\newtheorem{theorem}{Theorem}[section]
\newtheorem{corollary}{Corollary}[section]
\newtheorem{lemma}{Lemma}[section]
\newtheorem{proposition}{Proposition}[section]
\newtheorem{definition}{Definition}[section]
\newtheorem{example}{Example}[section]
\newtheorem{remark}{Remark}[section]
\newenvironment{proof} {\textsc{Proof}\quad} {\hfill $\Box$\\}
\title{A Topological Representation of Semantics of First-order Logic and Its Application as a Method in Model Theory
}
\author{
  Yunfei Qin\\
  Peking University\\
  \texttt{qyf@pku.edu.cn} \\
}
\begin{document}
\maketitle

\begin{abstract}
Various topological concepts are often involved in the research of mathematical logic, and almost all of these concepts can be regarded as developing from the Stone representation theorem. In the Stone representation theorem, a Boolean algebra is represented as the algebra of the clopen sets of a Stone space. And based on this, a natural connection is established between the structure of Stone space and the semantics of propositional logic. In other words, models of a propositional theory are represented as points in a Stone space. This enables us to use the concepts of topology to describe many facts in logic. In this paper, we do the same thing for the first-order logic. That is, we organize the basic objects of semantics of first-order logic, such as theories, models, elementary embeddings, and so on, into a kind of topological structure defined abstractly. To be precise, this kind of structure is a kind of enriched-topological space which we call cylindric space in this paper. Furthermore, based on this topological representation of semantics of first-order logic, we systematically introduce a method of point-set topology into the research of model theory. We demonstrate the great advantages of this topological method with an example and provide a general discussion of its features, advantages, and connection to the type space.
\end{abstract}

\keywords{Topology \and Stone space \and Model theory \and Cylindric algebra}

\section{Introduction}\label{intro}
Logic has been closely related to topology for a long time, and the most important connection is Stone duality. Stone duality is a further development of the Stone representation theorem of Boolean algebra. In Stone's representation theorem, Boolean algebra is represented as a set field algebra on the power set of its ultrafilters, and this set field algebra has a natural topological structure. In Stone duality, this topological structure is characterized as Stone space \cite{johnstone1982stone}. The connection between Stone duality and logic is that Stone duality can be regarded as a syntax-semantic duality of propositional logic: on the one hand, it is well known that the Lindenbaum-Tarski algebra of the syntactic structure of propositional logic is a Boolean algebra; On the other hand, since the Boolean homomorphisms from a Boolean algebra to 0-1 algebra correspond to the ultrafilters on the Boolean algebra, the points in the dual Stone space of this Boolean algebra can be regarded as such Boolean homomorphisms, and these homomorphisms can be regarded as an abstraction of the model of propositional logic theory (0-1 assignment to propositional variables) \cite{givant2008introduction}. Based on this, a Stone space can be regarded as a topological space consisting of all the models of a propositional theory. In this way, Stone duality establishes an algebraic-geometric duality between the syntactic structure of propositional logic theory and its semantic interpretation and gives us a systematic topological perspective on the semantics of propositional logic.

As a generalization of propositional logic, first-order logic naturally inherits the topological intuition . Various topological concepts are common in the study of first-order model theory. Even in many early works, the topology method has a place (two obvious examples can be seen in \cite{feferman1952h} and \cite{suzuki1970orbits}). Thus, a question arises: to what extent can we develop these topological concepts and intuitions in first-order logic? Can we organize the models of first-order theory 
into a certain topological structure, just as organizing the models of propositional logic as a Stone space? This paper gives a positive answer to these questions. That is, the development in this direction is not only possible but also has considerable potential to be applied as a systematic method to the further study of first-order model theory.

First, we review the existing works on topologizing first-order logic.

Unlike propositional logic, a first-order structure has a natural way of constructing a set field representation algebra of it, that is, cylindric set algebra. Each of these algebras is a set field algebra obtained by adding some non-Boolean operators to the set field Boolean algebra consisting of sequences of length $\alpha$($\alpha$-sequence, in short) on a set. For a first-order structure, consider the set $S$ consisting of all $\omega$-sequences on the domain of this first-order structure. Each definable set of this structure can naturally correspond to a subset of $S$, and all such subsets of $S$ form a cylindric set algebra. Obviously, this algebra generated by the first-order structure is a Boolean algebra and thus naturally has a zero-dimensional topological structure.

Cylindric set algebras are related to cylindric algebra proposed by Henkin, Monk, and Tarski in \cite{henkin1981cylindric}. Chapter 1 of \cite{henkin1971cylindric} mentions that the cylindric algebra of local-finite dimension is the algebraic abstraction of Lindenbaum-Tarski algebras of theories of the first-order language with equality and without functions and constants. Generally speaking, some but not every cylindric algebra can be represented as a cylindric set algebra. The Resek-Thompson theorem tells us that the cylindric set algebra is a set field representation of a modified version of cylindric algebra (see \cite{andreka1988stone}). However, considering the soundness and completeness of first-order logic, it is easy to see that each cylindric set algebra generated by a first-order structure is the set field representation of a local-finite dimensional cylindric algebra.



In addition to the topology on the cylindric set algebra of the first-order structure mentioned above, there has been much research on constructing the corresponding topological space of cylindric algebra in the following two early works.

In \cite{comer1972sheaf}, as a generalization of Stone duality, Comer Stephen dualized a cylindric algebra to a class of sheaves derived from it, which are, of course, based on topological space. However, he did not relate this work to the semantics of first-order logic.

In \cite{pinter1980topological}, Charles Pinter investigated the Stone dual space of the underlying Boolean algebra of the local-finite dimensional cylindric algebra (in fact, it is not a standard cylindric algebra but an equivalent form of cylindric algebra proposed in \cite{pinter1973simple}). He showed that a cylindrification $c_{\kappa}$ of a cylindric algebra could be dualized as the following equivalent relation $\sim_{\kappa}$ on the dual space:
$$p\sim_{\kappa}q\Leftrightarrow \{c_{\kappa}a:a\in p\}\subseteq q.$$
On this basis, he showed how to construct a first-order structure based on an ultrafilter with Henkin property of algebraic elements. However, his discussion was limited to the case of countable-dimension, so that only countable models can be constructed in this way. Moreover, the continuous mappings between spaces were not mentioned.

There are also works related to the topologization of semantics of first-order logic that does not involve cylindric algebra. Based on the results of Butz and Moerdijk in \cite{butz1996representing}, Awodey and Forssell constructed a first-order logical duality based on category logic in \cite{awodey2013first}. They established a duality between the Boolean coherent category representing the first-order theory and a kind of topological groupoids, in which the dual topological groupoid of the Boolean coherent category representing a theory $T$ can be regarded as consisting of enough $T$-models. This duality, although providing a topological representation of first-order structures, did not consider how to represent the elementary embeddings between the structures, which are clearly an important part of the semantics of first-order logic.


The work in this paper can be seen as a development of Pinter's investigation of the Stone dual space of cylindric algebra. However, this paper will not deal with cylindric algebra in technical detail. It is easy to see that the ultrafilters on the Lindenbaum algebra of a first-order theory are essentially the algebraic counterpart of the maximal consistent sets in logic, so by replacing the ultrafilters in the Stone dual space with the MCSs, we obtain a space consisting of first-order logic objects, which we call model space in this paper (For a strict definition see \ref{sglz}). Then a direct connection between topology and first-order logic is established without the need to go through the cylindric algebra as an intermediary.

As the title of this paper implies, this paper consists of two parts, one on the topological representation of the semantics of first-order logic, which forms the main body of the paper, and the other on the application of this topological representation as a method in model theory.

As mentioned earlier, the semantic models of propositional logic can be represented one-to-one as points in Stone spaces, and we want to do something similar for first-order logic. That is, we want to find a kind of abstractly defined topological structure such that the construction of such topological structures can completely and systematically represent the classical semantics of first-order logic. This work constitutes the first part of this paper, which consists of Sections \ref{yzkj}, \ref{sbw}, and \ref{tph}.

It is easy to see that the equivalence relation $\sim_{\kappa}$ on the Stone dual space of a Lindenbaum algebra is transformed into the following equivalence relation when we turn the Stone dual space into a model space:
$$p\sim_{\kappa}q\Leftrightarrow \{\exists v_{\kappa}\phi:\phi\in p\}\subseteq q.$$
In fact, we can define a similar equivalence relation $\sim'_{\kappa}$ on the cylindric set algebra:
$$s\sim'_{\kappa}t\Leftrightarrow s\backslash s(\kappa)=t\backslash t(\kappa).$$
The above discussion implies that we can transform the models and theories of first-order logic into the same class of structures, which should be topological spaces with a family of additional equivalence relations. In Section \ref{bco}, we will define these structures strictly and call them cylindric spaces. In addition, we will define several mappings between cylindric spaces (Strongly continuous mapping, cylindric mapping, basis-preserving mapping) and discuss some basic properties of these spaces and mappings. In Section \ref{sglz}, as examples of the cylindric space, we will strictly define the model space and verify that the cylindric set algebra constructed from a first-order structure naturally constitutes a cylindric space (called topologization space). Further, we will also show how to represent the relations between theories and their models as mappings between spaces.

For a complete topological representation of the semantics of first-order logic, two issues must be addressed:
\begin{enumerate}
    \item For a theory with infinite models, all of its models constitute a proper class, so we cannot represent all of them as points in the same space as propositional logic does.
    \item Unlike the case of propositional logic, there are formalized connections between models of first-order logic theories like elementary embeddings, and a complete topological representation should also consider such connections.
\end{enumerate}
In Section \ref{sbw}, we develop some techniques based on the cylindric space to solve these problems. The idea is that, for the first problem, there may not be enough points ``in" a space $\mathcal{C}$, But we can construct for each ordinal $\alpha$ an unique space that is obtained by expanding $\mathcal{C}$ ($\alpha$-expansion space of $\mathcal{C}$) so that we have enough points ``associated with" $\mathcal{C}$. For the second problem, we will define a kind of set-topological relation between points in a cylindric space (factor, permutation on point) to be used in the next work to represent the elementary embedding.

In Section \ref{tph}, based on the previous work, we finally give a topological representation of the first-order logical semantics. Technically, we build this representation using the topologization spaces of the first-order structures mentioned above as mediators. Intuitively, for a theory $T$, all $T$-models are precisely all first-order structures whose topologization spaces can be mapped onto the model space of $T$ . Calling the sequences in the topologization space that list all elements of the first-order structure as domain points, the points in the model space (or its expansion space) to which the domain points are mapped can be regarded as representing the first-order structures. These points representing first-order structures naturally have a set-topological 
definition (that is, the set-topological correspondence property of Henkin set). Based on this, the elementary embeddings between first-order structures can then be represented as mappings between the topologization spaces of these first-order structures and thus as relations between points of the kind we defined in Section \ref{sbw}.

The first part can be seen as part of an overall work: constructing a first-order analog of the Stone duality. In a forthcoming paper, we will prove that there is a categorical duality between the category of the cylindric algebra and the category consisting of the Stone cylindric space and the cylindric mapping, which will then naturally carry the meaning of the Syntax-Semantics dual of the first-order logic, based on the work of this paper.

In the second part, Section \ref{yy}, we introduce a topological method to model theory systematically based on the first part's work. In this part, we first discuss the connection between the model space and the type space, the classical set-topology concept in model theory. And then, take one theorem in model theory (which can be regarded as a critical intermediate theorem in the classical proof of Morley's theorem) as a example to show the application of the topological method in model theory. Moreover, we also compare the topological proof with the corresponding classical proof. As we will see, this new method greatly reduces the complexity of the proof.

In fact, the features and advantages shown in the example are not limited to the example. We have already had a complete topological proof of Morley's theorem, and the advantages presented in the example are general in this complete proof (although it is much more compact than the classical proof, it is still too long to attach here).

It is worth noting that the second part of this paper does not rely very much on the discussion of the abstract structure of the cylindric space in the first part. What I mean is that when considering the application of topological methods to model theory, the only cylindric spaces that need to be considered are concrete model spaces. Moreover, most of the tools and properties we introduce in the first part and will use in the application examples can be constructed or proved on these concrete model spaces by non-topological and sometimes relatively less complex processes. However, to simplify the discussion, we will not discuss these non-topological construction processes in this paper.
\section{Cylindric space}\label{yzkj}
\subsection{Basic concept}\label{bco}
This section will define some fundamental concepts, including cylindric space, and discuss their properties. As mentioned in Section \ref{intro}, cylindric space is a topological space with additional components. Since the direct definition of this structure will be very complex, and its property partly depends on the properties of its components, we first define and discuss these components, then define the whole structure.

\begin{definition}\label{cs}
Let $(S,\tau)$ be a topological space, $\{\sim_{i}:i\in I\}$ be an infinite family of equivalence relations on $S$,
\begin{itemize}
    \item If for any $a,b\in S$, any $i,j\in I$, $a\sim_{i}c\sim_{j}b$ for some $c\Leftrightarrow a\sim_{j}c'\sim_{i}b$ for some $c'$, we say that $\{\sim_{i}:i\in I\}$ is \textbf{commutative};
    \item For a basis $B$ of $\tau$, we call $B$ a \textbf{cylindric basis} about $\{\sim_{i}:i\in I\}$ if $B$ is closed under taking union, intersection, complement and $\sim_{i}$-saturation\footnote{For a subset $T\subseteq S$, $\sim_{i}$-saturation of $T$ writen as $[T]_{i}$ is the set $\{a\in S:\exists b\in T,a\sim_{i} b\}$; If $T=[T]_{i}$, we say $T$ is $\sim_{i}$-saturated.}.
    \item We call $\{\sim_{i}:i\in I\}$ a \textbf{cylindric system} if it is commutative and has a cylindric basis.
\end{itemize}
\end{definition}
\begin{proposition}
Let $(S,\tau)$ be a topological space, $\{\sim_{i}:i\in I\}$ be a cylindric system on it, we have:
\begin{enumerate}
    \item $(S,\tau)$ is a zero-dimensional space.
    \item If $(S,\tau)$ is compact, then $\{u\in\tau:u\mbox{ is a clopen set}\}$ is the only cylindric basis about $\{\sim_{i}:i\in I\}$.
\end{enumerate}
\end{proposition}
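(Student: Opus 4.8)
The plan is to treat the two parts separately, the first being immediate from closure under complement and the second resting on a compactness/finite-subcover argument. For part 1, let $B$ be a cylindric basis about $\{\sim_{i}:i\in I\}$. For any $U\in B$, closure under complement gives $S\setminus U\in B\subseteq\tau$, so $S\setminus U$ is open and hence $U$ is closed; since also $U\in\tau$ is open, $U$ is clopen. Thus every member of $B$ is clopen, so $B$ is a basis consisting of clopen sets, which is exactly the assertion that $(S,\tau)$ is zero-dimensional. (Incidentally, closure under union and complement forces $\emptyset,S\in B$, so there is no degeneracy to worry about.) Note that commutativity of the system plays no role here.

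For part 2, I would first show that the family $\mathcal{K}$ of all clopen sets is a cylindric basis, then that it is the unique one. By part 1 we have $B\subseteq\mathcal{K}$, and since $B$ is a basis every open set is a union of members of $B\subseteq\mathcal{K}$; as all members of $\mathcal{K}$ are open, $\mathcal{K}$ is itself a basis. Closure of $\mathcal{K}$ under finite union, intersection and complement is routine. The one nontrivial closure property is closure under $\sim_{i}$-saturation, and here compactness enters. Given $U\in\mathcal{K}$, write $U=\bigcup_{k}V_{k}$ with $V_{k}\in B$ (possible since $B$ is a basis); because $U$ is closed in the compact space $S$ it is compact, so finitely many suffice, $U=V_{1}\cup\cdots\cup V_{n}$. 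Since $\sim_{i}$-saturation commutes with unions, $[U]_{i}=\bigcup_{j=1}^{n}[V_{j}]_{i}$, and each $[V_{j}]_{i}\in B\subseteq\mathcal{K}$ is clopen, so the finite union $[U]_{i}$ is clopen, i.e.\ $[U]_{i}\in\mathcal{K}$.

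For uniqueness, let $B'$ be any cylindric basis. By the argument of part 1, every member of $B'$ is clopen, so $B'\subseteq\mathcal{K}$. Conversely, given any clopen $U$, express it as a union of members of $B'$; compactness of $U$ again yields a finite subfamily whose union is $U$, and closure of $B'$ under finite union gives $U\in B'$. Hence $\mathcal{K}\subseteq B'$, so $B'=\mathcal{K}$.

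I expect the main obstacle to be the saturation step, where one must turn the purely existential description of $[U]_{i}$ into a clopen set. The decisive idea in both the existence and the uniqueness directions is the same: reduce an arbitrary clopen set to a \emph{finite} union of cylindric-basis elements via compactness, after which closure of the basis under the defining operations does all the remaining work. Everything else is bookkeeping, and the commutativity hypothesis of the cylindric system is not needed for this proposition.
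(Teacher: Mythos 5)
Your proof is correct and follows essentially the same route as the paper's: part 1 from closure under complement, and part 2 by showing any cylindric basis consists of clopen sets and, via compactness and closure under finite unions, contains every clopen set. Your separate verification that the clopen sets themselves form a cylindric basis is harmless but redundant, since a cylindric basis exists by hypothesis and your uniqueness argument already forces it to equal the family of clopen sets.
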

\begin{proof}$ $

\begin{enumerate}
    \item Since the cylindric basis is closed under taking the complement, the first item holds.
    \item By definition of the cylindric system, $\{\sim_{i}:i\in I\}$ has a cylindric basis. Choosing an arbitrary cylindric basis $B$ about it, by definition of cylindric basis, $B\subseteq\{u\in\tau:u\mbox{ is clopen}\}$. Conversely, for an arbitrary clopen set $u\in\tau$, by compactness, there is finite $B'\subseteq B$ s.t. $u=\bigcup B'$. By definition of cylindric basis, $u\in B$.
\end{enumerate}
\end{proof}
\begin{proposition}\label{clp}
For a topological space $(S,\tau)$, a cylindric system $\{\sim_{i}:i\in I\}$ on it and any $U\subseteq\tau$, $i\in I$, we have: $[\bigcup U]_{i}=\bigcup\{[u]_{i}:u\in U\}$.
\end{proposition}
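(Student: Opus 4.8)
The plan is to prove the set equality $[\bigcup U]_i = \bigcup\{[u]_i : u \in U\}$ by a straightforward double inclusion, unwinding only the definition of $\sim_{i}$-saturation given in Definition \ref{cs}. Neither the topology $\tau$ nor the commutativity or cylindric-basis conditions of the cylindric system plays any role here, so the statement is really a purely set-theoretic fact about saturation under an arbitrary equivalence relation; the hypothesis $U \subseteq \tau$ is just the form in which it will be applied.

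For the inclusion $[\bigcup U]_i \subseteq \bigcup\{[u]_i : u \in U\}$, I would take an arbitrary $a \in [\bigcup U]_i$. By definition of saturation there is some $b \in \bigcup U$ with $a \sim_{i} b$. Since $b \in \bigcup U$, I can fix a particular $u \in U$ with $b \in u$; then the pair $a \sim_{i} b$ and $b \in u$ witnesses $a \in [u]_i$, and hence $a \in \bigcup\{[u]_i : u \in U\}$.

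For the reverse inclusion, I would start with $a \in \bigcup\{[u]_i : u \in U\}$, fix some $u \in U$ with $a \in [u]_i$, and choose $b \in u$ with $a \sim_{i} b$. Since $u \subseteq \bigcup U$ we get $b \in \bigcup U$, so the same witness $b$ shows $a \in [\bigcup U]_i$.

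The underlying reason both directions succeed is that the existential quantifier defining saturation distributes over the union: the assertion ``there is $b \in \bigcup U$ with $a \sim_{i} b$'' is logically equivalent to ``there is $u \in U$ and $b \in u$ with $a \sim_{i} b$.'' There is no genuine obstacle in this argument; the only point requiring a little care is the bookkeeping of which witness $b$ and which member $u$ of $U$ are produced at each step, and this is entirely routine.
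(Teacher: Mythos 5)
Your proof is correct and is essentially the paper's own argument: the paper proves the same equality by the chain of equivalences $a\in [\bigcup U]_{i}\Leftrightarrow\exists b\in\bigcup U,\ a\sim_{i}b\Leftrightarrow\exists u\in U,\ b\in u,\ a\sim_{i}b\Leftrightarrow a\in\bigcup\{[u]_{i}:u\in U\}$, which is just your two inclusions written as one line. Your observation that neither the topology nor the other cylindric-system axioms are used is also accurate.
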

\begin{proof}
$a\in [\bigcup U]_{i}\Leftrightarrow\exists b\in\bigcup U, a\sim_{i}b\Leftrightarrow\exists u\in U,b\in u, a\sim_{i}b\Leftrightarrow a\in\bigcup\{[u]_{i}:u\in U\}$.
\end{proof}

It is easy to see that the cylindric system is the topological counterpart of quantifiers. Now we discuss the topological counterpart of equality.
\begin{definition}\label{df}
Let $(S,\tau)$ be a topological space, $\mathcal{E}=\{\sim_{i}:i\in I\}$ be a cylindric system on it, a \textbf{diagonal family} on $\mathcal{E}$ is a family of clopen sets $\{D_{ij}:i,j\in I\}$ satisfying the following conditions: for any $i,j,k\in I$,
\begin{itemize}
    \item If $k\in I\backslash\{i,j\}$, then $D_{ij}$ is $\sim_{k}$-saturated;
    \item If $i\neq j$, then for any $a\in S$, $\|[a]_{i}\cap D_{ij}\|=\|[a]_{j}\cap D_{ij}\|=1$\footnote{For a set $X$, by $\|X\|$ we denote the cardinality of $X$}; this implies $S=[D_{ij}]_{i}=[D_{ij}]_{j}$;
    \item $D_{ij}\cap D_{jk}\subseteq D_{ik}$, $D_{ij}=D_{ji}$.
\end{itemize}
We call elements in the diagonal family diagonals.
\end{definition}

\begin{lemma}
Let $\{D_{ij}:i,j\in I\}$ be a diagonal family on some cylindric system on some topological space, for any $i,j,k\in I$,
\begin{enumerate}
    \item For any $u$, if $k\not\in\{i,j\}$, then $[D_{ij}\cap u]_{k}=D_{ij}\cap[u]_{k}$;
    \item $D_{ii}=S$; (and therefore, if $i=j$ or $k=j$, then $D_{ik}=D_{ij}\cap D_{jk}$)
    \item If $j\not\in\{i,k\}$, then $D_{ik}=[D_{ij}\cap D_{jk}]_{j}$.
\end{enumerate}
\end{lemma}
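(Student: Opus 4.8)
The plan is to read the three items as topological transcriptions of the standard cylindric-algebra identities $c_k(d_{ij}\cdot x)=d_{ij}\cdot c_k(x)$, $d_{ii}=1$, and $d_{ik}=c_j(d_{ij}\cdot d_{jk})$, and to derive each one purely from the three defining clauses of a diagonal family (the $\sim_k$-saturation clause, the cardinality-one clause, and the inclusion clause $D_{ij}\cap D_{jk}\subseteq D_{ik}$ with its symmetry $D_{ij}=D_{ji}$), together with elementary bookkeeping for saturations. The single fact I would invoke over and over is that $T$ being $\sim_k$-saturated means exactly that $a\in T$ and $a\sim_k b$ force $b\in T$.

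For item 1 I would prove the two inclusions separately. For $[D_{ij}\cap u]_k\subseteq D_{ij}\cap[u]_k$, take $a\sim_k b$ with $b\in D_{ij}\cap u$; since $k\notin\{i,j\}$ the saturation clause makes $D_{ij}$ be $\sim_k$-saturated, so $b\in D_{ij}$ yields $a\in D_{ij}$, while $b\in u$ yields $a\in[u]_k$. For the reverse inclusion, take $a\in D_{ij}$ and $a\sim_k b$ with $b\in u$; the same $\sim_k$-saturation now pushes $b$ into $D_{ij}$, so $b\in D_{ij}\cap u$ witnesses $a\in[D_{ij}\cap u]_k$. This item uses only the saturation clause.

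For item 2 I would first note that the inclusion clause with $k=i$ together with $D_{ij}=D_{ji}$ gives $D_{ij}\subseteq D_{ii}$ for every $j$. The tempting move is then to use the cardinality-one clause to locate an element of $[a]_i\cap D_{ij}$ and slide back to $a$ along $\sim_i$; this fails, and is the main obstacle, because the saturation clause only guarantees $D_{ii}$ to be $\sim_k$-saturated for $k\neq i$, never for $k=i$. The fix is to change the index: fix any $j\neq i$ (possible since $I$ is infinite), use $\|[a]_j\cap D_{ij}\|=1$ to pick the unique $b\sim_j a$ lying in $D_{ij}\subseteq D_{ii}$, and then invoke the $\sim_j$-saturation of $D_{ii}$ (legitimate because $j\neq i$) to conclude $a\in D_{ii}$. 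As $a$ was arbitrary, $D_{ii}=S$; the two parenthetical equalities are then immediate upon substituting $D_{ii}=S$ (resp.\ $D_{jj}=S$) into $D_{ij}\cap D_{jk}$.

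For item 3 I would again argue by double inclusion. The inclusion $[D_{ij}\cap D_{jk}]_j\subseteq D_{ik}$ follows from the inclusion clause $D_{ij}\cap D_{jk}\subseteq D_{ik}$ together with the $\sim_j$-saturation of $D_{ik}$ (available since $j\notin\{i,k\}$). The reverse inclusion $D_{ik}\subseteq[D_{ij}\cap D_{jk}]_j$ is the substantive direction: given $a\in D_{ik}$, use $\|[a]_j\cap D_{ij}\|=1$ to select the unique $b\sim_j a$ with $b\in D_{ij}$, then use the $\sim_j$-saturation of $D_{ik}$ to transport $a$'s membership to $b$, so that $b\in D_{ij}\cap D_{ik}$. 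Finally apply the inclusion clause in the form $D_{ji}\cap D_{ik}\subseteq D_{jk}$ (rewriting $D_{ij}=D_{ji}$) to get $b\in D_{jk}$, whence $b\in D_{ij}\cap D_{jk}$ and $b\sim_j a$ exhibit $a\in[D_{ij}\cap D_{jk}]_j$. I expect the only point genuinely requiring care—here as in item 2—to be the choice of which equivalence class and which saturation to work with, so as to avoid the missing $\sim_i$-saturation of the diagonals.
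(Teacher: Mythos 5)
Your proof is correct, and items (1) and (2) follow essentially the same route as the paper's (the paper phrases (2) set-theoretically as $S=[D_{ii'}]_{i'}=[D_{ii'}\cap D_{i'i}]_{i'}\subseteq[D_{ii}]_{i'}=D_{ii}$, which is exactly your element-wise argument: locate the unique $b\in[a]_{i'}\cap D_{ii'}\subseteq D_{ii}$ and pull $a$ back along the $\sim_{i'}$-saturation of $D_{ii}$; your observation that the $\sim_i$-saturation of $D_{ii}$ is unavailable is precisely the point the paper is also dodging). Where you genuinely diverge is the hard inclusion $D_{ik}\subseteq[D_{ij}\cap D_{jk}]_{j}$ in item (3). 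The paper splits into the cases $i\neq k$ and $i=k$, and in the first case argues globally: it shows $D_{ij}\subseteq[D_{ij}\cap D_{jk}]_{k}$, invokes the \emph{commutativity} of the cylindric system to get $[[D_{ij}\cap D_{jk}]_{j}]_{k}=S$, and then uses the cardinality-one clause to identify $[D_{ij}\cap D_{jk}]_{j}\cap[a]_{k}$ with $D_{ik}\cap[a]_{k}$ for every $a$. Your argument is local and more elementary: from $a\in D_{ik}$ you pick the unique $b\in[a]_{j}\cap D_{ij}$, transport $a\in D_{ik}$ to $b$ via the $\sim_j$-saturation of $D_{ik}$, and close with $D_{ji}\cap D_{ik}\subseteq D_{jk}$. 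This avoids the commutativity axiom and the case split entirely (your double inclusion goes through verbatim when $i=k$, since then $D_{ik}=D_{ii}=S$), at the modest cost of using the transitivity clause twice. Both are valid; yours is the leaner derivation and makes visible that item (3) needs nothing beyond the diagonal axioms themselves.
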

\begin{proof}
\
\begin{enumerate}

    \item 
    
    \begin{itemize}
        \item $\subseteq$: If $a\in[D_{ij}\cap u]_{k}$, we know that there is $b\sim_{k}a$ s.t. $b\in D_{ij}\cap u$, by definition of $D_{ij}$, $a\in D_{ij}$, then $a\in D_{ij}\cap[D_{ij}\cap u]_{k}\subseteq D_{ij}\cap[u]_{k}$;
        \item $\supseteq$: If $a\in D_{ij}\cap[u]_{k}$, then $a\in D_{ij}$ and there is $b\sim_{k}a$ s.t. $b\in u$, by definition of $D_{ij}$, $b\in D_{ij}$, then $a\in [D_{ij}\cap u]_{k}$.
    \end{itemize}
    \item Choose an arbitrary $i'\neq i$, we have $D_{ii}$ is $\sim_{i'}$-saturated, then $S=[D_{ii'}]_{i'}=[D_{ii'}\cap D_{i'i}]_{i'}\subseteq[D_{ii}]_{i'}=D_{ii}$. Hence $D_{ii}=S$.
    \item\begin{itemize}
        \item\textbf{Case 1:} $i\neq k$.

Clearly we have $[D_{ij}\cap D_{jk}]_{j}\subseteq[D_{ik}]_{j}=D_{ik}$.

For any $a\in D_{ij}$, $[a]_{k}\subseteq D_{ij}$, then there is $b\sim_{k}a$ s.t. $b\in D_{ij}\cap D_{jk}$. Hence $D_{ij}\subseteq[D_{ij}\cap D_{jk}]_{k}$. By commutativity of cylindric system, $S=[D_{ij}]_{j}\subseteq[[D_{ij}\cap D_{jk}]_{k}]_{j}=[[D_{ij}\cap D_{jk}]_{j}]_{k}$ i.e. $[[D_{ij}\cap D_{jk}]_{j}]_{k}=S$. This means that for any $a, [D_{ij}\cap D_{jk}]_{j}\cap [a]_{k}\neq\emptyset$, then $[D_{ij}\cap D_{jk}]_{j}\cap [a]_{k}=D_{ik}\cap [a]_{k}$. Hence $[D_{ij}\cap D_{jk}]_{j}=D_{ik}$.

\item\textbf{Case 2:} $i=k$.

$[D_{ij}\cap D_{jk}]_{j}=[D_{ij}]_{j}=S=D_{ii}=D_{ik}$.
    \end{itemize}
\end{enumerate}
\end{proof}

Now, let's introduce the primary object that the subsequent work will focus on: cylindric space.

\begin{definition}\label{cysp}
 A \textbf{cylindric space} is a triad $(\mathcal{S},\mathcal{E}_{\alpha},\mathcal{D})$, where $\mathcal{S}$ is a topological space, $\mathcal{E}_{\alpha}$ is a cylindric system on $\mathcal{S}$ with infinite ordinal $\alpha$ as index, $\mathcal{D}$ is a diagonal family on $\mathcal{E}_{\alpha}$.
\end{definition}

For a cylindric space $\mathcal{C}$, we write its underlying set as $|\mathcal{C}|$, underlying topology as $\tau^{\mathcal{C}}$, cylindric system as $\mathcal{E}^{\mathcal{C}}$ and equivalence relations in it as $\sim^{\mathcal{C}}_{i}$, diagonal family as $\mathcal{D}^{\mathcal{C}}$ and diagonals in it as $D^{\mathcal{C}}_{ij}$.

\begin{definition}
\
\begin{itemize}
    \item For an arbitrary cylindric space $\mathcal{C}$, we call the index ordinal $\alpha$ of $\mathcal{E}^{\mathcal{C}}$ the \textbf{dimension of $\mathcal{C}$}. For convenience, we sometimes write $\mathcal{C}$ as $\mathcal{C}_{\alpha}$ to indicate that its dimension is $\alpha$;
    \item For any arbitrary subset $X\subseteq|\mathcal{C}|$, define $\Delta(X):=\{i\in \alpha:X$ is not $\sim_{i}$-saturated$\}$, we call $\Delta(X)$ the \textbf{dimension of $X$};
    \item If there is a cylindric basis $B$ of $\mathcal{C}$ s.t. for any $u\in B$, $\|\Delta(u)\|<\omega$, then we say $\mathcal{C}$ is \textbf{basis-finite}.
\end{itemize}
\end{definition}

Now, we define the mapping between cylindric spaces.
\begin{definition}\label{cm}
\
\begin{itemize}
    \item Let $\alpha\ge\beta$ be two ordinals, $\mathcal{C}_{\alpha},\mathcal{C'}_{\beta}$ be two cylindric space, $f:|\mathcal{C}|\rightarrow|\mathcal{C'}|$ be a mapping between them, we call $f$ a \textbf{strongly continuous mapping} (\textbf{S-mapping} for short) and denote it as $f:\mathcal{C}\rightarrow\mathcal{C'}$ if the following two conditions are satisfied:
    \begin{itemize}
        \item $f:(|\mathcal{C}|,\tau^{\mathcal{C}})\rightarrow(|\mathcal{C'}|,\tau^{\mathcal{C'}})$ is continuous where $f^{-1}[D^{\mathcal{C'}}_{ij}]=D^{\mathcal{C}}_{ij}$ for any $i,j\in\beta$;
        \item $f:(|\mathcal{C}|,\sim_{i})_{i\in\beta}\rightarrow(|\mathcal{C'}|,\sim_{i})_{i\in\beta}$ is a structure homomorphism;
    \end{itemize}
    \item For an S-mapping $f$, if $f^{-1}$ is also an S-mapping, we call $f$ an \textbf{S-homeomorphism}, and denote it as $f:\mathcal{C}_{\alpha}\cong\mathcal{C'}_{\beta}$;
    \item We call an S-mapping $f:\mathcal{C}_{\alpha}\rightarrow\mathcal{C'}_{\beta}$ a \textbf{cylindric mapping} (\textbf{C-mapping} for short), if for any $i\in\alpha$, any $a\in|\mathcal{C}|$, $f[[a]_{i}]$ is a dense subset of $[f(a)]_{i}$. 
    \item Let $f:\mathcal{C}_{\alpha}\rightarrow\mathcal{C'}_{\beta}$ be an S-mapping, we say $f$ is \textbf{basis-preserving}, if for any $u\in\tau^{\mathcal{C}}$ with $\Delta(u)\subseteq\beta$, there is a $u'\in\tau^{\mathcal{C'}}$ s.t. $f^{-1}[u']=u$. 
  \end{itemize}
\end{definition}
\begin{proposition}\label{kjdw}
Let $f:\mathcal{C}_{\alpha}\rightarrow\mathcal{C'}_{\beta}$ be an S-mapping.
\begin{enumerate}
    \item For any $u'\in\tau^{\mathcal{C'}}$, $\Delta(f^{-1}[u'])\subseteq\Delta(u')$;
    \item The following statements are equivalent:
    
    \begin{itemize}
        \item $f$ is a C-mapping.
        \item For any $u'\in\tau^{\mathcal{C'}}$, $i\in\alpha$, $[f^{-1}[u']]_{i}=f^{-1}[[u']_{i}]$.
        \item For any clopen set $u'$ in a cylindric basis, $i\in\alpha$, $[f^{-1}[u']]_{i}=f^{-1}[[u']_{i}]$.
    \end{itemize}
\end{enumerate}
\end{proposition}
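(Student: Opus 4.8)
The plan is to treat the two parts separately; the homomorphism clause of an S-mapping (available for indices $i\in\beta$) together with Proposition \ref{clp} will carry most of the weight, and one inclusion in each of the set-equations of part 2 will turn out to be free.

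For part 1 I would argue by contraposition. Suppose $i\notin\Delta(u')$, i.e.\ $u'$ is $\sim_i$-saturated; I claim $f^{-1}[u']$ is then $\sim_i$-saturated, so $i\notin\Delta(f^{-1}[u'])$. Indeed, if $a\in f^{-1}[u']$ and $b\sim_i a$ with $i\in\beta$, the structure-homomorphism clause gives $f(b)\sim_i f(a)\in u'$, whence $f(b)\in u'$ by saturation of $u'$, i.e.\ $b\in f^{-1}[u']$. This settles the inclusion for indices lying in $\beta$. The delicate point is the indices $i\in\alpha\setminus\beta$: since $u'$ lives in the codomain, whose dimension is only $\beta$, it cannot depend on the coordinate $i$, so $f^{-1}[u']$ ought again to be $\sim_i$-saturated; I expect this to be the step that needs the most care, and I would first make explicit how coordinates beyond $\beta$ are to be read for a mapping between spaces of different dimension before closing it.

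For part 2 I would first record the inclusion that holds for \emph{every} S-mapping, again straight from the homomorphism clause: if $b\sim_i a$ and $f(b)\in u'$ then $f(a)\sim_i f(b)$, so $f(a)\in[u']_i$; hence $[f^{-1}[u']]_i\subseteq f^{-1}[[u']_i]$. Thus in both set-equations the only content is the reverse inclusion. Writing (A) for ``$f$ is a C-mapping'', (B) for the equation on all of $\tau^{\mathcal{C'}}$, and (C) for the equation on a cylindric basis, I would close the loop (A)$\Rightarrow$(B)$\Rightarrow$(C)$\Rightarrow$(A). Here (B)$\Rightarrow$(C) is trivial, being a restriction; and if one instead wants (C)$\Rightarrow$(B) directly, writing $u'=\bigcup_k w_k$ with each $w_k$ basic, Proposition \ref{clp} together with the commutation of preimage with unions lets both $[f^{-1}[\cdot]]_i$ and $f^{-1}[[\cdot]_i]$ distribute over the union.

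The two substantive directions are (A)$\Rightarrow$(B) and (C)$\Rightarrow$(A), and both hinge on the interplay between density and the subspace topology on $[f(a)]_i$. For (A)$\Rightarrow$(B): given $a$ with $f(a)\in[u']_i$, pick $c\in u'$ with $f(a)\sim_i c$, so $u'\cap[f(a)]_i$ is a nonempty relatively open subset of $[f(a)]_i$, and density of $f[[a]_i]$ in $[f(a)]_i$ yields $b\sim_i a$ with $f(b)\in u'$, i.e.\ $a\in[f^{-1}[u']]_i$. For (C)$\Rightarrow$(A): fix $a,i$ and a basic clopen $u'$ meeting $[f(a)]_i$ in some $c$; then $f(a)\sim_i c\in u'$, so $a\in f^{-1}[[u']_i]=[f^{-1}[u']]_i$ by (C), giving $b\sim_i a$ with $f(b)\in u'$, and $f(b)\sim_i f(a)$ places $f(b)$ in $u'\cap[f(a)]_i\cap f[[a]_i]$. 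I expect the main obstacle to be the bookkeeping in these last two arguments — specifically checking that the traces of the cylindric-basis elements form a basis of each fibre $[f(a)]_i$, so that ``meets every basic trace'' upgrades to genuine density — together with the index subtlety in part 1, where the conventions governing coordinates beyond $\beta$ must be pinned down.
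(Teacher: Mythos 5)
Your treatment of part 2 and of the $i\in\beta$ case of part 1 is correct and is essentially the paper's argument: the contrapositive/saturation argument for $\Delta(f^{-1}[u'])\subseteq\Delta(u')$ on indices in $\beta$, the free inclusion $[f^{-1}[u']]_{i}\subseteq f^{-1}[[u']_{i}]$ from the homomorphism clause, the reduction from general open sets to basis elements via Proposition \ref{clp} and commutation of preimage with unions, and the unwinding of ``$f[[a]_i]$ is dense in $[f(a)]_i$'' into ``every open $u'$ meeting $[f(a)]_i$ meets $f[[a]_i]$'' (the paper phrases this last step as a single chain of negations rather than a cycle of implications, but the content is identical). Your worry about basic traces forming a basis of the fibre is unfounded but harmless: traces of a basis always form a basis of the subspace topology.

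The one genuine gap is exactly the point you flagged and left open: the indices $i\in\alpha\setminus\beta$ in part 1 (and, silently, in part 2, where the set-equation must also be checked for such $i$). Your proposed route for closing it --- ``$u'$ cannot depend on the coordinate $i$'' --- points at the wrong object: for $i\ge\beta$ the relation $\sim_i$ simply does not exist in $\mathcal{C}'$, so no saturation property of $u'$ can help; what is needed is a property of $f$ itself. The paper closes this case by asserting that for $i\in\alpha\setminus\beta$ the image under $f$ of any $\sim_i$-equivalence class is a singleton, so that $f^{-1}[u']$ is automatically $\sim_i$-saturated for every $u'$ (and both sides of the equation in part 2 collapse to $f^{-1}[u']$). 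You would need to import that convention about how an S-mapping treats the ``extra'' relations before your part 1, and hence part 2, is complete for all $i\in\alpha$.
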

\begin{proof}$ $

\begin{enumerate}
    \item For any $i\in\beta$,

\begin{tabular}{rl}
     &$i\not\in\Delta(u')$\\
     $\Leftrightarrow$& for any $a'\in u',b'\in -u',a\not\sim_{i}b$\\
     $\Rightarrow$& for any $a\in f^{-1}[u'],b'\in f^{-1}[-u']=-f^{-1}[u'],a\not\sim_{i}b$\\
     $\Leftrightarrow$&$i\not\in\Delta(f^{-1}[u'])$\\
\end{tabular}

For any $i\in\alpha\backslash\beta$, clearly $i\not\in\Delta(f^{-1}[u'])$ since the image of any $\sim_{i}$-equivalence class is a singleton. Hence, $\Delta(f^{-1}[u'])\subseteq\Delta(u')$.

\item Since for any collection $U$ of open sets, $[\bigcup U]_{i}=\bigcup\{[u]_{i}:u\in U\}$, the second equivalence holds. By Proposition \ref{kjdw}.(1), if $i\in\alpha\backslash\beta$, the first equivalence obviously holds. If $i\in\beta$, it is also easy to see $[f^{-1}[u']]_{i}\subseteq f^{-1}[[u']_{i}]$ holds for any S-mapping $f$. Then we have

\begin{tabular}{rl}
     &$f$ is not a C-mapping\\
     $\Leftrightarrow$& there exists $i\in\alpha$, $u'\in\tau^{\mathcal{C'}}$ and $a\in|\mathcal{C}|$, $a\in f^{-1}[[u']_{i}]$ but $[a]_{i}\cap f^{-1}[u']=\emptyset$\\
     $\Leftrightarrow$& there exists $i\in\alpha$, $u'\in\tau^{\mathcal{C'}}$ and $a\in|\mathcal{C}|$, $a\in f^{-1}[[u']_{i}]$ but $a\not\in[f^{-1}[u']]_{i}$\\
     $\Leftrightarrow$& there exists $i\in\alpha$, $u'\in\tau^{\mathcal{C'}}$, $[f^{-1}[u']]_{i}\neq f^{-1}[[u']_{i}]$
\end{tabular}
\end{enumerate}
\end{proof}

\begin{proposition}\label{mp}
Let $f:\mathcal{C}_{\alpha}\rightarrow\mathcal{C'}_{\alpha'},g:\mathcal{C'}_{\alpha'}\rightarrow\mathcal{C''}_{\alpha''}$ be two S-mapping, then $g\circ f$ is an S-mapping. This statement also holds for C-mapping and basis-preserving C-mapping.
\end{proposition}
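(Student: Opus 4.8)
The plan is to verify the three assertions separately, in increasing order of difficulty, writing the composite as $g\circ f:\mathcal{C}_{\alpha}\rightarrow\mathcal{C''}_{\alpha''}$ and using throughout that the dimensions are nested, $\alpha\ge\alpha'\ge\alpha''$, so that every index and pair of indices relevant to the codomain $\mathcal{C''}$ already lies in $\alpha'$ and in $\alpha$. For the S-mapping claim everything is routine: continuity of $g\circ f$ is the usual composition fact; for $i,j\in\alpha''$ I would compute $(g\circ f)^{-1}[D^{\mathcal{C''}}_{ij}]=f^{-1}[g^{-1}[D^{\mathcal{C''}}_{ij}]]=f^{-1}[D^{\mathcal{C'}}_{ij}]=D^{\mathcal{C}}_{ij}$, applying first the diagonal condition for $g$ (legal since $i,j\in\alpha''$) and then that for $f$ (legal since $i,j\in\alpha''\subseteq\alpha'$); and the structure-homomorphism condition for $i\in\alpha''$ follows by preserving $\sim_i$ first through $f$ and then through $g$.

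For the C-mapping claim I would avoid arguing density directly and instead use the saturation characterization of Proposition \ref{kjdw}.(2): it suffices to show $[(g\circ f)^{-1}[u'']]_{i}=(g\circ f)^{-1}[[u'']_{i}]$ for every open $u''\in\tau^{\mathcal{C''}}$ and every $i\in\alpha$. The idea is to push the saturation through the two preimages one at a time,
$$[(g\circ f)^{-1}[u'']]_{i}=[f^{-1}[g^{-1}[u'']]]_{i}=f^{-1}[[g^{-1}[u'']]_{i}]=f^{-1}[g^{-1}[[u'']_{i}]]=(g\circ f)^{-1}[[u'']_{i}],$$
where the second equality is the characterization applied to the C-mapping $f$ (with the open set $g^{-1}[u'']$) and the third is the characterization applied to $g$. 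The only wrinkle is that the characterization for $g$ is stated for indices in $\alpha'$, whereas here $i$ ranges over $\alpha$; for $i\in\alpha\setminus\alpha'$ both sides of the middle equality collapse, since $\sim_i$ is trivial on $\mathcal{C'}$ and, because $\alpha''\subseteq\alpha'$, also on $\mathcal{C''}$, so the equality persists.

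The real work is the basis-preserving claim, and this is the step I expect to be the main obstacle. Given $u\in\tau^{\mathcal{C}}$ with $\Delta(u)\subseteq\alpha''$, basis-preservation of $f$ (applicable because $\Delta(u)\subseteq\alpha''\subseteq\alpha'$) yields an open $u'$ with $f^{-1}[u']=u$; but to feed $u'$ into the basis-preservation of $g$ I need $\Delta(u')\subseteq\alpha''$, and Proposition \ref{kjdw}.(1) only bounds $\Delta(u')$ from below. My plan is to replace $u'$ by the largest open set with the same $f$-preimage, namely $u'_{*}:=\bigcup\{V\in\tau^{\mathcal{C'}}:f^{-1}[V]=u\}$; this collection is nonempty (it contains $u'$), and since preimage commutes with unions, $f^{-1}[u'_{*}]=u$ while $u'_{*}$ is open. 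I would then show $\Delta(u'_{*})\subseteq\alpha''$: for each $i\in\alpha'\setminus\alpha''$ we have $i\notin\alpha''\supseteq\Delta(u)$, so $u$ is $\sim_i$-saturated, whence by the C-mapping property of $f$, $f^{-1}[[u'_{*}]_{i}]=[f^{-1}[u'_{*}]]_{i}=[u]_{i}=u$; as $[u'_{*}]_{i}$ is open (saturations of open sets are open, by Proposition \ref{clp} together with closure of a cylindric basis under saturation) it is itself a competitor in the union defining $u'_{*}$, forcing $[u'_{*}]_{i}=u'_{*}$. Thus $u'_{*}$ is $\sim_i$-saturated for every $i\in\alpha'\setminus\alpha''$, i.e. $\Delta(u'_{*})\subseteq\alpha''$. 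Now basis-preservation of $g$ supplies an open $u''$ with $g^{-1}[u'']=u'_{*}$, and $(g\circ f)^{-1}[u'']=f^{-1}[g^{-1}[u'']]=f^{-1}[u'_{*}]=u$, completing the argument. Since $g\circ f$ was already shown to be a C-mapping, this simultaneously settles the basis-preserving C-mapping case.
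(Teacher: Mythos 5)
Your proof is correct, and for the S-mapping and C-mapping parts it coincides with the paper's argument (composition of continuous maps and homomorphisms, then the saturation characterization of Proposition \ref{kjdw}.(2) pushed through the two preimages, with the $i\in\alpha\setminus\alpha'$ indices handled by triviality). The only genuine divergence is in the basis-preserving step, where both arguments share the same key idea --- enlarge the witness $u'$ to an open set with the same $f$-preimage whose dimension lands inside $\alpha''$ --- but realize it differently. The paper takes $v':=\bigcup\{[u']_{s}:s\subseteq\Delta(u')\setminus\alpha''\mbox{ finite}\}$, i.e.\ it saturates $u'$ over all finite strings of ``bad'' indices and checks via \ref{kjdw}.(2) that each such saturation still pulls back to $u$; you instead take the maximal open set $u'_{*}=\bigcup\{V:f^{-1}[V]=u\}$ and deduce $\sim_{i}$-saturation of $u'_{*}$ for $i\in\alpha'\setminus\alpha''$ from maximality, since $[u'_{*}]_{i}$ is again an open competitor with preimage $u$. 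Your version buys a cleaner invariance argument (no bookkeeping of finite index strings, and saturation in all bad directions at once rather than finitely many at a time), at the cost of needing the small auxiliary facts that saturations of open sets are open and that preimage commutes with unions --- both of which you correctly supply. Either construction completes the proof.
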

\begin{proof}
Clearly $g\circ f$ is continuous and for any $i,j\in\alpha''$, $(g\circ f)^{-1}[D^{\mathcal{C''}}_{ij}]=D^{\mathcal{C}}_{ij}$. By the property of structural homomorphism, $g\circ f$ also forms structural homomorphism for corresponding equivalence relations. Hence, $g\circ f$ is an S-mapping.

\begin{itemize}
    \item \textbf{C-mapping:} For any $u\in\tau^{\mathcal{C''}}$, $i\in\alpha$,  By Proposition \ref{kjdw}.(2), we have $(g\circ f)^{-1}[[u]_{i}]=f^{-1}[g^{-1}[[u]_{i}]]=f^{-1}[[g^{-1}[u]]_{i}]=[f^{-1}[g^{-1}[u]]]_{i}=[(g\circ f)^{-1}[u]]_{i}$ and then $g\circ f$ is a C-mapping. Hence, the case about C-mapping holds.
    \item \textbf{basis-preserving C-mapping:} For any $u\in\tau^{\mathcal{C}}$ with $\Delta(u)\subseteq\alpha''$, there is $u'\in\tau^{\mathcal{C'}}$ s.t. $f^{-1}[u']=u$.
    For any finite $s=\{s_{1},...,s_{n}\}\subseteq\Delta(u')\backslash\Delta(u)$, abbreviating $[...[u']_{s_{1}}...]_{s_{n}}$ to $[u']_{s}$, by Proposition \ref{kjdw} (2), we know $f^{-1}[[u']_{s}]=[f^{-1}[u']]_{s}=[u]_{s}=u$. Let $X:=\Delta(u')\backslash\alpha''$, $v':=\bigcup\{[u']_{s}:s\subseteq X$ is finite$\}$, we know $v'$ is open, $\Delta(v')\subseteq\alpha''$ and $f^{-1}[v']=u$. Then there is $u''\in\tau^{\mathcal{C''}}$ s.t. $g^{-1}[u'']=u'$ then $(g\circ f)^{-1}[u'']=u$. Hence, the case about basis-preserving C-mapping holds.
\end{itemize}
\end{proof}

\begin{lemma}\label{bh}
Basis-preserving S-bijection between two cylindric spaces with the same dimension is S-homeomorphism.
\end{lemma}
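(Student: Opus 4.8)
The plan is to show that $f^{-1}$, which exists since $f$ is a bijection, is itself an S-mapping; together with the hypothesis that $f$ is an S-mapping, this yields the S-homeomorphism. Because $\alpha=\beta$, the topological and diagonal conditions for $f^{-1}$ are almost immediate. For the diagonal condition, applying $f$ to both sides of $f^{-1}[D^{\mathcal{C'}}_{ij}]=D^{\mathcal{C}}_{ij}$ and using surjectivity gives $f[D^{\mathcal{C}}_{ij}]=D^{\mathcal{C'}}_{ij}$, which is exactly $(f^{-1})^{-1}[D^{\mathcal{C}}_{ij}]=D^{\mathcal{C'}}_{ij}$. For continuity of $f^{-1}$ I would observe that, since every open $u$ satisfies $\Delta(u)\subseteq\alpha=\beta$ automatically, the basis-preserving property says every $u\in\tau^{\mathcal{C}}$ equals $f^{-1}[u']$ for some open $u'$; by bijectivity $u'=f[u]$, so $f$ is an open map and hence $f^{-1}$ is continuous.

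The real content is the structure-homomorphism condition for $f^{-1}$, namely that $f$ \emph{reflects} each $\sim_{i}$: if $f(a)\sim_{i}f(b)$ then $a\sim_{i}b$. Equivalently, I aim to prove $f[[a]_{i}]=[f(a)]_{i}$ for every $a$ and every $i\in\alpha$. The inclusion $f[[a]_{i}]\subseteq[f(a)]_{i}$ is just the fact that $f$ is a homomorphism. The reverse inclusion $[f(a)]_{i}\subseteq f[[a]_{i}]$ is the crux, and this is where I expect the main difficulty: a naive attempt to deduce it from openness is circular, since pushing a $\sim_{i}$-class forward and pulling it back presupposes the very reflection property one is trying to establish.

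To break this symmetry I would use a diagonal as a canonical section of the $\sim_{i}$-classes. Fix $i$ and, since $\alpha$ is infinite, choose $j\neq i$. By the defining property of the diagonal family, $[x]_{i}\cap D^{\mathcal{C}}_{ij}$ is a singleton for every $x$, and likewise in $\mathcal{C'}$. Given $z\in[f(a)]_{i}$, set $x=f^{-1}(z)$ and let $d$ and $d_{x}$ be the unique points of $[a]_{i}\cap D^{\mathcal{C}}_{ij}$ and $[x]_{i}\cap D^{\mathcal{C}}_{ij}$. Since $f$ preserves $\sim_{i}$ and maps $D^{\mathcal{C}}_{ij}$ onto $D^{\mathcal{C'}}_{ij}$, both $f(d)$ and $f(d_{x})$ lie in $[f(a)]_{i}\cap D^{\mathcal{C'}}_{ij}$, which is a singleton; hence $f(d)=f(d_{x})$, and injectivity forces $d=d_{x}$. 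Then $d$ lies in both $[a]_{i}$ and $[x]_{i}$, so $a\sim_{i}x$, giving $z=f(x)\in f[[a]_{i}]$. This establishes $f[[a]_{i}]=[f(a)]_{i}$ for all $a$ and $i$, from which the homomorphism property of $f^{-1}$ follows: if $z_{1}\sim_{i}z_{2}$ then $z_{2}\in[z_{1}]_{i}=f[[f^{-1}(z_{1})]_{i}]$, so $f^{-1}(z_{2})\sim_{i}f^{-1}(z_{1})$. The decisive step is precisely this reflection argument, and the diagonal section combined with injectivity of $f$ is what resolves it.
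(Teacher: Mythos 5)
Your proof is correct and follows essentially the same route as the paper: both establish continuity of $f^{-1}$ from the basis-preserving hypothesis and then prove that $f$ reflects each $\sim_{i}$ by using the singleton intersection $[\,\cdot\,]_{i}\cap D_{ij}$ as a canonical section together with injectivity of $f$. The only difference is presentational (you argue directly that the two diagonal representatives coincide, while the paper derives a contradiction from assuming $f^{-1}(a')\not\sim_{i}f^{-1}(b')$), so nothing further is needed.
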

\begin{proof}
Let $f:\mathcal{C}_{\alpha}\rightarrow\mathcal{C'}_{\alpha}$ be a basis-preserving S-bijection, We show $f^{-1}$ is also a S-mapping:

It's easy to see that $(f^{-1})^{-1}[D^{\mathcal{C}}_{ij}]=D^{\mathcal{C'}}_{ij}$ and since $f$ is basis-preserving, $f^{-1}$ is continuous. 

For any $a',b'\in|\mathcal{C'}|$, any $i\in\alpha$, if $a'\sim_{i}b'$, then there is an unique $c'\sim_{i}a',b'$ s.t. $c'\in D^{\mathcal{C'}}_{ij}$. Assume $f^{-1}(a')\not\sim_{i}f^{-1}(b')$, then there are different $c\sim_{i}f^{-1}(a'),d\sim_{i}f^{-1}(b')$ s.t. $c,d\in D^{\mathcal{C}}_{ij}$. By definition of S-mapping, $f(c)\in [a']_{i}\cap D^{\mathcal{C'}}_{ij}=[b']_{i}\cap D^{\mathcal{C'}}_{ij}\ni f(d)$, which means $f(c)=c'=f(d)$. This contradicts with bijectivity and therefore $f^{-1}(a')\sim_{i}f^{-1}(b')$.
\end{proof}

\begin{proposition}\label{cstc}
Let $\mathcal{C}_{\alpha}$ be a $T_{2}$ cylindric space, $i\in\alpha$, then $[a]_{i}$ is a closed set for any $a\in|\mathcal{C}|$ and if $\mathcal{C}$ is compact, $[u]_{i}$ is closed for any closed set $u\subseteq|\mathcal{C}|$.
\end{proposition}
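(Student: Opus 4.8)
The plan is to reduce both assertions to the continuity of a single canonical selector map built from a diagonal. Fix any index $j\in\alpha$ with $j\neq i$ (such a $j$ exists because $\alpha$ is infinite). The defining property of the diagonal family guarantees that for every $x\in|\mathcal{C}|$ the set $[x]_{i}\cap D^{\mathcal{C}}_{ij}$ is a singleton, so I would define $\rho:|\mathcal{C}|\to|\mathcal{C}|$ by letting $\rho(x)$ be the unique element of $[x]_{i}\cap D^{\mathcal{C}}_{ij}$. Two observations make $\rho$ the right object: first, $\rho(x)=\rho(y)$ holds exactly when $x\sim_{i}y$, since the unique representative inside $D^{\mathcal{C}}_{ij}$ depends only on the class $[x]_{i}$; second, for an open set $u$ one checks directly that $\rho^{-1}[u]=[D^{\mathcal{C}}_{ij}\cap u]_{i}$, because $\rho(x)\in u$ iff the single point of $[x]_{i}\cap D^{\mathcal{C}}_{ij}$ lies in $u$, which happens iff $[x]_{i}\cap(D^{\mathcal{C}}_{ij}\cap u)\neq\emptyset$.

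The heart of the argument is to show that $\rho$ is continuous, i.e. that $[D^{\mathcal{C}}_{ij}\cap u]_{i}$ is open whenever $u$ is open. I would first record the general fact that the $\sim_{i}$-saturation of an open set is open: if $B$ is a cylindric basis, then each basic $v\in B$ has $[v]_{i}\in B$ open (as $B$ is closed under saturation), and for an arbitrary open $w=\bigcup\{v\in B:v\subseteq w\}$ Proposition \ref{clp} gives $[w]_{i}=\bigcup\{[v]_{i}:v\in B,\ v\subseteq w\}$, a union of open sets. Applying this to $w:=D^{\mathcal{C}}_{ij}\cap u$, which is open because $D^{\mathcal{C}}_{ij}$ is clopen, shows that $\rho^{-1}[u]$ is open, so $\rho$ is continuous.

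With $\rho$ in hand both claims become formal. For the first, the first observation gives $[a]_{i}=\rho^{-1}[\{\rho(a)\}]$, and since $\mathcal{C}$ is $T_{2}$ the singleton $\{\rho(a)\}$ is closed, so its preimage under the continuous $\rho$ is closed; note this part uses only $T_{2}$ and not compactness. For the second, the same observation yields $[u]_{i}=\rho^{-1}[\rho[u]]$; when $\mathcal{C}$ is compact a closed $u$ is compact, its continuous image $\rho[u]$ is compact and hence closed in the Hausdorff space $\mathcal{C}$, and pulling this back along the continuous $\rho$ shows $[u]_{i}$ is closed. I expect the only genuine obstacle to be the continuity of $\rho$, and more precisely the auxiliary lemma that saturations of open sets are open; everything else is routine manipulation of preimages together with the standard facts that closed subsets of a compactum are compact and that compacta in a Hausdorff space are closed.
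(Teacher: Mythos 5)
Your proof is correct, and it takes a genuinely different route from the paper's. The paper argues directly with the family $U'$ of $\sim_{i}$-saturated clopen neighborhoods of $[u]_{i}$: it first reduces $\bigcap U$ to $\bigcap U'$, then for a singleton $\{a\}$ it picks the canonical representatives $a'\in[a]_{i}\cap D_{ij}$ and $b'\in[b]_{i}\cap D_{ij}$ of a putative extra point $b\in\bigcap U'$ and shows they share all clopen neighborhoods, contradicting $T_{2}$; for general closed $u$ it then invokes compactness and the finite intersection property to reduce to the neighborhood filter of $u$. You instead package the diagonal selection once and for all into the map $\rho(x)=$ the unique point of $[x]_{i}\cap D^{\mathcal{C}}_{ij}$, verify $\rho^{-1}[u]=[D^{\mathcal{C}}_{ij}\cap u]_{i}$, and prove continuity from the auxiliary fact (via Proposition \ref{clp} and closure of the cylindric basis under saturation) that saturations of open sets are open; after that both claims are one-line consequences of standard point-set topology, since $[a]_{i}=\rho^{-1}[\{\rho(a)\}]$ and $[u]_{i}=\rho^{-1}[\rho[u]]$ with $\rho[u]$ compact, hence closed, in a Hausdorff space. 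Your argument is more modular and arguably cleaner: the selector $\rho$ is essentially the topological substitution operator $u\mapsto u(\frac{j}{i})$ that the paper only introduces later, so you get a reusable continuous map rather than an ad hoc case analysis, and the two halves of the statement fall out of the same identity. The paper's version is more self-contained in that it never leaves the language of neighborhood families, at the cost of a longer two-case argument. The only presentational nit: when you assert $\rho(x)=\rho(y)\Leftrightarrow x\sim_{i}y$, the direction you justify is $\Leftarrow$; the direction $\Rightarrow$ (which is what the identities $[a]_{i}=\rho^{-1}[\{\rho(a)\}]$ and $[u]_{i}=\rho^{-1}[\rho[u]]$ actually need) follows because $\rho(x)\in[x]_{i}$ and $\rho(y)\in[y]_{i}$, so equality of the images forces the two classes to meet and hence coincide; worth a half-sentence.
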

\begin{proof}
Let $U$ be the family of all clopen neighborhoods of $[u]_{i}$. We know that $[u]_{i}\subseteq\bigcap U$. For any $v\in U$, we know that $-v\cap [u]_{i}=\emptyset$, then $[-v]_{i}\cap [u]_{i}=\emptyset$, then $[u]_{i}\subseteq-([-v]_{i})\subseteq v$ and $-([-v]_{i})\subseteq U$ is $\sim_{i}$-saturated. Write $U':=\{v\in U:v$ is $\sim_{i}$-saturated$\}$, we know $\bigcap U=\bigcap U'$. So we just need to show $[u]_{i}\not\subset\bigcap U'$ to prove $[u]_{i}$ is closed:
\begin{itemize}
    \item Firstly, consider the case that $u$ is a singleton. Write $u=\{a\}$:
    
    Assume there exists $b\in\bigcap U'\backslash[u]_{i}$, we know $[b]_{i}\subseteq\bigcap U'$ and $[b]_{i}\cap[u]_{i}=\emptyset$. Choose an arbitrary $j\neq i$, we know $\|D_{ij}\cap[a]_{i}\|=\|D_{ij}\cap[b]_{i}\|=1$. Let $a'\in D_{ij}\cap[a]_{i},b'\in D_{ij}\cap[b]_{i}$.

For any clopen $v$, if $a'\in v$, then $a\in [v\cap D_{ij}]_{i}$. Since $[v\cap D_{ij}]_{i}\in U'$, we know $b'\in[v\cap D_{ij}]_{i}$, then $b'\in[v\cap D_{ij}]_{i}\cap D_{ij}$ i.e., $\exists b''\sim_{i}b'$, $b''\in v\cap D_{ij}$ and $b'\in D_{ij}$. By the fact $\|D_{ij}\cap[b']_{i}\|=1$, we know $b'=b''$, then $b'\in v$, vice versa. Since we choose $v$ arbitrarily, $\{w\in\tau:w\mbox{ is clopen and }a'\in w\}=\{w\in\tau:w\mbox{ is clopen and }b'\in w\}$. By property of $T_{2}$ space, $a'=b'$ which contradicts the assumption. Hence there isn't $b\in\bigcap U'\backslash[u]_{i}$, then $[u]_{i}\not\subset\bigcap U'$.

\item Now, consider the general $u$:

Assume there exists $b\in\bigcap U'\backslash[u]_{i}$, we know $[b]_{i}\subseteq\bigcap U'$ and $[b]_{i}\cap[u]_{i}=\emptyset$. For any clopen $v\supseteq u$, we know $[v]_{i}\in U'$ and therefore $[b]_{i}\subseteq [v]_{i}$, then $v\cap [b]_{i}\neq\emptyset$. Since $\{w\in \tau^{\mathcal{C}}:w\mbox{ is clopen and }u\subseteq w\}$ is closed under taking intersect and $[b]_{i}$ is closed set, by compactness we know $\bigcap\{w\in \tau^{\mathcal{C}}:w\mbox{ is clopen and }u\subseteq w\}\cap [b]_{i}=u\cap[b]_{i}\neq\emptyset$, then $b\in[u]_{i}$ which contradicts assumption. Hence $[u]_{i}\not\subset\bigcap U'$.
\end{itemize}
\end{proof}
\subsection{Four examples}\label{sglz}
In the previous subsection, we have defined cylindric spaces and the mappings between them. As introduced in the Introduction, we are trying to describe the structure of first-order logical semantics in the topological language through cylindric space. In this subsection, we respectively use the first-order structure and formulas to construct two specific examples of cylindric space and two specific examples of S-mapping.

\subsubsection*{Example 1:}

Let $\mathcal{L}$ be a first-order language, $\Lambda_{\mathcal{L}}$ be the set of $\mathcal{L}$-formulas, $\{R_{i}:i\in I\}$ be all predicates in $\mathcal{L}$ (as mentioned in Introduction, the only non-logical symbols in $\mathcal{L}$ are predicates), $\mathfrak{A}=(A,R^{\mathfrak{A}}_{i})_{i\in I}$ be an $\mathcal{L}$-structure.

Let $A^{\omega}$ be the $\omega$ power of $A$, for any formula $\phi\in\Lambda_{\mathcal{L}}$, we define $A_{\phi}\subseteq A^{\omega}$ as: $A_{\phi}=\{a:$ for every free variables $x_{i_{1}},...,x_{i_{n}}\mbox{ in }\phi,\langle a({i_{1}})...a({i_{n}})\rangle\in \phi(\mathfrak{A})\}$\footnote{$\phi(\mathfrak{A})$ denotes the subset defined by $\phi$ on $\mathfrak{A}$}. Let $B:=\{A_{\phi}:\phi\in\Lambda_{\mathcal{L}}\}$, it is easy to see that both empty set, and universal set are in $B$ and $B$ is closed under taking intersection, union and complement, empty set, and universal set. Hence $B$ forms a basis of a zero-dimensional topology on $A^{\omega}$. We denote this topology as $\tau^{A}$.

For each $i\in\omega$, we define equivalence relation $\sim^{\mathfrak{A}}_{i}$ on $A^{\omega}$ as: $a\sim^{\mathfrak{A}}_{i}b\Leftrightarrow$ for any $j\neq i$, $ a(j)=b(j)$. Let $\mathcal{S^{\mathfrak{A}}}:=(A^{\omega},\tau^{A})$, for any $i,j\in\omega$, $D^{\mathfrak{A}}_{ij}:=A_{v_{i}=v_{j}}$, $\mathcal{C^{\mathfrak{A}}_{\omega}}:=(\mathcal{S^{\mathfrak{A}}},\{\sim^{\mathfrak{A}}_{i}:i\in\omega\},\{D^{\mathfrak{A}}_{ij}:i,j\in\omega\})$. We have:

\begin{proposition}
$\mathcal{C^{\mathfrak{A}}_{\omega}}$ is a basis-finite cylindric space.
\end{proposition}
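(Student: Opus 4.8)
The plan is to check the three defining requirements in turn --- that $\{\sim^{\mathfrak{A}}_{i}:i\in\omega\}$ is a cylindric system, that $\{D^{\mathfrak{A}}_{ij}:i,j\in\omega\}$ is a diagonal family on it, and that the resulting cylindric space is basis-finite. The organizing principle throughout is that $a\in A_{\phi}$ holds precisely when $\mathfrak{A}$ satisfies $\phi$ under the assignment sending each variable $x_{k}$ to $a(k)$; under this reading every set-theoretic or topological requirement below translates into an elementary fact about first-order satisfaction.

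First I would establish that $\{\sim^{\mathfrak{A}}_{i}:i\in\omega\}$ is a cylindric system. Commutativity follows from the observation that, for $i,j\in\omega$, there is $c$ with $a\sim^{\mathfrak{A}}_{i}c\sim^{\mathfrak{A}}_{j}b$ if and only if $a(k)=b(k)$ for all $k\notin\{i,j\}$; since this condition is symmetric in $i$ and $j$, the existence of such a $c$ is equivalent to the existence of a matching $c'$ with the roles of $i$ and $j$ exchanged. For the cylindric basis I would use $B=\{A_{\phi}:\phi\in\Lambda_{\mathcal{L}}\}$ itself: closure under union, intersection and complement is already recorded above (these correspond to $\vee$, $\wedge$ and $\neg$), so only closure under $\sim^{\mathfrak{A}}_{i}$-saturation remains. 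This reduces to the identity $[A_{\phi}]_{i}=A_{\exists v_{i}\phi}$, which I would verify by the chain: $a\in[A_{\phi}]_{i}$ iff some $b$ agreeing with $a$ off coordinate $i$ lies in $A_{\phi}$ iff some value $d\in A$ makes the modified assignment satisfy $\phi$ iff $a\in A_{\exists v_{i}\phi}$; as $\exists v_{i}\phi\in\Lambda_{\mathcal{L}}$, this keeps us inside $B$.

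Next I would verify the diagonal-family axioms for $D^{\mathfrak{A}}_{ij}=A_{v_{i}=v_{j}}=\{a:a(i)=a(j)\}$. Each $D^{\mathfrak{A}}_{ij}$ is clopen, being a member of the cylindric basis $B$. For $k\notin\{i,j\}$ it is $\sim^{\mathfrak{A}}_{k}$-saturated because the condition $a(i)=a(j)$ does not involve coordinate $k$. For $i\neq j$ and any $a$, the class $[a]_{i}$ consists of the sequences altering at most the $i$-th coordinate, and within it the constraint $b(i)=b(j)=a(j)$ determines a unique sequence, giving $\|[a]_{i}\cap D^{\mathfrak{A}}_{ij}\|=1$; the argument for $[a]_{j}$ is symmetric. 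The inclusion $D^{\mathfrak{A}}_{ij}\cap D^{\mathfrak{A}}_{jk}\subseteq D^{\mathfrak{A}}_{ik}$ and the equality $D^{\mathfrak{A}}_{ij}=D^{\mathfrak{A}}_{ji}$ are just transitivity and symmetry of equality. This completes the verification that $\mathcal{C}^{\mathfrak{A}}_{\omega}$ is a cylindric space of dimension $\omega$.

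Finally, for basis-finiteness I would again use $B$: for $u=A_{\phi}$, an index $i$ lies in $\Delta(A_{\phi})$ only if $A_{\phi}$ fails to be $\sim^{\mathfrak{A}}_{i}$-saturated, which forces $x_{i}$ to occur free in $\phi$; hence $\Delta(A_{\phi})$ is contained in the (finite) set of free-variable indices of $\phi$, so $\|\Delta(A_{\phi})\|<\omega$. The only step carrying real content is the saturation identity $[A_{\phi}]_{i}=A_{\exists v_{i}\phi}$, i.e.\ the soundness of translating $\sim_{i}$-saturation into existential quantification; here one must check that the projection-style definition of $A_{\phi}$ genuinely yields $a\in A_{\phi}\Leftrightarrow\mathfrak{A}\models\phi[a]$ regardless of the bookkeeping of free variables, and handle the degenerate case where $x_{i}$ is not free in $\phi$ (so that $A_{\exists v_{i}\phi}=A_{\phi}$ and saturation is automatic). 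Everything else is routine; in particular the cardinality-one condition is immediate once one notes that the witnessing value $a(j)$ already belongs to $A$.
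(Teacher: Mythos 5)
Your proposal is correct and follows essentially the same route as the paper: verifying commutativity via the coordinate-wise characterization of $\sim_{i}$-composition, checking closure of $B=\{A_{\phi}\}$ under the Boolean operations and the saturation identity $[A_{\phi}]_{i}=A_{\exists v_{i}\phi}$, verifying the diagonal-family axioms for $A_{v_{i}=v_{j}}$ by the uniqueness of the sequence obtained by overwriting one coordinate, and bounding $\Delta(A_{\phi})$ by the finite set of free-variable indices. Your explicit treatment of the $[a]_{j}$ side of the singleton condition and of the degenerate case where $v_{i}$ is not free are small refinements the paper leaves implicit, but the substance is identical.
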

\begin{proof}
\
\begin{itemize}
    \item \textbf{$\{\sim^{A}_{i}:i\in\omega\}$ is a cylindric system:}
    
    $[[a]_{i}]_{j}=\{b:$ for $k\not\in\{i,j\},a(k)=b(k)\}=[[a]_{j}]_{i}$, so $\{\sim^{\mathfrak{A}}_{i}:i\in\omega\}$ is commutative. For any $u,v\in B$, we know there exist formulas $\phi,\psi$ s.t. $u=A_{\phi},v=A_{\psi}$. It is easy to see $-A_{\phi}=A_{\neg\phi},A_{\phi}\cap A_{\psi}=A_{\phi\wedge\psi},A_{\phi}\cup A_{\psi}=A_{\phi\vee\psi}$ and for any $i\in\omega$, $[A_{\phi}]_{i}=A_{\exists v_{i}\phi}$. Hence $B$ is closed under taking intersect, union, complement and $\sim^{\mathfrak{A}}_{i}$-saturation. Hence $\{\sim^{\mathfrak{A}}_{i}:i\in\omega\}$ forms a cylindric system.
    \item \textbf{$\{D^{\mathfrak{A}}_{ij}:i,j\in\omega\}$ is a diagonal family on $\{\sim^{A}_{i}:i\in\omega\}$:}
    
    For any pair $i\neq j\in\omega$, any $a\in A^{\omega}$, we define $a_{ij}$ as: $a_{ij}(i)=a(j)$ and for any $k\neq i$, $a_{ij}(k)=a(k)$. By definition of $D^{\mathfrak{A}}_{ij}$, we know $a_{ij}$ is the only point in $D^{\mathfrak{A}}_{ij}\cap[a]_{i}=D^{\mathfrak{A}}_{ji}\cap[a]_{i}$. Hence the intersection of $D^{\mathfrak{A}}_{ij}=D^{\mathfrak{A}}_{ji}$ and any $\sim^{\mathfrak{A}}_{i}$-equivalence class is singleton. For any $k\not\in\{i,j\}$ and any $a'\in [a]_{k}$, we know $a'(i)=a(i)=a(j)=a'(j)$ and therefore $a'\in D^{\mathfrak{A}}_{ij}$. Hence $[a]_{k}\subseteq D^{\mathfrak{A}}_{ij}$. Hence $D^{\mathfrak{A}}_{ij}$ is $\sim_{k}$-saturated.
    
    For any $i,j,k\in\omega$, $a\in D^{\mathfrak{A}}_{ij}\cap D^{\mathfrak{A}}_{jk}\Rightarrow a(i)=a(j)=a(k)\Rightarrow a\in D^{\mathfrak{A}}_{ik}$. Hence $D^{\mathfrak{A}}_{ij}\cap D^{\mathfrak{A}}_{jk}\subseteq D^{\mathfrak{A}}_{ik}$.
    \item \textbf{For any $u\in B$, $|\Delta(u)|<\omega$:}
    
    For any $u\in B$, we know there is a formula $\phi$ s.t. $u=A_{\phi}$. For any $a\in u$ and variable $v_{i}$ that doesn't occur freely in $\phi$, it's easy to see that $[a]_{i}\subseteq A_{\phi}=u$ i.e. $u$ is $\sim^{\mathfrak{A}}_{i}$-saturated. Since there are only finitely many free variables in $\phi$, we know $\Delta(u)<\omega$.
\end{itemize}
\end{proof}

We say that $\mathcal{C^{\mathfrak{A}}_{\omega}}$ is the \textbf{$\omega$-topologization} of $\mathfrak{A}$. We call a sequence $a\in A^{\omega}$ which lists all the elements of $\mathfrak{A}$ a \textbf{domain point} of $\mathcal{C^{\mathfrak{A}}_{\omega}}$. For an assignment $\sigma$ and a sequence $a\in A^{\omega}$, if for each $i\in\omega$, $a(i)=\sigma(v_{i})$, then we say that $a$ \textbf{represent} $\sigma$.

\subsubsection*{Example 2:}

For any langage $\mathcal{L}$, any $\mathcal{L}$-theory $T$ and any $\Phi\subseteq\Lambda_{\mathcal{L}}$, if there is $T$-model $\mathfrak{A}$ and assignment $\sigma$ on $\mathfrak{A}$ s.t. $\Phi=\{\phi\in\Lambda_{\mathcal{L}}:(\mathfrak{A},\sigma)\vDash\phi\}$, we call $\Phi$ a maximal satisfiable set (MSS) of $T$.

Let $S_{T}$ be the family of all MSSs of $T$. For each formula $\phi$, we define $S_{\phi}:=\{a\in S_{T}:\phi\in a\}$. It is easy to see that $B=\{S_{\phi}:\phi\in\Lambda_{\mathcal{L}}\}$ is a topological basis, we write the induced topology as $\tau^{T}$.

For each $i\in\omega$, we define equivalence relation $\sim^{T}_{i}$ as: $a\sim_{i}b\Leftrightarrow\{\phi\in a:v_{i}$ doesn't occur in $\phi$ freely$\}=\{\phi\in b:v_{i}$ doesn't occur in $\phi$ freely$\}$. For each $i,j\in\omega$, we define $D^{T}_{ij}:=S_{v_{i}=v_{j}}$ and write $\mathcal{C}^{T}=((S_{T},\tau^{T}),\{\sim^{T}_{i}:i\in\omega\},\{D^{T}_{ij}:i,j\in\omega\})$. We have:
\begin{proposition}
$\mathcal{C}^{T}$ is a $T_{2}$ basis-finite cylindric space.
\end{proposition}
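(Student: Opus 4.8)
The plan is to verify in turn the ingredients of Definition \ref{cysp} ---that $\{\sim^{T}_{i}:i\in\omega\}$ is a cylindric system with $B=\{S_{\phi}:\phi\in\Lambda_{\mathcal{L}}\}$ as cylindric basis, and that $\{D^{T}_{ij}:i,j\in\omega\}$ is a diagonal family on it--- together with the separation and basis-finiteness conditions, throughout exploiting that every $a\in S_{T}$ is by definition $\{\phi:(\mathfrak{A},\sigma)\vDash\phi\}$ for some $\mathfrak{A}\vDash T$. The routine preliminary observations are that each such $a$ is complete and consistent (for every $\phi$ exactly one of $\phi,\neg\phi$ lies in $a$), so $S_{\neg\phi}=S_{T}\backslash S_{\phi}$, $S_{\phi\wedge\psi}=S_{\phi}\cap S_{\psi}$, $S_{\phi\vee\psi}=S_{\phi}\cup S_{\psi}$; hence $B$ consists of clopen sets and is closed under finite Boolean operations. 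The $T_{2}$ property is then immediate, and is the feature distinguishing this example from Example 1: if $a\neq b$ pick $\phi\in a\backslash b$, so $\neg\phi\in b$, and $S_{\phi},S_{\neg\phi}$ are disjoint open sets separating $a$ and $b$.

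For the cylindric system I would first establish the cylindrification identity $[S_{\phi}]_{i}=S_{\exists v_{i}\phi}$, which simultaneously shows $B$ is closed under $\sim^{T}_{i}$-saturation. The inclusion $\subseteq$ is purely syntactic: if $b\sim^{T}_{i}a$ with $\phi\in b$ then $\exists v_{i}\phi\in b$, and since $v_{i}$ is not free in $\exists v_{i}\phi$ the definition of $\sim^{T}_{i}$ gives $\exists v_{i}\phi\in a$. For $\supseteq$ I use the semantics: writing $a=\{\phi:(\mathfrak{A},\sigma)\vDash\phi\}$, from $\exists v_{i}\phi\in a$ choose a witness $d$ and set $b=\{\phi:(\mathfrak{A},\sigma[v_{i}\mapsto d])\vDash\phi\}$; then $b\in S_{T}$, $\phi\in b$, and $b\sim^{T}_{i}a$ since the assignments differ only at $v_{i}$. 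Commutativity (Definition \ref{cs}) is the main obstacle, and I expect to prove it by compactness. Unwinding the definition, commutativity is equivalent to $[[\{b\}]_{j}]_{i}=[[\{b\}]_{i}]_{j}$ for all $b$, and I would show both ``$\exists c\,(a\sim^{T}_{i}c\sim^{T}_{j}b)$'' and ``$\exists c'\,(a\sim^{T}_{j}c'\sim^{T}_{i}b)$'' are equivalent to the single symmetric condition that $a,b$ agree on every formula in which neither $v_{i}$ nor $v_{j}$ is free. The forward implications are trivial from transitivity of agreement; for the converse I build the witness. Setting $\Gamma=\{\theta\in a:v_{i}\text{ not free in }\theta\}\cup\{\theta\in b:v_{j}\text{ not free in }\theta\}$, a finite subset splits as $\psi_{0}\wedge\chi_{0}$ with $\psi_{0}\in a$ free of $v_{i}$ and $\chi_{0}\in b$ free of $v_{j}$; since $\exists v_{i}\chi_{0}$ contains neither $v_{i}$ nor $v_{j}$ freely it lies in $b$, hence by hypothesis in $a$, so $(\mathfrak{A},\sigma)\vDash\psi_{0}\wedge\exists v_{i}\chi_{0}$ and re-choosing $\sigma(v_{i})$ realises $\psi_{0}\wedge\chi_{0}$ in the $T$-model $\mathfrak{A}$. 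By compactness $\Gamma$ is realised by some $(\mathfrak{C},\sigma_{c})\vDash T$, and $c=\{\theta:(\mathfrak{C},\sigma_{c})\vDash\theta\}$ is the required witness.

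It remains to verify the diagonal axioms for $D^{T}_{ij}=S_{v_{i}=v_{j}}$ and basis-finiteness. Since $S_{\phi}$ is $\sim^{T}_{k}$-saturated whenever $v_{k}$ is not free in $\phi$, and $v_{i}=v_{j}$ has only $v_{i},v_{j}$ free, the $\sim^{T}_{k}$-saturation axiom (for $k\notin\{i,j\}$) and the bound $\Delta(S_{\phi})\subseteq\{\text{free variables of }\phi\}$, hence $\|\Delta(S_{\phi})\|<\omega$ giving basis-finiteness, both follow at once. The symmetry $D^{T}_{ij}=D^{T}_{ji}$ and the inclusion $D^{T}_{ij}\cap D^{T}_{jk}\subseteq D^{T}_{ik}$ reduce to the validity of symmetry and transitivity of equality in every $T$-model. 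For the cardinality axiom $\|[a]_{i}\cap D^{T}_{ij}\|=1$ (and symmetrically in $j$), existence is given by $\{\theta:(\mathfrak{A},\sigma[v_{i}\mapsto\sigma(v_{j})])\vDash\theta\}$, which is $\sim^{T}_{i}$-equivalent to $a$ and contains $v_{i}=v_{j}$; uniqueness follows from the substitution lemma for equality: if $c,c'\in[a]_{i}\cap D^{T}_{ij}$ then for any $\theta$ the formula $\theta'$ obtained by replacing free $v_{i}$ with $v_{j}$ (renaming bound variables to avoid capture) has no free $v_{i}$ and satisfies $(v_{i}=v_{j})\vdash(\theta\leftrightarrow\theta')$, so $\theta\in c\Leftrightarrow\theta'\in c\Leftrightarrow\theta'\in c'\Leftrightarrow\theta\in c'$, whence $c=c'$. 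Assembling these verifications shows $\mathcal{C}^{T}$ is a $T_{2}$ basis-finite cylindric space.
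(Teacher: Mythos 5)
Your proof is correct and follows essentially the same route as the paper's: the same decomposition into cylindric system, diagonal family, basis-finiteness, and $T_{2}$, hinging on the identity $[S_{\phi}]_{i}=S_{\exists v_{i}\phi}$ and on realizing MSSs in $T$-models. You are in fact somewhat more thorough than the paper in two spots it leaves implicit --- the compactness construction of the witness for the hard direction of commutativity, and the substitution-lemma argument for uniqueness of the point in $[a]_{i}\cap D^{T}_{ij}$.
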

\begin{proof}
\
\begin{itemize}
    \item \textbf{$\{\sim^{T}_{i}:i\in\omega\}$ is a cylindric system:}
    
    For any $a,b\in S_{T}$,
    
    \begin{tabular}{rl}
         &$b\in[[a]_{i}]_{j}$\\
         $\Leftrightarrow$& for any formula $\phi$ satisfying $v_{i},v_{j}$ doesn't occur in it freely, we have $\phi\in a$ if and only if $\phi\in b$\\
         $\Leftrightarrow$& $b\in[[a]_{j}]_{i}$
    \end{tabular}
    
    Hence $\{\sim^{T}_{i}:i\in\omega\}$ is commutative.
    
    For any $u,v\in B$, there are formulas $\phi,\psi$ s.t. $u=S_{\phi},v=S_{\psi}$. It is easy to see $-S_{\phi}=S_{\neg\phi},S_{\phi}\cap S_{\psi}=S_{\phi\wedge\psi},S_{\phi}\cup S_{\psi}=S_{\phi\vee\psi}$ and for any $i\in\omega$, $[S_{\phi}]_{i}=S_{\exists v_{i}\phi}$. Hence $B$ is closed under intersection, union, complement and $\sim^{T}_{i}$-saturation.
    
    Hence, $\{\sim^{T}_{i}:i\in\omega\}$ is a cylindric system.
    \item \textbf{$\{D^{T}_{ij}:i,j\in\omega\}$ is a diagonal family:}
    
    For any pair $i\neq j\in\omega$ and any $a\in S_{T}$, there is a $T$-model $\mathfrak{A}$ and an assignment $\sigma$ on $\mathfrak{A}$ s.t. $(\mathfrak{A},\sigma)\vDash a$, we define assignment $\sigma'$ as: $\sigma'(v_{i})=\sigma(v_{j})$ and for any $k\neq i$, $\sigma'(v_{k})=\sigma(v_{k})$. Let $a_{\sigma'}:=\{\phi:(\mathfrak{A},\sigma')\vDash\phi\}$, we know $a_{\sigma'}\in[a]_{i}$ and is the only point containing $v_{i}=v_{j}$ in $[a]_{i}$. Hence $\|D^{T}_{ij}\cap[a]_{i}\|=1$. For any $k\not\in\{i,j\}$, any $a\in D^{T}_{ij}$ and any $a'\in [a]_{k}$, by definition, $v_{i}=v_{j}\in a'$, then $a'\in D^{T}_{ij}$. Hence $[a]_{k}\subseteq D^{T}_{ij}$. Hence $D^{\mathfrak{A}}_{ij}$ is $\sim^{T}_{k}$-saturated.
    
    For any $i,j,k\in\omega$, $a\in D^{T}_{ij}\cap D^{T}_{jk}\Rightarrow \{v_{i}=v_{j},v_{j}=v_{k}\}\subseteq a\Rightarrow \{v_{i}=v_{k}\}\subseteq a\Rightarrow a\in D^{T}_{ik}$. Hence $D^{T}_{ij}\cap D^{T}_{jk}\subseteq D^{T}_{ik}$.
    \item \textbf{For any $u\in B$, $\Delta(u)<\omega$:}
    
    For any $u\in B$, there is a formula $\phi$ s.t. $u=S_{\phi}$. For any $a\in u$ and variable $v_{i}$ which does not occur freely in $\phi$, by definition we have $[a]_{i}\subseteq S_{\phi}=u$ i.e. $u$ is $\sim^{T}_{i}$-saturated. Since there are only finitely many free variables in $\phi$, we know $\Delta(u)<\omega$.
    \item \textbf{$T_{2}$ property:}
    
    For any pair $a\neq b\in S_{T}$, we know there exists a formula $\phi$ s.t. $\phi\in a,\neg\phi\in b$, then $a\in S_{\phi},b\in S_{\neg\phi}$. Since $S_{\phi}\cap S_{\neg\phi}=\emptyset$, this means the property of $T_{2}$.
\end{itemize}
\end{proof}

We call $\mathcal{C}^ {T}$ the \textbf{model space} of $T$. It is easy to see from the compactness theorem of first-order logic that any clopen covering in the model space of $T$ must have a finite sub-covering, which means that the model space of $T$ is compact and thus is an FOL space (Definition \ref{dofol} below). We are not in a hurry to write this property into the above proposition here because we will give a topological proof of this property in Theorem \ref{cop}.

\begin{remark}
As mentioned in the Introduction, the work of this paper is inspired by the structure of cylindric set algebra and Charles Pinter's research on Stone dual space of cylindric algebra. This inspiration is embodied in the above two examples. Among them, Example 1 can be regarded as the topological structure naturally derived from the cylindric set algebra on $\mathfrak{A}$; the second example is essentially the ``cylindric dual space” in Charles Pinter's article.
\end{remark}

\subsubsection*{Example 3}

Let $\mathcal{L},\mathcal{L'}$ be two first-order languages s.t. $\mathcal{L'}\le\mathcal{L}$, $T$ be an $\mathcal{L}$-theory, $T'$ be an $\mathcal{L'}$-theory s.t. $T'\subseteq\{\phi\in\Lambda_{\mathcal{L'}}:\phi\in T\}$. Let $f:|\mathcal{C}^{T}|\rightarrow|\mathcal{C}^{T'}|$ be $f(a)=a|_{\mathcal{L'}}$ ($a|_{\mathcal{L'}}:=\{\phi\in\Lambda_{\mathcal{L'}}:\phi\in a\}$), then we have:
\begin{proposition}
$f:\mathcal{C}^{T}\rightarrow\mathcal{C}^{T'}$ is a C-mapping and if $\mathcal{L}=\mathcal{L'}$, then $f$ is basis-preserving.
\end{proposition}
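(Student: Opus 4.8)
The plan is to unwind the definitions in the order S-mapping, then C-mapping, then basis-preserving, with one elementary pullback identity doing most of the work:
\[
f^{-1}[S^{T'}_{\phi}] = S^{T}_{\phi} \qquad \text{for every } \phi \in \Lambda_{\mathcal{L}'},
\]
which holds because $\phi$, being an $\mathcal{L}'$-formula, lies in $a|_{\mathcal{L}'}$ exactly when it lies in $a$. Before anything else I would confirm that $f$ is well defined, i.e.\ that $a|_{\mathcal{L}'}$ is genuinely an MSS of $T'$: if $a$ is witnessed by a $T$-model $\mathfrak{A}$ with assignment $\sigma$, then the reduct $\mathfrak{A}|_{\mathcal{L}'}$ models $T'$ (as $T'\subseteq T$ and satisfaction of $\mathcal{L}'$-formulas is preserved by reducts), and $(\mathfrak{A}|_{\mathcal{L}'},\sigma)$ satisfies precisely $a|_{\mathcal{L}'}$.

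Granting the pullback identity, the S-mapping conditions are immediate. Continuity follows since $f^{-1}$ carries each basic clopen $S^{T'}_{\phi}$ to the clopen $S^{T}_{\phi}$; specialising $\phi$ to $v_i=v_j$ gives $f^{-1}[D^{T'}_{ij}]=D^{T}_{ij}$. For the homomorphism condition on the relations $\sim_i$ I would use that
\[
\{\phi\in a|_{\mathcal{L}'} : v_i \text{ not free in } \phi\} = \{\phi\in a : v_i \text{ not free in } \phi\}\cap\Lambda_{\mathcal{L}'},
\]
so $a\sim^{T}_i b$ (equality of the left-hand defining sets in $\mathcal{L}$) forces $f(a)\sim^{T'}_i f(b)$.

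For the C-mapping claim I would avoid arguing density by hand and instead invoke the equivalence of Proposition \ref{kjdw}.(2), which reduces matters to checking $[f^{-1}[u']]_i=f^{-1}[[u']_i]$ for $u'$ ranging over a cylindric basis of $\mathcal{C}^{T'}$, namely the sets $S^{T'}_{\psi}$. Using the saturation formula $[S_{\psi}]_i=S_{\exists v_i\psi}$ from Example 2, the left side equals $[S^{T}_{\psi}]_i=S^{T}_{\exists v_i\psi}$ and the right side equals $f^{-1}[S^{T'}_{\exists v_i\psi}]=S^{T}_{\exists v_i\psi}$, so the two agree. Finally, for basis-preservation under $\mathcal{L}=\mathcal{L}'$ the constraint $\Delta(u)\subseteq\omega$ is vacuous, and since now $\Lambda_{\mathcal{L}}=\Lambda_{\mathcal{L}'}$ I would write an arbitrary open $u\in\tau^{T}$ as $\bigcup_k S^{T}_{\phi_k}$ and take $u':=\bigcup_k S^{T'}_{\phi_k}$, whence $f^{-1}[u']=u$ by the pullback identity. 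None of the steps is deep; the points needing care are the bookkeeping in the homomorphism condition and recognising why the last step genuinely requires $\mathcal{L}=\mathcal{L}'$ — when $\mathcal{L}'\subsetneq\mathcal{L}$, $\tau^{T}$ is generated by strictly more formulas than $\mathcal{L}'$ provides, so an $\mathcal{L}$-definable open subset of $S_T$ need not be the pullback of any open set of $\mathcal{C}^{T'}$.
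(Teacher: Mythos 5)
Your proposal is correct, and its skeleton matches the paper's: both rest on the single pullback identity $f^{-1}[S'_{\phi}]=S_{\phi}$ for $\phi\in\Lambda_{\mathcal{L}'}$, from which continuity, diagonal preservation, the $\sim_{i}$-homomorphism condition, and (when $\mathcal{L}=\mathcal{L}'$) basis-preservation all follow exactly as you describe. The one step where you genuinely diverge is the C-mapping verification: the paper checks the density condition directly (given $S'_{\phi}\cap[f(a)]_{i}\neq\emptyset$, it uses $[S'_{\phi}]_{i}=S'_{\exists v_{i}\phi}$ to produce $b\in[a]_{i}\cap S_{\phi}$ with $f(b)\in S'_{\phi}\cap[f(a)]_{i}$), whereas you invoke the equivalence of Proposition \ref{kjdw}.(2) and verify $[f^{-1}[u']]_{i}=f^{-1}[[u']_{i}]$ on the basic clopen sets $S'_{\psi}$ by computing both sides as $S_{\exists v_{i}\psi}$. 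The two arguments encode the same fact --- that existential quantification commutes with restriction to $\mathcal{L}'$ --- but your route is slightly more modular (it delegates the topological content to the already-proved equivalence and leaves only an algebraic identity to check), at the cost of depending on Proposition \ref{kjdw}.(2) rather than being self-contained. You also add a well-definedness check that $a|_{\mathcal{L}'}$ is an MSS of $T'$ via reducts of models, which the paper passes over in silence; this is a worthwhile addition, since without it the map $f$ is not even known to land in $|\mathcal{C}^{T'}|$.
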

\begin{proof}$ $

\begin{itemize}
        \item \textbf{continuity and preserving diagonal family:}
        
        Here we only consider the case about the clopen sets in the cylindric basis (i.e., $\{S'_{\phi}:\phi\in\Lambda_{\mathcal{L'}}\}$), the case about general open sets is an obvious corollary.
        
        For any formula $\phi\in\Lambda_{\mathcal{L'}}$,
    
    \begin{tabular}{rl}
         &$a\in f^{-1}[S'_{\phi}]$\\
         $\Leftrightarrow$&$\phi\in f(a)$\\
         $\Leftrightarrow$&$\phi\in a$\\
         $\Leftrightarrow$&$a\in S_{\phi}$
    \end{tabular}
    
    Hence $f^{-1}[S'_{\phi}]=S_{\phi}\in\tau^{\mathcal{C}^{T}}$ and particularly $f^{-1}[D^{\mathcal{C}^{T'}}_{ij}]=D^{\mathcal{C}^{T}}_{ij}$.
    \item \textbf{structure homomorphism:}
    
    For any $a,b\in S_{T}$, $i\in\omega$, we have
    
    \begin{tabular}{rl}
         &$a\sim_{i}b$\\
         $\Leftrightarrow$& for any formula $\phi\in\Lambda_{\mathcal{L}}$ in which $v_{i}$ doesn't occur freely, $\phi\in a$ if and only if $\phi\in b$\\
         $\Rightarrow$& for any formula $\phi\in\Lambda_{\mathcal{L'}}$ in which $v_{i}$ doesn't occur freely, $\phi\in f(a)$ if and only if $\phi\in f(b)$\\
         $\Leftrightarrow$&$f(a)\sim_{i}f(b)$
    \end{tabular}
    
    \item \textbf{C-mapping:}
    
    For any $a\in|\mathcal{C}|$ and any $S'_{\phi}\cap[f(a)]_{i}\neq\emptyset$, we know $f(a)\in[S'_{\phi}]_{i}=S'_{\exists v_{i}\phi}$, then $a\in S_{\exists v_{i}\phi}=[S_{\phi}]_{i}$, then there is $b\in[a]_{i}\cap S_{\phi}$, then $f(b)\in S'_{\phi}\cap[f(a)]_{i}$. hence, $f[[a]_{i}]$ is dense in $[f(a)]_{i}$.
    \item \textbf{If $\mathcal{L}=\mathcal{L'}$}
    
    Then for any $\phi\in\Lambda_{\mathcal{L}}$, $S_{\phi}=f^{-1}[S'_{\phi}]$, then $f$ is basis-preserving.
    \end{itemize}
\end{proof}

\subsubsection*{Example 4}

Let $\mathcal{L}$ be a first-order language, $T$ be an $\mathcal{L}$-theory, $\mathfrak{A}$ be a $T$-model. Define $f:|\mathcal{C}^{\mathfrak{A}}_{\omega}|\rightarrow\mathcal|{C}_{T}|$ as $f(a)=\{\phi:(\mathfrak{A},\sigma_{a})\vDash\phi\}$ ($\sigma_{a}$ is the assignment represented by $a$), we have:

\begin{proposition}
$f:\mathcal{C}^{\mathfrak{A}}_{\omega}\rightarrow\mathcal{C}_{T}$ is basis-preserving C-mapping.
\end{proposition}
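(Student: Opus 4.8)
The entire argument reduces to a single identity, namely $f^{-1}[S_{\phi}]=A_{\phi}$ for every formula $\phi$, which is nothing more than a restatement of Tarski's inductive definition of satisfaction. The plan is therefore to establish this identity first and then read off every required property from it. Before that, I would check that $f$ is well-defined: since $\mathfrak{A}$ is a $T$-model and $\sigma_{a}$ an assignment, $f(a)=\{\phi:(\mathfrak{A},\sigma_{a})\vDash\phi\}$ is exactly the set of formulas satisfied by $(\mathfrak{A},\sigma_{a})$, hence an MSS of $T$, so $f(a)\in S_{T}$. For the central identity I would unfold the definitions: $a\in f^{-1}[S_{\phi}]\Leftrightarrow\phi\in f(a)\Leftrightarrow(\mathfrak{A},\sigma_{a})\vDash\phi\Leftrightarrow a\in A_{\phi}$, where the last equivalence is precisely the defining condition of $A_{\phi}$ from Example 1.

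With $f^{-1}[S_{\phi}]=A_{\phi}$ in hand, the S-mapping conditions follow immediately. Continuity holds because the preimage of each basic clopen $S_{\phi}$ is the basic clopen $A_{\phi}$; preservation of the diagonal family is the special case $f^{-1}[D^{T}_{ij}]=f^{-1}[S_{v_{i}=v_{j}}]=A_{v_{i}=v_{j}}=D^{\mathfrak{A}}_{ij}$. For the structure-homomorphism condition on the $\sim_{i}$ I only need the forward direction: if $a\sim^{\mathfrak{A}}_{i}b$ then $\sigma_{a}$ and $\sigma_{b}$ agree on every $v_{j}$ with $j\neq i$, so $(\mathfrak{A},\sigma_{a})$ and $(\mathfrak{A},\sigma_{b})$ satisfy exactly the same formulas in which $v_{i}$ does not occur freely, which is by definition $f(a)\sim^{T}_{i}f(b)$.

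For the C-mapping property I would invoke Proposition \ref{kjdw}.(2), reducing the claim to the identity $[f^{-1}[S_{\phi}]]_{i}=f^{-1}[[S_{\phi}]_{i}]$ for basic clopen $S_{\phi}$. Both sides compute to the same basic set using the saturation formulas already proved in Examples 1 and 2: the left side is $[A_{\phi}]_{i}=A_{\exists v_{i}\phi}$, while the right side is $f^{-1}[S_{\exists v_{i}\phi}]=A_{\exists v_{i}\phi}$. Finally, for basis-preserving (here $\alpha=\beta=\omega$, so the dimension condition $\Delta(u)\subseteq\beta$ is vacuous and the requirement is just that every open $u$ be a preimage), I would write an arbitrary open $u$ of $\mathcal{C}^{\mathfrak{A}}_{\omega}$ as a union $\bigcup_{\phi\in\Psi}A_{\phi}$ of basic sets and set $u':=\bigcup_{\phi\in\Psi}S_{\phi}$; since taking preimages commutes with unions, $f^{-1}[u']=\bigcup_{\phi\in\Psi}A_{\phi}=u$.

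None of the steps is genuinely hard once the identity $f^{-1}[S_{\phi}]=A_{\phi}$ is isolated; the only points demanding care are remembering that the structure-homomorphism requirement is one-directional (so the possible non-injectivity of $f$ causes no trouble) and that the saturation identities $[A_{\phi}]_{i}=A_{\exists v_{i}\phi}$ and $[S_{\phi}]_{i}=S_{\exists v_{i}\phi}$ driving the C-mapping step must be imported from the earlier examples rather than reproved here.
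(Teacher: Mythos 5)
Your proof is correct and takes essentially the same route as the paper: both reduce everything to the single identity $f^{-1}[S_{\phi}]=A_{\phi}$, established by the same Tarski-style chain of equivalences, and then read off continuity, diagonal preservation, the structure-homomorphism condition, and basis preservation from it. The only cosmetic difference is the C-mapping step, where the paper checks the density condition directly (as in Example 3) while you route through Proposition \ref{kjdw}.(2) and the saturation identities $[A_{\phi}]_{i}=A_{\exists v_{i}\phi}$, $[S_{\phi}]_{i}=S_{\exists v_{i}\phi}$; these two verifications are equivalent by that very proposition, so nothing substantive changes.
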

\begin{proof}$ $

\begin{itemize}
    \item \textbf{continuity and diagonal-preserving:}
    
    Here we only consider the case about the clopen sets in cylindric basis (i.e. $\{S_{\phi}:\phi\in\Lambda_{\mathcal{L}}\}$ and $\{A_{\phi}:\phi\in\Lambda_{\mathcal{L}}\}$), the case about general open sets is an obvious corollary.
    
    For any $\phi\in\Lambda_{\mathcal{L}}$,
    
    \begin{tabular}{rl}
         &$a\in f^{-1}[S_{\phi}]$\\
         $\Leftrightarrow$&$\phi\in f(a)$\\
         $\Leftrightarrow$&$(\mathfrak{A},\sigma_{a})\vDash\phi$\\
         $\Leftrightarrow$&$a\in A_{\phi}$
    \end{tabular}
    
    Hence $f^{-1}[S_{\phi}]=A_{\phi}$ and particularly, $f^{-1}[D^{T}_{ij}]=D^{\mathfrak{A}}_{ij}$.
    
    \item \textbf{structure homomorphism, C-mapping and basis-preserving:} the proof is the same as the corresponding part of the proof of Example 3, except that $S_{\phi}$ needs to be changed to $A_{\phi}$, and $S'_{\phi}$ to $S_{\phi}$.
\end{itemize}
\end{proof}
\section{Some Basic Works}\label{sbw}
As mentioned in Introduction, the topological representation of semantics of first-order logic based on the cylindric space faces two requirements: we need a point-to-point relation on the space to represent elementary embeddings between models and we need enough points to represent all models of a theory.

This section will devote to providing the basic topological tools to meet these requirements.
\subsection{Permutation on sets and points}
In this subsection, we introduce a kind of point-to-point, set-to-set relations on the basis-finite cylindric space, namely \textbf{permutation} and \textbf{factor}. Intuitively, this kind of relationship can be regarded as an abstract characterization of the endo-functions on the domain of first-order structure.

For convenience, we agree on the following notation:
\begin{definition}
For any cylindric space $\mathcal{C}_{\alpha}$, $u\subseteq|\mathcal{C}|$ and $i,j\in\alpha$, if $i\neq j$, we write $u(\frac{i}{j}):=[u\cap D^{\mathcal{C}}_{ij}]_{j}$, if $i=j$, we write $u(\frac{i}{j}):=u$.
\end{definition}

Now, let us do some technical preparations:
\begin{proposition}\label{pop}
Let $\mathcal{C}_{\alpha}$ be a cylindric space, $u,u'\in\tau^{\mathcal{C}}$ be clopen sets on it, then
\begin{enumerate}
    \item If $j\not\in\Delta(u)$, then $u(\frac{i}{j})=u$;
    \item If $i\neq j$, then $(-u)(\frac{i}{j})=-(u(\frac{i}{j}))$ and $(u\cap u')(\frac{i}{j})=u(\frac{i}{j})\cap u'(\frac{i}{j})$;
    \item  If $j_{1}\neq j_{2}$ and $\{i_{i},i_{2}\}\cap\{j_{1},j_{2}\}=\emptyset$, then $u(\frac{i_{1}}{j_{1}})(\frac{i_{2}}{j_{2}})=u(\frac{i_{2}}{j_{2}})(\frac{i_{1}}{j_{1}})$;
    \item If $k\not\in\Delta(u),i\not\in\{j,k\}$, then $u(\frac{k}{j})(\frac{i}{k})=u(\frac{i}{j})$;
    \item If $j\not\in\Delta(u),i\neq j$, then $(u\cap D_{ij})(\frac{i}{j})=u(\frac{j}{i})(\frac{i}{j})=u$;
    \item If no duplicate element in $\{j_{i}:1\ge i\ge n\}$, for any $r,s\in\alpha$, $k_{r}=k_{s}\Leftrightarrow k'_{r}=k'_{s}\Rightarrow i_{r}=i_{s}$ and $(\{k_{1},...k_{n}\}\cup\{k'_{1},...k'_{n}\})\cap(\{i_{1},...,i_{n}\}\cup\{j_{1},...,j_{n}\})=\emptyset$ and $\Delta(u)\subseteq\{j_{1},...,j_{n}\}$, then $u(\frac{k_{1}}{j_{1}})...(\frac{k_{n}}{j_{n}})(\frac{i_{1}}{k_{1}})...(\frac{i_{n}}{k_{n}})=u(\frac{k'_{1}}{j_{1}})...(\frac{k'_{n}}{j_{n}})(\frac{i_{1}}{k'_{1}})...(\frac{i_{n}}{k'_{n}})$.
\end{enumerate}
\end{proposition}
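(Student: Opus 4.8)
The plan is to read the operator
$u\mapsto u(\tfrac{k_1}{j_1})\cdots(\tfrac{k_n}{j_n})(\tfrac{i_1}{k_1})\cdots(\tfrac{i_n}{k_n})$, which I abbreviate $\Theta_{\bar k}(u)$, as a single simultaneous substitution $j_\ell\mapsto i_\ell$ that is routed through the auxiliary indices $k_\ell$ precisely to avoid the clashes caused by possible coincidences between the targets $i_\ell$ and the sources $j_{\ell'}$. What must be shown is that the result is insensitive to the chosen auxiliaries, given that $\bar k,\bar k'$ realize the same coincidence pattern ($k_r=k_s\Leftrightarrow k'_r=k'_s$) and that a coincidence is licensed only where the targets already agree ($k_r=k_s\Rightarrow i_r=i_s$). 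So the goal is $\Theta_{\bar k}(u)=\Theta_{\bar k'}(u)$.

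First I would record the reorderings that come for free. In the first block all sources $j_\ell$ are distinct and all targets $k_\ell$ are disjoint from every $j$, so part~(3) lets me permute the factors $(\tfrac{k_\ell}{j_\ell})$ arbitrarily; in the second block only the (mutually disjoint) families of $k$'s and $i$'s occur, so part~(3) again lets me permute the factors $(\tfrac{i_\ell}{k_\ell})$ (two factors with a coincident $k$ are literally equal and commute trivially). With this freedom the whole claim reduces to a chain of atomic moves: letting $P$ be the partition of $\{1,\dots,n\}$ induced by $\bar k$ (equivalently by $\bar k'$), I connect $\bar k$ to $\bar k'$ through a tuple $\bar m$ having the same partition $P$ whose block-values are ``super-fresh'' (distinct from one another and from every index occurring in $\bar k,\bar k',\bar i,\bar j$), changing the common value of one $P$-block at a time. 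Thus it suffices to prove that changing the shared auxiliary value of a single block $C$ from $a$ to a fresh $b$ leaves $\Theta$ unchanged.

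Bringing the factors attached to $C$ together at the block boundary, this atomic move reduces to the local identity $v\big[\prod_{c\in C}(\tfrac{a}{j_c})\big](\tfrac{i}{a})=v\big[\prod_{c\in C}(\tfrac{b}{j_c})\big](\tfrac{i}{b})$, where $i$ is the common target of $C$, the sources $j_c$ are distinct, $a,b\notin\Delta(v)\cup\{i\}$, and $\Delta(v)$ contains only the $j_c$ together with indices disjoint from $a,b,i$ (this is exactly where the hypothesis $\Delta(u)\subseteq\{j_1,\dots,j_n\}$ enters). I would show that $\Sigma_{C,i}(v):=v\big[\prod_{c\in C}(\tfrac{a}{j_c})\big](\tfrac{i}{a})$ is independent of the fresh $a$ by induction on $|C|$. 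The base case $|C|=1$ is exactly part~(4) (both reductions land on $v(\tfrac{i}{j_c})$), with part~(5) dispatching the degenerate subcase $i=j_c$. For the inductive step I peel off one index $c^\ast$, set $P:=v\prod_{c\in C\setminus\{c^\ast\}}(\tfrac{a}{j_c})$, and invoke the key sub-lemma $P(\tfrac{a}{j_{c^\ast}})(\tfrac{i}{a})=P(\tfrac{i}{a})(\tfrac{i}{j_{c^\ast}})$; since $P(\tfrac{i}{a})=\Sigma_{C\setminus\{c^\ast\},i}(v)$ is $a$-independent by the induction hypothesis, so is the whole expression.

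The main obstacle is precisely this sub-lemma, the ``de-conflation'' identity $P(\tfrac{a}{j})(\tfrac{i}{a})=P(\tfrac{i}{a})(\tfrac{i}{j})$. Unlike the base case it genuinely collapses two coordinates that are both present, so part~(4) (which requires the intermediate index to be absent from $\Delta$) does not apply; I would prove it directly from $u(\tfrac{i}{j})=[u\cap D_{ij}]_j$ by a finite diagonal computation, unwinding both sides to $[[P\cap D_{aj}]_j\cap D_{ia}]_a$ and $[[P\cap D_{ia}]_a\cap D_{ij}]_j$ and matching them via the saturation-commutation property $[D_{ij}\cap u]_k=D_{ij}\cap[u]_k$, the transitivity $D_{ia}\cap D_{aj}\subseteq D_{ij}$, and $D_{ik}=[D_{ij}\cap D_{jk}]_j$ from the diagonal lemma, together with commutativity of the cylindric system. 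I also anticipate spending care on the degenerate cases where a target coincides with a source (e.g. $i=j_{c^\ast}$), since these break the disjointness needed for parts~(3)/(4) and must be handled separately through parts~(1) and~(5); organizing these coincidence cases uniformly, rather than the individual diagonal manipulations, is where the real effort will lie.
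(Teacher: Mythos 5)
Your proposal only addresses item (6); items (1)--(5) are invoked as black boxes throughout (the commutation (3), the fresh-intermediate composition (4), the cancellation (5), the no-op (1)), so as a proof of the full proposition you still owe the verifications of those five items, which the paper carries out by direct element-chasing from $u(\frac{i}{j})=[u\cap D_{ij}]_{j}$, the diagonal axioms, and the lemma on diagonal families. For item (6) itself your route is correct but genuinely different from the paper's. The paper works purely equationally: it inserts cancelling pairs $(\frac{k'_{s}}{k_{s}})(\frac{k_{s}}{k'_{s}})$ after the first block using (5), merges $(\frac{k_{n}}{j_{n}})(\frac{k'_{s}}{k_{s}})$ into $(\frac{k'_{n}}{j_{n}})$ using (4) --- legitimate because the inserted indices are genuinely fresh, i.e.\ absent from the dimension of the set at that stage --- and shuffles everything into place with (3) in one long display; it never has to compose substitutions through an index already lying in the dimension. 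Your argument instead changes one coincidence-block's auxiliary at a time through super-fresh intermediates and proves the atomic move by induction on block size, which forces you through the de-conflation identity $P(\frac{a}{j})(\frac{i}{a})=P(\frac{i}{a})(\frac{i}{j})$ with $a\in\Delta(P)$, an identity the paper's proof avoids entirely. That identity is indeed provable here, and more easily than your sketch suggests: by the saturation-commutation property both sides become $[[P\cap D_{aj}\cap D_{ia}]_{j}]_{a}$ and $[[P\cap D_{ia}\cap D_{ij}]_{a}]_{j}$, and two applications of the transitivity axiom give $D_{ia}\cap D_{aj}=D_{ia}\cap D_{aj}\cap D_{ij}=D_{ia}\cap D_{ij}$, so the two sets under saturation literally coincide and commutativity of the cylindric system finishes it. What your decomposition buys is a clearly isolated algebraic kernel (essentially the cylindric-algebra law $s^{i}_{a}s^{a}_{j}=s^{i}_{j}s^{i}_{a}$) and a more structured case analysis; what it costs is this extra lemma plus the degenerate subcases $i=j_{c}$, which, as you correctly note, parts (1) and (5) dispatch. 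Supply the proofs of (1)--(5) and the argument is complete.
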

\begin{proof}$ $

\begin{enumerate}
    \item 
    For any $a\in|\mathcal{C}|$,
    
    \begin{tabular}{rl}
         &$a\in u(\frac{i}{j})$\\
         $\Leftrightarrow$&$\exists b\sim_{j}a$ s.t. $b\in u\cap D^{\mathcal{C}}_{ij}$\\
         $\Leftrightarrow$&$\exists b\sim_{j}a$ s.t. $b\in [u]_{j}\cap D^{\mathcal{C}}_{ij}$\\
         $\Leftrightarrow$&$a\in [u]_{j}$ and $\exists b\sim_{j}a$ s.t. $b\in [u]_{j}\cap D^{\mathcal{C}}_{ij}$\\
         $\Leftrightarrow$&$a\in u$ and $\exists b\sim_{j}a$ s.t. $b\in [u]_{j}\cap D^{\mathcal{C}}_{ij}$\\
         $\Leftrightarrow$&$a\in u$ ($[u]_{j}\cap D^{\mathcal{C}}_{ij}\supseteq[a]_{j}\cap D^{\mathcal{C}}_{ij}\neq\emptyset$)
    \end{tabular}
    \item 
    For any $a\in|\mathcal{C}|$,
    
    \begin{tabular}{rl}
         &$a\in(-u)(\frac{i}{j})$\\
         $\Leftrightarrow$&$[a]_{j}\cap(-u\cap D^{\mathcal{C}}_{ij})\neq\emptyset$\\
         $\Leftrightarrow$&$[a]_{j}\cap(u\cap D_{ij})=\emptyset\quad$(because $\|[a]_{j}\cap D^{\mathcal{C}}_{ij}\|=1$)\\
         $\Leftrightarrow$&$[a]_{j}\subseteq-(u\cap D^{\mathcal{C}}_{ij})$\\
         $\Leftrightarrow$&$a\in -(u(\frac{i}{j}))$
    \end{tabular}
    
    \begin{tabular}{rl}
         &$a\in u(\frac{j}{i})\cap u'(\frac{j}{i})$\\
         $\Leftrightarrow$&$\exists b,c\sim_{i}a$ s.t. $b\in u\cap D^{\mathcal{C}}_{ij}$ and $c\in u'\cap D^{\mathcal{C}}_{ij}$\\
         $\Leftrightarrow$&$\exists b,c\sim_{i}a$ s.t. $b\in u\cap D^{\mathcal{C}}_{ij}$ and $c\in u'\cap D^{\mathcal{C}}_{ij}$ and $c=b$ (because $\|[b]_{i}\cap D^{\mathcal{C}}_{ij}\|=1$)\\
         $\Leftrightarrow$&$\exists b\sim_{i}a$ s.t. $b\in u\cap u'\cap D^{\mathcal{C}}_{ij}$\\
         $\Leftrightarrow$&$a\in(u\cap u')(\frac{j}{i})$
    \end{tabular}
    \item 
    For any $a\in|\mathcal{C}|$,
    
    \begin{tabular}{rl}
         &$a\in u(\frac{i_{1}}{j_{1}})(\frac{i_{2}}{j_{2}})$\\
         $\Leftrightarrow$&$\exists b,c$ s.t. $c\sim_{j_{1}}b\sim_{j_{2}}a$ and $c\in u\cap D^{\mathcal{C}}_{i_{1}j_{1}},b\in D^{\mathcal{C}}_{i_{2}j_{2}}$\\
         $\Leftrightarrow$&$\exists b,c$ s.t. $c\sim_{j_{1}}b\sim_{j_{2}}a$ and $c\in u\cap D^{\mathcal{C}}_{i_{1}j_{1}},b,c\in D^{\mathcal{C}}_{i_{2}j_{2}}$ (because $D^{\mathcal{C}}_{i_{2}j_{2}}$ is $\sim_{j_{1}}$-saturated)\\
         $\Leftrightarrow$&$\exists b,c$ s.t. $c\sim_{j_{1}}b\sim_{j_{2}}a$ and $c\in u\cap D^{\mathcal{C}}_{i_{1}j_{1}}\cap D^{\mathcal{C}}_{i_{2}j_{2}}\quad$(similarly)\\
         $\Leftrightarrow$&$a\in[[u\cap D^{\mathcal{C}}_{i_{1}j_{1}}\cap D^{\mathcal{C}}_{i_{2}j_{2}}]_{j_{1}}]_{j_{2}}$\\
         $\Leftrightarrow$&$a\in[[u\cap D^{\mathcal{C}}_{i_{1}j_{1}}\cap D^{\mathcal{C}}_{i_{2}j_{2}}]_{j_{2}}]_{j_{1}}$\\
         $\Leftrightarrow$&$a\in u(\frac{i_{2}}{j_{2}})(\frac{i_{1}}{j_{1}})$
    \end{tabular}
    \item
    It is obviously true when $j=k$. Consider the case when $j\neq k$: for any $a\in|\mathcal{C}|$,
    
    \begin{tabular}{rl}
         &$a\in u(\frac{k}{j})(\frac{i}{k})$\\
         $\Leftrightarrow$&$\exists b,c$ s.t. $c\sim_{j}b\sim_{k}a$ and $c\in u\cap D^{\mathcal{C}}_{kj},b\in D^{\mathcal{C}}_{ik}$\\
         $\Leftrightarrow$&$\exists b,c$ s.t. $c\sim_{j}b\sim_{k}a$ and $c\in u\cap D^{\mathcal{C}}_{kj},b,c\in D^{\mathcal{C}}_{ik}$ (because $D^{\mathcal{C}}_{ik}$ is $\sim_{j}$-saturated)\\
         $\Leftrightarrow$&$\exists b,c$ s.t. $c\sim_{j}b\sim_{k}a$ and $c\in u\cap D^{\mathcal{C}}_{kj}\cap D^{\mathcal{C}}_{ik}\quad$(similarly)\\
         $\Leftrightarrow$&$a\in[[u\cap D^{\mathcal{C}}_{kj}\cap D^{\mathcal{C}}_{ik}]_{j}]_{k}$\\
         $\Leftrightarrow$&$a\in[[u\cap D^{\mathcal{C}}_{kj}\cap D^{\mathcal{C}}_{ik}]_{k}]_{j}$\\
         $\Leftrightarrow$&$\exists b,c$ s.t. $c\sim_{k}b\sim_{j}a$ and $c\in u\cap D^{\mathcal{C}}_{kj}\cap D^{\mathcal{C}}_{ik}$\\
         $\Leftrightarrow$&$\exists b,c$ s.t. $c\sim_{k}b\sim_{j}a$ and $c\in u\cap D^{\mathcal{C}}_{kj}\cap D^{\mathcal{C}}_{ik}$ and $b\in u$ (because $u$ is $\sim_{k}$-saturated)\\
         $\Leftrightarrow$&$\exists b,c$ s.t. $c\sim_{k}b\sim_{j}a$ and $c\in D^{\mathcal{C}}_{kj}\cap D^{\mathcal{C}}_{ik}$ and $b\in u\quad$(similarly)\\
         $\Leftrightarrow$&$\exists b\sim_{j}a$ s.t. $b\in D^{\mathcal{C}}_{ij}=[D^{\mathcal{C}}_{kj}\cap D^{\mathcal{C}}_{ik}]_{k}$ and $b\in u$\\
         $\Leftrightarrow$&$a\in u(\frac{i}{j})$
    \end{tabular}
    \item
    For any $a\in|\mathcal{C}|$,
    
    \begin{tabular}{rl}
         &$a\in u(\frac{j}{i})(\frac{i}{j})$\\
         $\Leftrightarrow$&$a\in [[u\cap D^{\mathcal{C}}_{ij}]_{i}\cap D^{\mathcal{C}}_{ij}]_{j}$\\
         $\Leftrightarrow$&$\exists b\sim_{j}a$ s.t. $b\in D^{\mathcal{C}}_{ij}$ and $\exists c\sim_{i}b$ s.t. $c\in u\cap D^{\mathcal{C}}_{ij}$\\
         $\Leftrightarrow$&$\exists b\sim_{j}a$ s.t. $b\in D^{\mathcal{C}}_{ij}$ and $\exists c\sim_{i}b$ s.t. $c\in u\cap D^{\mathcal{C}}_{ij}$ and $c=b$ (because $\|[b]_{i}\cap D^{\mathcal{C}}_{ij}\|=1$)\\
         $\Leftrightarrow$&$a\in [u\cap D^{\mathcal{C}}_{ij}]_{j}=[(u\cap D^{\mathcal{C}}_{ij})\cap D^{\mathcal{C}}_{ij}]_{j}=(u\cap D^{\mathcal{C}}_{ij})(\frac{i}{j})$\\
         $\Leftrightarrow$&$[a]_{j}\cap (u\cap D^{\mathcal{C}}_{ij})\neq\emptyset$\\
         $\Leftrightarrow$&$[a]_{j}\subseteq u$ and $[a]_{j}\cap D^{\mathcal{C}}_{ij}\neq\emptyset\quad$(because $u$ is $\sim_{j}$-saturated)\\
         $\Leftrightarrow$&$a\in u$
    \end{tabular}
    \item WLOG, let $\{k_{i}:1\ge i\ge n\}\cap\{k'_{i}:1\ge i\ge n\}=\emptyset$, we have: (the following proof will frequently use the antecedents of the state but will not be indicated in the corresponding place, please note when reading)
    
    \begin{tabular}{rl}
         &$u(\frac{k_{1}}{j_{1}})...(\frac{k_{n}}{j_{n}})(\frac{i_{1}}{k_{1}})...(\frac{i_{n}}{k_{n}})$\\
         $=$&$u(\frac{k_{1}}{j_{1}})...(\frac{k_{n}}{j_{n}})(\frac{k'_{1}}{k_{1}})(\frac{k_{1}}{k'_{1}})(\frac{i_{1}}{k_{1}})...(\frac{i_{n}}{k_{n}})$(by Proposition \ref{pop}.(5))\\
         &\dots\dots\\
         $=$&$u(\frac{k_{1}}{j_{1}})...(\frac{k_{n}}{j_{n}})(\frac{k'_{1}}{k_{1}})...(\frac{k'_{s}}{k_{s}})(\frac{k_{s}}{k'_{s}})...(\frac{k_{1}}{k'_{1}})(\frac{i_{1}}{k_{1}})...(\frac{i_{n}}{k_{n}})$ \\&(if there is $r<s,k_{r}=k_{s}$, by Proposition \ref{pop}.(1), $(\frac{k'_{s}}{k_{s}})$ can be added \\&after $(\frac{k'_{r}}{k_{r}})$, and $(\frac{k_{r}}{k'_{r}})$ can be recorded as $(\frac{k_{s}}{k'_{s}})$ and $(\frac{k_{r}}{k'_{r}})$ can be added \\&after it, and then $(\frac{k'_{s}}{k_{s}})$ and $(\frac{k_{s}}{k'_{s}})$ are moved to the desired position by \\&the Proposition \ref{pop}.(3); if there is no $r<s$, using the Proposition \ref{pop}.(5))\\
         &$\dots\dots$\\
         $=$&$u(\frac{k_{1}}{j_{1}})...(\frac{k_{n}}{j_{n}})(\frac{k'_{1}}{k_{1}})...(\frac{k'_{n}}{k_{n}})(\frac{k_{n}}{k'_{n}})...(\frac{k_{1}}{k'_{1}})(\frac{i_{1}}{k_{1}})...(\frac{i_{n}}{k_{n}})$ (A)\\
         $=$&$u(\frac{k_{1}}{j_{1}})...(\frac{k_{n}}{j_{n}})(\frac{k'_{s_{11}}}{k_{s_{11}}})(\frac{k'_{s_{12}}}{k_{s_{12}}})...(\frac{k'_{s_{1t_{1}}}}{k_{s_{1t_{1}}}})...(\frac{k'_{ht_{h}}}{k_{ht_{h}}})(\frac{k_{n}}{k'_{n}})...(\frac{k_{2}}{k'_{2}})(\frac{i_{1}}{k'_{1}})(\frac{i_{2}}{k_{2}})...(\frac{i_{n}}{k_{n}})$ \\&(here $(\frac{k'_{s_{11}}}{k_{s_{11}}}),...,(\frac{k'_{ht_{h}}}{k_{ht_{h}}})$ lists all the $(\frac{k'_{s}}{k_{s}})$ in the previous equation, and \\&for each $l\le h$, $(\frac{k'_{l2}}{k_{l2}}),...,(\frac{k'_{lt_{l}}}{k_{lt_{l}}})$ lists all the $(\frac{k'_{s}}{k_{s}})$ with the same shape as \\&$(\frac{k'_{l1}}{k_{l1}})$, by Proposition\ref{pop}.(3), two equations are equal)\\
         $=$&$u(\frac{k_{1}}{j_{1}})...(\frac{k_{n-1}}{j_{n-1}})(\frac{k'_{s_{11}}}{k_{s_{11}}})...(\frac{k'_{s_{1t_{1}}}}{k_{s_{1t'_{1}}}})(\frac{k^{*}_{n}}{j_{n}})(\frac{k'_{s_{21}}}{k_{s_{21}}})...$ \\&(if $k_{n}\neq k_{s_{11}}$, then repeated use of Proposition \ref{pop}.(3) yields this equ-\\&-ation, and we know $t_{1}=t'_{1},k^{*}_{n}=k_{n}$; if $k_{n}=k_{s_{11}}$, then by Proposition \\&\ref{pop}.(4), $(\frac{k_{n}}{j_{n}})(\frac{k'_{s_{11}}}{k_{s_{11}}})$ can be synthesized into $(\frac{k'_{n}}{j_{n}})$
        then by Proposition \\&\ref{pop}.(3) $(\frac{k'_{s_{12}}}{k_{s_{12}}})...(\frac{k'_{s_{1t_{1}}}}{k_{s_{1t'_{1}}}})$ can be moved before $(\frac{k'_{n}}{j_{n}})$ and therefore this eq-\\&-uation can be obtained, and we know $t'_{1}=t_{1}-1$, $k^{*}_{n}=k'_{n}$)$\quad$(B)\\
         &$\dots\dots$\\
         $=$&$u(\frac{k'_{1}}{j_{1}})...(\frac{k'_{n}}{j_{n}})(\frac{k_{n}}{k'_{n}})...(\frac{k_{2}}{k'_{2}})(\frac{i_{2}}{k_{2}})...(\frac{i_{n}}{k_{n}})\quad$(C) \\&(In the same way as steps A-B, repeated several times to obtain)\\
         &$\dots\dots$\\
         $=$&$u(\frac{k'_{1}}{j_{1}})...(\frac{k'_{n}}{j_{n}})(\frac{i_{1}}{k'_{1}})...(\frac{i_{n}}{k'_{n}})\quad$(In the same way as steps A-C)
    \end{tabular}
\end{enumerate}
\end{proof}

The following definition is well-defined by Proposition \ref{pop}.

\begin{definition}\label{pmn}
Let $\mathcal{C}_{\alpha}$ be a basis-finite cylindric space, $\rho:\alpha^{\alpha}$ be a (partial) mapping,
\begin{itemize}
    \item Let $B$ be a cylindric basis of $\mathcal{C}$, for any $u\in B$ with $\Delta(u)\subseteq dom(\rho)$, we write $\rho u:=u(\frac{k_{1}}{j_{1}})...(\frac{k_{n}}{j_{n}})(\frac{\rho(j_{1})}{k_{1}})...(\frac{\rho(j_{n})}{k_{n}})$ s.t. no duplicate elements in both $\{k_{i}:1\ge i\ge n\}$ and $\{j_{i}:1\ge i\ge n\}$, $\Delta(u)\subseteq\{j_{1},...,j_{n}\}$ and $\{k_{1},...k_{n}\}\cap(\{j_{1},...,j_{n}\}\cup\{\rho(j_{1},...,\rho(j_{n}))\})=\emptyset$. We call $\rho u$ a \textbf{permutation} of $u$; 
    \item For a closed set $u$ with $\Delta(u)\subseteq dom(\rho)$, we write $\rho u:=\bigcap\{\rho v:v\in B\mbox{ and }u\subseteq v\}$, call $\rho u$ a \textbf{permutation} of $u$; the permutation of open sets also can be defined in the same way.
\end{itemize}
\end{definition}

Now we define a cylindric space that is very important in the following text:
\begin{definition}\label{dofol}
We call a Stone basis-finite cylindric space an \textbf{FOL space}.
\end{definition}

For FOL space, the following proposition holds:
\begin{theorem}\label{up}
Let $\mathcal{C}_{\alpha}$ be an FOL space, $\rho:\alpha^{\alpha}$ be a mapping, $a\in|\mathcal{C}_{\alpha}|$, we have
\begin{enumerate}
    \item If for any $k\in ran(\rho)$ and $i,j\in\rho^{-1}[k]$, we have $a\in D_{ij}$, then $\rho\{a\}\neq\emptyset$;
    \item If $\rho$ is a surjection, then $\|\rho\{a\}\|\le 1$;
    \item $\|\{a':a\in\rho\{a'\}\}\|=1$;
    \item For closed sets $u,v,w$, if $\rho u=v,\rho' v=w$, then $(\rho'\circ\rho) u=w$.
\end{enumerate}
\end{theorem}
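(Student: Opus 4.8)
My plan rests on the intuition, already visible in Example 1, that the permutation $\rho u$ is the topological shadow of the preimage operation induced by pre-composing a sequence with $\rho$; morally $\rho u=\{b:b\circ\rho\in u\}$, so that $\rho\{a\}$ should be the fibre of $a$ under this substitution map. Accordingly I would make two purely algebraic facts about the clopen sets rigorous and then read off all four items from them together with the Stone properties (compactness, $T_2$, zero-dimensionality) of an FOL space, recalling that in a compact cylindric space the clopen sets are the only cylindric basis, so $\rho\{a\}=\bigcap\{\rho v:a\in v\text{ clopen}\}$.

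The first fact is that on the clopen algebra $\rho$ is a Boolean homomorphism: by Proposition \ref{pop}.(2) each elementary substitution $(\frac ij)$ with $i\neq j$ commutes with complement and intersection, and since every elementary substitution occurring in the definition of $\rho u$ has distinct upper and lower index, composing them yields $\rho(-u)=-\rho u$ and $\rho(u\cap u')=\rho u\cap\rho u'$ for all clopen $u,u'$; monotonicity follows as well. The second, and central, fact is the composition law $\rho'(\rho u)=(\rho'\circ\rho)u$ for clopen $u$ (together with the trivial $\mathrm{id}\,u=u$); this is the technical heart, and I would prove it by the same coordinate bookkeeping that underlies Definition \ref{pmn}, the point being that collisions among the values $\rho(j_l)$ are absorbed using Proposition \ref{pop}.(4),(6), exactly as in the well-definedness argument there.

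Granting these, item (3) is immediate: the family $F:=\{u\text{ clopen}:a\in\rho u\}$ is a proper clopen ultrafilter, since $\emptyset\notin F$ (as $\rho\emptyset=\emptyset$), exactly one of $u,-u$ lies in $F$ (as $a\in\rho(-u)\Leftrightarrow a\notin\rho u$), and $F$ is closed under intersection (as $\rho(u\cap u')=\rho u\cap\rho u'$). In a Stone space this ultrafilter converges to a unique point $a'$ with $a'\in u\Leftrightarrow u\in F\Leftrightarrow a\in\rho u$, whence $a\in\bigcap\{\rho u:a'\in u\}=\rho\{a'\}$; any other $a''$ with $a\in\rho\{a''\}$ has all its clopen neighbourhoods in $F$, forcing $a''=a'$ by $T_2$. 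For item (2), a surjection $\rho$ admits a section $\eta$ with $\rho\circ\eta=\mathrm{id}$, so the composition law gives $\rho(\eta w)=(\rho\circ\eta)w=w$ for every clopen $w$; thus $\{\rho u:u\text{ clopen}\}$ is the whole clopen algebra, which separates points in a Stone space. Since every $b\in\rho\{a\}$ satisfies $b\in\rho u\Leftrightarrow a\in u$ for all clopen $u$ (the complement computation of (3)), any two members of $\rho\{a\}$ lie in exactly the same clopen sets and hence coincide.

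Item (1) rests on a non-emptiness criterion, again from Proposition \ref{pop}: taking the index set used to form $\rho v$ to be $\Delta(v)$ itself, one shows $\rho v\neq\emptyset$ iff $v\cap\bigcap\{D_{ij}:i,j\in\Delta(v),\ \rho(i)=\rho(j)\}\neq\emptyset$, the forced diagonals being precisely the identifications created when two coordinates are sent to the same place. The hypothesis of (1) says exactly that $a\in D_{ij}$ whenever $\rho(i)=\rho(j)$, so $a$ witnesses this intersection for every clopen $v\ni a$; as $\rho$ preserves intersection, the sets $\{\rho v:a\in v\text{ clopen}\}$ form a family of nonempty clopen sets with the finite intersection property, and compactness gives $\rho\{a\}\neq\emptyset$. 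Finally, item (4) lifts the clopen composition law to closed sets: writing $v=\rho u=\bigcap\{\rho p:u\subseteq p\}$, compactness shows the clopen sets $\rho p$ (closed under finite intersection via $\rho(p_1\cap p_2)=\rho p_1\cap\rho p_2$) are cofinal in the clopen neighbourhoods of $v$, so monotonicity of $\rho'$ gives $\bigcap\{\rho'q:v\subseteq q\}=\bigcap\{\rho'(\rho p):u\subseteq p\}$; applying $\rho'(\rho p)=(\rho'\circ\rho)p$ and the definition of $(\rho'\circ\rho)u$ on the closed set $u$ yields $(\rho'\circ\rho)u=\rho'v=w$. The main obstacle throughout is the clopen composition law and the closely related non-emptiness criterion: both demand careful control of coordinate collisions under a non-injective $\rho$, which is where Proposition \ref{pop}.(4),(6) does the real work; once these are established, the Stone-space topology closes each item cleanly.
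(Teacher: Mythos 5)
Your proposal is correct and takes essentially the same route as the paper: the Boolean-homomorphism and composition properties of $\rho$ on clopen sets (via Proposition \ref{pop}), a finite-intersection-property/compactness argument for (1), pulling back a separating clopen along a section of the surjection for (2), the clopen ultrafilter $\{u:a\in\rho u\}$ plus $T_2$ for (3), and the coordinate-bookkeeping composition law for (4). The two steps you defer --- the non-emptiness criterion controlling coordinate collisions and the clopen composition law --- are exactly the inductive computations the paper carries out in its proofs of (1) and (4), so nothing essential is missing from the plan.
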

\begin{proof}
Let $B$ be a cylindric basis of $\mathcal{C}$,
\begin{enumerate}
    \item 
    We know $\rho\{a\}=\bigcap\{\rho u:u$ is a clopen neighborhood of $a\}$.
    
    For any clopen neighborhood $u_{1},...,u_{m}\ni a$, we know there are $k_{1},...,k_{n},j_{i},...,j_{n}$, $\rho u_{i}=u_{i}(\frac{k_{1}}{j_{1}})...(\frac{k_{n}}{j_{n}})$
    $(\frac{\rho(j_{1})}{k_{1}})...(\frac{\rho(j_{n})}{k_{n}})$ satisfying the conditions in Definition \ref{pmn}. Let $(j_{l_{1}},j_{l'_{1}}),...,(j_{l_{s}},j_{l'_{s}})$ be all pairs in $\{j_{1},...,j_{n}\}$ satisfying $\rho(j_{l_{r}})=\rho(j_{l_{r'}})$, we know $\bigcap_{r=1}^{s} D^{\mathcal{C}}_{j_{l_{r}},j_{l'_{r}}}\cap\bigcap_{i=1}^{n}u_{i}\neq\emptyset$ because $a$ is in it by the assumption.
    
    \begin{tabular}{rl}
         &$\bigcap_{r=1}^{s} \rho D^{\mathcal{C}}_{j_{l_{r}},j_{l'_{r}}}\cap\bigcap_{i=1}^{n}\rho u_{i}$\\
         $=$&$\bigcap_{r=1}^{s} (D^{\mathcal{C}}_{j_{l_{r}},j_{l'_{r}}}(\frac{k_{1}}{j_{1}})...(\frac{k_{n}}{j_{n}})(\frac{\rho(j_{1})}{k_{1}})...(\frac{\rho(j_{n})}{k_{n}}))\cap\bigcap_{i=1}^{n}(u_{i}(\frac{k_{1}}{j_{1}})...(\frac{k_{n}}{j_{n}})(\frac{\rho(j_{1})}{k_{1}})...(\frac{\rho(j_{n})}{k_{n}}))$\\
         $=$&$(\bigcap_{r=1}^{s} D^{\mathcal{C}}_{k_{l_{r}},k_{l'_{r}}}\cap(\bigcap_{i=1}^{n}u_{i})(\frac{k_{1}}{j_{1}})...(\frac{k_{n}}{j_{n}}))(\frac{\rho(j_{1})}{k_{1}})...(\frac{\rho(j_{n})}{k_{n}})$ (use Proposition \ref{pop}.(2) repeatedly)
    \end{tabular}
    
    Since $\bigcap_{r=1}^{s} D^{\mathcal{C}}_{j_{l_{r}},j_{l'_{r}}}$
    $\cap$
    $\bigcap_{i=1}^{n}u_{i}\neq\emptyset$, we know $\bigcap_{r=1}^{s} D^{\mathcal{C}}_{k_{l_{r}},k_{l'_{r}}}$
    $\cap(\bigcap_{i=1}^{n}u_{i})(\frac{k_{1}}{j_{1}})$
    $...$
    $(\frac{k_{n}}{j_{n}})\neq\emptyset$. For each $i\in\{0,...,n\}$, let $X_{i}:=\bigcap_{r=1}^{s} D^{\mathcal{C}}_{k_{l_{r}},k_{l'_{r}}}$
    $\cap$
    $(\bigcap_{i=1}^{n}u_{i})(\frac{k_{1}}{j_{1}})$
    $...$
    $(\frac{k_{n}}{j_{n}})(\frac{\rho(j_{1})}{k_{1}})$
    $...$
    $(\frac{\rho(j_{i})}{k_{i}})$. We use induction on $i\in\{0,...,n\}$ as follow to show $X_{i}\neq\emptyset$:
    
    \begin{itemize}
        \item \textbf{IH:} For $i'<i$, $X_{i'}=\bigcap\{D^{\mathcal{C}}_{\rho(j_{l_{r}}),\rho(j_{l'_{r}})}:l_{r},l'_{r}\le i'\}\cap\bigcap\{D^{\mathcal{C}}_{\rho(j_{l_{r}}),k_{l'_{r}}}:l_{r}\le i,l'_{r}>i'\}\cap\bigcap\{D^{\mathcal{C}}_{k_{l_{r}},k_{l'_{r}}}:l_{r},l'_{r}>i'\}\cap(\bigcap_{t=1}^{n}u_{t})(\frac{k_{1}}{j_{1}})...(\frac{k_{n}}{j_{n}})(\frac{\rho(j_{1})}{k_{1}})...(\frac{\rho(j_{i'})}{k_{i'}})\neq\emptyset$.
        \item \textbf{IS:}$ $
        
        \begin{itemize}
        \item If $\rho(j_{i})$ does not in the dimension of $X_{i-1}$, then $X_{i}\neq\emptyset$. By Proposition \ref{pop}.(2), $X_{i}=\bigcap\{D^{\mathcal{C}}_{\rho(j_{l_{r}}),\rho(j_{l'_{r}})}:l_{r},l'_{r}\le i\}\cap\bigcap\{D^{\mathcal{C}}_{\rho(j_{l_{r}}),k_{l'_{r}}}:l_{r}\le i,l'_{r}>i\}\cap\bigcap\{D^{\mathcal{C}}_{k_{l_{r}},k_{l'_{r}}}:l_{r},l'_{r}>i\}\cap(\bigcap_{t=1}^{n}u_{t})(\frac{k_{1}}{j_{1}})...(\frac{k_{n}}{j_{n}})(\frac{\rho(j_{1})}{k_{1}})...(\frac{\rho(j_{i})}{k_{i}})$;
        \item If $\rho(j_{i})$ does in the dimension of $X_{i-1}$, then there is $i'<i$ s.t. $\rho(j_{i})=\rho(j_{i'})$. By IH, we know $X_{i-1}\subseteq D^{\mathcal{C}}_{\rho(j_{i'}),k_{i}}$, then $X_{i}=[X_{i-1}\cap D^{\mathcal{C}}_{\rho(j_{i}),k_{i}}]_{k_{i}}\neq\emptyset$ and by Proposition \ref{pop}.(2), $X_{i}=\bigcap\{D^{\mathcal{C}}_{\rho(j_{l_{r}}),\rho(j_{l'_{r}})}:l_{r},l'_{r}\le i\}\cap\bigcap\{D^{\mathcal{C}}_{\rho(j_{l_{r}}),k_{l'_{r}}}:l_{r}\le i,l'_{r}>i\}\cap\bigcap\{D^{\mathcal{C}}_{k_{l_{r}},k_{l'_{r}}}:l_{r},l'_{r}>i\}\cap(\bigcap_{t=1}^{n}u_{t})(\frac{k_{1}}{j_{1}})...(\frac{k_{n}}{j_{n}})(\frac{\rho(j_{1})}{k_{1}})...(\frac{\rho(j_{i})}{k_{i}})$.
    \end{itemize}
    \end{itemize}
    
    Hence, $\bigcap_{r=1}^{s} \rho D^{\mathcal{C}}_{j_{l_{r}},j_{l'_{r}}}\cap\bigcap_{i=1}^{n}\rho u_{i}\neq\emptyset$, then we know $\{\rho u:u$ is a clopen neighborhood of $a\}$ has finite intersection property, then by compactness, $\rho\{a\}=\bigcap\{\rho u:u$ is clopen neighborhood of $a\}\neq\emptyset$.
    
    \item Assume there are $b,b'\in\rho\{a\}$ s.t. $b\neq b'$, we know there is a $u\in B$ s.t. $u\ni b,-u\ni b'$. Let $\Delta(u)=\Delta(-u)=\{x_{1},...,x_{n}\}$. Since $\rho$ is a surjection, we know there are $y_{1},..,y_{n}\in\alpha$ s.t. $\rho(y_{i})=x_{i}$. Choose arbitrary $k_{1},...,k_{n}\in\alpha\backslash\{x_{1},...,x_{n},y_{1},...,y_{n}\}$ and let $v=u(\frac{k_{1}}{x_{1}})...(\frac{k_{n}}{x_{n}})(\frac{y_{1}}{k_{1}})...(\frac{y_{n}}{k_{n}})$. By Proposition \ref{pop}, we know $\rho v=u(\frac{k_{1}}{x_{1}})...(\frac{k_{n}}{x_{n}})(\frac{y_{1}}{k_{1}})...(\frac{y_{n}}{k_{n}})(\frac{k_{1}}{y_{1}})...(\frac{k_{n}}{y_{n}})(\frac{x_{1}}{k_{1}})...(\frac{x_{n}}{k_{n}})=u,\rho -v=-u(\frac{k_{1}}{x_{1}})...(\frac{k_{n}}{x_{n}})(\frac{y_{1}}{k_{1}})...(\frac{y_{n}}{k_{n}})$
    $(\frac{k_{1}}{y_{1}})...(\frac{k_{n}}{y_{n}})$
    $(\frac{x_{1}}{k_{1}})...(\frac{x_{n}}{k_{n}})=-u$. Clearly $a\in v$ or $a\in -v$, then $\rho\{a\}\subseteq u$ or $\rho\{a\}\subseteq -u$, which contradicts the requirement for $u$.
   
    \item Let $U=\{u\in B:a\in\rho u\}$, we know $\{a':a\in\rho\{a'\}\}=\bigcap U$. Choose an arbitrary finite $V\subseteq U$, by Proposition \ref{pop}.(2), $\rho(\bigcap V)=\bigcap\{\rho v:v\in V\}\ni a$, then $\bigcap V\neq\emptyset$. By compactness, $\bigcap U\neq\emptyset$.
    
    For any $u\in B$, if $u\not\in U$, then $a\not\in\rho u$, then by Proposition \ref{pop}.(2), $a\in-\rho u=\rho-u$, then $-u\in U$, then by $T_{2}$ property, $\|\bigcap U\|=1$.
    \item$ $
    
    \begin{tabular}{rl}
         &$(\rho'\circ\rho) u$\\
         $=$&$(\frac{k_{1}}{j_{1}})...(\frac{k_{n}}{j_{n}})(\frac{\rho'\circ\rho(j_{1}))}{k_{1}})...(\frac{\rho'\circ\rho(j_{n}))}{k_{n}})$\\
         $=$&$(\frac{k_{1}}{j_{1}})...(\frac{k_{n}}{j_{n}})(\frac{k'_{n}}{k_{n}})...(\frac{k'_{1}}{k_{1}})(\frac{\rho'\circ\rho(j_{1}))}{k'_{1}})...(\frac{\rho'\circ\rho(j_{n}))}{k'_{n}})$\\&(In the same way as steps A-C of the proof of the Proposition \ref{pop}.(6))\\
         $=$&$(\frac{k_{1}}{j_{1}})...(\frac{k_{n}}{j_{n}})(\frac{k'_{1}}{k_{1}})...(\frac{k'_{n}}{k_{n}})(\frac{\rho'\circ\rho(j_{1}))}{k'_{1}})...(\frac{\rho'\circ\rho(j_{n}))}{k'_{n}})$ (Repeated use of the \\&Proposition \ref{pop}.(3))\\
         $=$&$(\frac{k_{1}}{j_{1}})...(\frac{k_{n}}{j_{n}})(\frac{\rho(j_{1})}{k_{1}})...(\frac{\rho(j_{n})}{k_{n}})(\frac{k'_{n}}{\rho(j_{n})})...(\frac{k'_{1}}{\rho(j_{1})})(\frac{\rho'(\rho(j_{1}))}{k'_{1}})...(\frac{\rho'(\rho(j_{n}))}{k'_{n}})$\\&(In the same way as steps A-C of the proof of the Proposition \ref{pop}.(6))\\
         $=$&$(\frac{k_{1}}{j_{1}})...(\frac{k_{n}}{j_{n}})(\frac{\rho(j_{1})}{k_{1}})...(\frac{\rho(j_{n})}{k_{n}})(\frac{k'_{1}}{\rho(j_{1})})...(\frac{k'_{n}}{\rho(j_{n})})(\frac{\rho'(\rho(j_{1}))}{k'_{1}})...(\frac{\rho'(\rho(j_{n}))}{k'_{n}})$\\&(Repeated use of the Proposition \ref{pop}.(3))\\
         $=$&$\rho'(\rho u)\quad$\\&(By Proposition \ref{pop}.(1), we can add or delete operations $(\frac{k}{i}),(\frac{\rho'(i)}{k})$ \\&to a format consistent with the definition)\\
         $=$&$w$
    \end{tabular}
\end{enumerate}
\end{proof}

\begin{definition}
let $\mathcal{C}_{\alpha}$ be a basis-finite cylindric space, $s\subseteq\alpha$, $u$ be a closed set in $\mathcal{C}_{\alpha}$ with dimension $s$ s.t. for any closed set $v$, $\Delta(v)\subseteq s\Rightarrow u\subseteq v$ or $u\subseteq-v$. We call $u$ a \textbf{complete closed set of dimension $s$}. For $a\in|\mathcal{C}|$, write the complete closed set of dimension $s$ containing $a$ as $a|_{s}$.
\end{definition}

We can now define the permutation relation between points based on the above theorem.

\begin{definition}
Let $\mathcal{C}_{\alpha}$ be an FOL space.
\begin{itemize}
    \item For $a,b\in|\mathcal{C}_{\alpha}|$ and (partial) mapping $\rho:\alpha^{\alpha}$, if $b\in\rho\{a|_{dom(\rho)}\}$, then we say that $a$ is a \textbf{(partial) factor} of $b$, and denote as $\rho^{-1}b=a$, $a\prec_{\rho} b$ or $a\prec b$;
    \item For $a,b\in|\mathcal{C}_{\alpha}|$ and $\rho:\alpha^{\alpha}$, if $\rho$ is a surjection and $\rho^{-1}b=a$ (knowing that at this time, $\rho\{a\}=\{b\}$), we say that $b$ is a \textbf{permutation} of $a$, and denote as $\rho a=b$.
\end{itemize}
\end{definition}

\begin{example}\label{examp}
Let $\mathfrak{A},\mathfrak{B}$ be two countable first-order structures s.t. there is a (partial) elementary embedding $f:\mathfrak{A}\rightarrow\mathfrak{B}$. Then for $a\in\mathcal{C}^{\mathfrak{A}}_{\omega}$, $b\in\mathcal{C}^{\mathfrak{B}}_{\omega}$ being domain points and $a',b'\in\mathcal{C}_{T}$ realized by $a,b$ respectively, $a'$ is a (partial) factor of $b'$. If $f$ is a isomorphism and sequence $b$ has no duplicates, then $b'$ is a permutation of $a'$.
\end{example}

\begin{definition}
Let $\mathcal{C}_{\alpha}$ be an FOL space, $a,b\in|\mathcal{C}|$. If there is a $\rho:a\prec b$ s.t. for any $j\in\alpha\backslash ran(\rho)$, there is an $i\in ran(\rho)$ s.t. $b\in D_{ij}$, then we say $b$ is an equivalent point of $a$ and denote this as $\rho:a\asymp b$.
\end{definition}

\begin{proposition}
Let $\mathcal{C}_{\alpha}$ be an FOL space, the relation $\asymp$ is an equivalence relation on $|\mathcal{C}|$:

\begin{itemize}
    \item $a\asymp b\Leftrightarrow$ there is a $\rho:a\prec b$ s.t. for any $j\in\alpha\backslash ran(\rho)$, there is an $i\in ran(\rho)$ s.t. $b\in D_{ij}$ (denote this as $\rho:a\asymp b$).
\end{itemize}
\end{proposition}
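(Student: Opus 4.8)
The plan is to check reflexivity, symmetry and transitivity of $\asymp$ in turn, using Theorem \ref{up} as the workhorse and the substitution calculus of Proposition \ref{pop} for the bookkeeping. Two preliminary remarks guide everything. First, because an FOL space is Stone, its cylindric basis of clopen sets separates points, so the complete closed set of full dimension through a point is the singleton $a|_{\alpha}=\{a\}$; consequently, whenever $dom(\rho)=\alpha$ the factor condition $a\prec_{\rho}b$ is exactly $b\in\rho\{a\}$. Second, the coverage clause of $\asymp$ says precisely that every coordinate of $b$ lying outside $ran(\rho)$ is tied by a diagonal to one inside it, so that the ``content'' of $b$ is already exhausted along $ran(\rho)$. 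Reflexivity is then immediate: take $\rho=\mathrm{id}_{\alpha}$, note $\mathrm{id}_{\alpha}\{a\}=\{a\}\ni a$ so $a\prec_{\mathrm{id}}a$, and observe that $\alpha\backslash ran(\mathrm{id}_{\alpha})=\emptyset$ makes the coverage clause vacuously true.

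For transitivity, suppose $\rho:a\asymp b$ and $\rho':b\asymp c$. I would form the composite map $\rho'\circ\rho$, first enlarging $dom(\rho')$ so that it contains $ran(\rho)$: any index of $ran(\rho)$ not already in $dom(\rho')$ is tied to one that is by the coverage of the first relation, and the diagonal axioms $D_{ij}\cap D_{jk}\subseteq D_{ik}$ together with the $\sim$-saturation of diagonals (Definition \ref{df}) let me redefine $\rho'$ on that index without changing $c$. Once the domains are aligned, Theorem \ref{up}.(4) composes the two factor relations into $a\prec_{\rho'\circ\rho}c$. It then remains to verify the coverage clause for $c$ against $ran(\rho'\circ\rho)$: an index $j$ of $c$ outside this range is, by the coverage of $\rho'$, diagonal-tied to some index in $ran(\rho')$, which traces back through $\rho'$ into $ran(\rho'\circ\rho)$, and chaining the diagonals via $D_{ij}\cap D_{jk}\subseteq D_{ik}$ supplies the required witness.

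Symmetry is the crux and the step I expect to cost the most work. Given $\rho:a\asymp b$, the coverage clause together with $a\prec_{\rho}b$ forces the content of $a$ and of $b$ to coincide: every value realized by $b$ is, through $\rho$, already realized by $a$, and coverage prevents $b$ from carrying anything new. I would construct the inverse $\rho'$ by selecting, for each index $j$, a coordinate $\rho'(j)$ at which $a$ realizes the same value that $b$ does at $j$; the existence of such a choice respecting the diagonal structure is guaranteed by Theorem \ref{up}.(1), whose fiber-equality hypothesis is met because $a\prec_{\rho}b$ forces exactly the equalities $a\in D_{i'j'}$ that are needed, while Proposition \ref{pop} certifies that the resulting permutation of closed sets is well defined. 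One then checks directly that $a\circ\rho'$ matches the $dom(\rho')$-type of $b$, giving $a\in\rho'(b|_{dom(\rho')})$, i.e. $b\prec_{\rho'}a$; and finally that coverage holds for $a$, since any index $j$ outside $ran(\rho')$ carries a value already realized at the chosen index $\rho'(\rho(j))\in ran(\rho')$, whence $a\in D_{\rho'(\rho(j)),\,j}$.

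The main obstacle throughout is the interplay between a possibly non-injective, non-surjective $\rho$ and the equality (diagonal) structure: inverting $\rho$ in the symmetry step, and realigning domains in the transitivity step, both hinge on using the coverage clause to reconstruct the coordinates that $\rho$ merges or omits, and on Theorem \ref{up} to turn these combinatorial choices into genuine factor relations between points rather than mere relations between types.
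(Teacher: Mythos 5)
Your proposal follows essentially the same route as the paper's proof: reflexivity via the identity map, symmetry by constructing $\rho'(j)$ as a coordinate of $a$ realizing the value $b$ carries at $j$ (the paper takes $\rho'(j)=\min\{i':b\in D_{\rho(i')j}\}$, with the coverage clause guaranteeing existence), and transitivity by composing the two factor relations through Theorem \ref{up}.(4) and chaining diagonals via $D_{ij}\cap D_{jk}\subseteq D_{ik}$ to recover the coverage clause. The details you defer (the well-definedness of $\rho'$ and the case where the diagonal witness for $c$ lands in $ran(\rho')\setminus ran(\rho'\circ\rho)$) are exactly the ones the paper fills in, and your outline resolves them the same way.
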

\begin{proof}$ $

\begin{itemize}
    \item\textbf{Reflexivity: }Let $\rho$ be the identity, clearly $\rho:a\asymp a$.
    \item\textbf{Symmetry: }Assume $\rho:a\asymp b$, we define $\rho':\alpha^{\alpha}$ as $i\mapsto min\{i':b\in D_{\rho(i')i}\}$, it's easy to see $\rho'^{-1}a=b$.
    
     For any $j\in\alpha\backslash ran(\rho')$, consider $j'=min\{k:\rho(k)=\rho(j)\}$, we know $j'=\rho'(\rho(j))\in ran(\rho')$ and $a\in D_{jj'}$. Hence $\rho':b\asymp a$.
     
     \item\textbf{Transitivity: }Assume $\rho:a\asymp b$,, $\rho':b\asymp c$, then by Theorem \ref{up}.(4), we know $\rho'\circ\rho\{a\}\supseteq\{c\}$ i.e. $\rho'\circ\rho: a\prec c$. For any $i\in\alpha\backslash ran(\rho'\circ\rho)$, we know there is an $i'\in ran(\rho')$ s.t. $c\in D_{ii'}$. If $i'\not\in ran(\rho'\circ\rho)$, then $\rho'^{-1}[i']\subseteq\alpha\backslash ran(\rho)$, then for any $i^{+}\in\rho'^{-1}[i']$ there is an $i^{*}\in ran(\rho)$ s.t. $b\in D_{i^{+}i^{*}}$, then $c\in D_{i'\rho'(i^{*})}$, then $c\in D_{i\rho'(i^{*})}$, then clearly we have $\rho'(i^{*})\in ran(\rho'\circ\rho)$. Hence $\rho'\circ\rho: a\asymp c$.
\end{itemize}
\end{proof}

\begin{example}
Let $a,b,a,'b'$ be as in Example \ref{examp}, $f$ be a isomorphism, then $a'\asymp b'$ regardless of whether $b'$ has duplicates.
\end{example}
\subsection{$\alpha$-expansion}\label{alpex}
In this subsection, we will study how to ``expand" a basis-finite cylindric space into an FOL space of a specific dimension and further construct a one-to-one correspondence between FOL spaces of different dimensions. First, for a basis-finite cylindric space $\mathcal{C}_{\beta}$ and an ordinal $\alpha\ge\beta$, consider the following construction:

Let $B$ be the family of all clopen sets in $\mathcal{C}_{\alpha}$, $M$ be the collection of all mappings from $\beta$ to $\alpha$. We call $x\subseteq M\times B$ an atom if it satisfies the following conditions:
\begin{enumerate}
    \item For any $\rho\in M,u\in B$, $(\rho,u)\in x\Leftrightarrow(\rho,-u)\not\in x$;
    \item For any $\rho\in M,u,v\in B$, if $u\subseteq v$ and $(\rho,u)\in x$, then $(\rho,v)\in x$;
    \item For any $\rho\in M,u,v\in B$, if $u\cap v\neq\emptyset$ and $(\rho,u),(\rho,v)\in x$, then $(\rho,u\cap v)\in x$;
    \item For any $\rho,\mu,\sigma\in M,u,v\in B$, if $\mu|_{\Delta(u)}=\sigma|_{\Delta(u)}$, $\rho[\Delta(u)]\subseteq\beta$ and $\rho v=u$, then $(\mu\circ \rho,v)\in x\Leftrightarrow (\sigma,u)\in x$;
    \footnote{For any mapping $\rho:\alpha^{\beta}$, any $u\in B$ satisfying $\rho[\Delta(u)]\subseteq\beta$, clearly there is $\rho':\beta^{\beta}$ s.t. $\rho'|_{\Delta(u)}=\rho|_{\Delta(u)}$, we write $\rho u:=\rho' u$.}
\end{enumerate}

Before starting the formal construction, we still need to verify one property.
\begin{proposition}\label{ate}
Define $X_{\mathcal{C}}:=\{a\subseteq M\times B:a$ is atom$\}$. For any $(\rho,u)\in M\times B$, there is an atom $x\in X_{\mathcal{C}}$ s.t. $(\rho,u)\in x$ and therefore $X_{\mathcal{C}}\neq\emptyset$.
\end{proposition}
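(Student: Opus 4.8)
The plan is to build the required atom by transporting a genuine point of $\mathcal{C}_\beta$ through all the reindexing maps, so I would first record two reductions. Since clauses 1 and 2 in the definition of atom force $(\rho,|\mathcal{C}|)\in x$ and forbid $(\rho,\emptyset)\in x$, no atom contains $(\rho,\emptyset)$; hence I restrict to $u\neq\emptyset$ (and for the bare conclusion $X_{\mathcal{C}}\neq\emptyset$ one may simply take $u=|\mathcal{C}|$). Next I would isolate the only constraint the axioms put on a single fibre over $\rho_0$: instantiating clause 4 with $\sigma=\mu$ gives $(\mu,\rho w)\in x\Leftrightarrow(\mu\circ\rho,w)\in x$ whenever $\rho[\Delta(w)]\subseteq\beta$, and taking $w=D_{ij}$ with a $\rho$ that identifies $i$ and $j$ (so that $\rho D_{ij}=|\mathcal{C}|$ by the diagonal axioms) forces $(\rho_0,D_{ij})\in x$ whenever $\rho_0(i)=\rho_0(j)$. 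So the natural hypothesis under which $(\rho_0,u_0)$ is realizable is that $u_0$ meets $\bigcap\{D_{ij}:\rho_0(i)=\rho_0(j)\}$, which is automatic when $u_0=|\mathcal{C}|$; I would construct $x$ under this hypothesis.

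Given such a $(\rho_0,u_0)$, I would choose a point $p\in u_0\cap\bigcap\{D_{ij}:\rho_0(i)=\rho_0(j)\}$ and define the candidate atom as the trace of $p$ under all reindexings. The key enabling fact is that the value of $(\rho,u)$ should depend only on $\rho|_{\Delta(u)}$, which is exactly what makes $\rho u$ well defined in Definition \ref{pmn} via Proposition \ref{pop}; since $\Delta(u)$ is finite by basis-finiteness and $\beta$ is infinite, the finitely many coordinates in $\rho[\Delta(u)]\subseteq\alpha$ can be folded injectively back into $\beta$, giving a $\beta$-to-$\beta$ permutation $\theta$ agreeing with the folded $\rho$ on $\Delta(u)$, and I set $(\rho,u)\in x\Leftrightarrow p\in\theta\,u$ (extended to a fixed ultrafilter through $p$). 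Clauses 1--3 then hold essentially for free: exactly one of $u,-u$, upward closure, and closure under nonempty intersection are the ultrafilter clauses, transported across permutations by the Boolean naturality in Proposition \ref{pop}.(2); and the forced diagonals are respected because $p$ was chosen inside the relevant $D_{ij}$. (Alternatively, one can run a Zorn/finite-consistency argument, using these folded points as the finite witnesses, but the verification burden is the same.)

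The main obstacle is twofold, and both parts reduce to the permutation calculus of Proposition \ref{pop}. First, \emph{well-definedness}: the value $\theta\,u$ must not depend on the choice of folding or of the auxiliary coordinates $k_\ell,j_\ell$ used to express a permutation, which is precisely Proposition \ref{pop}.(6) together with items (1), (3), (4), (5). Second, \emph{clause 4 in full generality}: writing $u=\rho v$, I would rewrite both $(\mu\circ\rho,v)$ and $(\sigma,u)$ as a common permutation of $v$ and check that the hypothesis $\mu|_{\Delta(u)}=\sigma|_{\Delta(u)}$ forces the two resulting index strings to agree on $\Delta(v)$ after normalizing with Proposition \ref{pop}.(1)--(3). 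I expect this last matching — aligning the composite reindexing $\mu\circ\rho$ against the permuted clopen $\rho v$ — to be the delicate step, entirely analogous to the associativity computation carried out in Theorem \ref{up}.(4); once it is in place, the defining clauses are mutually consistent, $x$ is an atom containing $(\rho_0,u_0)$, and in particular $X_{\mathcal{C}}\neq\emptyset$.
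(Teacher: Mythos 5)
Your plan — realize the atom as the trace of an actual point $p\in|\mathcal{C}_{\beta}|$ under all reindexings — is a genuinely different route from the paper's (which runs a transfinite Lindenbaum-style completion: well-order $M\times B$, decide each pair in turn by adding the clause-4 orbit of either $u_i$ or $-u_i$, and maintain a finite-consistency invariant), but as written it has two real gaps that the paper's route is specifically built to avoid. First, the point $p$ you want need not exist: the set $\{(i,j):\rho_0(i)=\rho_0(j)\}$ can be infinite (take $\rho_0$ constant), and in this section $\mathcal{C}_{\beta}$ is only assumed to be a basis-finite cylindric space, not compact, so $u_0\cap\bigcap\{D_{ij}:\rho_0(i)=\rho_0(j)\}$ can be empty even though every finite subintersection is nonempty. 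Only the finitely many diagonals with $i,j\in\Delta(u_0)$ are actually relevant to the consistency of $(\rho_0,u_0)$; the others are absorbed by the reindexing itself, since $\theta D_{ij}=D_{kk}=|\mathcal{C}|$ once $\theta$ identifies $i$ and $j$. Note also that even granting $p$, your own membership rule asks for $p\in\theta_0 u_0$, a permuted copy of $u_0$, which does not follow from $p\in u_0$ unless $\rho_0$ folds to the identity on $\Delta(u_0)$. The paper's proof never produces a point of $\mathcal{C}$ at all — its invariant is only that each finite subfamily of a fibre has nonempty intersection — which is exactly why it goes through without compactness; your parenthetical ``Zorn/finite-consistency'' alternative is in fact the paper's argument, and it does not use folded points as witnesses.

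Second, and more seriously, the folding $\theta$ cannot be chosen ``injectively'' pair by pair and still yield an atom. Clauses 1 and 3 compare different clopen sets over the same $\rho$, so the fold must be a single function of the coordinates of $\alpha$ applied uniformly: if $\sigma(0)=\gamma\in\alpha\backslash\beta$ and you fold $\gamma$ to $5$ when evaluating $(\sigma,u)$ but to $7$ when evaluating $(\sigma,-u)$, you can perfectly well have $p\in\theta_{5}u$ and $p\in\theta_{7}(-u)$ simultaneously, since the two conditions constrain different coordinates of $p$; your set then contains both $(\sigma,u)$ and $(\sigma,-u)$. A globally injective fold $\alpha\rightarrow\beta$ does not exist when $\|\alpha\|>\|\beta\|$, which is precisely the interesting case of the expansion, and Proposition \ref{pop}.(6) does not rescue this: it gives independence from the auxiliary coordinates $k_{\ell}$, not from the target coordinates of the fold. (A global non-injective fold — the identity on $\beta$, constant on $\alpha\backslash\beta$, adjusted to be injective on the finite set $\rho_0[\Delta(u_0)]$ — would repair both problems for this one atom, but that is a different construction from the one you describe, and the clause-4 naturality computation would still have to be carried out.) Your preliminary observation that no atom can contain $(\rho,\emptyset)$, so that the statement implicitly presupposes a nontrivial consistency condition on $(\rho,u)$, is correct and is a caveat the paper's own formulation and base step do not acknowledge.
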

\begin{proof}
Well-order $M\times B$ with $(\rho,u)$ as the least element and let its isomorphic ordinal be $\gamma$. For each $i\in\gamma$, write the $i$-th element as $(\rho_{i},u_{i})$. We use transfinite induction to define the set $x_{i}$ for each $i\in\gamma$ and verify that they satisfy the following conditions:

\begin{enumerate}
    \item For any $\mu\in M$, any finite $U\subseteq B$, if $\{(\mu,v):v\in U\}\subseteq x_{i}$, then for any $w\subseteq-\bigcap U$, $(\mu,w)\not\in x_{i}$;
    \item It satisfies the fourth clause of the definition of the atom.
\end{enumerate}
The following proof will refer to these two conditions as inductive conditions.

\textbf{BS:} $x_{0}:=\{(\rho\circ\mu,v):\mu[\Delta(v)]=\Delta(u)\mbox{ and }\mu v=u\}$. clearly, $x_{0}$ satisfies the inductive conditions;

\textbf{IH:} For $j<i$, $x_{j}$ has been constructed and satisfies the induction conditions, and for $j'<j<i$, $x_{j'}\subseteq x_{j}$; here we prove a corollary of IH, namely $\bigcup_{j<i}x_{j}$ satisfies the induction conditions:

It is easy to see that $\bigcup_{j<i}x_{j}$ satisfies the fourth clause of the definition of the atom. If there is $\mu\in M$ and finite $U^{+}\subseteq B$ s.t. $\{(\mu,v):v\in U^{+}\}\subseteq\bigcup_{j<i}x_{j}$ constitutes a counterexample to the first clause of the inductive conditions, then since $U^{+}$ is finite, we know there is $j<i$ s.t. $\{(\mu,v):v\in U^{+}\}\subseteq x_{j}$, then $x_{j}$ does not satisfy the inductive conditions, which contradicts IH.

\textbf{IS:} If $(\rho_{i},u_{i})\in \bigcup_{j<i}x_{j}$, then let $x_{i}:=\bigcup_{j<i}x_{j}$. We know $x_{i}$ satisfies the inductive conditions by IH.

If $(\rho_{i},u_{i})\not\in \bigcup_{j<i}x_{j}$,
\begin{itemize}
    \item If there is a finite $U\subseteq B$ s.t. $\{(\rho_{i},v):v\in U\}\subseteq \bigcup_{j<i}x_{j}$ and $u_{i}\subseteq-\bigcap U$, then let $x_{i}:=\bigcup_{j<i}x_{j}\cup\{(\rho_{i}\circ\mu,v):\mu[\Delta(v)]=\Delta(u_{i})\mbox{ and }\mu v=-u_{i}\}$, It's easy to see that it satisfies the fourth clause of definition of atom. Assume there is a $\mu\in M$, finite $U^{+}\subseteq B$ s.t. $\{(\mu,v):v\in U^{+}\}\subseteq x_{i}$ constituting a counterexample to the first clause of the inductive conditions. By the corollary of IH, $\{(\mu,v):v\in U^{+}\}\subseteq x_{i}\not\subseteq\bigcup_{j<i}x_{j}$. By the fourth clause of definition of atom, there is a finite $V^{+}\subseteq B$ s.t. $\{(\rho_{i},v):v\in V^{+}\}\subseteq x_{i}$ constitutes a counterexample to the first clause of the inductive conditions, and $(\rho_{i},-u_{i})\in\{(\rho_{i},v):v\in V^{+}\}\subseteq\{(\rho_{i},-u_{i})\}\cup\bigcup_{j<i}x_{j}$. By the construction method, $\{(\rho_{i},w):w\in U\cup V^{+}\backslash\{-u_{i}\}\}$ constitutes a counterexample to the first clause of the inductive conditions and clearly $\{(\rho_{i},w):w\in U\cup V^{+}\backslash\{-u_{i}\}\}\subseteq\bigcup_{j<i}x_{j}$ which contradicts the corollary of IH;
    \item If there is no such $U$, let $x_{i}:=\bigcup_{j<i}x_{j}\cup\{(\rho_{i}\circ\mu,v):\mu[\Delta(v)]=\Delta(u_{i})\mbox{ and }\mu v=u_{i}\}$, then the inductive conditions is satisfied in the same way as the previous case.
\end{itemize}

Let $x=\bigcup_{i<\gamma}x_{i}$. By the same reasoning as the corollary of IH, it can be obtained that $x$ satisfies the induction conditions. Then we have
\begin{itemize}
    \item $x$ obviously satisfies the first and the fourth clauses of the definition of the atom;
    \item For any $\rho\in M,u,v,w\in B$ satisfying $u\cap v\neq\emptyset$, $(\rho,u),(\rho,v)\in x$ and $u\cap v\subseteq w$, assume $(\rho,w)\not\in x$, we know $(\rho,-w)\in x$, then $\{(\rho,u),(\rho,v),(\rho,-w)\}$ constitutes a counterexample to the inductive conditions, contradiction. Hence $(\rho,w)\in x$. Hence $x$ satisfies the second and the third clauses of the definition of the atom.
\end{itemize}
Hence, $x$ is an atom.
\end{proof}

As the reader may have realized, we want to construct an ``ultrafilter space" of $\mathcal{C}$ and make its dimension $\beta$, each atom being an ``ultrafilter" point in this space. Then, the clopen sets in this space are naturally defined as follows: for each $(\rho,u)\in M\times B$, define $X_{(\rho,u)}=\{x\in X_{\mathcal{C}}:(\rho,u)\in x\}$.

\begin{lemma}\label{2t1}
For any $(\rho,u),(\mu,v)\in M\times B$, there is a $\rho'\in M$, $u',v'\in B$ s.t. $X_{(\rho,u)}=X_{(\rho',u')}$, $X_{(\mu,v)}=X_{(\rho',v')}$.
\end{lemma}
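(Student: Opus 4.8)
The plan is to read the fourth clause in the definition of atom as a ``change of coordinates'' rule and to use it to move the outer maps $\rho$ and $\mu$ onto a common map $\rho'$. That clause says: whenever $\mu'|_{\Delta(u)}=\sigma|_{\Delta(u)}$, $\eta[\Delta(u)]\subseteq\beta$ and $\eta w=u$, one has $(\mu'\circ\eta,w)\in x\Leftrightarrow(\sigma,u)\in x$ for every atom $x$, i.e. $X_{(\sigma,u)}=X_{(\mu'\circ\eta,w)}$. Two instances carry the whole argument. Taking $\eta=\mathrm{id}$ and $w=u$ shows that $X_{(\sigma,u)}$ depends on $\sigma$ only through $\sigma|_{\Delta(u)}$, so the outer map may be altered freely off $\Delta(u)$. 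Taking $\mu'=\rho'\circ\eta^{-1}$ for a bijection $\eta$ of $\beta$ shows that relabelling the base coordinates of $u$ by $\eta^{-1}$ while pre-composing the outer map with $\eta^{-1}$ preserves the associated set.

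The only reason a single $\rho'$ with $\rho'|_{\Delta(u)}=\rho|_{\Delta(u)}$ and $\rho'|_{\Delta(v)}=\mu|_{\Delta(v)}$ need not exist is that $\rho$ and $\mu$ can disagree on $\Delta(u)\cap\Delta(v)$. Since $\mathcal{C}$ is basis-finite the dimensions $\Delta(u),\Delta(v)\subseteq\beta$ are finite, and since $\beta$ is infinite I can relabel $v$ onto fresh coordinates to dissolve the clash. Concretely I would pick a finite $K\subseteq\beta\backslash\Delta(u)$ with $\|K\|=\|\Delta(v)\|$ and a bijection $\xi:\beta\to\beta$ that maps $K$ onto $\Delta(v)$ and fixes everything outside a finite set, then put $v':=\xi^{-1}v$, the permutation of $v$ by $\xi^{-1}$ in the sense of Definition \ref{pmn}, so that $\Delta(v')=K$ is disjoint from $\Delta(u)$ and $\xi v'=v$ by the composition law for permutations (Theorem \ref{up}.(4)).

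I would then set $u':=u$ and define $\rho':\beta\to\alpha$ by $\rho'|_{\Delta(u)}:=\rho|_{\Delta(u)}$, $\rho'(k):=\mu(\xi(k))$ for $k\in K$, and arbitrarily elsewhere; this is unambiguous precisely because $K\cap\Delta(u)=\emptyset$. The two required equalities then follow from the clause. For $X_{(\rho,u)}=X_{(\rho',u')}$ apply it with $\eta=\mathrm{id}$, $\sigma=\rho$, $\mu'=\rho'$, $w=u$, whose hypothesis is exactly $\rho'|_{\Delta(u)}=\rho|_{\Delta(u)}$. For $X_{(\mu,v)}=X_{(\rho',v')}$ apply it with $\eta=\xi$, $\sigma=\mu$, $w=v'$ and $\mu'=\rho'\circ\xi^{-1}$: then $\mu'\circ\xi=\rho'$, $\xi v'=v$, and for $j\in\Delta(v)$ one has $\mu'(j)=\rho'(\xi^{-1}(j))=\mu(j)$ by the definition of $\rho'$ on $K$, so every hypothesis holds and the conclusion is $X_{(\mu,v)}=X_{(\rho',v')}$.

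I expect the main, and essentially only, obstacle to be the bookkeeping with the permutation notation: checking that $\xi v'=v$ genuinely holds for $v'=\xi^{-1}v$, that $\rho'\circ\xi^{-1}$ composed with $\xi$ returns $\rho'$, and that the clause produces literally the pair $(\rho',v')$ rather than something only provably equal to it. These are routine once the composition behaviour of permutations from Proposition \ref{pop} and Theorem \ref{up}.(4) is invoked, and no step needs compactness or any structure of the atoms beyond their fourth defining clause.
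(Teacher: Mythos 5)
Your proposal is correct and is essentially the paper's own argument: the paper likewise uses basis-finiteness to relabel one of the two clopen sets onto coordinates disjoint from the other's dimension (it moves $u$ to a $u'$ with $\Delta(u')\cap\Delta(v)=\emptyset$ via $\eta u'=u$, where you instead move $v$), and then merges the two outer maps into a single $\rho'$ using the fourth clause of the atom definition. The swap of which set gets relabelled is immaterial.
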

\begin{proof}
obviously there is $\eta\in \beta^{\beta},u'\in B$ s.t. $\eta u'=u$ and $\Delta(u')\cap\Delta(v)=\emptyset$. We know $X_{(\rho\circ\eta,u')}=X_{(\rho,u)}$. Let $\rho'\in M$ contain $(\rho\circ\eta)|_{\Delta(u')}$ and $\mu|_{\Delta(v)}$, we know $X_{(\rho,u)}=X_{(\rho',u')},X_{(\mu,v)}=X_{(\rho',v})$.
\end{proof}
\begin{proposition}\label{ast}
$B^{X}:=\{X_{(\rho,u)}:(\rho,u)\in M\times B\}$ is a basis of Stone topology on $X_{\mathcal{C}}$. We write the induced Stone topology as $\tau^{X}$.
\end{proposition}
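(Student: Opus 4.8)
The plan is to recall that a Stone topology is a compact, Hausdorff, zero-dimensional topology, and to establish these properties from the clopen behaviour of the sets $X_{(\rho,u)}$. Everything rests on two identities, valid for a fixed $\rho\in M$ and clopen $u,v\in B$:
\begin{equation*}
X_{\mathcal{C}}\setminus X_{(\rho,u)}=X_{(\rho,-u)},\qquad X_{(\rho,u)}\cap X_{(\rho,v)}=X_{(\rho,u\cap v)}.
\end{equation*}
The first is immediate from clause (1) of the definition of an atom. For the second, $\supseteq$ uses clause (2), and $\subseteq$ uses clause (3) when $u\cap v\neq\emptyset$; the case $u\cap v=\emptyset$ is harmless, since then $v\subseteq -u$, so by clauses (1)--(2) no atom can contain both $(\rho,u)$ and $(\rho,v)$ and both sides are empty. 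Combining these with Lemma \ref{2t1}, which rewrites any two basic sets $X_{(\rho,u)},X_{(\mu,v)}$ over a common index, I obtain that $B^{X}$ is closed under finite intersection and complementation. Taking $u=|\mathcal{C}|$ gives $X_{(\rho,|\mathcal{C}|)}=X_{\mathcal{C}}$ for every $\rho$ (because $(\rho,\emptyset)\notin x$ for every atom, again by clauses (1)--(2)), so $B^{X}$ covers $X_{\mathcal{C}}$ and is therefore a basis.

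Two of the remaining properties are then read off directly. The complement identity exhibits each $X_{(\rho,u)}$ as a clopen set, so the generated topology is zero-dimensional. For the Hausdorff property, if $x\neq y$ are distinct atoms they disagree on some pair $(\rho,u)$, and then one of the disjoint clopen sets $X_{(\rho,u)},X_{(\rho,-u)}$ contains $x$ and the other contains $y$.

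Compactness is the main obstacle, and I would obtain it by identifying $X_{\mathcal{C}}$ with the Stone dual of the Boolean algebra $B^{X}$. By the first paragraph $B^{X}$ is a field of subsets of $X_{\mathcal{C}}$, hence a Boolean algebra. I would then verify that $x\mapsto U_{x}:=\{X_{(\rho,u)}:(\rho,u)\in x\}$ is a bijection from $X_{\mathcal{C}}$ onto the set of ultrafilters of $B^{X}$. That $U_{x}$ is an ultrafilter follows from the defining clauses of an atom: for each complementary pair clause (1) puts exactly one of $X_{(\rho,u)},X_{(\rho,-u)}$ into $U_{x}$ (maximality and properness), while clause (3) with the intersection identity gives closure under meets. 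Conversely an ultrafilter $U$ produces the atom $x_{U}:=\{(\rho,u):X_{(\rho,u)}\in U\}$, where clause (4) is automatic because its hypotheses force the equality $X_{(\mu\circ\rho,v)}=X_{(\sigma,u)}$ of subsets of $X_{\mathcal{C}}$ (every atom already satisfies clause (4)). These two maps are mutually inverse, and $X_{(\rho,u)}$ corresponds to the Stone-dual basic clopen set $\{U:X_{(\rho,u)}\in U\}$, so the bijection is a homeomorphism onto $\mathrm{Ult}(B^{X})$ with its Stone topology. Compactness of $X_{\mathcal{C}}$ then follows from the compactness of the Stone dual of any Boolean algebra, with Proposition \ref{ate} entering only to guarantee nondegeneracy, namely that $X_{(\rho,u)}\neq\emptyset$ whenever $u\neq\emptyset$.

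As an alternative that reuses the machinery already in place, compactness can be proved directly in its finite-intersection form: given a family $\{X_{(\rho_{i},u_{i})}\}$ with the finite intersection property, one extends it to an atom by the transfinite Lindenbaum-style construction of Proposition \ref{ate}, seeding the induction with the whole family rather than a single pair. The finite intersection property is exactly what keeps the ``inductive conditions'' of that proof satisfiable at the seed, after which the argument proceeds verbatim. I expect the delicate point of this direct route to be translating the set-theoretic finite intersection property, via Lemma \ref{2t1} and the intersection identity above, into the combinatorial consistency condition on pairs of $M\times B$; it is precisely this bookkeeping that the Stone-dual identification sidesteps, which is why I would take the latter as the primary route.
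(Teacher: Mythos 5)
Your proof is correct, and its treatment of the basis property, zero-dimensionality, and the $T_{2}$ separation coincides with the paper's: both rest on the complement identity $-X_{(\rho,u)}=X_{(\rho,-u)}$, the intersection identity after normalizing indices via Lemma \ref{2t1}, and the fact that distinct atoms disagree on some pair. The divergence is in the compactness argument. The paper proves compactness directly in finite-intersection form: given $U\subseteq B^{X}$ with the finite intersection property, it seeds the transfinite construction of Proposition \ref{ate} with $x_{0}=\{(\rho,u):X_{(\rho,u)}\in U\}$ and extends this to an atom lying in $\bigcap U$ --- exactly the ``alternative'' route you sketch at the end, and the bookkeeping you flag there (translating the finite intersection property into the inductive conditions of that construction) is precisely what the paper passes over with an ``it is easy to see.'' Your primary route instead identifies $X_{\mathcal{C}}$ with the Stone dual of the Boolean algebra $B^{X}$; the observation that makes this work is that clause (4) of the atom definition, being satisfied by \emph{every} atom, forces the set-level equality $X_{(\mu\circ\rho,v)}=X_{(\sigma,u)}$ under its hypotheses, so the set of pairs read off from an arbitrary ultrafilter of $B^{X}$ automatically satisfies clause (4) and the correspondence between atoms and ultrafilters is a bijection (and a homeomorphism, since $X_{(\rho,u)}$ pulls back from the dual basic clopen set). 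This buys you compactness of Stone spaces off the shelf and cleanly isolates Proposition \ref{ate} as a nondegeneracy statement, at the price of verifying the bijection carefully --- in particular that $(\rho,u)\in x\Leftrightarrow x\in X_{(\rho,u)}$ makes atoms automatically saturated under coincidences $X_{(\rho,u)}=X_{(\mu,v)}$, so $U_{x}$ is a genuine ultrafilter. Both routes are sound; yours is arguably the more transparent packaging of the same underlying extension argument.
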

\begin{proof}
For any $(\rho,u),(\mu,v)\in M\times B$ s.t. $X_{(\rho,u)}\cap X_{(\mu,v)}\neq\emptyset$, by Lemma \ref{2t1}, there is $\rho'\in M$, $u',v'\in B$ s.t. $X_{(\rho,u)}=X_{(\rho',u')}$, $X_{(\mu,v)}=X_{(\rho',v')}$. Then by the second clause of definition of atom, $X_{(\rho,u)}\cap X_{(\mu,v)}=X_{(\rho',u'\cap v')}$. Hence $B^{X}$ is closed for taking finite intersection. Apparently, both $X_{(1,|\mathcal{C}|)}=X_{\mathcal{C}},X_{(1,\emptyset)}=\emptyset$ are in $B^{X}$\footnote{Here $1:\beta\rightarrow\alpha$ in $X_{(1,|\mathcal{C}|)}$ 
denotes the inclusion mapping, which is the same later in the text.}. Hence $B^{X}$ is a topological basis.

By the definition, $-X_{(\rho,u)}=X_{(\rho,-u)}$, then sets in $B^{X}$ are all clopen sets. Hence $(X_{\mathcal{C}},\tau^{X})$ is zero-dimensional.

For any $x\neq y\in X_{\mathcal{C}}$, there must be $(\rho,u)\in M\times B$ s.t. $x\in X_{(\rho,u)}, y\not\in X_{(\rho,u)}$. Hence $(X_{\mathcal{C}},\tau^{X})$ is a $T_{2}$ space.

For any $U\subseteq B^{X}$ with finite intersection property, define $x_{0}=\{(\rho,u):X_{(\rho,u)}\in U\}$. It is easy to see that $x_{0}$ satisfies the inductive conditions in the proof of Proposition \ref{ate}. Then, in the same way as the proof of Proposition \ref{ate}, $x_{0}$ can be expanded to an atom $x$ and $x\in\bigcap U$. By the arbitrariness of $U$, we know $(X_{\mathcal{C}},\tau^{X})$ is compact.

$ $
\end{proof}

For any $i,j\in\alpha$, we know there are $i',j'\in\beta,\rho\in M$ s.t. $\rho(i')=i,\rho(j')=j$, then define $D_{ij}=X_{(\rho,D^{\mathcal{C}}_{i'j'})}$ (It is easy to see this is well-defined). For any $i\in\alpha$, define relation $\backsim_{i}$ on $X_{\mathcal{C}}$ as: $x\backsim_{i}y\Leftrightarrow \{(\rho,u)\in x:i\not\in \rho[\Delta(u)]\}=\{(\rho,u)\in y:i\not\in \rho[\Delta(u)]\}$.

\begin{lemma}\label{el}
For $(\rho,u),(\mu,v)\in M\times B$, $i\in\beta$ satisfying $\mu(i)\not\in \rho[\Delta(u)]$, we have $X_{(\rho,u)}\cap X_{(\mu,[v]_{i})}\neq\emptyset\Rightarrow X_{(\rho,u)}\cap X_{(\mu,v)}\neq\emptyset$.
\end{lemma}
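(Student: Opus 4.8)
The plan is to produce a single atom $x'$ containing both $(\rho,u)$ and $(\mu,v)$; since any such atom lies in $X_{(\rho,u)}\cap X_{(\mu,v)}$, this proves the implication. By hypothesis there is an atom $x$ with $(\rho,u),(\mu,[v]_i)\in x$, and the idea is to build $x'$ from $x$ by discarding everything that depends on the coordinate $\mu(i)$ and then forcing $(\mu,v)$ as a witness for the existential statement recorded by $(\mu,[v]_i)$. Concretely I would take the seed
$$x_0:=\{(\sigma,w)\in x:\mu(i)\notin\sigma[\Delta(w)]\}\cup\{(\mu,v)\},$$
close it under clause (4) of the definition of atom, and then feed it to the extension procedure of Proposition \ref{ate}, which turns any set satisfying the two inductive conditions into a genuine atom. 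The role of the hypothesis $\mu(i)\notin\rho[\Delta(u)]$ appears immediately: it is exactly what puts $(\rho,u)$ into the ``$\mu(i)$-independent'' part of $x$, so that $(\rho,u)\in x_0$ and hence $(\rho,u)\in x'$; and $(\mu,v)\in x_0\subseteq x'$ by construction. Thus the whole proof reduces to checking that $x_0$ satisfies the inductive conditions of Proposition \ref{ate}.

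Before that I would dispose of the degenerate case $i\notin\Delta(v)$: then $v$ is $\sim_i$-saturated, $v=[v]_i$, and $(\mu,v)=(\mu,[v]_i)\in x$ already, so $x$ itself witnesses the conclusion. Assuming $i\in\Delta(v)$, the expansion-dimension of the basic clopen $X_{(\mu,v)}$ is $\mu[\Delta(v)]$, an invariant that contains $\mu(i)$; consequently every pair clause-(4)-equivalent to $(\mu,v)$ has $\mu(i)$ in its dimension and is therefore not $\mu(i)$-independent. Hence the clause-(4) closure adds to the $\mu(i)$-independent facts inherited from $x$ only pairs whose dimension contains $\mu(i)$, and these cannot collide with the inherited ones. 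The only nontrivial way the first inductive condition could fail is then through a finite family $(\mu,v),(\mu,w_1),\dots,(\mu,w_n)\in x_0$ with common first coordinate $\mu$, where the $(\mu,w_j)$ are inherited from $x$ and $v\cap w_1\cap\cdots\cap w_n=\emptyset$.

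The crux is to rule this out. Since each $(\mu,w_j)$ is $\mu(i)$-independent we have $\mu(i)\notin\mu[\Delta(w_j)]$, which forces $i\notin\Delta(w_j)$, i.e.\ each $w_j$ is $\sim_i$-saturated. Using the identity $[v\cap w]_i=[v]_i\cap w$ for $\sim_i$-saturated $w$ (proved exactly as Proposition \ref{clp}) and iterating over $j$, applying $[\cdot]_i$ to the empty intersection gives
$$\emptyset=[\,v\cap w_1\cap\cdots\cap w_n\,]_i=[v]_i\cap w_1\cap\cdots\cap w_n.$$
But $(\mu,[v]_i),(\mu,w_1),\dots,(\mu,w_n)$ all lie in $x$, so this empty intersection contradicts the first inductive condition for $x$ itself. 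The remaining, purely $\mu(i)$-independent potential violations pull back verbatim to violations in $x$, and clause (4) holds by the way $x_0$ was closed; hence $x_0$ meets the hypotheses of Proposition \ref{ate} and extends to the desired atom $x'$. I expect the genuine obstacle to be precisely this consistency check: bookkeeping the clause-(4) closure so that it contributes nothing $\mu(i)$-independent, and then collapsing the $\mu(i)$-direction via the saturation identity to transport the contradiction back to the given atom $x$.
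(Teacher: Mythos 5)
Your central computation is sound and is in fact the same engine the paper uses: from $\mu(i)\notin\rho[\Delta(u)]$ one gets $i\notin\Delta(u)$ (after identifying first coordinates), and the saturation identity $[u\cap v]_{i}=u\cap[v]_{i}$ for $\sim_{i}$-saturated $u$ transports nonemptiness between $u\cap v$ and $u\cap[v]_{i}$. But the paper runs this contrapositively and entirely at the level of the sets $X_{(\cdot,\cdot)}$: by Lemma \ref{2t1} one may assume $\rho=\mu$, so $X_{(\rho,u)}\cap X_{(\mu,v)}=X_{(\rho,u\cap v)}$; if this is empty then $u\cap v=\emptyset$ (via Proposition \ref{ate}), hence $u\cap[v]_{i}=[u\cap v]_{i}=\emptyset$ and $X_{(\rho,u)}\cap X_{(\mu,[v]_{i})}=X_{(\rho,u\cap[v]_{i})}=\emptyset$. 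No new atom is built.

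Your route --- constructing an atom containing $(\rho,u)$ and $(\mu,v)$ from a seed --- has a genuine gap exactly where you locate the ``genuine obstacle.'' The first inductive condition of Proposition \ref{ate} is a per-fiber condition: it only constrains finite families of pairs sharing the same first coordinate. In general $\rho\neq\mu$, so $(\rho,u)$ and $(\mu,v)$ sit in different fibers and never interact through that condition directly; their interaction is mediated entirely by the clause-(4) closure, which can transport $(\rho,u)$ into the $\mu$-fiber as some $(\mu,u')$ and transport $(\mu,v)$ into other fibers. Ruling out $u'\cap v=\emptyset$, and more generally ruling out empty finite intersections between transported representatives of $(\mu,v)$ and inherited pairs in every fiber, is precisely the content of the lemma, and your argument for it rests on two unsupported claims: that $\sigma[\Delta(w)]$ is a clause-(4) invariant (the construction only yields $\Delta(X_{(\sigma,w)})\subseteq\sigma[\Delta(w)]$, and the inclusion can be strict, e.g.\ when $\sigma$ is not injective), and that pairs whose dimension contains $\mu(i)$ ``cannot collide with the inherited ones'' --- collision is about emptiness of intersections of members of $B$ within a fiber, not about dimensions, so nothing follows from the dimension claim. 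Your $\mu$-fiber computation handles only the configuration $(\mu,v),(\mu,w_{1}),\dots,(\mu,w_{n})$ with the $w_{j}$ inherited from $x$; the configurations created by the closure are exactly the ones left unchecked. The repair is either to rerun your saturation argument in every fiber for every clause-(4) representative of $(\mu,v)$, or --- far more economically --- to normalize to a single first coordinate via Lemma \ref{2t1} at the outset, after which no atom needs to be constructed at all.
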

\begin{proof}
By Lemma \ref{2t1}, w.l.o.g. let $\rho=\mu$, then we have $i\not\in\Delta(u)$. If $X_{(\rho,u)}\cap X_{(\mu,v)}=X_{(\rho,u\cap v)}=\emptyset$, then by definition, $u\cap v=\emptyset$. then $u\cap[v]_{i}=[u\cap v]_{i}=[\emptyset]_{i}=\emptyset$, then $X_{(\rho,u)}\cap X_{(\mu,[v]_{i})}=X_{(\rho,u\cap [v]_{i})}=\emptyset$.
\end{proof}

\begin{theorem}\label{alexpan}
Let $\mathcal{C}_{\beta}$ be a basis-finite cylindric space. For $\alpha\ge\beta$, define $\mathcal{C}^{\alpha}=((X_{\mathcal{C}},\tau^{X}),\{\backsim_{i}:i\in\alpha\},\{D_{ij}:i,j\in\alpha\})$, then
    \begin{enumerate}
        \item $\mathcal{C}^{\alpha}$ forms an FOL space;
        \item If $\alpha=\beta$, then there is a basis-preserving C-mapping $f:\mathcal{C}_{\beta}\rightarrow\mathcal{C}^{\alpha}$ which is an injection if $\mathcal{C}_{\beta}$ is a $T_{2}$ space.
    \end{enumerate}
\end{theorem}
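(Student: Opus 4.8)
The plan is to prove the two parts separately, leaning on the fact that Proposition \ref{ast} has already shown $(X_{\mathcal{C}},\tau^X)$ to be compact, Hausdorff and zero-dimensional, i.e.\ a Stone space. Thus for part 1 what remains is to check that $\{\backsim_i:i\in\alpha\}$ together with $\{D_{ij}:i,j\in\alpha\}$ endows this Stone space with the structure of a basis-finite cylindric space; for part 2 I would exhibit the obvious ``evaluation'' map and verify in turn each clause in the definition of a basis-preserving C-mapping.

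For part 1, I take $B^X$ as the candidate cylindric basis. Closure of $B^X$ under complement and finite intersection is immediate from Proposition \ref{ast} and clause 3 of the atom definition (after using Lemma \ref{2t1} to put the two generators over a common $\rho$), and closure under union follows by De Morgan together with clause 1. The crucial point is closure under $\backsim_i$-saturation. Here I would split on whether $i\in\rho[\Delta(u)]$: if $i\not\in\rho[\Delta(u)]$ then $(\rho,u)$ is precisely one of the pairs that defines $\backsim_i$, so $X_{(\rho,u)}$ is already $\backsim_i$-saturated; if $i=\rho(i')$ with $i'\in\Delta(u)$, then I claim $[X_{(\rho,u)}]_i=X_{(\rho,[u]_{i'})}$, the inclusion $\subseteq$ being routine and the reverse inclusion being exactly where Lemma \ref{el} is used to produce, from a point of $X_{(\rho,[u]_{i'})}$, a $\backsim_i$-related point lying in $X_{(\rho,u)}$. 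Commutativity of $\{\backsim_i\}$ reduces to the commutativity already present in $\mathcal{C}_\beta$. For the diagonal family I would verify the three clauses of Definition \ref{df} by pulling back the corresponding properties of $\{D^{\mathcal{C}}_{i'j'}\}$ through the construction; the delicate one is the uniqueness $\|[x]_i\cap D_{ij}\|=1$, which again rests on translating, pair by pair, the single-point property of $D^{\mathcal{C}}_{i'j'}$ in $\mathcal{C}_\beta$. Finally, basis-finiteness is obtained by restricting attention to the subfamily $\{X_{(\rho,u)}:\rho\in M,\ u\in B_0\}$ coming from a cylindric basis $B_0$ of $\mathcal{C}_\beta$ witnessing basis-finiteness, since each such set has dimension $\rho[\Delta(u)]$, a finite set.

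For part 2, with $\alpha=\beta$ every $\rho\in M$ is a self-map of $\beta$ and the permutation $\rho u$ of Definition \ref{pmn} is defined for all clopen $u$. I would set $f(a):=x_a:=\{(\rho,u)\in M\times B:a\in\rho u\}$. That $x_a$ is genuinely an atom is checked clause by clause from Proposition \ref{pop} (complement, intersection and monotonicity) and from the composition law Theorem \ref{up}.(4) (the compatibility clause 4). Because $f^{-1}[X_{(\rho,u)}]=\rho u$ is clopen, $f$ is continuous, satisfies $f^{-1}[D_{ij}]=D^{\mathcal{C}}_{ij}$, and, taking $\rho=1$ so that $f^{-1}[X_{(1,u)}]=u$, is basis-preserving. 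The homomorphism property for $\backsim_i$ holds because $(\rho,u)\in x_a$ is determined by membership of $a$ in the $\sim_i$-saturated set $\rho u$ whenever $i\not\in\rho[\Delta(u)]$; the C-mapping property I would read off from Proposition \ref{kjdw}.(2), comparing $[\rho u]_i$ with $\rho[u]_{i'}$ where $i=\rho(i')$; and injectivity in the $T_2$ case is immediate, since a clopen set separating $a$ from $b$ yields a pair $(1,u)$ lying in $x_a$ but not in $x_b$.

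The main obstacle I anticipate is the bookkeeping inside part 1: establishing closure of $B^X$ under $\backsim_i$-saturation and, above all, the uniqueness clause $\|[x]_i\cap D_{ij}\|=1$ of the diagonal family. Both require translating a property stated for one point of $\mathcal{C}_\beta$ into a statement about atoms over all re-indexings $\rho$ simultaneously, and it is here that Lemmas \ref{2t1} and \ref{el} do the real work. A secondary but comparatively routine difficulty in part 2 is verifying that $x_a$ satisfies clause 4 of the atom definition, which is precisely the composition law for permutations guaranteed by Theorem \ref{up}.(4).
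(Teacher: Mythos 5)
Your overall strategy coincides with the paper's: part 1 rests on Proposition \ref{ast} plus a verification of the cylindric system and diagonal family, with Lemma \ref{el} carrying the saturation computation $[X_{(\rho,u)}]_i=X_{(\rho,[u]_{i'})}$; part 2 uses the atom $x_a=\{(\rho,u):a\in\rho u\}$, which is exactly the point the paper singles out as $\bigcap\{X_{(1,u)}:a\in u\}$, and the remaining verifications match. You even patch a small omission of the paper by explicitly noting that each $X_{(\rho,u)}$ has dimension inside the finite set $\rho[\Delta(u)]$, so basis-finiteness holds.

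There is, however, one step where your plan as written would not go through: the commutativity of $\{\backsim_i:i\in\alpha\}$ does \emph{not} reduce to the commutativity of the cylindric system on $\mathcal{C}_\beta$. For $i,j\in\alpha\setminus\beta$ there are no corresponding relations on $\mathcal{C}_\beta$ at all, and even for indices coming from $\beta$ the required witness $z'$ with $x\backsim_j z'\backsim_i y$ must be manufactured as an atom extending $x^{j}\cup y^{i}$ (where $x^{j}:=\{(\rho,u)\in x: j\notin\rho[\Delta(u)]\}$). That existence is a finite-intersection-property argument in $X_{\mathcal{C}}$: one shows pairwise non-emptiness of the corresponding clopen sets by applying Lemma \ref{el} twice (using $x^{ij}=y^{ij}$) and then invokes the compactness of $(X_{\mathcal{C}},\tau^X)$ from Proposition \ref{ast}. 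This is the paper's argument and it is a genuinely new construction, not an inheritance from $\mathcal{C}_\beta$; your proposal is missing this idea. A second, smaller point: in part 2 you cite Theorem \ref{up}.(4) to get the composition law for atoms, but that theorem is stated for FOL spaces, whereas $\mathcal{C}_\beta$ here is only a basis-finite cylindric space; what you actually need is the clopen-set composition identity, which follows from Proposition \ref{pop} (parts (1), (3), (6)) without any compactness or separation hypotheses, so you should cite that instead.
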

\begin{proof}
To prove that $\mathcal{C}^{\alpha}$ is an FOL space, by Proposition \ref{ast}, we just need to prove that it is a cylindric space:
\begin{itemize}
    \item \textbf{The commutativity of $\{\backsim_{i}:i\in\alpha\}$}
    
    For $x,y,z\in X_{\mathcal{C}}$, if $x\backsim_{i}z\backsim_{j}y$, let $x^{ij}:=\{(\rho,u)\in x:i,j\not\in \rho[\Delta(u)]\},y^{ij}:=\{(\rho,u)\in y:i,j\not\in \rho[\Delta(u)]\}$, then $x^{ij}=y^{ij}$. Let $x^{j}=\{(\rho,u)\in x:j\not\in \rho[\Delta(u)]\}$, $y^{i}=\{(\rho,u)\in y:i\not\in \rho[\Delta(u)]\}$. Write $j':=\rho^{-1}(j),i':=\mu^{-1}(i)$, then for any $(\rho,u)\in x^{j}$ and $(\mu,v)\in y^{i}$, $(\rho,[u]_{i'})\in x^{ij}=y^{ij}\ni (\mu,[v]_{j'})$, then $x,y\in X_{(\rho,[u]_{i'})}\cap X_{(\mu,[v]_{j'})}\neq\emptyset$. Using Lemma \ref{el} twice, we get $X_{(\rho,u)}\cap X_{(\mu,v)}\neq\emptyset$. By compactness, we know $\bigcap\{X_{(\rho,u)}:(\rho,u)\in x^{j}\cup y^{i}\}\neq\emptyset$ and for any $z'$ in this set, $x\sim_{j}z'\sim_{i}y$.
    \item \textbf{$B^{X}$ is a cylindric basis about $\{\backsim_{i}:i\in\alpha\}$}
    
    By definition, $B^{X}$ is closed under taking intersection, union, complement and  for any $i\in\beta$, $(\rho,u)\in B^{X}$. write $i':=\rho^{-1}(i)$, we show $[X_{(\rho,u)}]_{i}=X_{(\rho,[u]_{i'})}$:
    
    \begin{tabular}{rl}
         &$x\in[X_{(\rho,u)}]_{i}$\\
         $\Leftrightarrow$&$\exists y\backsim_{i}x$ s.t. $(\rho,u)\in y$\\
         $\Leftrightarrow$&$x^{i}\cup\{(\rho,u)\}$ can be expanded to a atom\\
         $\Leftrightarrow$&$\{X_{\rho,u}\}\cup\{X_{(\mu,v)}:(\mu,v)\in x^{i}\}$ has finite intersection property \\&(by compactness)\\
         $\Leftrightarrow$& For any $(\mu,v)\in x^{i},X_{(\rho,u)}\cap X_{(\mu,v)}\neq\emptyset$\\
         $\Leftrightarrow$& For any $(\mu,v)\in x^{i},X_{(\rho,[u]_{i'})}\cap X_{(\mu,v)}\neq\emptyset\quad$\\&(by definition of $x^{i}$, $\rho(i')=i\not\in\mu(\Delta(v))$, then by Lemma \ref{el} we get this)\\
         $\Leftrightarrow$&$(\rho,[u]_{i'})\in x^{i}\subseteq x\quad$\\&($\Rightarrow$: otherwise $(\rho,-[u]_{i'})\in x^{i}$; $\Leftarrow$: any $X_{(\mu,v)}$ in the above equation is $\sim_{i}$-saturated)\\
         $\Leftrightarrow$&$x\in X_{(\rho,[u]_{i'})}$
    \end{tabular}
    
    Hence, for any $i\in\alpha$, $B^{X}$ is closed under taking $\backsim_{i}$-saturation, and then is a cylindric basis about $\{\backsim_{i}:i\in\alpha\}$.
    \item \textbf{$\{D_{ij}:i,j\in\alpha\}$ is diagonal family}
    
    For any $i\neq j\in\alpha$, let $x,y\in D_{ij}$ be different and $x\sim_{i}y$, we know there is $(\rho,u)$ s.t. $x\in X_{(\rho,u)},y\in X_{(\rho,-u)}$, then for $x_{\rho}:=\{u:(\rho,u)\in x\}$ and $y_{\rho}:=\{u:(\rho,u)\in y\}$, we have $x_{\rho}\neq y_{\rho}$ and $D^{\mathcal{C}}_{\rho^{-1}(i)\rho^{-1}(j)}\in x_{\rho}\cap y_{\rho}$. By definition of atom, both $x_{\rho},y_{\rho}$ form ultrafilters on $B$ which is the basis of $\mathcal{C}$, then by compactness and $T_{2}$ property, both $\bigcap x_{\rho},\bigcap y_{\rho}$ are singletons. We denote points in $\bigcap x_{\rho},\bigcap y_{\rho}$ as $a_{x},a_{y}$. By $x\sim_{i}y$, we know $\{u\in x_{\rho}:\rho^{-1}(i)\not\in\Delta(u)\}=\{u\in y_{\rho}:\rho^{-1}(i)\not\in\Delta(u)\}$, then $a_{x}\sim_{\rho^{-1}(i)}a_{y}$ which contradicts $D^{\mathcal{C}}_{\rho^{-1}(i)\rho^{-1}(j)}\in x_{\rho}\cap y_{\rho}$.
\end{itemize}

For the case of $\alpha=\beta$, let $f:|\mathcal{C}|\rightarrow X_{\mathcal{C}}$ satisfy: for any $a\in|\mathcal{C}|,f(a)\in\bigcap\{X_{(1,u)}:u\mbox{ is clopen neighborhood of }a\}$ (It is easy to see that this set is a singleton. hence we actually give a precise definition of $f$). Now we prove $f$ is a basis-preserving C-mapping generally and is an injection if $\mathcal{C}_{\beta}$ is a $T_{2}$ space.
\begin{itemize}
    \item \textbf{Continuity, basis-preserving, and diagonal keeping.}
    
    Here it is only proved that for the case of clopen sets in the cylindric basis, the case of general open sets is an obvious corollary.
    
    For any $X_{(\rho,u)}\in B^{X}$, we know there is $u'\in B$, $X_{(1,u')}=X_{(\rho,u)}$. Now we prove, for any $u\in B$, $f^{-1}[X_{(1,u)}]=u$:
    
    By the definition of $f$, $f[u]\subseteq X_{(1,u)}$ i.e. $u\subseteq f^{-1}[X_{(1,u)}]$. If there is $a\in f^{-1}[X_{(1,u)}]\backslash u$, we know $a\in-u$, then $a\in f^{-1}[X_{(1,-u)}]=f^{-1}[-X_{(1,u)}]$, contradiction.
    \item\textbf{The case of $T_{2}$ space}
    
    For any $a,b\in|\mathcal{C}|$ s.t. $f(a)=f(b)$, we know $\bigcap\{X_{(1,u)}:u$ is a clopen neighborhood of $a\}=\bigcap\{X_{(1,u)}:u$ is clopen neighborhood of $b\}$, then $\{X_{(1,u)}:u$ is clopen neighborhood of $a\}=\{X_{(1,u)}:u$ is clopen neighborhood of $b\}$, then $\{u:u$ is clopen neighborhood of $a\}=\{u:u$ is clopen neighborhood of $b\}$, then $a=b$.
    \item\textbf{structure homomorphism and C-mapping}
    
    For any $i\in\beta$ and any $a\in|\mathcal{C}|$, if $b\sim_{i}a$, then $\{u:u$ is $ i$-saturated clopen neighborhood of $a\}=\{u:u$ is $i$-saturated clopen neighborhood of $b\}$, then $\{(1,u):u$ is $i$-saturated clopen neighborhood of $a\}=\{(1,u):u$ is $i$-saturated clopen neighborhood of $b\}$, then $f(a)\sim_{i}f(b)$.
    
    For any $X_{(1,u)}\in B^{X}$ s.t. $[f(a)]_{i}\cap X_{(1,u)}\neq\emptyset$, we know $f(a)\in[X_{(1,u)}]_{i}=X_{(1,[u]_{i})}$, then $a\in[u]_{i}$, then there is $b\sim_{i}a$ s.t. $b\in u$, and then $f(b)\in X_{(1,u)}$.
\end{itemize}
\end{proof}

We call the $\mathcal{C}^{\alpha}$ in the above theorem the \textbf{$\alpha$-expansion space} of $\mathcal{C}$.

\begin{lemma}
If $\mathcal{C}$ is an FOL space, then for any $x\in|\mathcal{C}^{\alpha}|$, $\bigcap\{u|(1,u)\in x\}$ is a singleton.
\end{lemma}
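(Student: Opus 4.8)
The plan is to reduce the statement to the standard fact that in a Stone space an ultrafilter of clopen sets converges to exactly one point. Fix $x\in|\mathcal{C}^{\alpha}|=X_{\mathcal{C}}$ and set $x_{1}:=\{u\in B:(1,u)\in x\}$, where $B$ is the Boolean algebra of clopen sets of the FOL space $\mathcal{C}$ and $1\colon\beta\to\alpha$ is the inclusion. The set to be analyzed, $\bigcap\{u:(1,u)\in x\}$, is then exactly $\bigcap x_{1}$, the intersection of the clopen sets belonging to $x_{1}$.

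The key step is to verify that $x_{1}$ is a proper ultrafilter on $B$, reading off the required closure properties from the atom clauses specialized to the single index $\rho=1$. Clause 1 gives that for every $u$ exactly one of $u,-u$ lies in $x_{1}$; this yields both properness and maximality, once we note $\emptyset\notin x_{1}$: if $\emptyset\in x_{1}$, then clause 2 (upward closure, as $\emptyset\subseteq v$ for all $v$) would force every $v$ into $x_{1}$, so both $u$ and $-u$ would lie in $x_{1}$, contradicting clause 1; hence $\emptyset\notin x_{1}$ and $S\in x_{1}$. Clause 2 gives upward closure. For closure under finite meets I would observe that any two members $u,v\in x_{1}$ must meet: were $u\cap v=\emptyset$, then $v\subseteq-u$ and clause 2 would put $-u\in x_{1}$, contradicting clause 1; so $u\cap v\neq\emptyset$ and clause 3 applies, giving $u\cap v\in x_{1}$. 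Thus $x_{1}$ is an ultrafilter.

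Finally I would invoke the Stone structure of $\mathcal{C}$. The members of $x_{1}$ are clopen, hence closed, and $x_{1}$ is a filter not containing $\emptyset$, so $x_{1}$ has the finite intersection property; compactness of $\mathcal{C}$ then gives $\bigcap x_{1}\neq\emptyset$. For uniqueness, if $a\neq b$ both lay in $\bigcap x_{1}$, zero-dimensionality together with the $T_{2}$ property would supply a clopen $u$ with $a\in u$, $b\notin u$; by maximality either $u\in x_{1}$ or $-u\in x_{1}$, and either option contradicts $a,b\in\bigcap x_{1}$. Hence $\bigcap x_{1}$ is a singleton. This is essentially the ultrafilter-to-point argument already used in the proof of Theorem \ref{alexpan} for the fibers $x_{\rho}$, now applied to the fiber over the inclusion map. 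I do not anticipate a genuine obstacle; the only point needing care is the pairwise-meeting observation required to apply clause 3, since that clause carries the extra hypothesis $u\cap v\neq\emptyset$.
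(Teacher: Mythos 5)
Your proof is correct and follows essentially the same route as the paper: the paper likewise extracts the finite intersection property of $\{u:(1,u)\in x\}$ from the atom clauses, applies compactness for non-emptiness, and uses clopen separation of distinct points together with clause 1 for uniqueness. Your version merely spells out the ultrafilter verification (in particular the pairwise-meeting step needed before invoking clause 3) that the paper compresses into "by the definition of atom".
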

\begin{proof}
By the definition of atom, $\{u|(1,u)\in x\}$ has finite intersection property, then by compactness, $\bigcap\{u|(1,u)\in x\}\neq\emptyset$.

If there are two different points $a,b\in\bigcap\{u|(1,u)\in x\}$, then there are clopen set $v\in\tau^{\mathcal{C}}$ s.t. $a\in v,b\in-v$, then neither $(1,v)$ nor $(1,-v)$ is in $x$, which contradicts the definition of atom. Hence $\bigcap\{u|(1,u)\in x\}$ is singleton.
\end{proof}

\begin{theorem}\label{kex}
Let $\mathcal{C}_{\beta}$ be an FOL space, $\alpha\ge\beta$, then up to S-homemorphism there is a unique $\alpha$-dimensional FOL space $\mathcal{C'}_{\alpha}$ and basis-preserving C-surjection $f$ s.t. $f:\mathcal{C'}_{\alpha}\rightarrow\mathcal{C}_{\beta}$.
\end{theorem}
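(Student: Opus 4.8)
The plan is to take $\mathcal{C'}_{\alpha}$ to be the $\alpha$-expansion space $\mathcal{C}^{\alpha}$ of Theorem \ref{alexpan}, which is already an $\alpha$-dimensional FOL space, and to realise the required surjection as the canonical ``projection along the inclusion $1:\beta\to\alpha$''. Since $\mathcal{C}_{\beta}$ is an FOL space, the preceding lemma makes $\bigcap\{u:(1,u)\in x\}$ a singleton for every atom $x\in|\mathcal{C}^{\alpha}|$, and I would define $f(x)$ to be its unique element. First I would check $f^{-1}[u]=X_{(1,u)}$ for each clopen $u\in B$, which yields continuity and $f^{-1}[D^{\mathcal{C}}_{ij}]=D_{ij}$ for $i,j\in\beta$ at once. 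For the structure-homomorphism and C-mapping clauses I would combine the identity $[X_{(1,u)}]_{i}=X_{(1,[u]_{i})}$ from Theorem \ref{alexpan} and Proposition \ref{kjdw}.(2) with the fact, extracted from the proof of Proposition \ref{cstc}, that in a compact $T_{2}$ cylindric space $[a]_{i}=\bigcap\{\,\sim_{i}\text{-saturated clopen }v\supseteq[a]_{i}\,\}$, so distinct $\sim_{i}$-classes are separated by $\sim_{i}$-saturated clopen sets; then $x\backsim_{i}y$ forces $f(x)$ and $f(y)$ into the same saturated clopens and hence $f(x)\sim_{i}f(y)$. Surjectivity follows by feeding the neighbourhood family $\{X_{(1,u)}:a\in u\}$ of a target point $a$ into the finite-intersection argument of Proposition \ref{ast}: it extends to an atom $x$ with $\{u:(1,u)\in x\}=\{u:a\in u\}$, so $f(x)=a$.

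The genuinely technical clause of the existence half is basis-preservation. Here I would first record the dimension formula $\Delta(X_{(\rho,u)})=\rho[\Delta(u)]$ (read off from $[X_{(\rho,u)}]_{i}=X_{(\rho,[u]_{\rho^{-1}(i)})}$ together with the fact that $X_{(\rho,u)}$ is $\sim_{i}$-saturated when $i\notin\mathrm{ran}(\rho)$) and the canonical-form identity $X_{(\rho,u)}=X_{(1,\rho u)}$ whenever $\rho[\Delta(u)]\subseteq\beta$, which is exactly the relation $X_{(\rho\circ\eta,u')}=X_{(\rho,\eta u')}$ used in the proof of Lemma \ref{2t1} (equivalently, clause 4 of the atom definition). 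Given an open $u$ with $\Delta(u)\subseteq\beta$, I would cover it by basic clopens $X_{(\rho,v)}\subseteq u$ and, using that $u$ is $\sim_{i}$-saturated for the finitely many $i\in\rho[\Delta(v)]\setminus\beta$, replace each by its saturation over those coordinates; this gives $X_{(\rho,v')}\subseteq u$ with $\rho[\Delta(v')]\subseteq\beta$, hence $X_{(\rho,v')}=X_{(1,\rho v')}=f^{-1}[\rho v']$, so $u$ is a union of $f$-preimages and therefore a preimage.

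For uniqueness, suppose $g:\mathcal{C''}_{\alpha}\to\mathcal{C}_{\beta}$ is another basis-preserving C-surjection from an $\alpha$-dimensional FOL space. I would define $h:|\mathcal{C''}|\to|\mathcal{C}^{\alpha}|$ by $h(y)=\{(\rho,v)\in M\times B:y\in\rho(g^{-1}[v])\}$, where $\rho(g^{-1}[v])$ is the permutation (Definition \ref{pmn}) of the clopen set $g^{-1}[v]$; this is legitimate since $\Delta(g^{-1}[v])\subseteq\beta=\mathrm{dom}(\rho)$ by Proposition \ref{kjdw}.(1). Atom axioms 1--3 for $h(y)$ reduce to permutation commuting with complement and intersection and being monotone (Proposition \ref{pop}.(2)); axiom 4 reduces to the composition law $(\rho'\circ\rho)u=\rho'(\rho u)$ of Theorem \ref{up}.(4) together with $g^{-1}[\rho v]=\rho\,g^{-1}[v]$, the latter because $g$ is a C-mapping, so $g^{-1}$ commutes with saturations (Proposition \ref{kjdw}.(2)) and with diagonals. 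Since $h^{-1}[X_{(\rho,v)}]=\rho(g^{-1}[v])$ is clopen, $h$ is continuous; preservation of diagonals uses $h^{-1}[D_{ij}]=\rho_{0}(g^{-1}[D^{\mathcal{C}}_{i'j'}])=\rho_{0}(D^{\mathcal{C''}}_{i'j'})=D^{\mathcal{C''}}_{ij}$, and the structure-homomorphism clauses follow from $\Delta(\rho(g^{-1}[v]))\subseteq\rho[\Delta(v)]$ (so $i\notin\rho[\Delta(v)]$ makes the set $\sim_{i}$-saturated, which $\sim_{i}$-related points respect).

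Finally I would show $h$ is a basis-preserving bijection and conclude by Lemma \ref{bh}. Injectivity and basis-preservation both rest on the substitution fact that in the $\alpha$-dimensional FOL space $\mathcal{C''}$ every clopen $w$ equals $\rho(g^{-1}[v])$ for some $(\rho,v)$: move the finitely many coordinates of $\Delta(w)$ to fresh coordinates inside the infinite set $\beta$ by an invertible substitution (Proposition \ref{pop}.(5)), land in a clopen of dimension $\subseteq\beta$ that is a $g$-preimage by basis-preservation, then permute back. Injectivity is then immediate from the $T_{2}$ property, and $w=h^{-1}[X_{(\rho,v)}]$ gives basis-preservation. Surjectivity of $h$ is where I expect the real work: for an atom $x$ the clopen family $\{\rho(g^{-1}[v]):(\rho,v)\in x\}$ has the finite-intersection property, via Lemma \ref{2t1} and atom-axiom 3 to reach a common $\rho$, and via surjectivity of $g$ together with Theorem \ref{up}.(1) (whose diagonal-compatibility hypothesis is guaranteed by the consistency of $x$) to see each finite intersection is non-empty; compactness of $\mathcal{C''}$ then yields a point $y$ with $x\subseteq h(y)$, and maximality of both atoms forces $h(y)=x$. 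A direct computation gives $f\circ h=g$, so the pair $(\mathcal{C}^{\alpha},f)$ is unique up to S-homeomorphism. I expect the atom-axiom~4 verification for $h(y)$ and this compactness-based surjectivity to be the main obstacles, both resting on the permutation calculus of Proposition \ref{pop} and Theorem \ref{up}.
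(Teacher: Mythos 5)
Your proposal is correct and follows essentially the same route as the paper: existence is obtained from the $\alpha$-expansion space $\mathcal{C}^{\alpha}$ with the projection $x\mapsto\bigcap\{u:(1,u)\in x\}$, and uniqueness from the atom map $y\mapsto\{(\rho,v):y\in\rho\,g^{-1}[v]\}$ shown to be a basis-preserving S-bijection and hence an S-homeomorphism by Lemma \ref{bh}. The only difference is that you spell out the finite-intersection/compactness argument for surjectivity of $h$ and the basis-preservation of $f$ in more detail than the paper does.
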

\begin{proof}
$ $
\begin{itemize}
    \item\textbf{Existence:} Define $f:|\mathcal{C}^{\beta}|\rightarrow|\mathcal{C}|$ as $x\mapsto\bigcap\{u|(1,u)\in x\}$, we verify that $f$ is a basis-preserving C-surjection.
    \begin{itemize}
        \item \textbf{Continuity, basis-preserving and diagonal keeping}
    
    Here it is only proved that for the case of clopen sets in the cylindric basis, the case of general open sets is an obvious corollary.
    
    For any $X_{(\rho,u)}\in B^{X}$ s.t. $\Delta(X_{(\rho,u)})\subseteq\beta$, we know $\rho[\Delta(u)]\subseteq\beta$, then there is $u'\in B$ s.t. $X_{(1,u')}=X_{(\rho,u)}$ ($X_{1,u'}=D_{ij}$ when $u'=D^{\mathcal{C}}_{ij}$). Now we just need to prove for any $u\in B$, $f^{-1}[u]=X_{(1,u)}$:
        
        $x\in f^{-1}[u]\Leftrightarrow f(x)\in u\Leftrightarrow (1,u)\in x\Leftrightarrow x\in X_{(1,u)}$.
        \item \textbf{surjection}
        
        For any $a\in|\mathcal{C}|$, consider $x'=\{X_{(1,u)}:a\in u\}$. It's easy to see $x'$ can be expanded to an atom $x$, then by definition of $f$, $f(x)=a$.
        \item \textbf{structure homomorphism and C-mapping}
        
        For any $x,y\in|\mathcal{C}^{\alpha}|$, and any $i\in\alpha$ s.t. $x\sim_{i}y$, if $i\in\alpha\backslash\beta$ we know $f(x)=f(y)$; if $i\in\beta$, then $\{(1,u)\in x:i\not\in\Delta(u)\}=\{(1,u)\in y:i\not\in\Delta(u)\}$, then $\{u\in B^{\mathcal{C}}:f(x)\in u\mbox{ and }i\not\in\Delta(u)\}=\{u\in B^{\mathcal{C}}:f(y)\in u\mbox{ and }i\not\in\Delta(u)\}$, then by Proposition \ref{cstc}, $f(x)\sim_{i}f(y)$.
        
        For any $u\in B,i\in\beta$, $f^{-1}[[u]_{i}]=X_{(1,[u]_{i})}=[X_{(1,u)}]_{i}=[f^{-1}[u]]_{i}$.
    \end{itemize}
    
    \item\textbf{Uniqueness:} 
    
    Let $\mathcal{C}'$ be a FOL space of $\alpha$ dimension, $f':\mathcal{C}'\rightarrow\mathcal{C}$ be a basis-preserving C-surjection. denotes the cylindric basis of $\mathcal{C}'$ as $B^{\mathcal{C'}}$, we know $\{f'^{-1}[u]:u\in B\}=\{u\in B^{\mathcal{C'}}:\Delta(u)\subseteq\beta\}$, then $B^{\mathcal{C}'}=\{\rho (f'^{-1}[u]):u\in B,\rho:\alpha^{\alpha}\}=\{\rho (f'^{-1}[u]):u\in B,\rho:\alpha^{\beta}\mbox{ is injection }\}$. For each $a\in\mathcal{C'}$, $a_{at}:=\{(\rho,u)\in M\times B:a\in\rho f'^{-1}[u]\}$. By the definition of atom, $a_{at}$ is an atom in $M\times B$ and from each atom in $M\times B$ we can get an ultrafilter on $B^{\mathcal{C'}}$, and then, by compactness, get a point $a$ in $|\mathcal{C'}|$ s.t. $a_{at}=x$. We define $g:|\mathcal{C'}|\rightarrow|\mathcal{C}^{\alpha}|$ as $a\mapsto a_{at}$. Clearly, it is a bijection, and we have $f\circ g=f'$.
   
   Now, by Proposition \ref{bh}, to prove $g$ is an S-homeomorphism, we just need to prove that $g$ is a basis-preserving S-mapping:
   
   \begin{itemize}
        \item \textbf{Continuity, basis-preserving, and diagonal keeping}
    
    Here it is only proved that for the case of clopen sets in the cylindric basis, the case of general open sets is an obvious corollary.
    
    For $D_{ij}\in B^{X}$, we know there is $\rho\in M,i'j'\in\alpha,D_{ij}=X_{(\rho,D^{\mathcal{C}}_{i'j'})}$ and $D^{\mathcal{C'}}_{ij}=\rho D^{\mathcal{C}}_{i'j'}$ (here $\rho(i')=i,\rho(j')=j$).
    
    Hence, we just need to prove, for any $X_{(\rho,u)}\in B^{X}$, $g^{-1}[X_{(\rho,u)}]=\rho f'^{-1}[u]$:
        
        $a\in\rho  f'^{-1}[u]\Leftrightarrow (\rho,u)\in a_{at}=g(a)\Leftrightarrow g(a)\in X_{(\rho,u)}$.
        \item \textbf{Structure homomophism:}
        
        For any $a,b\in|\mathcal{C'}|$, 
        
        \begin{tabular}{rl}
             &$i\in\alpha$, $a\sim_{i}b$\\
             $\Rightarrow$&there is no $u'\in B^{\mathcal{C'}}$ s.t. $a\in u'\not\ni b$ and $i\not\in\Delta(u')$\\
             $\Rightarrow$&there is no $u\in B$, $\rho:\alpha^{\alpha}$ s.t. $a\in \rho u\not\ni b$ and $i\not\in\rho[\Delta(u)]$\\
             $\Rightarrow$&there is no $u\in B$, $\rho:\alpha^{\alpha}$ s.t. $f(a)\in X_{(\rho,u)}\not\ni f(b)$ and $i\not\in\Delta(X_{(\rho,u)})$\\
             $\Rightarrow$&$f(a)\sim_{i}f(b)$
        \end{tabular}
    \end{itemize}
\end{itemize}
\end{proof}

Call the mapping constructed in the above proof the \textbf{$\alpha$-expansion mapping} of $\mathcal{C}_{\beta}$.

\begin{remark}
In fact, by constructing quotient spaces, we can easily prove the opposite direction of this proposition, namely, for $\alpha\ge\beta$, each $\alpha$-dimensional FOL space can be mapped by a basis-preserving C-surjection to a $\beta$-dimensional FOL space, and this basis-preserving C-surjection and this $\beta$-dimensional FOL space are unique up to S-homeomorphism. This way, we get a one-to-one correspondence between FOL spaces with different dimensions.
\end{remark}
\section{Topologization of first-order logic semantics}\label{tph}
Based on the purely topological discussion in Section \ref{sbw}, this section establishes a connection between first-order logic and cylindric space and finally obtains a systematic topological representation of semantics of first-order logic. The basic pattern of this connection has been shown by examples in Section \ref{sglz}. To facilitate a rigorous discussion, in \ref{4.1}, we generalize the correspondence between clopen sets in the topologization space or model space and formulas in these examples as a mapping (called $\mathcal{L}$-formation) from formulas of a language to clopen sets in a cylindric space. In \ref{4.2}, we initially establish the connection between first-order structures and points in FOL spaces. And on this basis, a topological proof of the first-order logic compactness theorem is given. This means that all model spaces are FOL spaces. In \ref{4.3}, for each infinite ordinal $\alpha$, each theory $T$ is associated with a unique $\alpha$-dimensional FOL space called $\alpha$-model space of $T$; and the class of $\alpha$-dimensional FOL spaces is shown to be the class of S-homeomorphic spaces of $\alpha$-model spaces. In this way, we provide enough points for representing uncountable models of $T$. In \ref{4.4}, the topological representations of first-order structures and elementary embeddings are finally given in two theorems, respectively.
\subsection{$\alpha$-topologization and $\mathcal{L}$-formation}
In subsection \ref{sglz}, we define the $\omega$-topologization of first-order structure which can be easily generalized to any infinite ordinal $\alpha$: for a first-order language $\mathcal{L}$ and an infinite ordinal $\alpha$, let $\mathcal{L}_{\alpha}$ be the language obtained by expanding the set of variables $\{v_{i}:i\in\omega\}$ in $\mathcal{L}$ to $\{v_{i}:i\in\alpha\}$. The definition of $\mathcal{L}_{\alpha}$-formulas and the definition of substitution for these formulas are then identical to those in $\mathcal{L}$. For a $\mathcal{L}$-structure $\mathfrak{A}$, we consider the $\alpha$ power of its domain $A$, written as $A^{\alpha}$, and the clopen set $A_{\phi}$ corresponding to $\phi\in\Lambda_{\mathcal{L}_{\alpha}}$. Based on this, applying exactly the same method as that used to handle the $\omega$-topologization, we can obtain the \textbf{$\alpha$-topologization} of $\mathfrak{A}$ (denoted as $\mathcal{C}_{\alpha}^{\mathfrak{A}}$) and prove that $\mathcal{C}_ {\alpha}^{\mathfrak{A}}$ is also a basis-finite cylindric space.

From the construction of $\mathcal{C}_{\alpha}^{\mathfrak{A}}$, we naturally obtain a mapping $\phi\mapsto A_{\phi}$ from $\Lambda_{\mathcal{L}_{\alpha}}$ to the cylindric basis of $\mathcal{C}_{\alpha}^{\mathfrak{A}}$, which can be defined abstractly on a general basis-finite cylindric space as follows.

\begin{definition}
For an infinite ordinal $\alpha$, a basis-finite cylindric space $\mathcal{C}_{\alpha}$, if the mapping $l:\Lambda_{\mathcal{L}_{\alpha}}\rightarrow \tau_{\mathcal{C}}$ satisfies the following conditions, then we call $l$ an \textbf{$\mathcal{L}$-formation} on $\mathcal{C}_{\alpha}$:
\begin{itemize}
    \item For $n$-ary predicate $R$, $l(Rv_{0}...v_{n-1})$ is a clopen set s.t. $\Delta(l(Rv_{0}...v_{n-1}))\subseteq n$;
    \item For $i,j\in\alpha$, $l(v_{i}=v_{j})=D^{\mathcal{C}}_{ij}$;
    \item For $i,j\in\alpha$, $l(\phi(v_{i}/v_{j}))=[l(\phi)\cap D^{\mathcal{C}}_{ij}]_{j}$;
    \item $l(\neg\phi)=|\mathcal{C}|\backslash l(\phi)$;
    \item $l(\phi\wedge\psi)=l(\phi)\cap l(\psi)$;
    \item $l(\exists v_{i}\phi)=[l(\phi)]_{i}$;
    \item $ran(l)$ is a topological basis of $\tau_{\mathcal{C}}$.
\end{itemize}
\end{definition}

For any $\mathcal{L}$-structure $\mathfrak{A}$ and its $\alpha$-topologization $\mathcal{C}^{\mathfrak{A}}_{\alpha}$, define $l_{\mathfrak{A}}:\Lambda_{\mathcal{L}_{\alpha}}\rightarrow\tau_{\mathcal{C}^{\mathfrak{A}}_{\alpha}}$ as $\phi\mapsto A_{[\phi]}$. It is easy to verify that $l_{\mathfrak{A}}$ is an $\mathcal{L}$-formation. Call $l_{\mathfrak{A}}$ the \textbf{natural $\mathcal{L}$-formation} on $\mathcal{C}^{\mathfrak{A}}_{\alpha}$.

For any $\mathcal{L}$-theory $T$ and its model space $\mathcal{C}^{T}$, let $l_{T}:\Lambda_{\mathcal{L}}\rightarrow\tau_{\mathcal{C}^{T}}$ be $\phi\mapsto S_{[\phi]}$. It is easy to verify that $l_{T}$ is also an $\mathcal{L}$-formation. Call $l_{T}$ the \textbf{natural $\mathcal{L}$-formation} on $\mathcal{C}^{T}$.

\begin{definition}
Let $f:\mathcal{C}_{\alpha}\rightarrow\mathcal{C'}_{\beta}$ be an S-mapping, $\mathcal{L}\ge\mathcal{L'}$ be two languages, $l,l'$ be $\mathcal{L}$-formation on $\mathcal{C}$ and $\mathcal{L'}$-formation on $\mathcal{C'}$ respectively, if for any $\phi\in\Lambda_{\mathcal{L'}_{\beta}},f^{-1}[l'(\phi)]=l(\phi)$, then we say \textbf{$f$ maps $l$ to $l'$} and denote this as $f:(\mathcal{C},l)\rightarrow(\mathcal{C}',l')$.
\end{definition}

\begin{proposition}\label{ltl}
Let $\mathcal{C}_{\alpha},\mathcal{C}'_{\beta}$ be basis-finite cylindric space, $\mathcal{L}$ be a language, $f:\mathcal{C}\rightarrow\mathcal{C}'$ be basis-preserving C-mapping, then for any $\mathcal{L}$-formation $l'$ on $\mathcal{C}'_{\beta}$, there is a unique $\mathcal{L}$-formation $l$ on $\mathcal{C}_{\alpha}$ s.t. $f:(\mathcal{C},l)\rightarrow(\mathcal{C}',l')$.
\end{proposition}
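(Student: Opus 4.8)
The plan is to pin $l$ down on atomic formulas, observe that the formation axioms then force its value on every formula, and finally check that the forced values really assemble into an $\mathcal{L}$-formation. This yields uniqueness almost immediately and reduces existence to a well-definedness check and one substitution computation.

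For \emph{uniqueness}, note that the requirement $f:(\mathcal{C},l)\rightarrow(\mathcal{C}',l')$ means $l(\phi)=f^{-1}[l'(\phi)]$ for every $\phi\in\Lambda_{\mathcal{L}_{\beta}}$, so it already fixes $l$ on the atoms $Rv_{0}\dots v_{n-1}$; moreover the equality axiom forces $l(v_{i}=v_{j})=D^{\mathcal{C}}_{ij}$ for all $i,j\in\alpha$ independently of $f$. Every formula is built from these atoms by $\neg,\wedge,\exists v_{i}$ and substitution, and the clauses for an $\mathcal{L}$-formation express $l$ of a compound in terms of $l$ of its constituents; since an arbitrary atom $Rv_{i_{1}}\dots v_{i_{n}}$ is a substitution instance of $Rv_{0}\dots v_{n-1}$, an induction on formula complexity shows these clauses determine $l$ everywhere. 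Hence at most one such $l$ exists.

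For \emph{existence}, I define $l$ by recursion on the structure of $\phi\in\Lambda_{\mathcal{L}_{\alpha}}$: on atoms set $l(v_{i}=v_{j})=D^{\mathcal{C}}_{ij}$ and $l(Rv_{i_{1}}\dots v_{i_{n}})=\rho\bigl(f^{-1}[l'(Rv_{0}\dots v_{n-1})]\bigr)$, where $\rho(k)=i_{k+1}$ and $\rho(\cdot)$ is the permutation of Definition \ref{pmn}; this is legitimate since $\Delta(f^{-1}[l'(Rv_{0}\dots v_{n-1})])\subseteq n$ by Proposition \ref{kjdw}.(1) and the predicate axiom for $l'$, and it is independent of the choice of $\rho$ by Proposition \ref{pop}.(6). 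On compounds use $l(\neg\phi)=|\mathcal{C}|\setminus l(\phi)$, $l(\phi\wedge\psi)=l(\phi)\cap l(\psi)$ and $l(\exists v_{i}\phi)=[l(\phi)]_{i}$, so the predicate-dimension, equality, Boolean and existential axioms hold by construction. To see that $f:(\mathcal{C},l)\rightarrow(\mathcal{C}',l')$, recall that as a basis-preserving C-mapping, $u'\mapsto f^{-1}[u']$ commutes with complement and intersection, commutes with each saturation $[\cdot]_{i}$ for $i\in\beta$ by Proposition \ref{kjdw}.(2), and sends $D^{\mathcal{C}'}_{ij}$ to $D^{\mathcal{C}}_{ij}$; since these pullback identities also make $f^{-1}$ commute with the permutation operation (a composition of the $(\frac{\cdot}{\cdot})$ operations), an induction over $\mathcal{L}_{\beta}$-formulas, where every index is below $\beta$, gives $l(\phi)=f^{-1}[l'(\phi)]$ as needed.

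The main obstacle is the remaining substitution axiom $l(\phi(v_{i}/v_{j}))=[l(\phi)\cap D^{\mathcal{C}}_{ij}]_{j}$, which is a substitution lemma proved by induction on $\phi$: the atomic case is precisely the permutation identities collected in Proposition \ref{pop}, the Boolean cases use Proposition \ref{pop}.(2), and the quantifier case $\exists v_{k}\psi$ must be split according to whether $k$ clashes with $i$ or $j$, invoking commutativity of the cylindric system and renaming the bound variable to avoid capture before applying the induction hypothesis. Finally, for the basis axiom, observe $ran(l)\supseteq\{f^{-1}[v]:v\in ran(l')\}$; since $ran(l')$ is a basis of $\tau_{\mathcal{C}'}$ and $f$ is basis-preserving, every clopen set of $\mathcal{C}$ with dimension contained in $\beta$ is a union of such pullbacks, hence a union of values of $l$. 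Because $\mathcal{C}$ is basis-finite, an arbitrary basic clopen set has finite dimension and can be carried into $\beta$ by an injective renaming $\rho$ and then recovered by $\rho^{-1}$, both of which act inside $ran(l)$ through the atomic construction and the substitution clause; thus $ran(l)$ generates $\tau_{\mathcal{C}}$, completing the construction of the unique $\mathcal{L}$-formation $l$.
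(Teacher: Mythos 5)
Your proposal is correct and follows essentially the same route as the paper: define $l$ on the atomic predicates as the pullback $f^{-1}[l'(Rv_{0}\dots v_{n-1})]$, extend by the inductive clauses of the definition of $\mathcal{L}$-formation, verify that $ran(l)$ is a basis by pulling back basic clopen sets with dimension in $\beta$ and transporting general finite-dimensional clopen sets by a renaming $\rho$, and get uniqueness because the clauses determine $l$ once the atoms are fixed. Your explicit treatment of the substitution clause as a coherence lemma to be proved (rather than a defining clause) is a point the paper's proof leaves implicit, but it is a refinement of, not a departure from, the same argument.
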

\begin{proof}
$ $

\begin{itemize}
    \item\textbf{Existence: }Define $l:\Lambda_{\mathcal{L}_{\alpha}}\rightarrow 2^{\mathcal{C}}$ as: for any $n$-ary predicate $R$, $l(Rv_{0}...v_{n-1}):=f^{-1}[l'(Rv_{0}...v_{n-1})]$, for any other formula $\phi$, $l(\phi)$ is determined by the inductive conditions in the definition of $\mathcal{L}$-formation. By Proposition \ref{kjdw}, it's easy to see that for any $\phi\in\Lambda_{\mathcal{L}_{\beta}}$, $l(\phi)=f^{-1}[l'(\phi)]$. Now we just need to prove $ran(l)$ forms a basis of $\tau^{\mathcal{C}}$:
    \begin{itemize}
        \item For any $n$-ary predicate $R$, $l(Rv_{0}...v_{n-1})=f^{-1}[l'(Rv_{0}...v_{n-1})]$ has finite dimension, then by definition it is easy to see for any $\phi\in\Lambda_{\mathcal{L}_{\alpha}}$, $l(\phi)$ is a clopen set with finite dimension;
        \item By the inductive conditions in the definition, $ran(l)$ is closed under taking intersection, union, complement and $\sim_{i}$-saturation for any $i\in\alpha$.
        \item For any clopen set $u\in\tau^{\mathcal{C}}$ with finite dimension, we know there is a $\rho\in\alpha^{\beta}$ and a finite-dimensional clopen set $v$ s.t. $\Delta(v)\subseteq\beta$ and $\rho v=u$, then there is a $v'\in\tau^{\mathcal{C'}}$ s.t. $f^{-1}[v']=v$, by the definition of $\mathcal{L}$-formation, there is a $U'\subseteq ran(l')$ such that $v'=\bigcup U'$, then there is a $U\subseteq ran(l)$ such that $v=\bigcup U$, then $u=\rho\bigcup U=\bigcup\{\rho w:w\in U\}$. By the third clause of the inductive conditions of definition of $\mathcal{L}$-formation, $\{\rho w:w\in U\}\subseteq ran(l)$.
        
        For any open set $u\in\tau^{\mathcal{C}}$, we know there is a family of finite dimensional clopen sets $V\subseteq\tau^{\mathcal{C}}$ s.t. $u=\bigcup V$. As proved above, for each $v\in V$, there is $U\subseteq ran(l)$, $v=\bigcup U$, then there is a $U^{*}\subseteq ran(l)$ such that $u=\bigcup U^{*}$.
    \end{itemize}
    \item\textbf{Uniqueness: } For any $\mathcal{L}$-formation $l^{*}$ on $\mathcal{C}$, since the definition of $\mathcal{L}$-formation is inductive, if $l\neq l^{*}$, then there must be predicate $R$ s.t. $l^{*}(Rv_{0}...v_{n-1})\neq l(Rv_{0}...v_{n-1})=f^{-1}l'(Rv_{0}...v_{n-1})$.
\end{itemize}
\end{proof}
\begin{proposition}\label{ubk}
Let $\mathcal{C}_{\alpha},\mathcal{C}'_{\beta}$ be basis-finite cylindric spaces, where $\mathcal{C}'_{\beta}$ is a $T_{2}$ space, $\mathcal{L}$ be a language, $l,l'$ be $\mathcal{L}$-formations on $\mathcal{C}_{\alpha},\mathcal{C}'_{\beta}$ respectively. Then there is at most one C-mapping $f:(\mathcal{C},l)\rightarrow(\mathcal{C}',l')$.
\end{proposition}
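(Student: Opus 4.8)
The plan is to show that any two C-mappings $f,g:(\mathcal{C},l)\to(\mathcal{C}',l')$ coincide as functions $|\mathcal{C}|\to|\mathcal{C}'|$, and the only feature of these maps I will actually invoke is that each of them maps $l$ to $l'$; the S-mapping and C-mapping conditions are not needed for uniqueness. First I would fix an arbitrary point $a\in|\mathcal{C}|$ and compare the two images $f(a),g(a)\in|\mathcal{C}'|$ by testing them against the basic clopen sets in $ran(l')$. Because $f$ maps $l$ to $l'$ we have $f^{-1}[l'(\phi)]=l(\phi)$ for every $\phi\in\Lambda_{\mathcal{L}_{\beta}}$, and likewise $g^{-1}[l'(\phi)]=l(\phi)$. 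Evaluating these two set equalities at the single point $a$ yields, for every such $\phi$,
\[
f(a)\in l'(\phi)\ \Leftrightarrow\ a\in l(\phi)\ \Leftrightarrow\ g(a)\in l'(\phi).
\]
Thus $f(a)$ and $g(a)$ lie in exactly the same members of $ran(l')$.

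The second step is a routine Hausdorff separation. By the definition of an $\mathcal{L}$-formation, $ran(l')=\{l'(\phi):\phi\in\Lambda_{\mathcal{L}_{\beta}}\}$ is a topological basis of $\tau^{\mathcal{C}'}$, and $\mathcal{C}'_{\beta}$ is $T_{2}$ by hypothesis. Suppose toward a contradiction that $f(a)\neq g(a)$. Using the $T_{2}$ property I would pick disjoint open sets $U\ni f(a)$ and $V\ni g(a)$; since $ran(l')$ is a basis, there is $\phi$ with $f(a)\in l'(\phi)\subseteq U$, and because $g(a)\in V$ and $U\cap V=\emptyset$ this forces $g(a)\notin l'(\phi)$. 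But the displayed equivalence says precisely that $g(a)\in l'(\phi)$ whenever $f(a)\in l'(\phi)$, a contradiction. Hence $f(a)=g(a)$, and as $a$ was arbitrary, $f=g$.

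I expect no genuine obstacle here: the whole argument is a transfer of Hausdorff separation in $\mathcal{C}'$ back through the two formation-preserving preimages, which are forced to be the same set $l(\phi)$. The one point I would state carefully is that the domain of $l'$ is exactly $\Lambda_{\mathcal{L}_{\beta}}$, so that $ran(l')$ really is the \emph{full} basis of $\mathcal{C}'$ over which separation takes place; there are no basic open sets of $\mathcal{C}'$ lying outside the range of $l'$ whose preimages the matching condition fails to pin down. Given this, the $T_{2}$ hypothesis on $\mathcal{C}'$ is exactly what upgrades ``$f(a)$ and $g(a)$ have the same basic neighborhoods'' to ``$f(a)=g(a)$,'' and the proof closes.
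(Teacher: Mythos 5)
Your proof is correct and follows essentially the same route as the paper's: both arguments use only the formation-matching condition $f^{-1}[l'(\phi)]=l(\phi)=g^{-1}[l'(\phi)]$, separate $f(a)$ from $g(a)$ by a basic set $l'(\phi)\in ran(l')$ via the $T_{2}$ property, and pull back to a contradiction at $a$. The paper phrases the contradiction as $a\in l(\phi)\cap l(\neg\phi)=\emptyset$, which is the same step you carry out with $g(a)\notin l'(\phi)$.
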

\begin{proof}
Assume there are $f,f':(\mathcal{C},l)\rightarrow(\mathcal{C}',l')$ such that there is an $a\in|\mathcal{C}|$ satisfying $f(a)\neq f'(a)$, then since $ran(l')$ is a topological basis and $\mathcal{C}$ is $T_{2}$, we know there is a $\phi\in\Lambda_{\mathcal{L}_{\beta}}\subseteq\Lambda_{\mathcal{L}_{\alpha}}$ s.t. $f(a)\in l'(\phi)$, $f'(a)\in l'(\neg\phi)$, then $a\in l(\phi)\cap l(\neg\phi)=\emptyset$, contradiction.
\end{proof}
\begin{proposition}\label{blbk}
Let $\mathcal{C}_{\alpha},\mathcal{C}'_{\beta}$ be basis-finite cylindric spaces, $\mathcal{L},\mathcal{L'}$ be two languages, $l,l'$ be $\mathcal{L}$-formation on $\mathcal{C}_{\alpha}$ and $\mathcal{L'}$-formation on $\mathcal{C}'_{\beta}$ respectively. For a structure homomophism $f:(|\mathcal{C}|,\sim_{i})_{i\in\alpha}\rightarrow(|\mathcal{C'}|,\sim_{i})_{i\in\alpha}$ s.t. for any $\phi\in\Lambda_{\mathcal{L}_{\beta}}$, $f^{-1}[l'(\phi)]=l(\phi)$, we have $f:(\mathcal{C},l)\rightarrow(\mathcal{C}',l')$ is a C-mapping and if $\mathcal{L}=\mathcal{L'}$, then $f$ is basis-preserving.
\end{proposition}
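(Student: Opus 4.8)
The plan is to verify the three requirements in turn: that $f$ is an S-mapping (so that the very notation $f:(\mathcal{C},l)\to(\mathcal{C}',l')$ is meaningful), that it is a C-mapping, and finally — under the extra hypothesis $\mathcal{L}=\mathcal{L}'$ — that it is basis-preserving. First I would confirm that $f$ is an S-mapping. The structure-homomorphism clause is assumed outright. For continuity, recall that $ran(l')$ is by definition a topological basis of $\tau^{\mathcal{C}'}$; since each basic open set $l'(\phi)$ satisfies $f^{-1}[l'(\phi)]=l(\phi)\in ran(l)\subseteq\tau^{\mathcal{C}}$ by hypothesis, preimages of basic open sets are open and $f$ is continuous. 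For the diagonals, apply the hypothesis to the equality formula $v_i=v_j\in\Lambda_{\mathcal{L}'_\beta}$ (for $i,j\in\beta$), together with the $\mathcal{L}$-formation clauses $l'(v_i=v_j)=D^{\mathcal{C}'}_{ij}$ and $l(v_i=v_j)=D^{\mathcal{C}}_{ij}$, to obtain $f^{-1}[D^{\mathcal{C}'}_{ij}]=D^{\mathcal{C}}_{ij}$. This makes $f$ an S-mapping, and since the hypothesis is precisely $f^{-1}[l'(\phi)]=l(\phi)$, it also establishes $f:(\mathcal{C},l)\to(\mathcal{C}',l')$.

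Next, for the C-mapping property I would use the characterization in Proposition \ref{kjdw}.(2): it suffices to check $[f^{-1}[u']]_i=f^{-1}[[u']_i]$ for all $i\in\alpha$ and all $u'$ in the cylindric basis $ran(l')$ (which is indeed a cylindric basis, being closed under the Boolean operations and $\sim_i$-saturations via the $\mathcal{L}'$-formation clauses). For $i\in\alpha\setminus\beta$ the identity is automatic for any S-mapping by Proposition \ref{kjdw}.(1), so only $i\in\beta$ needs attention. Writing $u'=l'(\phi)$ and using the existential clause of the two $\mathcal{L}$-formations on each side,
$$[f^{-1}[l'(\phi)]]_i=[l(\phi)]_i=l(\exists v_i\phi)=f^{-1}[l'(\exists v_i\phi)]=f^{-1}[[l'(\phi)]_i],$$
where the central equality is the hypothesis applied to $\exists v_i\phi\in\Lambda_{\mathcal{L}'_\beta}$. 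Hence $f$ is a C-mapping.

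Finally, assume $\mathcal{L}=\mathcal{L}'$ and let $u\in\tau^{\mathcal{C}}$ with $\Delta(u)\subseteq\beta$. I want $u'\in\tau^{\mathcal{C}'}$ with $f^{-1}[u']=u$, and the natural candidate is $u':=\bigcup\{l'(\psi):\psi\in\Lambda_{\mathcal{L}_\beta},\ l(\psi)\subseteq u\}$; applying the hypothesis $f^{-1}[l'(\psi)]=l(\psi)$ termwise yields $f^{-1}[u']=\bigcup\{l(\psi):\psi\in\Lambda_{\mathcal{L}_\beta},\ l(\psi)\subseteq u\}$, so it remains only to show this union is all of $u$, i.e. that every point of $u$ lies in some $l(\psi)\subseteq u$ with $\psi\in\Lambda_{\mathcal{L}_\beta}$. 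Given $a\in u$, I pick $\chi$ with $a\in l(\chi)\subseteq u$ (possible since $ran(l)$ is a basis) and existentially quantify the finitely many free variables of $\chi$ whose indices lie outside $\beta$; writing $t$ for the set of those indices, this produces $\psi_0$ with $l(\psi_0)=[l(\chi)]_t$, and since $\Delta(u)\subseteq\beta$ forces $u$ to be $\sim_i$-saturated for every $i\in t$, we keep $a\in l(\chi)\subseteq l(\psi_0)\subseteq[u]_t=u$.

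The \emph{main obstacle}, and the only genuinely delicate point in the whole argument, is that $\psi_0$ may still contain bound variables indexed outside $\beta$, so that a priori $\psi_0\notin\Lambda_{\mathcal{L}_\beta}$. I would remove these by renaming bound variables to fresh indices in $\beta$ (available because $\beta$ is infinite while $\chi$ has only finitely many variables) and check that such renaming leaves the value under $l$ unchanged — a routine consequence of the substitution clause $l(\phi(v_i/v_j))=[l(\phi)\cap D^{\mathcal{C}}_{ij}]_j$ of the $\mathcal{L}$-formation. This produces the desired $\psi\in\Lambda_{\mathcal{L}_\beta}$ with $l(\psi)=l(\psi_0)$, completing the covering of $u$ and hence the proof. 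Note that it is exactly this step that needs $\mathcal{L}=\mathcal{L}'$: when $\mathcal{L}'\lneq\mathcal{L}$ a clopen set of dimension $\subseteq\beta$ may genuinely require predicates of $\mathcal{L}\setminus\mathcal{L}'$ and so admit no preimage presentation in $\mathcal{C}'$.
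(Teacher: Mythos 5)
Your proposal is correct and follows essentially the same route as the paper: continuity and diagonal-preservation from the formation clauses and the hypothesis $f^{-1}[l'(\phi)]=l(\phi)$, the C-mapping property via the clopen-basis criterion of Proposition \ref{kjdw}.(2) applied to $l(\exists v_i\phi)=[l(\phi)]_i$, and basis-preservation by re-expressing a set of dimension $\subseteq\beta$ by a formula in $\Lambda_{\mathcal{L}_\beta}$. The paper compresses the last step into a single ``w.l.o.g.'' and treats general open sets as an obvious corollary, whereas you make the quantification of stray free variables, the renaming of bound variables, and the passage from basis sets to arbitrary open sets explicit — a faithful elaboration rather than a different argument.
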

\begin{proof}
Both $ran(l),ran(l')$ are cylindric bases of corresponding spaces, then $f$ is continuous and clearly preserves diagonal. Then by Proposition \ref{kjdw}.(2), the case of C-mapping clearly holds. If $\mathcal{L}=\mathcal{L'}$, then for each clopen set $l(\phi)$ in the cylindric basis of $\mathcal{C}$ with dimension in $\beta$, we can assume the $\phi$ is in $\Lambda_{\mathcal{L}_{\beta}}$ w.l.o.g., then $f^{-1}[l'(\phi)]=l(\phi)$. hence $f$ is basis-preserving.
\end{proof}\label{4.1}
\subsection{Compactness of model space}
\begin{definition} Let $\mathcal{C}_{\alpha}$ be an FOL space, $b\in|\mathcal{C}|$
\begin{itemize}
    \item If for any $i\in\alpha$ and any clopen set $u$, $b\in [u]_{i}\Rightarrow b\in\bigcup_{j\in\alpha} u(\frac{j}{i})$, then we call $b$ a \textbf{model point};
    \item If $b$ is a model point and $\mathcal{D}^{\mathcal{C}}$ doesn't cover the set of all factors of $b$, then we call $b$ a \textbf{big model point}.
\end{itemize}
\end{definition}

\begin{lemma}\label{mtm}
Let $\mathcal{C}_{\alpha}$ be an FOL space, $b\in|\mathcal{C}|$ is a model point, then for any $\mathcal{L}$-formation $l$ on $\mathcal{C}$, there is a $\mathcal{L}$-structure $\mathfrak{A}$ and basis-preserving C-mapping $f:(\mathcal{C}^{\mathfrak{A}}_{\alpha},l_{\mathfrak{A}})\rightarrow(\mathcal{C}_{\alpha},l)$ s.t. $b\in ran(f)$ and: 1), $\|\mathfrak{A}\|\le\alpha$; 2), there is a domain point in $f^{-1}[b]$; 3), for any $c\prec b$, $c\in ran(f)$; 4), for any $c\in ran(f)$, if there is a domain point in $f^{-1}[c]$, then $c$ is a model point.
\end{lemma}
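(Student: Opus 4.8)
The plan is to run a Henkin-style construction driven by the model point $b$ and the $\mathcal{L}$-formation $l$, and then invoke Proposition \ref{blbk} to upgrade the resulting map to a basis-preserving C-mapping. First I would build the structure $\mathfrak{A}$: on the index set $\alpha$ put $i\approx j\Leftrightarrow b\in D^{\mathcal{C}}_{ij}$, which is an equivalence relation by the diagonal axioms, and take the domain to be $A:=\alpha/\!\approx$, so immediately $\|\mathfrak{A}\|\le\alpha$ (condition 1). Each $n$-ary predicate $R$ is interpreted by declaring $\langle[i_1],\dots,[i_n]\rangle\in R^{\mathfrak{A}}$ iff $b\in l(Rv_{i_1}\cdots v_{i_n})$; well-definedness in the representatives follows from the substitution clause of $\mathcal{L}$-formation together with $b\in D^{\mathcal{C}}_{i_m i_m'}$. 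Writing $a_b$ for the sequence $a_b(i)=[i]$, which is a domain point since the $\approx$-classes exhaust $A$, the heart of the construction is the truth lemma: for every $\phi\in\Lambda_{\mathcal{L}_\alpha}$, $(\mathfrak{A},\sigma_{a_b})\vDash\phi\Leftrightarrow b\in l(\phi)$. I would prove this by induction on the complexity of $\phi$; the atomic and equality cases are the definitions of $R^{\mathfrak{A}}$ and $\approx$, the Boolean cases are immediate, and the existential case $\exists v_i\phi$ is exactly where the model point hypothesis enters: for ``$\Leftarrow$'', from $b\in l(\exists v_i\phi)=[l(\phi)]_i$ the model point property yields some $j$ with $b\in l(\phi)(\frac{j}{i})=l(\phi(v_j/v_i))$, and the induction hypothesis supplies the witness $[j]\in A$.

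Next I would define $f\colon|\mathcal{C}^{\mathfrak{A}}_\alpha|\to|\mathcal{C}|$. For a sequence $s\in A^\alpha$ let $T_s:=\{\phi:(\mathfrak{A},\sigma_s)\vDash\phi\}$ and set $f(s)$ to be the unique point of $\bigcap\{l(\phi):\phi\in T_s\}$. Existence follows from compactness of the Stone space $\mathcal{C}$ once I check the finite intersection property: since $T_s$ is closed under conjunction it suffices that $l(\phi)\neq\emptyset$ for $\phi\in T_s$, and this I get from the truth lemma by substituting representatives $k_m$ of the values $s(i_m)$ to obtain $\phi'$ with $b\in l(\phi')$, noting that $l(\phi')$ arises from $l(\phi)$ by operations $(\frac{k}{i})$ which send $\emptyset$ to $\emptyset$. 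Uniqueness is the $T_2$ property of $\mathcal{C}$ together with $\mathrm{ran}(l)$ being a basis. By construction $f^{-1}[l(\phi)]=\{s:\phi\in T_s\}=A_\phi=l_{\mathfrak{A}}(\phi)$, so $f$ maps $l_{\mathfrak{A}}$ to $l$; and $s\sim_i s'$ forces $T_s,T_{s'}$ to agree on formulas without free $v_i$, whence $f(s)\sim_i f(s')$ (the $\sim_i$-classes being separated by $\sim_i$-saturated clopen sets, Proposition \ref{cstc}), so $f$ is a structure homomorphism. Proposition \ref{blbk} then makes $f$ a basis-preserving C-mapping, and the truth lemma gives $f(a_b)=b$, so $b\in\mathrm{ran}(f)$ with a domain point in $f^{-1}[b]$ (condition 2).

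For condition 4, suppose $c=f(s)$ and $f^{-1}[c]$ contains a domain point $s'$; then $\sigma_{s'}$ is onto $A$, so every existential true at $c$ is witnessed by a variable: if $c\in[u]_i$ with $u$ clopen, writing $u$ as a finite union of basis sets $l(\phi_k)$ I reduce to $u=l(\phi)$, and surjectivity of $\sigma_{s'}$ forces $c\in l(\phi)(\frac{j}{i})\subseteq u(\frac{j}{i})$ for some $j$, i.e. $c$ is a model point. The remaining and hardest part is condition 3: every factor $c\prec_\rho b$ lies in $\mathrm{ran}(f)$. Since a computation shows $\mathrm{ran}(f)$ is precisely the set of total factors of $b$ (those $c$ with $c\in l(\phi)\Leftrightarrow b\in\mu\,l(\phi)$ for a total $\mu\colon\alpha\to\alpha$), the task reduces to extending the partial witnessing map $\rho$ to a total one. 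I would do this by transfinite recursion over $\alpha\setminus\mathrm{dom}(\rho)$, maintaining the invariant $c\prec_{\rho_k}b$: at a successor I must choose $\rho_{k+1}(i_k)$ so that $b$ stays in the permutation of the more refined complete closed set $c|_{\mathrm{dom}(\rho_{k+1})}$, and this witness is produced by the model point property of $b$ applied to $\rho_k\big(c|_{\mathrm{dom}(\rho_{k+1})}\big)$, after extending that property from clopen to closed sets via Proposition \ref{cstc} and compactness; limit stages are handled by the finite intersection property. The main obstacle is exactly this step — securing a single witness index that works simultaneously for the whole complete type $c|_{\mathrm{dom}(\rho_{k+1})}$ rather than for one clopen approximation at a time — which is where the full strength of ``$b$ is a model point'' is genuinely needed.
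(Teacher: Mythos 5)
Your construction of $\mathfrak{A}$, the truth lemma at the canonical domain point, the definition of $f$ via type intersections, and the verification of conditions 1, 2 and 4 all match the paper's argument (which builds $A$ from minimal representatives of the classes $\{i:b\in D^{\mathcal{C}}_{ij}\}$, defines $f(a)=\rho_a^{-1}b$, and proves $l_{\mathfrak{A}}(\phi)=f^{-1}[l(\phi)]$ by induction, invoking the model-point property only at the existential step and Proposition \ref{blbk} to upgrade $f$). Your definition of $f(s)$ as the unique point of $\bigcap\{l(\phi):\phi\in T_s\}$ is an equivalent repackaging of the paper's $\rho_s^{-1}b$.

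The problem is condition 3. Your own observation that $\mathrm{ran}(f)$ consists exactly of the total factors of $b$ already disposes of the case where the witnessing map $\rho$ is total: given a domain point $b'\in f^{-1}[b]$, the sequence $c'(i)=b'(\rho(i))$ satisfies $f(c')=c$, which is precisely the paper's two-line argument, and this is the only case the paper proves or later uses (e.g.\ in Theorem \ref{cop} the factor comes from Lemma \ref{mex} with $\rho$ an injection of $\alpha$ into $\alpha$). The transfinite recursion you then launch to extend a genuinely partial $\rho$ to a total one does not go through. At a successor stage you need a single index $j$ such that $b$ lands in the $(\frac{j}{i})$-image of the permuted complete closed set $\rho_k\bigl(c|_{\mathrm{dom}(\rho_{k+1})}\bigr)$, but the model-point property quantifies only over \emph{clopen} sets: for a closed set $v=\bigcap_n u_n$ it yields a witness $j_n$ for each clopen approximation $u_n$ separately, and there is no reason these can be made simultaneous. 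Indeed, the closed-set statement ``$b\in[v]_i\Rightarrow b\in\bigcup_j v(\frac{j}{i})$'' translates exactly to ``every type over the parameters enumerated by $b$ that is consistent with its elementary diagram is realized in the model $b$ represents,'' i.e.\ to a saturation property, which a general model point does not have (take $b$ representing a countable model of a theory with a non-principal type; a point $c$ whose $\mathrm{dom}(\rho)$-part is realized but whose full type is omitted is a partial factor of $b$ that is not a total factor). So either restrict condition 3 to total factor maps, where your argument (and the paper's) is complete, or accept that the literal partial-factor reading is false and cannot be rescued by this recursion.
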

\begin{proof}

Let $\alpha':=\{i\in\alpha:\forall j<i,b\not\in D^{\mathcal{C}}_{ij}\}$.
\begin{itemize}
    \item We construct an $\mathcal{L}$-structure $\mathfrak{A}=(A,R^{\mathfrak{A}}_{i})_{i\in I}$ as: $A:=\{x_{i}:i\in\alpha'\}$, we know $\|A\|\le\alpha$; for an $n$-ary predicate $R$ and $x_{i_{1}},...,x_{i_{n}}\in A$, $\langle x_{i_{1}}...x_{i_{n}}\rangle\in R^{\mathfrak{A}}\Leftrightarrow b\in l(Rv_{i_{1}}...v_{i_{n}})$.
    \item For any sequence $a\in A^{\alpha}$, we know for any $i\in\alpha$, there is an $i'\in\alpha'$ s.t. $a(i)=x_{i'}$, let $\rho_{a}:\alpha^{\alpha}$ be defined as $\rho_{a}(i)=i'$. Define mapping $f:A^{\alpha}\rightarrow|\mathcal{C}|$ as $a\mapsto\rho_{a}^{-1}b$.
    \item We show $f$ maps $l_{\mathfrak{A}}$ to $l$:
    
    Use induction on $\phi\in\Lambda_{\mathcal{L}_{\alpha}}$:
    
    \textbf{BS: }For any $n$-ary predicate $R$, $i_{1},...,i_{n}\in\alpha$,
    
    \begin{tabular}{rl}
         &$a\in A_{Rv_{i_{1}}v_{i_{n}}}$\\
         $\Leftrightarrow$&$\langle a(i_{1})...a(i_{n})\rangle\in R^{\mathfrak{A}}$\\
         $\Leftrightarrow$&$\langle x_{\rho_{a}(i_{1})}...x_{\rho_{a}(i_{n})}\rangle\in R^{\mathfrak{A}}$\\
         $\Leftrightarrow$&$b\in l(Rv_{\rho_{a}(i_{1})}...v_{\rho_{a}(i_{n})})$\\
         $\Leftrightarrow$&$f(a)\in\rho_{a}^{-1}l(Rv_{\rho_{a}(i_{1})}...v_{\rho_{a}(i_{n})})=l(Rv_{i_{1}}...v_{i_{n}})$\\
         $\Leftrightarrow$&$a\in f^{-1}[l(Rv_{i_{1}}...v_{i_{n}})]$
    \end{tabular}
    
    Hence $l_{\mathfrak{A}}(Rv_{i_{1}}...v_{i_{n}})=A_{Rv_{i_{1}}v_{i_{n}}}=f^{-1}[l(Rv_{i_{1}}...v_{i_{n}})]$.
    
    \textbf{IH: }For $\phi\in\Lambda_{\mathcal{L}_{\alpha}}$ with complexity less than $l$, $l_{\mathfrak{A}}(\phi)=A_{\phi}=f^{-1}[l(\phi)]$.
    
    \textbf{IS: }The complexity of $\phi\in\Lambda_{\mathcal{L}_{\alpha}}$ is $l$:
    
    The cases where $\phi$ has the form $\neg\psi,\psi\wedge\chi$ are trivial. Here we consider the case $\phi$ has the form $\exists v_{i}\psi$:
    
    \begin{tabular}{rl}
     &$a\in f^{-1}[l(\phi)]$\\
     $\Leftrightarrow$& For each $j\not\in\rho_{a}^{-1}[\rho_{a}(\Delta(l(\psi)))],b\in \rho_{a} l(\phi)=\rho_{a} [l(\psi)]_{i}=[\rho_{a}l(\psi(v_{j}/v_{i}))]_{\rho_{a}(j)}$\\
     $\Leftrightarrow$& there is a $ j'\in\alpha$, for each $j\not\in\rho_{a}^{-1}[\rho_{a}(\Delta(l(\psi)))]$, $b\in (\rho_{a}l(\psi(v_{j}/v_{i})))(\frac{j'}{\rho_{a}(j)})$\\&$=\rho_{a}(j'/j)(l(\psi)(\frac{j}{i}))=\rho_{a}(j'/i)l(\psi)$\footnotemark (By the property of model point)\\
     $\Leftrightarrow$& there is an $a'$ s.t. $f(a')\in l(\psi)$ and for some $j'$, $\rho_{a'}=\rho_{a}(j'/i)$\\
     &(The existence of $a'$: Let $j'$ be the least number satisfying $b\in \rho_{a}(j'/i)l(\psi)$, we can obtain $a'$)\\
     $\Leftrightarrow$& there is an $a'$ s.t. $a'\sim_{i}a$ and $a'\in A_{\psi}$\\
     $\Leftrightarrow$&$a\in A_{\phi}$
\end{tabular}
\footnotetext{Here $\rho(j'/i)$ denotes such a mapping: its only difference from $\rho$ is that $\rho(j'/i)$ shoots $i$ to $j'$.}

Hence, for any $\phi\in\Lambda_{\mathcal{L}_{\alpha}}$, $l_{\mathfrak{A}}(\phi)=A_{\phi}=f^{-1}[l(\phi)]$.
\item\textbf{$f$ is a basis-preserving C-mapping}

For any $i\in\alpha$, 

\begin{tabular}{rl}
     &$a\sim_{i}a'\in A^{\alpha}$\\
     $\Rightarrow$& for any $j\neq i,a(j)=a'(j)$\\
     $\Rightarrow$&for any $j\neq i,\rho_{a}(j)=\rho_{a'}(j)$\\
     $\Rightarrow$&$\{\rho_{a}^{-1}u:u$ is a clopen neighbourhood of $b$ and $\rho_{a}^{-1}u$ is $i$-saturated$\}=$\\&$\{\rho_{a'}^{-1}u:u$ is a clopen neighbourhood of $b$ and $\rho_{a}^{-1}u$ is $i$-saturated$\}$\\
     $\Rightarrow$&$\{u:u$ is $i$-saturated clopen neighbourhood of $f(a)\}=$\\&$\{u:u$ is an $i$-saturated clopen neighbourhood of $f(a')\}$\\
     $\Rightarrow$&$f(a)\sim_{i}f(a')$
\end{tabular}

As proven above, for any $\phi$, $l_{\mathfrak{A}}(\phi)=A_{\phi}=f^{-1}[l(\phi)]$. Then by Proposition \ref{blbk}, $f$ is a basis-preserving C-mapping.

\item \textbf{$b\in ran(f)$ and there is a domain point in $f^{-1}[b]$}

For any $i\in\alpha$, if $i\not\in\alpha'$, then by definition of $\alpha'$ we know there is a $j<i$ s.t. $j\in\alpha'$ and $b\in D^{\mathcal{C}}_{ij}$. Write $i':=argmin_{j}\{x_{j}:j\in\alpha',b\in D^{\mathcal{C}}_{ij}\}$, we know there is a sequence $a\in A^{\alpha}$ s.t. $a(i)=i'$, then it's easy to see $f(a)=\rho_{a}^{-1}b=b$ and $a$ is a domain point.

\item \textbf{There is a $\rho:c\prec b\Rightarrow c\in ran(f)$}

Let $b'$ be a domain point in $f^{-1}[b]$, we define $c'\in|\mathcal{C}^{\mathfrak{A}}_{\alpha}|$ as $c'(i)=b'(\rho(i))$. It's easy to see for any $\phi$, $c'\in l_{\mathfrak{A}}(\phi)\Leftrightarrow b'\in l_{\mathfrak{A}}(\phi(\overline{v_{\rho(i)}}/\overline{v_{i}}))$, then $f(c')\in l(\phi)\Leftrightarrow b\in l(\phi(\overline{v_{\rho(i)}}/\overline{v_{i}}))=\rho l(\phi)$, then $f(c')=\rho^{-1}b=c$.

\item \textbf{There is a domain point in $f^{-1}[c]\Rightarrow c$ is a model point}

Let $a$ be this domain point. It is easy to see for any $i\in\alpha$ and any formula $\phi$,

\begin{tabular}{rl}
     &$a\in [l_{\mathfrak{A}}(\phi)]_{i}=l_{\mathfrak{A}}(\exists v_{i}\phi)$\\
     $\Rightarrow$&$a\in\bigcup_{j\in\alpha}\{l_{\mathfrak{A}}(\phi(v_{j}/v_{i}))\}=\bigcup_{j\in\alpha} l_{\mathfrak{A}}(\phi)(\frac{j}{i})$ 
\end{tabular}

hence if $c\in[l(\phi)]_{i}$, then $c\in \bigcup_{j\in\alpha} l(\phi)(\frac{j}{i})$. hence $c$ is a model point.
\end{itemize}
\end{proof}

\begin{lemma}\label{mex}
Let $\mathcal{C}_{\alpha}$ be an FOL space, $v$ be an $\emptyset$-dimensional complete closed set on it s.t. the number of clopen sets in the subspace induced by $v$ is not greater than $\alpha$, then for any $a\in v$, there is a model point $b\in v$ s.t. $a\prec b$. In particular, such a big model point exists if the diagonal family has no finite subset covering $v$.
\end{lemma}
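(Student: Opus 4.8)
The plan is to obtain $b$ as the unique point cut out by a maximal, Henkin-complete family of clopen sets with the finite intersection property, in the spirit of a Henkin model-existence argument transcribed into the topological language. I would first reduce to the subspace induced by $v$. Because $v$ is $\emptyset$-dimensional it is $\sim_i$-saturated for every $i$, and being closed in a compact $T_2$ zero-dimensional space it is itself compact and $T_2$; the restricted relations $\sim_i\cap(v\times v)$ and the traces $D_{ij}\cap v$ make $v$ an FOL space of dimension $\alpha$ whose cylindric basis (the traces $u\cap v$) has cardinality at most $\alpha$ by hypothesis. Saturation of $v$ guarantees $[u]_i\cap v=[u\cap v]_i$ and $u(\frac{j}{i})\cap v=(u\cap v)(\frac{j}{i})$, so the notions ``model point'', ``factor'', and ``covered by the diagonal family'' are unchanged whether computed in $v$ or in $\mathcal{C}$, and every complete closed set through a point of $v$ is contained in $v$. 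Hence I may assume $v=|\mathcal{C}|$ and look for a model point $b$ with $a\prec b$.

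Next I would fix an injection $\rho\colon\alpha\to\alpha$ whose range omits a set $W\subseteq\alpha$ of cardinality $\alpha$; the coordinates in $W$ will receive the Henkin witnesses. Start from the filter base $G_0=\{\rho w: w \text{ clopen},\ a\in w\}$. This has the finite intersection property: by Proposition \ref{pop}.(2) permutation commutes with finite intersections, so $\bigcap_k\rho w_k=\rho\bigl(\bigcap_k w_k\bigr)$, and since $a\in\bigcap_k w_k$ and $\rho$ is injective, Theorem \ref{up}.(1) (whose hypothesis is vacuous for an injective $\rho$) gives $\rho\bigl(\bigcap_k w_k\bigr)\neq\emptyset$. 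Any point lying in every member of $G_0$ lies in $\rho\{a\}$, i.e.\ admits $a$ as a factor along $\rho$. I then enumerate all clopen sets in a sequence of length at most $\alpha$ and recurse along it, at each stage deciding the next clopen set on whichever side preserves the finite intersection property; whenever the set just decided has the form $[u']_i$ and is placed into the base, I simultaneously adjoin a witness $u'(\frac{c}{i})$ for a coordinate $c\in W$ chosen fresh with respect to the finitely many conditions committed so far and to $\Delta(u')$. Extending the resulting base to an ultrafilter $F$ and putting $b:=\bigcap F$ (a singleton by compactness and the $T_2$ property) yields a point with $a\prec b$ which decides every clopen set and carries a witness for every existential it satisfies, hence is a model point.

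The crux is the witness step: showing that adjoining $u'(\frac{c}{i})$ at a fresh $c$ keeps the finite intersection property. Writing $P$ for the current finite intersection, one has $P\subseteq[u']_i$, and the key identity $[\,u'(\frac{c}{i})\,]_c=[u']_i$ for fresh $c$ reduces everything to saturation: since $u'$ is $\sim_c$-saturated one gets $[u'\cap D_{ci}]_c=u'$, and $[D_{ci}]_c=|\mathcal{C}|$ by Definition \ref{df}, whence $[\,u'(\frac{c}{i})\,]_c=[[u'\cap D_{ci}]_i]_c=[[u'\cap D_{ci}]_c]_i=[u']_i$. Thus $P\subseteq[\,u'(\frac{c}{i})\,]_c$, and as $P$ is $\sim_c$-saturated (because $c$ is fresh) every $p\in P$ has a $\sim_c$-variant $q\in u'(\frac{c}{i})$ with $q\in P$, so $P\cap u'(\frac{c}{i})\neq\emptyset$. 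This is the only point at which the construction could break down, and it is precisely the abundance of fresh coordinates afforded by $\alpha$ being infinite that makes it go through.

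Finally, for the big model point I would additionally demand $-D_{cc'}\in F$ for all distinct $c,c'\in W$, so that $b$ takes pairwise distinct values on the $\alpha$ reserved coordinates. The hypothesis that no finite subset of the diagonal family covers $v$ is exactly what keeps these demands compatible with the finite intersection property: by compactness it furnishes points of $v$ avoiding any prescribed finite set of diagonals, supplying an inexhaustible reservoir of distinct elements from which each Henkin witness can be drawn as a genuinely new element. Letting $\sigma\colon\alpha\to W$ be a bijection, Theorem \ref{up}.(3) produces the factor $c=\sigma^{-1}b$, whose coordinates are the pairwise distinct values $b(\sigma(i))$; hence $c$ lies outside every $D_{ij}$, the diagonal family fails to cover the factors of $b$, and $b$ is a big model point. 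I expect the main obstacle to be the simultaneous bookkeeping of these distinctness requirements against the Henkin witnesses, but the permutation calculus of Proposition \ref{pop} together with the no-finite-cover hypothesis and compactness should settle it.
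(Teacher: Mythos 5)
Your main construction --- a single transfinite Lindenbaum-style recursion that decides every clopen set, adjoins a Henkin witness $u'(\frac{c}{i})$ at a fresh coordinate the moment an existential $[u']_{i}$ enters the filter, and reads off $b$ as the intersection of the resulting ultrafilter --- is sound and takes a genuinely different route from the paper's. The paper instead iterates $\omega$ times a bulk step: at stage $n$ it collects \emph{all} existentials $[u]_{i}$ containing the current complete closed set $X_{n-1}$ that are still unwitnessed, witnesses them simultaneously in a fresh block of $\alpha$ coordinates, and passes to a smaller complete closed set; compactness of the nested sequence then yields $b$. Your version is closer to the classical Henkin argument and needs only cosmetic repairs: freshness of $c$ must be relative to the (fewer than $\|\alpha\|$ many) coordinates occurring in \emph{all} previously committed conditions, not ``the finitely many conditions committed so far''; and one should enumerate pairs $(u,i)$ rather than the sets $[u]_{i}$, since distinct pairs can name the same clopen set yet require different witnesses. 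Your key identity $[\,u'(\frac{c}{i})\,]_{c}=[u']_{i}$ for fresh $c$, and the saturation argument built on it, are correct.

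The ``big model point'' clause, however, has a genuine gap. You demand $-D_{cc'}\in F$ for all distinct $c,c'$ in the witness block $W$, i.e.\ that the Henkin witnesses take pairwise distinct values. But a witness is not a free choice of a ``genuinely new element'': $b\in u'(\frac{c}{i})$ only forces $b(c)$ to be \emph{some} witness of the existential, and when the existential has a unique witness the value at $c$ is determined. Concretely, for any coordinate $j$ the set $[D_{0j}]_{0}$ is the whole space, so it enters $F$ and receives a witness $D_{0j}(\frac{c}{0})=D_{cj}$ at a fresh $c\in W$; taking $j=c''$ to be a witness coordinate used earlier forces $b\in D_{cc''}$ with $c,c''\in W$, directly contradicting $-D_{cc''}$ (and even for $j,j'\notin W$ one gets $D_{cj}\cap D_{jj'}\cap D_{j'c'}\subseteq D_{cc'}$ whenever $D_{jj'}$ is forced by the initial conditions $\rho w$). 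So the combined family fails the finite intersection property, and the no-finite-cover hypothesis does not rescue it. The paper carries the distinctness on a \emph{separate} factor rather than on the witnesses: by compactness the hypothesis yields a point $b_{0}\in v\cap\bigcap_{i,j}-D_{ij}$; using two permutations with disjoint ranges (even and odd coordinates) one finds a common extension $a'\in\rho\{a\}\cap\rho'\{b_{0}\}$, and running the main construction from $a'$ produces a model point having both $a$ and $b_{0}$ as factors, so that $b_{0}$ itself witnesses that the diagonal family does not cover the factor set. You should replace the distinctness demands on $W$ by this (or an equivalent) device.
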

\begin{proof}
Let $\lambda:\omega\times\alpha\rightarrow\alpha$ be a bijection, $\rho:\alpha\rightarrow\alpha$ be an injection s.t. $ran(\rho)=\lambda[1\times\alpha]$, then $\rho\{a\}$ is a non-empty complete closed set of dimension $\lambda[1\times\alpha]$. Use induction on $n\in\omega$, we construct closed sets $X_{n}$ and families $U_{n}$ of clopen sets as follow:
\begin{itemize}
    \item\textbf{BS: } $X_{0}:=\rho\{a\}$, $U_{0}:=\{[u]_{i}:u\mbox{ is clopen set},\rho\{a\}\subseteq [u]_{i}\mbox{ but for any }j\in\lambda[1\times\alpha],\rho\{a\}\not\subseteq u(\frac{j}{i})\}$.
    \item\textbf{IH: } For $m<n$, $X_{m}$ is complete closed set of dimension $\lambda[(m+1)\times\alpha]$ s.t. $X_{m}\subseteq\rho\{a\}$; $U_{m}:=\{[u]_{i}:u\mbox{ is clopen set},i\in\alpha,X_{m}\subseteq [u]_{i}\backslash\bigcup_{j\in\lambda[(m+1)\times\alpha]}u(\frac{j}{i})\}$.
    \item\textbf{IS: } It is easy to see $\|U_{n-1}\|\le\alpha$. Lists $U_{n-1}$ as $\{[u_{j}]_{i_{j}}:j\in\alpha,i_{j}\in\alpha\}$. For $j\in\alpha$, let $j':=\lambda(n,j)$, $u'_{j}:=u_{j}(\frac{j'}{i_{j}})$. Then for any $j_{1},...,j_{k}\in\alpha$,
    
    \begin{tabular}{rl}
         &$a\in X_{n-1}$\\
         $\Leftrightarrow$&$a\in X_{n-1}\cap[u_{j_{1}}]_{i_{j_{1}}}$\\
         $\Leftrightarrow$&$a\in X_{n-1}$ and there is $b\sim_{j_{1}'}a$ s.t. $b\in u_{j_{1}}(\frac{j_{1}'}{i_{j_{1}}})$\\
         $\Leftrightarrow$&$a\in X_{n-1}$ and there is $b\sim_{j_{1}'}a$ s.t. $b\in X_{n-1}\cap u_{j_{1}}(\frac{j_{1}'}{i_{j_{1}}})$\\
         $\Leftrightarrow$& there is $b\sim_{j_{1}'}a$ s.t. $b\in X_{n-1}\cap u_{j_{1}}(\frac{j_{1}'}{i_{j_{1}}})$\\
         $\Leftrightarrow$&$a\in[X_{n-1}\cap u'_{j_{1}}]_{j_{1}'}$\\
         &$\dots\dots$\\
         $\Leftrightarrow$&$a\in[...[X_{n-1}\cap u'_{j_{1}}\cap...\cap u'_{j_{k}}]_{j_{1}'}...]_{j_{k}'}$
    \end{tabular}
    
    Since $X_{n-1}\neq\emptyset$, we know $[...[X_{n-1}\cap u'_{j_{1}}\cap...\cap u'_{j_{k}}]_{j_{1}'}...]_{j_{k}'}\neq\emptyset$, then $X_{n-1}\cap u'_{j_{1}}\cap...\cap u'_{j_{k}}\neq\emptyset$. By compactness, $X'_{n}:=X_{n-1}\cap\bigcap_{j\in\alpha}u_{j}(\frac{j'}{i_{j}})\neq\emptyset$ and easy to see $\Delta(X'_{n})\subseteq\lambda[(n+1)\times\alpha]$. Let $X_{n}$ be complete closed set in $X'_{n}$ of dimension $\lambda[(n+1)\times\alpha]$. $U_{n}:=\{[u]_{i}:u\mbox{ is clopen set},i\in\alpha,X_{n}\subseteq [u]_{i}\backslash\bigcup_{j\in\lambda[(n+1)\times\alpha]}u(\frac{j}{i})\}$.
\end{itemize}

Let $X:=\bigcap_{n\in\omega} X_{n}$, then by compactness and $T_{2}$ property, we know $X$ is a singleton. Let the point in $X$ be $b$, we know for any $[u]_{j}\ni b$, there is a $j\in\alpha$ s.t. $u(\frac{i}{j})\ni b$, Hence $b$ is a model point. Since $b\in X\subseteq\rho\{a\}$, we know $a\prec b$. Then the $\emptyset$-dimensional complete closed set containing $b$ is the same as the $\emptyset$-dimensional complete closed set containing $a$. Hence $b\in v$.

If the diagonal family has no finite subset covering $v$, then by compactness, $v\cap\bigcap\{-D_{ij}:i,j\in\alpha\}\neq\emptyset$. Choosing an arbitrary point $b$ in it, let $\rho,\rho'$ be defined as $\rho(i)=2i,\rho'(i)=2i+1$, it is easy to see $\rho\{a\}\cap\rho'\{b\}\neq\emptyset$. For any point $a'$ in it, let $a'$ replace $a$ in the BS of induction. Obviously, the last constructed model point takes $b$ as a factor and therefore is a big model point.
\end{proof}

The following theorem implies the compactness theorem of first-order logic.
\begin{theorem}[Compactness theorem of first-order logic]\label{cop}$ $

For any first-order language $\mathcal{L}$ and any $\mathcal{L}$-theory $T$, $\mathcal{C}^{T}$ is compact and therefore is an FOL space.
\end{theorem}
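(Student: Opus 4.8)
The plan is to prove compactness first and then read off the FOL-space conclusion. From the analysis in Example 2 we already know that $\mathcal{C}^{T}$ is a $T_{2}$ basis-finite cylindric space of dimension $\omega$; since a cylindric system forces zero-dimensionality, it only remains to show that $\mathcal{C}^{T}$ is compact, after which $\mathcal{C}^{T}$ is automatically a Stone basis-finite cylindric space, i.e.\ an FOL space by Definition \ref{dofol}. Because the basic clopen sets $\{S_{\phi}:\phi\in\Lambda_{\mathcal{L}}\}$ are closed under complement and finite intersection and form a subbasis, by the Alexander subbasis lemma it suffices to prove that every family $\{S_{\phi}:\phi\in\Sigma\}$ with the finite intersection property has nonempty intersection. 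Concretely, given that $S_{\phi_{1}}\cap\dots\cap S_{\phi_{n}}=S_{\phi_{1}\wedge\dots\wedge\phi_{n}}\neq\emptyset$ for every finite $\{\phi_{1},\dots,\phi_{n}\}\subseteq\Sigma$, I must exhibit a single MSS of $T$ that contains all of $\Sigma$.

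The key idea is to borrow compactness from an expansion, where it is available for free, and then transport the witness back to $\mathcal{C}^{T}$ through an extracted first-order structure. Fix an ordinal $\alpha\geq\max(|\mathcal{L}|,\omega)$ and form the $\alpha$-expansion $\mathcal{E}:=(\mathcal{C}^{T})^{\alpha}$, which is a compact FOL space by Theorem \ref{alexpan}; crucially this uses nothing about compactness of $\mathcal{C}^{T}$, so there is no circularity. In $\mathcal{E}$ the sets $X_{(1,S_{\phi})}$ are basic clopen, and since each $S_{\phi_{1}\wedge\dots\wedge\phi_{n}}$ is nonempty, Proposition \ref{ate} produces an atom containing $(1,S_{\phi_{1}\wedge\dots\wedge\phi_{n}})$, so the family $\{X_{(1,S_{\phi})}:\phi\in\Sigma\}$ still has the finite intersection property; compactness of $\mathcal{E}$ then yields a point $x\in\bigcap_{\phi\in\Sigma}X_{(1,S_{\phi})}$. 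I would then equip $\mathcal{E}$ with the natural $\mathcal{L}$-formation $l$ obtained by expanding $l_{T}$, so that $l(\phi)=X_{(1,S_{\phi})}$ for every $\phi\in\Lambda_{\mathcal{L}_{\omega}}$; the clauses of $\mathcal{L}$-formation follow from the fact, established inside the proof of Theorem \ref{alexpan}, that $u\mapsto X_{(1,u)}$ respects Boolean operations, cylindrifications, and diagonals, and that the resulting range is a basis. Thus $x\in l(\phi)$ for all $\phi\in\Sigma$, and also $x\in l(\psi)=|\mathcal{E}|$ for every sentence $\psi\in T$, since $S_{\psi}=|\mathcal{C}^{T}|$.

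Now I would extract a model. Let $v:=x|_{\emptyset}$ be the $\emptyset$-dimensional complete closed set containing $x$; because $\mathrm{ran}(l)$ is a basis and $v$ is compact, every clopen set of the subspace $v$ has the form $v\cap l(\phi)$ for some $\phi\in\Lambda_{\mathcal{L}_{\alpha}}$, and there are at most $\alpha$ such formulas, so the cardinality hypothesis of Lemma \ref{mex} is met. Applying Lemma \ref{mex} to $x\in v$ gives a \emph{model point} $b\in v$ with $x\prec b$. Then Lemma \ref{mtm}, applied to the model point $b$ of the FOL space $\mathcal{E}$ with formation $l$, yields an $\mathcal{L}$-structure $\mathfrak{A}$ with $\|\mathfrak{A}\|\leq\alpha$ and a basis-preserving C-mapping $g:(\mathcal{C}^{\mathfrak{A}}_{\alpha},l_{\mathfrak{A}})\to(\mathcal{E},l)$ whose range contains every factor of $b$; in particular $x\in\mathrm{ran}(g)$, say $x=g(c)$. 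Since $g$ maps $l_{\mathfrak{A}}$ to $l$, we have $A_{\phi}=g^{-1}[l(\phi)]$, so $c\in A_{\phi}$ exactly when $x\in l(\phi)$; hence $c\in A_{\phi}$ for every $\phi\in\Sigma$ and $c\in A_{\psi}$ for every sentence $\psi\in T$, i.e.\ $(\mathfrak{A},\sigma_{c})\vDash\phi$ for all $\phi\in\Sigma$ and $\mathfrak{A}\vDash T$. Restricting to $\mathcal{L}$-formulas, $\Phi:=\{\phi\in\Lambda_{\mathcal{L}}:(\mathfrak{A},\sigma_{c})\vDash\phi\}$ is an MSS of $T$ containing $\Sigma$, so $\Phi\in\bigcap_{\phi\in\Sigma}S_{\phi}$, which completes the compactness argument.

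The routine but careful points are the construction and verification of the $\mathcal{L}$-formation $l$ on the expansion and the counting that secures the cardinality condition of Lemma \ref{mex}; the genuinely load-bearing steps are Theorem \ref{alexpan} (which supplies compactness of the expansion through the atom construction, where the choice principle really enters) together with Lemmas \ref{mex} and \ref{mtm} (which jointly perform the Henkin-style model construction). The main obstacle is conceptual rather than computational: one must notice that compactness cannot be obtained by merely embedding $\mathcal{C}^{T}$ into its expansion and showing the image is closed, since that reduces to exactly the finite-intersection statement being proved. It is therefore essential to route the argument through a model point having $x$ as a factor and to recover the witnessing assignment from the extracted structure, which is what lets us land back inside $\mathcal{C}^{T}$ rather than merely inside $\mathcal{E}$.
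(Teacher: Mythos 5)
Your proposal is correct and follows essentially the same route as the paper's proof: both obtain compactness of an expansion space from the atom construction of Theorem \ref{alexpan}, lift the relevant point to a model point via Lemma \ref{mex}, extract a $T$-model through Lemma \ref{mtm}, and read off the witnessing MSS in $\mathcal{C}^{T}$. The only (inessential) differences are that you pass to a single expansion $(\mathcal{C}^{T})^{\alpha}$ and verify the finite intersection property directly, whereas the paper expands twice, to $((\mathcal{C}^{T})^{\omega})^{\alpha}$, and phrases the conclusion as surjectivity of the canonical injection $\mathcal{C}^{T}\rightarrow(\mathcal{C}^{T})^{\omega}$ combined with Lemma \ref{bh}.
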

\begin{proof}
By Theorem \ref{alexpan}, there is a basis-preserving C-injection $f:\mathcal{C}^{T}\rightarrow(\mathcal{C}^{T})^{\omega}$. We know $(\mathcal{C}^{T})^{\omega}$ is an FOL space. Now we just need to prove $f$ is a surjection which, by Lemma \ref{bh}, means $\mathcal{C}^{T}$ is S-homemorphic to $(\mathcal{C}^{T})^{\omega}$.

Let $l:\Lambda_{\mathcal{L}}\rightarrow\tau^{X}$\footnote{$\tau^{X}$ is the topology of $(\mathcal{C}^{T})^{\omega}$.} be $\phi\mapsto X_{(1,l_{T}(\phi))}$, we know $f^{-1}[l(\phi)]=l_{T}(\phi)$. From the structure of $\omega$-expansion space, it is easy to see that $l$ is an $\mathcal{L}$-formation. Hence $f:(\mathcal{C}^{T},l_{T})\rightarrow((\mathcal{C}^{T})^{\omega},l)$.

For any $x\in(\mathcal{C}^{T})^{\omega}$, let the number of clopen sets containing $x$ be $\alpha$, we know $\alpha\ge\omega$. Denote $\mathcal{C}':=((\mathcal{C}^{T})^{\omega})^{\alpha}$, $g:\mathcal{C}'\rightarrow(\mathcal{C}^{T})^{\omega}$ be expansion mapping. By Proposition \ref{ltl}, there is a unique $\mathcal{L}$-formation $l'$ on $\mathcal{C}'$ s.t. $g:(\mathcal{C}',l')\rightarrow((\mathcal{C}^{T})^{\omega},l)$.

Choosing an arbitrary $z\in g^{-1}[x]$, the number of clopen sets in $\mathcal{C}'$ containing $z$ is $\alpha\times\alpha=\alpha$, then by Lemma \ref{mex}, there is a $\rho:\alpha^{\alpha}$ and a model point $y\in z|_{\emptyset}$ s.t. $\rho^{-1}y=z$. By Lemma \ref{mtm}, there is a $\mathcal{L}$-structure $\mathfrak{A}$ and a basis-preserving C-mapping $h:(\mathcal{C}^{\mathfrak{A}}_{\alpha},l_{\mathfrak{A}})\rightarrow(\mathcal{C}',l')$ s.t. $\|\mathfrak{A}\|\le\alpha$ and $z\in ran(h)$ i.e. there is an $a\in A^{\alpha}$, $h(a)=z$.

Clearly $a\in\bigcap\{A_{\phi}:x\in l(\phi)\}$, which means that $\{\phi\in\Lambda_{\mathcal{L}}:x\in l(\phi)\}$ is satisfied by the assignment represented by $a$. Then there exists a point in $\bigcap\{S_{\phi}:x\in l(\phi)\}$ which is mapped to $x$. By the arbitrariness of $x$, we know that $f$ is a surjection.
\end{proof}\label{4.2}
\subsection{$\alpha$-model space}
Theorem \ref{cop} tells us that the model space of a first-order logic theory must be an FOL space. In turn, for any FOL space $\mathcal{C}$, it is easy to see that we can take a formation on it (e.g., take a language $\mathcal{L}$ such that for any $n$, $\mathcal{L}$ contains $|\tau^{\mathcal{C}}|$ $n$-ary predicates, and then construct the formation such that each clopen set $u$ in $\mathcal{C}$ with $\Delta(u)=n$ is mapped by an $n$-ary atomic formula) and thus determine a first-order logic theory according to the following theorem.

\begin{theorem}
Let $\mathcal{C}_{\alpha}$ be an FOL space, $\mathcal{L}$ be first-order language, $l$ be a $\mathcal{L}$-formation on $\mathcal{C}_{\alpha}$, $T:=\{\phi\in\Lambda_{\mathcal{L}}:\phi\mbox{ is a sentence},l(\phi)=|\mathcal{C}|\}$, then $T$ is an $\mathcal{L}$-theory and there is an S-homemorphism $f:(\mathcal{C},l)\cong((\mathcal{C}^{T})^{\alpha},l^{\alpha}_{T})$ (here $l^{\alpha}_{T}$ is the $\mathcal{L}$-formation on $(\mathcal{C}^{T})^{\alpha}$ induced by $l_{T}$).
\end{theorem}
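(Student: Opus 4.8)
The plan is to realize the claimed S-homeomorphism as the one furnished by the uniqueness part of Theorem \ref{kex}. By Theorem \ref{cop} the model space $\mathcal{C}^{T}$ is an FOL space of dimension $\omega$, and $(\mathcal{C}^{T})^{\alpha}$ is by construction the unique (up to S-homeomorphism) $\alpha$-dimensional FOL space admitting a basis-preserving C-surjection onto $\mathcal{C}^{T}$. Hence it suffices to exhibit a basis-preserving C-surjection $g\colon\mathcal{C}_{\alpha}\rightarrow\mathcal{C}^{T}$ carrying $l$ to $l_{T}$. Granting this, Theorem \ref{kex} supplies an S-homeomorphism $f\colon\mathcal{C}_{\alpha}\cong(\mathcal{C}^{T})^{\alpha}$ identifying the two surjections, i.e.\ $g^{\alpha}\circ f=g$ where $g^{\alpha}\colon(\mathcal{C}^{T})^{\alpha}\rightarrow\mathcal{C}^{T}$ is the $\alpha$-expansion mapping; since $l^{\alpha}_{T}$ is by definition induced from $l_{T}$ along $g^{\alpha}$, the chain $f^{-1}[l^{\alpha}_{T}(\phi)]=(g^{\alpha}\circ f)^{-1}[l_{T}(\phi)]=g^{-1}[l_{T}(\phi)]=l(\phi)$ shows $f$ carries $l$ to $l^{\alpha}_{T}$, which is the assertion. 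The natural candidate for $g$ is the type map
\[
g(a):=\{\phi\in\Lambda_{\mathcal{L}}:a\in l(\phi)\}.
\]

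The topological properties of $g$ are routine once well-definedness is granted. Straight from the definition, $g^{-1}[S_{\phi}]=l(\phi)$ for every $\phi\in\Lambda_{\mathcal{L}}$; specializing to $\phi$ of the form $v_{i}=v_{j}$ gives $g^{-1}[D^{\mathcal{C}^{T}}_{ij}]=D^{\mathcal{C}}_{ij}$, so $g$ is continuous, preserves the diagonal family, and carries $l$ to $l_{T}$. For $i\in\omega$ and $\phi$ in which $v_{i}$ is not free, $l(\phi)$ is $\sim_{i}$-saturated, whence $a\sim_{i}b$ forces $g(a)\sim_{i}g(b)$; thus $g$ is a structure homomorphism for the relations indexed by $\omega$. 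As $ran(l)$ and $ran(l_{T})$ are cylindric bases and the languages agree, Proposition \ref{blbk} then upgrades $g$ to a basis-preserving C-mapping.

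Two points carry the real content, and I expect well-definedness to be the main obstacle: each $g(a)$ must be a genuine MSS of $T$, i.e.\ realized by a $T$-model and an assignment. Given $a$, I would work inside its $\emptyset$-dimensional complete closed set $a|_{\emptyset}$ and apply Lemma \ref{mex} to produce a model point $b$ with $a\prec b$; this step requires the cardinality bound that the subspace induced by $a|_{\emptyset}$ carries at most $\alpha$ clopen sets, which holds (say when $|\mathcal{L}|\le\alpha$) because every clopen set of $\mathcal{C}$ is a finite union of members of $ran(l)\subseteq\{l(\phi):\phi\in\Lambda_{\mathcal{L}_{\alpha}}\}$. Feeding $b$ and $l$ into Lemma \ref{mtm} yields an $\mathcal{L}$-structure $\mathfrak{A}$ with $\|\mathfrak{A}\|\le\alpha$ and a basis-preserving C-mapping $h\colon(\mathcal{C}^{\mathfrak{A}}_{\alpha},l_{\mathfrak{A}})\rightarrow(\mathcal{C}_{\alpha},l)$ whose range contains every factor of $b$; in particular $a=h(a')$ for some $a'$. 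Since each sentence of $T$ has $l$-value $|\mathcal{C}|$ and hence pulls back under $h$ to $A^{\alpha}$, we get $\mathfrak{A}\vDash T$, and $g(a)=\{\phi:(\mathfrak{A},\sigma_{a'})\vDash\phi\}$ is realized in a $T$-model, as required.

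Surjectivity is the second point, and it is cleaner. Let $p\in S_{T}$ be realized by $(\mathfrak{B},\sigma)$ with $\mathfrak{B}\vDash T$. The key observation is $l(\psi)\neq\emptyset$ for every $\psi\in p$: otherwise the existential closure would satisfy $l(\exists\bar v\,\psi)=\emptyset$, so the sentence $\neg\exists\bar v\,\psi$ would have $l$-value $|\mathcal{C}|$ and thus lie in $T$, contradicting $(\mathfrak{B},\sigma)\vDash\psi$ together with $\mathfrak{B}\vDash T$. As $p$ is closed under conjunction, $\{l(\phi):\phi\in p\}$ is a family of clopen sets with the finite intersection property, so compactness of the FOL space $\mathcal{C}$ yields $a\in\bigcap_{\phi\in p}l(\phi)$, and completeness of $p$ forces $g(a)=p$. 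With $g$ now a basis-preserving C-surjection carrying $l$ to $l_{T}$, the S-homeomorphism follows from Theorem \ref{kex} as described; finally, the consistency of $T$ (needed for $\mathcal{C}^{T}$ to be nonempty, the case $\mathcal{C}=\emptyset$ being trivial) is witnessed by the structure $\mathfrak{A}$ produced above, so $T$ is indeed an $\mathcal{L}$-theory.
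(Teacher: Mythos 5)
Your proposal is correct in substance but reaches the conclusion by a genuinely different route than the paper. The paper builds the S-homeomorphism $f:\mathcal{C}\rightarrow(\mathcal{C}^{T})^{\alpha}$ directly: it first proves that for all $\phi,\psi\in\Lambda_{\mathcal{L}_{\alpha}}$ one has $l(\phi)\cap l(\psi)\neq\emptyset$ iff $l_{T}^{\alpha}(\phi)\cap l_{T}^{\alpha}(\psi)\neq\emptyset$ (by reducing both sides to the question of whether a universally closed sentence lies in $T$), extends this to arbitrary families by compactness, and then defines $f(a)$ as the unique point of $\bigcap\{l_{T}^{\alpha}(\phi):a\in l(\phi)\}$. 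You instead construct the ``type map'' $g:\mathcal{C}_{\alpha}\rightarrow\mathcal{C}^{T}$, verify it is a basis-preserving C-surjection carrying $l$ to $l_{T}$, and invoke the uniqueness clause of Theorem \ref{kex} to identify $\mathcal{C}_{\alpha}$ with $(\mathcal{C}^{T})^{\alpha}$. Your route reuses more of the paper's machinery and replaces the paper's somewhat intricate chain of syntactic equivalences by the cleaner finite-intersection argument for surjectivity of $g$; the paper's route is more self-contained and does not pass through the expansion-space uniqueness theorem. Both proofs rely on Lemmas \ref{mex} and \ref{mtm} to attach a genuine $T$-model to an arbitrary point (the paper in the ``$T$ is a theory'' step, you in the well-definedness of $g$), and both silently or explicitly need the cardinality hypothesis of Lemma \ref{mex}; your flagging of that hypothesis is if anything more careful than the paper.

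Two loose ends, both easily closed within your framework. First, the paper's notion of theory requires closure under consequence, and you only verify consistency; but your well-definedness argument already shows every $a\in|\mathcal{C}|$ lies in $ran(h)$ for some $h:(\mathcal{C}^{\mathfrak{A}}_{\alpha},l_{\mathfrak{A}})\rightarrow(\mathcal{C},l)$ with $\mathfrak{A}\vDash T$, so $T\vDash\phi$ gives $A_{\phi}=A^{\alpha}$ and hence $l(\phi)=|\mathcal{C}|$, i.e.\ $\phi\in T$. Second, your chain $f^{-1}[l^{\alpha}_{T}(\phi)]=l(\phi)$ is only established for $\phi\in\Lambda_{\mathcal{L}}$; to get it for all of $\Lambda_{\mathcal{L}_{\alpha}}$ you should note that an $\mathcal{L}$-formation is determined by its values on the atomic formulas $Rv_{0}\dots v_{n-1}$ (which lie in $\Lambda_{\mathcal{L}}$) together with the inductive clauses, so the uniqueness part of Proposition \ref{ltl} forces $f^{-1}\circ l^{\alpha}_{T}=l$ everywhere.
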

\begin{proof}$ $
\begin{itemize}
    \item\textbf{$T$ is a theory}
    
    It suffices to show that $T$ is closed under consequence.
    
    For any sentence $\phi\in\Lambda_{\mathcal{L}}$ s.t. $T\vDash\phi$, let $b\in|\mathcal{C}|$ be an arbitrary model point. By Lemma \ref{mtm}, there is an $\mathcal{L}$-structure $\mathfrak{A}$, a basis-preserving C-mapping $f:(\mathcal{C}^{\mathfrak{A}}_{\alpha},l_{\mathfrak{A}})\rightarrow(\mathcal{C},l)$ and a domain point $b'\in f^{-1}[b]$, then $b'\in \bigcap_{\psi\in T}l_{\mathfrak{A}}(\psi)$ i.e. the assignment represented by $b'$ realizes $T$, then the assignment represented by $b'$ satisfies $\phi$, then $b\in l(\phi)$. Since  $\phi$ is a sentence, we know the dimension of $l(\phi)$ is $\emptyset$, then for any $\rho:\alpha^{\alpha}$, $\rho l(\phi)=l(\phi)$, then the factor set of $b$ is subset of $l(\phi)$. By Lemma \ref{mex}, $|\mathcal{C}|=\{a\in|\mathcal{C}|:\mbox{ there is a model point }b\mbox{ }s.t.\mbox{ }a\prec b\}$, then from the arbitrariness of $b$ in the above proof, we know $|\mathcal{C}|\subseteq l(\phi)$ and therefore $\phi\in T$.
    \item\textbf{Constructing f as a bijection}
    
    For any $\phi,\psi\in\Lambda_{\mathcal{L}_{\alpha}}$,
    
    \begin{tabular}{rl}
         &$l(\phi)\cap l(\psi)\neq\emptyset$\\
         $\Leftrightarrow$&$l(\phi\wedge\psi)\neq\emptyset$\\
         $\Leftrightarrow$&$[...[l(\phi\wedge\psi)]_{i_{1}}...]_{i_{k}}\neq\emptyset$ (here $\{i_{1},...,i_{k}\}=\Delta(l(\phi\wedge\psi))$)\\
         $\Leftrightarrow$&$-[...[-l(\neg(\phi\wedge\psi))]_{i_{1}}...]_{i_{k}}\neq|\mathcal{C}|$\\
         $\Leftrightarrow$&$l(\forall v_{i_{1}}...v_{i_{k}}\neg(\phi\wedge\psi))\neq|\mathcal{C}|$\\
         $\Leftrightarrow$&$l(\forall v_{1}...v_{k}\neg(\phi\wedge\psi)(v_{1}/v_{i_{1}})...(v_{k}/v_{i_{k}}))\neq|\mathcal{C}|$\\
         $\Leftrightarrow$&$\forall v_{1}...v_{k}\neg(\phi\wedge\psi)(v_{1}/v_{i_{1}})...(v_{k}/v_{i_{k}})\not\in T$\\
         $\Leftrightarrow$&$l_{T}^{\alpha}(\forall v_{1}...v_{k}\neg(\phi\wedge\psi)(v_{1}/v_{i_{1}})...(v_{k}/v_{i_{k}}))\neq|(\mathcal{C}^{T})^{\alpha}|$\\
         &$\dots\dots$\\
         $\Leftrightarrow$&$l_{T}^{\alpha}(\phi)\cap l_{T}^{\alpha}(\psi)\neq\emptyset$
    \end{tabular}
    
    Hence for any finite $\Phi\subseteq\Lambda_{\mathcal{L}_{\alpha}}$, $\bigcap_{\phi\in\Phi}l(\phi)\neq\emptyset\Leftrightarrow\bigcap_{\phi\in\Phi}l_{T}^{\alpha}(\phi)\neq\emptyset$, then by compactness, for any $\Phi\subseteq\Lambda_{\mathcal{L}_{\alpha}}$, $\bigcap_{\phi\in\Phi}l(\phi)\neq\emptyset\Leftrightarrow\bigcap_{\phi\in\Phi}l_{T}^{\alpha}(\phi)\neq\emptyset$. Hence, defining $f$ as $f(a):=a'\in\bigcap\{l_{T}^{\alpha}(\phi):a\in l(\phi)\}$ (by $T_{2}$ property, $a'$ is unique), it is easy to see $f$ is a bijection.
    
    \item\textbf{$f$ is S-homemophism}
    
    By definition of $f$, it is easy to see for any $\phi$, $l(\phi)=f^{-1}[l_{\mathfrak{A}}^{\alpha}(\phi)]$, then for any $i\in\alpha$, $a\sim_{i}b\in|\mathcal{C}|$, we know $\{u\in\tau^{\mathcal{C}}:u$ is an $i$-saturated clopen neighborhood of $a\}=\{u\in\tau^{\mathcal{C}}:u$ is an $i$-saturated clopen neighborhood of $b\}$, then $\{\phi\in\Lambda_{\mathcal{L}_{\alpha}}:a\in l(\phi)$ and $v_{i}$ does not occur freely in $\phi\}=\{\phi\in\Lambda_{\mathcal{L}_{\alpha}}:b\in l(\phi)$ and $v_{i}$ does not occur freely in $\phi\}$, then $\{\phi\in\Lambda_{\mathcal{L}_{\alpha}}:f(a)\in l_{\mathfrak{A}}^{\alpha}(\phi)$ and $v_{i}$ does not occur freely in $\phi\}=\{\phi\in\Lambda_{\mathcal{L}_{\alpha}}:f(b)\in l_{\mathfrak{A}}^{\alpha}(\phi)$ and $v_{i}$ does not occur freely in $\phi\}$, then  $\{u:u$ is a $i$-saturated clopen neighborhood of $f(a)\}=\{u:u$ is a $i$-saturated clopen neighborhood of $f(b)\}$, then by Proposition \ref{cstc}, $f(a)\sim_{i}f(b)$, then by Proposition \ref{blbk}, $f$ is a basis-preserving C-mapping, then by Lemma \ref{bh}, $f$ is an S-homemorphism.
\end{itemize}
\end{proof}
This theorem implies that the class of all FOL spaces is exactly the class of all S-homeomorphic spaces of model spaces and their expansion spaces. Based on the discussion above, we can have a more general definition of model space.
\begin{definition}
Let $\mathcal{C}_{\alpha}$ be an FOL space, $\mathcal{L}$ be a first-order language, $l$ be an $\mathcal{L}$-formation on $\mathcal{C}_{\alpha}$, $T$ be an $\mathcal{L}$-theory, if $T=\{\phi\in\Lambda_{\mathcal{L}}:l(\phi)=|\mathcal{C}|\}$, then we call $\mathcal{C}_{\alpha}$ an \textbf{$\alpha$-dimensional model space} of $T$ and denote it as $\mathcal{C}^{T}_{\alpha}$, call $l$ a \textbf{$T$-formation} on it and denote it as $l^{\alpha}_{T}$.
\end{definition}
\subsection{The Representation of Semantics of First-Order Logic }
Lemma \ref{mtm} tells us that a model of a theory $T$ of cardinality $\kappa$ can be represented as a cylindric space that can be mapped to the $\kappa$-model space of $T$ by a basis-preserving C-mapping and therefore as a model point in this FOL space. We show this correspondence between model points and models in more detail in the first theorem of this section. Based on this, we will see that an elementary embedding between first-order models can be represented as a basis-preserving C-mapping between the corresponding topologization spaces and, therefore, as a factoring relationship between two model points in a FOL space. In this way, a systematic  topological representation is presented.

\begin{theorem}\label{su}
Let $\mathcal{L}$ be a first-order language, $T$ be an $\mathcal{L}$-theory,
\begin{enumerate}
    \item For any $T$-model $\mathfrak{A}$, any infinite ordinal $\alpha\ge\beta$, there exists a unique basis-preserving C-mapping $f:(\mathcal{C}^{\mathfrak{A}}_{\alpha},l_{\mathfrak{A}})\rightarrow(\mathcal{C}^{T}_{\beta},l_{T}^{\beta})$;
    \item For any $T$-model $\mathfrak{A}$, any infinite ordinal $\alpha$ and basis-preserving C-mapping $f:(\mathcal{C}^{\mathfrak{A}}_{\alpha},l_{\mathfrak{A}})\rightarrow(\mathcal{C}^{T}_{\alpha},l_{T}^{\alpha})$, let $U$ be the non-empty set of all domain points in $\mathcal{C}^{\mathfrak{A}}_{\alpha}$, then $f[U]$ is an $\asymp$-equivalence class and all the points in it are model point and if $\alpha=\|\mathfrak{A}\|$, then these model points are big.
    \item For any infinite ordinal $\alpha$, model point $a\in|\mathcal{C}^{T}_{\alpha}|$, up to S-homeomorphism, there exists unique $T$-model $\mathfrak{A}$ s.t. for the basis-preserving C-mapping $f:(\mathcal{C}^{\mathfrak{A}}_{\alpha},l_{\mathfrak{A}})\rightarrow(\mathcal{C}^{T}_{\alpha},l_{T}^{\alpha})$, there is domain point $b\in f^{-1}[a]$. The cardinality of $|\mathfrak{A}|$ is not greater than $\alpha$. We say $a$ \textbf{represents} $\mathfrak{A}$.
\end{enumerate}
\end{theorem}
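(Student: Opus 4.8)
The plan is to handle the three clauses in order, using the natural ``realization'' map as the backbone throughout. For clause (1) I would first write the map down explicitly: for a $T$-model $\mathfrak{A}$ and a sequence $a\in A^{\alpha}$, set $f(a):=\{\phi\in\Lambda_{\mathcal{L}_{\beta}}:(\mathfrak{A},\sigma_{a})\vDash\phi\}$, where $\sigma_{a}$ is read off the first $\beta$ coordinates of $a$. Since $\mathfrak{A}\vDash T$, each $f(a)$ is a maximal satisfiable set of $T$, so $f$ lands in $|\mathcal{C}^{T}_{\beta}|$. Exactly as for the map of Example 4 one checks that $f^{-1}[l_{T}^{\beta}(\phi)]=l_{\mathfrak{A}}(\phi)$ for every $\phi\in\Lambda_{\mathcal{L}_{\beta}}$ and that $f$ respects each $\sim_{i}$ with $i\in\beta$; Proposition \ref{blbk} (applied with $\mathcal{L}=\mathcal{L}'$) then upgrades this to the assertion that $f$ is a basis-preserving C-mapping. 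Uniqueness is immediate: $\mathcal{C}^{T}_{\beta}$ is $T_{2}$, so Proposition \ref{ubk} permits at most one C-mapping $(\mathcal{C}^{\mathfrak{A}}_{\alpha},l_{\mathfrak{A}})\rightarrow(\mathcal{C}^{T}_{\beta},l_{T}^{\beta})$.

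For clause (2) I would first invoke (1) with $\beta=\alpha$ together with Proposition \ref{ubk} to identify the given $f$ with the natural map, so that I may compute with $f(a)=\{\phi:(\mathfrak{A},\sigma_{a})\vDash\phi\}$. That every $f(a)$ with $a\in U$ is a model point is direct: if $f(a)\in[l_{T}^{\alpha}(\psi)]_{i}=l_{T}^{\alpha}(\exists v_{i}\psi)$ then $(\mathfrak{A},\sigma_{a})\vDash\exists v_{i}\psi$, and since the domain point $a$ enumerates all of $A$ some coordinate $a(j)$ witnesses the quantifier, giving $f(a)\in l_{T}^{\alpha}(\psi)(\frac{j}{i})$; as the model-point condition on clopen sets reduces to a cylindric basis, $f(a)$ is a model point. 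To see that $f[U]$ lies in a single $\asymp$-class, I would take two domain points $a,a'$, define $\rho$ by $a'(i)=a(\rho(i))$ (possible because $a$ is onto $A$), and verify, as in Example \ref{examp} and the isomorphism case following it, that $\rho$ witnesses $f(a)\asymp f(a')$, the covering clause of $\asymp$ holding precisely because $a'$ too exhausts $A$. Finally, when $\alpha=\|\mathfrak{A}\|$ I would choose an injective enumeration $a_{0}$, note $f(a_{0})\notin D_{ij}$ for $i\neq j$ so that $f(a_{0})$ is a factor of itself uncovered by $\mathcal{D}$ and hence big, and then transport bigness across the whole class using that $\prec$ composes (Theorem \ref{up}.(4)).

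For clause (3), existence comes straight from Lemma \ref{mtm} applied to the FOL space $\mathcal{C}^{T}_{\alpha}$, the model point $a$, and the formation $l_{T}^{\alpha}$: it yields an $\mathcal{L}$-structure $\mathfrak{A}$ with $\|\mathfrak{A}\|\le\alpha$, the natural basis-preserving C-mapping $f$, and a domain point in $f^{-1}[a]$. That $\mathfrak{A}\vDash T$ follows because for a sentence $\psi\in T$ we have $l_{\mathfrak{A}}(\psi)=f^{-1}[l_{T}^{\alpha}(\psi)]=f^{-1}[|\mathcal{C}^{T}_{\alpha}|]=A^{\alpha}$, i.e.\ $\psi$ holds under every assignment. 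For uniqueness up to S-homeomorphism, given two such models $\mathfrak{A},\mathfrak{B}$ with domain points $b_{\mathfrak{A}},b_{\mathfrak{B}}$ realizing $a$, I would show the correspondence $b_{\mathfrak{A}}(i)\mapsto b_{\mathfrak{B}}(i)$ is an isomorphism: it is well defined and injective because $b_{\mathfrak{A}}(i)=b_{\mathfrak{A}}(j)\Leftrightarrow(v_{i}=v_{j})\in a\Leftrightarrow b_{\mathfrak{B}}(i)=b_{\mathfrak{B}}(j)$, surjective since both enumerate their domains, and relation-preserving since $\langle b_{\mathfrak{A}}(i_{1}),\dots\rangle\in R^{\mathfrak{A}}\Leftrightarrow(Rv_{i_{1}}\cdots)\in a\Leftrightarrow\langle b_{\mathfrak{B}}(i_{1}),\dots\rangle\in R^{\mathfrak{B}}$; the induced map on $\alpha$-topologizations is then an S-homeomorphism.

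I expect the genuine obstacle to be the reverse inclusion in clause (2): that the whole $\asymp$-class of a realized point $f(a)$ is already contained in $f[U]$. Unwinding $c\asymp f(a)$ through its witness $\rho$ with $c\prec_{\rho}f(a)$, one computes that $c$ is realized by the re-indexed sequence $a\circ\rho$; the delicate point is that the covering clause of $\asymp$ (for each $j\notin ran(\rho)$ some $i\in ran(\rho)$ with $f(a)\in D_{ij}$) is exactly what forces $a\circ\rho$ to remain a surjection onto $A$, hence a genuine domain point. Making this surjectivity argument interact correctly with the substitution calculus of Proposition \ref{pop} and Theorem \ref{up} is the one step that is more than bookkeeping; the forward inclusion and the model-point and bigness assertions are, by contrast, routine manipulations of the realization map.
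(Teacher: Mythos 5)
Your proposal is correct and follows essentially the same route as the paper's proof: the natural realization map plus Propositions~\ref{blbk} and~\ref{ubk} for clause (1), the re-indexing maps $\rho$ (with the covering clause of $\asymp$ doing exactly the surjectivity work you identify) for clause (2), and Lemma~\ref{mtm} plus the induced correspondence $b_{\mathfrak{A}}(i)\mapsto b_{\mathfrak{B}}(i)$ (which the paper packages directly as the map $\rho^{+}b\mapsto\rho^{+}b'$ on the topologization spaces) for clause (3). The only presentational differences are that the paper defines $f$ in clause (1) as the unique point of an intersection of closed sets rather than via satisfaction, and cites Lemma~\ref{mtm}.(4) for model-pointness where you verify it directly; neither changes the substance.
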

\begin{proof}$ $

\begin{enumerate}
    \item \begin{itemize}
        \item\textbf{Existence: }For any $a\in|\mathcal{C}^{\mathfrak{A}}_{\alpha}|$, we know $\{l_{\mathfrak{A}}(\phi):\phi\in\Lambda_{\mathcal{L}_{\alpha\cap\beta}},a\in l_{\mathfrak{A}}(\phi)\}$ is closed under taking finite intersection and does not contain $\emptyset$, then it is easy to see $X:=\{l_{T}(\phi):\phi\in\Lambda_{\mathcal{L}_{\alpha\cap\beta}},a\in l_{\mathfrak{A}}(\phi)\}$ has the same property, then by compactness, $\bigcap X\neq\emptyset$. Let $a'\in\bigcap X\cap\bigcap_{i\in\beta\backslash\alpha}D^{T}_{0i}$ (by $T_{2}$ property and property of diagonal family, this is a singleton), we define $f:|\mathcal{C}^{\mathfrak{A}}_{\alpha}|\rightarrow|\mathcal{C}^{T}_{\beta}|$ as $f(a)=a'$, then for any $\psi\in\Lambda_{\mathcal{L}_{\alpha\cap\beta}}$, $b\in|\mathcal{C}^{\mathfrak{A}}_{\alpha}|$, $b\in l_{\mathfrak{A}}(\psi)\Leftrightarrow \psi\in \{\phi\in\Lambda_{\mathcal{L}_{\alpha\cap\beta}}:b\in l_{\mathfrak{A}}(\phi)\}\Leftrightarrow f(b)\in l_{T}^{\beta}(\psi)\Leftrightarrow b\in f^{-1}[l_{T}^{\beta}(\psi)]$, then $l_{\mathfrak{A}}(\psi)=f^{-1}[l_{T}^{\beta}(\psi)]$.
        
        As with the mapping in the proof of Lemma \ref{mtm}, it is easy to verify that $f$ is a structural homomorphism. Then by Proposition \ref{blbk}, $f:(\mathcal{C}^{\mathfrak{A}}_{\alpha},l_{\mathfrak{A}})\rightarrow(\mathcal{C}^{T}_{\beta},l_{T}^{\beta})$ is a basis-preserving C-mapping.
        \item\textbf{Uniqueness: }The uniqueness can be obtained immediately by Proposition \ref{ubk}.
    \end{itemize}
    \item For $a,b\in|\mathcal{C}^{\mathfrak{A}}_{\alpha}|$ s.t. $a$ is a domain point, define $\rho:\alpha^{\alpha}$ as $\rho(i):=min\{j:b(i)=a(j)\}$, we know for any $\phi\in\Lambda_{\mathcal{L}_{\alpha}}$, $b\in l_{\mathfrak{A}}(\phi)\Leftrightarrow a\in \rho l_{\mathfrak{A}}(\phi)=l_{\mathfrak{A}}((\overline{v_{\rho(i)}}/\overline{v_{i}}))$, then
    
    \begin{tabular}{rl}
         &$b$ is a domain point\\
         $\Leftrightarrow$&$b$ is a domain point and $a\in\bigcap\{l_{\mathfrak{A}}(\phi(\overline{v_{\rho(i)}}/\overline{v_{i}})):b\in l_{\mathfrak{A}}(\phi)\}$\\
         $\Leftrightarrow$&$a\in\bigcap\{l_{\mathfrak{A}}(\phi(\overline{v_{\rho(i)}}/\overline{v_{i}})):b\in l_{\mathfrak{A}}(\phi)\}$ and for any $i\not\in ran(\rho)$,\\&there is an $i'\in ran(\rho)$ s.t. $a\in l_{\mathfrak{A}}(v_{i}=v_{i'})$\\
         $\Leftrightarrow$&$f(a)\in\bigcap\{l_{T}^{\alpha}(\phi(\overline{v_{\rho(i)}}/\overline{v_{i}})):f(b)\in l_{T}^{\alpha}(\phi)\}$ and for any $i\not\in ran(\rho)$,\\&there is an $i'\in ran(\rho)$ s.t. $f(a)\in l_{T}^{\alpha}(v_{i}=v_{i'})$\\
         $\Leftrightarrow$&$f(a)\in\bigcap\{\rho(l_{T}^{\alpha}(\phi)):f(b)\in l_{T}^{\alpha}(\phi)\}$ and for any $i\not\in ran(\rho)$,\\&there is an $i'\in ran(\rho)$ s.t. $f(a)\in l_{T}^{\alpha}(v_{i}=v_{i'})$\\
         $\Leftrightarrow$&$\rho:f(b)\prec f(a)$ and for any $i\not\in ran(\rho)$,\\&there is an $i'\in ran(\rho)$ s.t. $f(a)\in D^{T}_{ii'}$ ($ran(l_{T}^{\alpha})$ is cylindrical basis)\\
         $\Leftrightarrow$&$f(b)\asymp f(a)$
    \end{tabular}
    
    Hence, the image of the non-empty set of all domain points under $f$ is an $\asymp$-equivalence class. By Lemma \ref{mtm}, all points in this class are model points.
    
    If $\alpha=\|\mathfrak{A}\|$, then there is a domain point $a\in A^{\alpha}$ s.t. for any $i\neq j,a(i)\neq a(j)$, then $f(a)\not\in\bigcup\mathcal{D}^{T}$, then $f(a)$ is a big model point. For other $b\in U$, we know $f(a)\prec b$ and therefore $b$ is also big.
    
    \item Existence is exactly Lemma \ref{mtm}. To prove uniqueness, consider two basis-preserving C-mapping $f:(\mathcal{C}^{\mathfrak{A}}_{\alpha},l_{\mathfrak{A}})\rightarrow(\mathcal{C}^{T}_{\alpha},l_{T}^{\alpha})$ and $f:(\mathcal{C}^{\mathfrak{A}'}_{\alpha},l_{\mathfrak{A}'})\rightarrow(\mathcal{C}^{T}_{\alpha},l_{T}^{\alpha})$ s.t. there are domain points $b\in f^{-1}[a],b'\in f'^{-1}[a]$. for any $\rho:\alpha^{\alpha}$, any $c\in A^{\alpha}$, we define $\rho^{+} c\in A^{\alpha}$ as $\rho^{+} c(i)=c(\rho(i))$. it is easy to see $A^{\alpha}=\{\rho^{+}b:\rho:\alpha^{\alpha}\}$ and similarly, $A'^{\alpha}=\{\rho^{+}b':\rho:\alpha^{\alpha}\}$. Define $h:A\rightarrow A'$ as $\rho^{+}b\mapsto\rho^{+}b'$, we show $h:(\mathcal{C}^{\mathfrak{A}}_{\alpha},l_{\mathfrak{A}})\rightarrow(\mathcal{C}^{\mathfrak{A}'}_{\alpha},l_{\mathfrak{A}'})$ is S-homeomorphism:
    \begin{itemize}
        \item $h$ is well-defined and is an injection:
        
        For any $\rho,\rho':\alpha^{\alpha}$,
        
        \begin{tabular}{rl}
             &$\rho^{+} b=\rho'^{+} b$\\
             $\Leftrightarrow$& For any $i$, $b(\rho(i))=b(\rho'(i))$\\
             $\Leftrightarrow$& For any $i$, $b\in l_{\mathfrak{A}}(v_{\rho(i)}=v_{\rho'(i)})$\\
             $\Leftrightarrow$& For any $i$, $b'\in l_{\mathfrak{A'}}(v_{\rho(i)}=v_{\rho'(i)})$\\
             &$\dots\dots$\\
             $\Leftrightarrow$&$h(\rho^{+} b)=\rho^{+} b'=\rho'^{+} b'=h(\rho'^{+} b')$
        \end{tabular}
        \item $h$ is a surjection: $A'^{\alpha}=\{\rho^{+}b':\rho:\alpha^{\alpha}\}=ran(h)$.
        
        \item $h$ is a basis-preserving C-mapping:
        
        For any $i\in\alpha$ and any $a,c\in A^{\alpha}$ s.t. $a\sim_{i}c$, we know $a(j)=b(j)$ for any $j\neq i$, then there are $\rho,\rho':\alpha^{\alpha}$ s.t. $\rho(j)=\rho'(j)$ for any  $j\neq i$ and $a=\rho^{+}b,c=\rho'^{+}b$, then $h(a)=\rho^{+}b',h(c)=\rho'^{+}b'$, then $h(a)(j)=h(c)(j)$ for any $j\neq i$, then $h(a)\sim_{i}h(c)$.
        
        For any $\phi\in\Lambda_{\mathcal{L}_{\alpha}}$,
        
        \begin{tabular}{rl}
             &$\rho^{+} b\in l_{\mathfrak{A}}(\phi)$\\
             $\Leftrightarrow$&$b\in l_{\mathfrak{A}}(\phi)(\frac{\rho(i_{1})}{i_{1}})...(\frac{\rho(i_{k})}{i_{k}})$ ($\Delta(l(\phi))=\{i_{1},...,i_{k}\}$)\\
             $\Leftrightarrow$&$b\in l_{\mathfrak{A}}(\phi(v_{\rho(i_{1})}/v_{i_{1}})...(v_{\rho(i_{k})}/v_{i_{k}}))$\\
             $\Leftrightarrow$&$b'\in l_{\mathfrak{A'}}(\phi(v_{\rho(i_{1})}/v_{i_{1}})...(v_{\rho(i_{k})}/v_{i_{k}}))$\\
             &$\dots\dots$\\
             $\Leftrightarrow$&$\rho^{+} b'\in l_{\mathfrak{A'}}(\phi)$
        \end{tabular}
        
        By Proposition \ref{blbk}, $h$ is a basis-preserving C-mapping.
    \end{itemize}
    By Proposition \ref{bh}, $h$ is an S-homeomorphism. By Proposition \ref{mp}, $f'\circ h$ is a basis-preserving C-mapping and obviously maps $l_{\mathfrak{A}}$ to $l_{T}^{\alpha}$, then by Proposition $\ref{ubk}$, $f'\circ h=f$.
\end{enumerate}
\end{proof}
\begin{theorem}\label{map}
Let $\mathcal{L}$ be a first-order language, $\mathfrak{A},\mathfrak{B}$ be $\mathcal{L}$-structures, the following statements are equivalent:
\begin{enumerate}
    \item There is an elementary embedding $f:\mathfrak{A}\rightarrow\mathfrak{B}$;
    \item For any infinite ordinal $\alpha$, there is a basis-preserving C-mapping $h:(\mathcal{C}^{\mathfrak{A}}_{\alpha},l_{\mathfrak{A}})\rightarrow(\mathcal{C}^{\mathfrak{B}}_{\alpha},l_{\mathfrak{B}})$;
    \item For $T\subseteq Th(\mathfrak{A})\cap Th(\mathfrak{B})$, infinite ordinal $\alpha\ge\|\mathfrak{A}\|\cup\|\mathfrak{B}\|$ and model points $a,b\in|\mathcal{C}^{T}_{\alpha}|$ s.t. $a$ represents $\mathfrak{A}$, $b$ represents $\mathfrak{B}$, we have $a\prec b$.
\end{enumerate}
\end{theorem}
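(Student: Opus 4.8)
The plan is to prove the cycle $(1)\Rightarrow(2)\Rightarrow(3)\Rightarrow(1)$, using the representation theorem (Theorem \ref{su}) and the $\mathcal{L}$-formation machinery of \ref{4.1} as the engine. The governing principle is that a basis-preserving C-mapping is completely determined by how it transports formations (Propositions \ref{ubk}, \ref{blbk}), while domain points convert abstract maps into honest set-maps between the underlying structures. For $(1)\Rightarrow(2)$, given an elementary embedding $f\colon\mathfrak{A}\to\mathfrak{B}$ and an infinite ordinal $\alpha$, I would define $h\colon A^{\alpha}\to B^{\alpha}$ by postcomposition, $h(a)(i):=f(a(i))$. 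The defining property of an elementary embedding says exactly that, for every $\phi\in\Lambda_{\mathcal{L}_\alpha}$, the tuple of values of $a$ at the free positions of $\phi$ satisfies $\phi$ in $\mathfrak{A}$ iff its $f$-image does in $\mathfrak{B}$; this is precisely $h^{-1}[l_{\mathfrak{B}}(\phi)]=l_{\mathfrak{A}}(\phi)$. Since $a\sim^{\mathfrak{A}}_i a'$ forces $h(a)\sim^{\mathfrak{B}}_i h(a')$, $h$ is a structure homomorphism, so Proposition \ref{blbk} (with $\mathcal{L}=\mathcal{L}'$) upgrades $h$ to a basis-preserving C-mapping. This works for every $\alpha$, giving (2).

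For $(2)\Rightarrow(3)$, fix $T,\alpha,a,b$ as in (3). Because $a$ represents $\mathfrak{A}$ and $b$ represents $\mathfrak{B}$, Theorem \ref{su} supplies the basis-preserving C-mappings $f_{\mathfrak{A}}\colon(\mathcal{C}^{\mathfrak{A}}_\alpha,l_{\mathfrak{A}})\to(\mathcal{C}^T_\alpha,l^\alpha_T)$ and $f_{\mathfrak{B}}\colon(\mathcal{C}^{\mathfrak{B}}_\alpha,l_{\mathfrak{B}})\to(\mathcal{C}^T_\alpha,l^\alpha_T)$ together with domain points $a_0\in f_{\mathfrak{A}}^{-1}[a]$ and $b_0\in f_{\mathfrak{B}}^{-1}[b]$. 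Taking the mapping $h$ from (2) at this $\alpha$, the composite $f_{\mathfrak{B}}\circ h$ sends $l_{\mathfrak{A}}$ to $l^\alpha_T$ and is a C-mapping (Proposition \ref{mp}), so by the uniqueness in Proposition \ref{ubk} it equals $f_{\mathfrak{A}}$; in particular $f_{\mathfrak{B}}(h(a_0))=f_{\mathfrak{A}}(a_0)=a$. Since $b_0$ is a domain point, setting $\rho(i):=\min\{j:h(a_0)(i)=b_0(j)\}$ defines a \emph{total} $\rho$ with $h(a_0)=\rho^{+}b_0$, and the computation in the proof of Theorem \ref{su}.(2) (carried out in $\mathcal{C}^{\mathfrak{B}}_\alpha$ with domain point $b_0$ and the point $h(a_0)$, then pushed forward by $f_{\mathfrak{B}}$) yields $f_{\mathfrak{B}}(h(a_0))\prec_\rho f_{\mathfrak{B}}(b_0)$, i.e. $a\prec b$.

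For $(3)\Rightarrow(1)$, let $\rho$ witness $a\prec b$ and take domain points $a_0\mapsto a$, $b_0\mapsto b$ as above. Unwinding $b\in\rho\{a|_{dom(\rho)}\}$ through the two formations gives, for every $\phi$ with free variables in $dom(\rho)$, the implication $a_0\in l_{\mathfrak{A}}(\phi)\Rightarrow b_0\in l_{\mathfrak{B}}(\phi(\overline{v_{\rho(j)}}/\overline{v_j}))$, where the identity $\rho\,l^\alpha_T(\phi)=l^\alpha_T(\phi(\overline{v_{\rho(j)}}/\overline{v_j}))$ is read off from Proposition \ref{kjdw}.(2) and the diagonal identities of Proposition \ref{pop}. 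Applying this to $\neg\phi$ and complementing promotes it to the biconditional $\mathfrak{A}\vDash\phi[a_0]\Leftrightarrow\mathfrak{B}\vDash\phi[b_0\circ\rho]$. I would then define $f\colon A\to B$ by $f(a_0(j)):=b_0(\rho(j))$; the instance $\phi=(v_j=v_{j'})$ makes $f$ well-defined using diagonal preservation, and the general biconditional, read on an arbitrary $\psi$, is exactly the assertion that $f$ is elementary (hence injective, via an inequality formula), so $f$ is an elementary embedding.

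The main obstacle is totality of $f$: that it is defined on all of $A$. This requires the witnessing $\rho$ to be total, for only then does $\{a_0(j):j\in dom(\rho)\}$ exhaust $A$ (as $a_0$ is a domain point), so that every element receives an image. A merely partial (in the extreme, empty) witness produces only a partial elementary map, which in general does not extend to a total embedding, so the factor in (3) must be understood as a total one; it is reassuring that the witness manufactured in $(2)\Rightarrow(3)$ is automatically total, so the cycle is coherent. Isolating this totality, and checking that the two formation-translations genuinely commute with $\rho$, is the one delicate point; everything else is the same bookkeeping used throughout \ref{4.1}.
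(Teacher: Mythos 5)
Your proposal is correct and follows essentially the same route as the paper: the cycle $(1)\Rightarrow(2)\Rightarrow(3)\Rightarrow(1)$, with $h$ defined by postcomposition with $f$ and upgraded via Proposition \ref{blbk}, the witness $\rho(i)=\min\{i':b^{*}(i')=h(a^{*})(i)\}$ built from domain points supplied by Theorem \ref{su}, and the elementary embedding recovered by $f(a^{*}(i)):=b^{*}(\rho(i))$. Your side remarks (identifying $f_{\mathfrak{B}}\circ h$ with $f_{\mathfrak{A}}$ via Proposition \ref{ubk}, and flagging that the factor in (3) must be witnessed by a total $\rho$ — which the paper handles by restricting to a set $I$ of representative indices) are consistent with, and if anything slightly more explicit than, the paper's own bookkeeping.
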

\begin{proof}$ $

\begin{itemize}
    \item $1\Rightarrow 2:$ Define $h:A^{\alpha}\rightarrow B^{\alpha}$ as $h(a)(i):=f(a(i))$, then clearly the condition of structure homomorphism holds. For any $\phi\in\Lambda_{\mathcal{L}_{\alpha}}$, $c\in A^{\alpha}$,

\begin{tabular}{rl}
     &$c\in l_{\mathfrak{A}}(\phi)$\\
     $\Leftrightarrow$&$\langle c(i_{1}),...,c(i_{k})\rangle\in \phi(\mathfrak{A})$ ($v_{i_{1}},...,v_{i_{k}}$ are the free variables in $\phi$)\\
     $\Leftrightarrow$&$\langle f(c(i_{1})),...,f(c(i_{k}))\rangle\in \phi(\mathfrak{B})$\\
     $\Leftrightarrow$&$h(c)\in l_{\mathfrak{B}}(\phi)$\\
     $\Leftrightarrow$&$c\in h^{-1}[l_{\mathfrak{B}}(\phi)]$
\end{tabular}

By Proposition \ref{blbk}, $h$ is a basis-preserving mapping.

\item $2\Rightarrow 3:$ By Theorem \ref{su}.(1), there are unique basis-preserving C-mappings $g:(\mathcal{C}^{\mathfrak{A}}_{\alpha},l_{\mathfrak{A}})\rightarrow(\mathcal{C}^{T}_{\alpha},l_{T}^{\alpha}),g':(\mathcal{C}^{\mathfrak{B}}_{\alpha},l_{\mathfrak{B}})\rightarrow(\mathcal{C}^{T}_{\alpha},l_{T}^{\alpha})$. Since $\alpha\ge\|\mathfrak{A}\|\cup\|\mathfrak{B}\|$, we know both $A^{\alpha}$ and $B^{\alpha}$ contain domain points. By Theorem \ref{su}.(2), there are model points $a,b\in|\mathcal{C}^{T}_{\alpha}|$ representing $\mathfrak{A},\mathfrak{B}$ respectively.

For any domain points $a^{*}\in g^{-1}[a]$,, $b^{*}\in g'^{-1}[b]$ and any $i\in\alpha$, we know there is an $i'$ s.t. $b^{*}(i')=h(a^{*})(i)$. Define $\rho:\alpha^{\alpha}$ as $i\mapsto min\{i':b^{*}(i')=h(a^{*})(i)\}$, then for any $\phi\in\Lambda_{\mathcal{L}_{\alpha}}$ we have:

\begin{tabular}{rl}
     &$a\in l_{T}^{\alpha}(\phi)$\\
     $\Leftrightarrow$&$a^{*}\in l_{\mathfrak{A}}(\phi)$\\
     $\Leftrightarrow$&$h(a^{*})\in l_{\mathfrak{B}}(\phi)$\\
     $\Leftrightarrow$&$\langle h(a^{*})(i_{1}),...,h(a^{*})(i_{k})\rangle\in \phi(\mathfrak{B})$ ($v_{i_{1}},...,v_{i_{k}}$ are free variables in $\phi$)\\
     $\Leftrightarrow$&$\langle b^{*}(\rho(i_{1})),...,b^{*}(\rho(i_{k}))\rangle\in \phi(\mathfrak{B})$\\
     $\Leftrightarrow$&$b^{*}\in l_{\mathfrak{B}}(\phi(v_{\rho(i_{1})}/v_{i_{1}})...(v_{\rho(i_{k})}/v_{i_{k}})))$\\
     $\Leftrightarrow$&$b\in l_{T}^{\alpha}(\phi(v_{\rho(i_{1})}/v_{i_{1}})...(v_{\rho(i_{k})}/v_{i_{k}})))$\\
     $\Leftrightarrow$&$b\in \rho l_{T}^{\alpha}(\phi)$
\end{tabular}

Hence $\rho:a\prec b$.

\item $3\Rightarrow 1:$ For any $a\in|\mathcal{C}^{T}_{\alpha}|,b\in|\mathcal{C}^{T}_{\alpha}|$ representing $\mathfrak{A},\mathfrak{B}$ respectively and any $\rho:a\prec b$, by Theorem \ref{su}, we know there are unique basis-preserving C-mappings $g:(\mathcal{C}^{\mathfrak{A}}_{\alpha},l_{\mathfrak{A}})\rightarrow(\mathcal{C}^{T}_{\alpha},l_{T}^{\alpha}),g':(\mathcal{C}^{\mathfrak{B}}_{\alpha},l_{\mathfrak{B}})\rightarrow(\mathcal{C}^{T}_{\alpha},l_{T}^{\alpha})$ s.t. there are domain points $a^{*}\in g^{-1}[a],b^{*}\in g'^{-1}[b]$, let $I:=\{i\in\alpha:\mbox{ for any }j<i,a^{*}\in-D^{\mathfrak{A}}_{ij}\}$, clearly $A=\{a^{*}(i):i\in I\}$ and for any $i,j\in I$, $i\neq j\Rightarrow a^{*}(i)\neq a^{*}(j)$. We define $B$ in the same way and define $f:A\rightarrow B$ as: for any $i\in I$, $f(a^{*}(i))=b^{*}(\rho(i))$. Then for any $\phi\in\Lambda_{\mathcal{L}_{\alpha}}$, $i_{1},...,i_{k}\in I$,

\begin{tabular}{rl}
     &$\langle a^{*}(i_{1}),...,a^{*}(i_{k})\rangle\in\phi(\mathfrak{A})$\\
     $\Leftrightarrow$&$a^{*}\in l_{\mathfrak{A}}(\phi)$\\
     $\Leftrightarrow$&$a\in l_{T}^{\alpha}(\phi)$\\
     $\Leftrightarrow$&$b\in\rho l_{T}^{\alpha}(\phi)$\\
     $\Leftrightarrow$&$b^{*}\in\rho l_{\mathfrak{B}}(\phi)$\\
     $\Leftrightarrow$&$b^{*}\in l_{\mathfrak{B}}(\phi(v_{\rho(i_{1})}/v_{i_{1}}))...(v_{\rho(i_{k})}/v_{i_{k}}))$\\
     $\Leftrightarrow$&$\langle b^{*}(\rho(i_{1})),...,b^{*}(\rho(i_{k}))\rangle\in\phi(\mathfrak{B})$\\
     $\Leftrightarrow$&$\langle f(a^{*}(i_{1})),...,f(a^{*}(i_{k}))\rangle\in\phi(\mathfrak{B})$
\end{tabular}

Hence $f$ is an elementary embedding.
\end{itemize}
\end{proof}
Based on the proof of Theorem \ref{map}, we have the following corollary:
\begin{corollary}\label{ccc}
Let $\mathcal{L}$ be a first-order language, $\mathfrak{A},\mathfrak{B}$ be $\mathcal{L}$-structure, then the following statements are equivalent:
\begin{enumerate}
    \item $\mathfrak{A}\cong\mathfrak{B}$;
    \item there is an S-homeomorphism $h:(\mathcal{C}^{\mathfrak{A}\kappa},l_{\mathfrak{A}})\cong(\mathcal{C}^{\mathfrak{B}\kappa},l_{\mathfrak{B}})$ s.t. $h$ maps domain points to domain points.
    \item For $T\subseteq Th(\mathfrak{A})\cap Th(\mathfrak{B})$, infinite ordinal $\alpha\ge\|\mathfrak{A}\|\cup\|\mathfrak{B}\|$ and any model points $a,b\in|\mathcal{C}^{T}_{\alpha}|$ s.t. $a$ represents $\mathfrak{A}$, $b$ represents $\mathfrak{B}$, we have $a\asymp b$.
\end{enumerate}
\end{corollary}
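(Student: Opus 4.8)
The plan is to mirror the cyclic argument $1\Rightarrow 2\Rightarrow 3\Rightarrow 1$ from the proof of Theorem \ref{map}, upgrading each implication by tracking the extra surjectivity data that distinguishes an isomorphism from a bare elementary embedding, and the covering clause of $\asymp$ that distinguishes it from $\prec$. The point is that $\mathfrak{A}\cong\mathfrak{B}$ is exactly a bijective elementary embedding, and $a\asymp b$ is exactly $a\prec b$ together with a condition asserting that the range of the witnessing $\rho$ ``covers'' all of $b$; so each of the three new conditions is the symmetric/surjective strengthening of the corresponding condition in Theorem \ref{map}, and the extra bookkeeping should slot into the existing proof.

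For $1\Rightarrow 2$, given an isomorphism $f:\mathfrak{A}\rightarrow\mathfrak{B}$ I would define $h(c)(i):=f(c(i))$ exactly as in the step $1\Rightarrow 2$ of Theorem \ref{map}; that computation already shows $h$ is a basis-preserving C-mapping. Since $f$ is a bijection, $h$ is a bijection whose inverse is induced by $f^{-1}$, itself an isomorphism, so both $h$ and $h^{-1}$ are basis-preserving S-mappings; hence by Lemma \ref{bh} $h$ is an S-homeomorphism. Surjectivity of $f$ guarantees that a sequence enumerating all of $A$ maps to a sequence enumerating all of $B$, i.e. $h$ sends domain points to domain points.

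For $2\Rightarrow 3$, I would take the mappings $g,g'$ and domain points $a^{*},b^{*}$ as in the step $2\Rightarrow 3$ of Theorem \ref{map} and define $\rho(i):=\min\{i':b^{*}(i')=h(a^{*})(i)\}$ from $h(a^{*})$ and $b^{*}$; the same argument yields $\rho:a\prec b$. The new ingredient is the covering clause: because $h$ carries domain points to domain points, $h(a^{*})$ enumerates every element of $\mathfrak{B}$, so for each index $j$ the value $b^{*}(j)$ equals $h(a^{*})(i)$ for some $i$, giving $b^{*}\in D_{j,\rho(i)}$ and hence $b\in D_{j,\rho(i)}$ with $\rho(i)\in ran(\rho)$. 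This is precisely what promotes $a\prec b$ to $a\asymp b$. For $3\Rightarrow 1$, I would build $f$ on the canonical index set $I$ by $f(a^{*}(i)):=b^{*}(\rho(i))$ as in Theorem \ref{map}'s $3\Rightarrow 1$; the factor relation already makes $f$ an elementary embedding, and injective since the language is relational with equality. The only addition is surjectivity: for an arbitrary $b^{*}(j)$, if $j\in ran(\rho)$ it is visibly in the image, and if $j\notin ran(\rho)$ the covering clause of $\asymp$ supplies $i'\in ran(\rho)$ with $b\in D_{i'j}$, i.e. $b^{*}(i')=b^{*}(j)$, and writing $i'=\rho(i)$ gives $f(a^{*}(i))=b^{*}(j)$. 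Thus $f$ is a bijective elementary embedding, i.e. an isomorphism.

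The step I expect to be the main obstacle is verifying that the covering clause of $\asymp$ translates \emph{exactly}, in both directions, into surjectivity of the induced map on domains: one must check that the $\min$-choices defining $\rho$ together with the canonical representatives chosen for the index sets $I$ (and its analogue for $\mathfrak{B}$) interact so that no element of $B$ is omitted and none is spuriously demanded, so that the equivalence of the covering condition with ``$f$ is onto'' is genuinely tight rather than merely one-directional.
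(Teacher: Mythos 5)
Your proposal is correct and takes essentially the same route as the paper's own proof: both arguments reuse the constructions of $h$, $\rho$, and $f$ from the proof of Theorem \ref{map} and upgrade each of the three implications by tracking how bijectivity of $f$, preservation of domain points by $h$, and the covering clause of $\asymp$ translate into one another. The surjectivity bookkeeping you outline (including the treatment of indices $j\notin ran(\rho)$ via the diagonals $D_{i'j}$) is exactly what the paper does.
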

\begin{proof}$ $

\begin{itemize}
    \item $1\Rightarrow 2$: Since an elementary embedding must be an injection, from the definition of $h$ in the above proof, it is easy to see that $h$ is injective. If $f$ is an isomorphism, then for any $b\in B^{\alpha}$, $i\in \alpha$, there must exist $x_{i}\in A$ s.t. $b(i)=f(x_{i})$. Let $a\in A^{\alpha}$ be $a(i)=x_{i}$, we know that $h(a)=b$. Therefore, $h$ is a bijection and thus S-homeomorphism. It is also obvious that if $a$ is a domain point, then $h(a)$ is also a domain point.
    \item $2\Rightarrow 3$: We know $h(a^{*})$ is domain point, then from the definition of $\rho$, it is easy to see $B=\{b^{*}(\rho(i)):i\in\alpha\}$, then for any $i\not\in ran(\rho)$, there is an $i'\in ran(\rho)$ s.t. $b^{*}(i)=b^{*}(i')$, then $b^{*}\in l_{\mathfrak{B}}(v_{i}=v_{i'})$, then $b\in D_{ii'}$. By definition, $\rho: a\asymp b$.
    \item $3\Rightarrow 1$: By definittion, for any $i\not\in ran(\rho)$, there is an $i'\in ran(\rho)$ s.t. $b^{*}\in D^{\mathfrak{B}}_{ii'}$ i.e. $b^{*}(i)=b^{*}(i')$, then $B=\{b(i):i\in ran(\rho)\}=ran(f)$, then $f$ is a surjection and therefore is an isomorphism.
\end{itemize}
\end{proof}

\begin{corollary}
For any first-order language $\mathcal{L}$, any $\mathcal{L}$-theory $T$ and any infinite cardinal $\kappa$, let $\mathcal{I}(T,\kappa)$ denote the number of nonisomorphic models of $T$ of cardinality $\kappa$, we have that $\mathcal{I}(T,\kappa)=\|\{[a]_{\asymp}:a$ is big model point in $\mathcal{C}^{T}_{\kappa}\}\|$.
\end{corollary}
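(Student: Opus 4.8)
The plan is to exhibit an explicit bijection $\Phi$ between the isomorphism classes of $T$-models of cardinality $\kappa$ and the $\asymp$-equivalence classes of big model points in $\mathcal{C}^{T}_{\kappa}$, and then read the stated cardinality equality off from it. I would define $\Phi$ by sending the isomorphism class of a $T$-model $\mathfrak{A}$ with $\|\mathfrak{A}\|=\kappa$ to the $\asymp$-class of any model point of $\mathcal{C}^{T}_{\kappa}$ that represents $\mathfrak{A}$. Well-definedness is almost immediate from the results already proved: by Theorem \ref{su}.(1) there is a unique basis-preserving C-mapping $f\colon(\mathcal{C}^{\mathfrak{A}}_{\kappa},l_{\mathfrak{A}})\to(\mathcal{C}^{T}_{\kappa},l_{T}^{\kappa})$, and by Theorem \ref{su}.(2) the image $f[U]$ of the set $U$ of domain points of $\mathcal{C}^{\mathfrak{A}}_{\kappa}$ is a single $\asymp$-class consisting of model points, which are big precisely because $\kappa=\|\mathfrak{A}\|$. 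Every point of $f[U]$ represents $\mathfrak{A}$ by definition, so $\Phi([\mathfrak{A}])$ is an unambiguous $\asymp$-class of big model points; independence of the chosen model in the isomorphism class follows from Corollary \ref{ccc} (the implication $1\Rightarrow 3$): if $\mathfrak{A}\cong\mathfrak{B}$ then any representing model points $a,b$ satisfy $a\asymp b$.

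Injectivity of $\Phi$ is the reverse half of Corollary \ref{ccc}: if $\Phi([\mathfrak{A}])=\Phi([\mathfrak{B}])$ then representing model points $a,b$ lie in the same $\asymp$-class, so $a\asymp b$, whence $\mathfrak{A}\cong\mathfrak{B}$ by the implication $3\Rightarrow 1$ (equivalently, a common big model point then represents both $\mathfrak{A}$ and $\mathfrak{B}$, so the uniqueness clause of Theorem \ref{su}.(3) forces $\mathfrak{A}\cong\mathfrak{B}$).

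For surjectivity I would start from an arbitrary big model point $a\in|\mathcal{C}^{T}_{\kappa}|$. By Theorem \ref{su}.(3) it represents a unique (up to isomorphism) $T$-model $\mathfrak{A}$ with $\|\mathfrak{A}\|\le\kappa$, via the unique $f\colon(\mathcal{C}^{\mathfrak{A}}_{\kappa},l_{\mathfrak{A}})\to(\mathcal{C}^{T}_{\kappa},l_{T}^{\kappa})$ and a domain point $b\in f^{-1}[a]$; thus $a\in f[U]$ and $\Phi([\mathfrak{A}])=[a]_{\asymp}$, provided $[\mathfrak{A}]$ lies in the domain of $\Phi$, i.e. provided $\|\mathfrak{A}\|=\kappa$. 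This cardinality refinement is the real content and the main obstacle, since Theorem \ref{su} only guarantees $\|\mathfrak{A}\|\le\kappa$. Here I would use that the existence clause of Theorem \ref{su}.(3) is furnished by Lemma \ref{mtm}, so $f$ enjoys property (3) of that lemma: every factor of the model point $a$ lies in $ran(f)$. Since $a$ is big, $\mathcal{D}^{T}$ does not cover its factor set, so there is a factor $c\prec a$ with $c\notin\bigcup\mathcal{D}^{T}$; writing $c=f(d)$ and using that $f$ is an S-mapping with $f^{-1}[D^{T}_{ij}]=D^{\mathfrak{A}}_{ij}$, I obtain $d\notin\bigcup\mathcal{D}^{\mathfrak{A}}$. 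But $D^{\mathfrak{A}}_{ij}=A_{v_{i}=v_{j}}$, so the coordinates of $d\in A^{\kappa}$ are pairwise distinct, i.e. $d$ is an injection $\kappa\to A$ and hence $\|\mathfrak{A}\|\ge\kappa$. Together with $\|\mathfrak{A}\|\le\kappa$ this yields $\|\mathfrak{A}\|=\kappa$, completing surjectivity and simultaneously confirming that big model points correspond exactly to models of cardinality $\kappa$ (the converse direction being Theorem \ref{su}.(2)).

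Finally, the displayed equality $\mathcal{I}(T,\kappa)=\|\{[a]_{\asymp}:a\text{ is a big model point in }\mathcal{C}^{T}_{\kappa}\}\|$ is simply the assertion that $\Phi$ is a bijection between the two index sets, so the two sides have equal cardinality. The only step beyond bookkeeping with Theorem \ref{su} and Corollary \ref{ccc} is the cardinality argument above, which pins down that bigness of a model point is equivalent to its represented model having cardinality exactly $\kappa$.
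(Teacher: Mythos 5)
Your proof is correct and follows exactly the route the paper intends: the paper states this corollary without proof, as an immediate consequence of Theorem \ref{su} and Corollary \ref{ccc}, and your bijection $\Phi$ is precisely the correspondence described in the remark following the corollary. The one step you supply that the paper leaves entirely implicit — that a \emph{big} model point forces the represented model to have cardinality exactly $\kappa$ rather than merely $\le\kappa$, via a factor avoiding $\bigcup\mathcal{D}^{T}$ pulling back to an injective sequence in $A^{\kappa}$ — is exactly the right argument and the only nontrivial content.
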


\begin{remark}
Combined with their proofs, Theorem \ref{su} and Theorem \ref{map} tell us that for any infinite cardinality $\kappa$, up to isomorphism and relation $\asymp$, all models of a theory of cardinality$\le\kappa$ correspond one-to-one with all model points in the $\kappa$-model space of $T$, and all elementary embeddings between $T$-models correspond one-to-one with all factor relations between model points in the $\kappa$-model space of $T$. It is also easy to see that a subset of a model can be represented as a factor of the model point representing it; a partial elementary embedding can be represented as a partial factor relation. In this way, each basic object in semantics of first-order logic is represented here as a topological object on the FOL space. 

An important similarity to the topological representation of semantics of propositional logic is that, just as the number of points in a Stone space is independent of the specific theory it corresponds to and the way it corresponds to it, in an FOL space, the structure consisting of points and their factor relations is also independent of the specific $\mathcal{L}$-formation. In other words, through the FOL space, we refine the general abstract structure of semantics first-order logic independent from the formal language.
\end{remark}\label{4.4}
\section{Application in model theory}\label{yy}
In Section \ref{4.3}, we showed that the class of FOL spaces is exactly the class of S-homemorphic spaces of $\alpha$-model spaces. Then, the topological representation of semantics of first-order logic via $\alpha$-model spaces naturally leads to a method for discussing model theory using a pure set-topology language. Introducing our topological representation as such a method into model theory has at least two foreseeable benefits:
\begin{itemize}
    \item The topological representation is essentially a homogenization of first-order structures and the formal language that are heterogeneous from each other. Under which some distinct objects become the same thing (e.g., both models and Henkin sets become model points in the FOL space). Thus this treatment can obviously simplify the discussion and even generate new ideas.
    \item In the set-topology language, we do not need to consider the strict requirement of the formal language in the form of writing, which can simplify the lines.
\end{itemize}

In \ref{5.2}, we will look at this new topological method through an example. As mentioned in the introduction, we already have a complete topological proof of Morley's theorem. The example in \ref{5.2} is a critical fragment of this entire proof with some modifications in order to correspond to an intermediate theorem of the classical proof of Morley's theorem. We chose this fragment as the example because, on the one hand, this fragment reflects relatively more features and advantages of the new method;  on the other hand, it follows the classical proof idea of the theorem, thus making it easier to compare it with the classical proof.

Before getting into the example, we first explore the connection between our work and the type space in \ref{5.1}. We know that type space is a classical set-topology notion in model theory. Since many concepts and works in model theory are constructed based on the type space, not only this connection means that our new method can directly inherit many set-topology approaches that are already well established in model theory, but also we need to base on this discussion when translating concepts in the example.
\subsection{Connection with type space}\label{5.1}
Let us start with a basic concept, the formula with parameters. Saying a formula $\phi(\overline{v},\overline{a})$ with parameters $\overline{a}$ can be satisfied on a first-order structure $\mathfrak{A}$ is to say there is an assignment $\sigma$ on $\mathfrak{A}$ containing $\overline{w}\mapsto\overline{a}$ as a part satisfying $\phi(\overline{v},\overline{w})$. Then by topological language, this is to say that the maximal consistent set satisfied by $\sigma$ is in $S_{\phi(\overline{v},\overline{w})}$. In this way, as with the formula without parameters, the satisfiability of a formula with parameters can be expressed as the nonemptyness of a clopen set. Thus, the realizability of a type with parameters can be expressed as the nonemptyness of the corresponding closed set. Based on this discussion, the following theorem is more intuitive.

\begin{theorem}\label{ttm}
Let $\mathcal{L}$ be a language, $T$ be an $\mathfrak{L}$-theory, then for any $T$-model $\mathfrak{A}$, any nonempty $B\subseteq A$, any $n\in\mathbb{N}^{+}$ and any $\kappa>\|B\|+\aleph_{0}$, the type space $S^{\mathfrak{A}}_{n}(B)$ is homeomorphic to a $\kappa\backslash n$-dimensional complete closed set on $\mathcal{C}_{T}^{\kappa}$. Vice versa.
\end{theorem}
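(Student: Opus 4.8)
The plan is to realize the complete closed set concretely as $a|_{\kappa\backslash n}$ for a carefully chosen point $a$, and to read the correspondence with parameter-formulas off the dictionary sketched just before the theorem. First I would fix the naming: since $\kappa>\|B\|+\aleph_{0}$ we have $|\kappa\backslash n|=\kappa\ge\|B\|$, so I can choose an assignment $\sigma$ on $\mathfrak{A}$ whose restriction to the coordinates $\{v_{i}:i\in\kappa\backslash n\}$ surjects onto $B$. Let $a:=\{\phi\in\Lambda_{\mathcal{L}_{\kappa}}:(\mathfrak{A},\sigma)\vDash\phi\}$ be the resulting point of $\mathcal{C}^{T}_{\kappa}$ (it is an MSS of $T$ because $T\subseteq Th(\mathfrak{A})$), and set $v:=a|_{\kappa\backslash n}$, i.e. $v=\bigcap\{u:u\text{ clopen},\ \Delta(u)\subseteq\kappa\backslash n,\ a\in u\}$. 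Then $v$ is $\sim_{i}$-saturated for every $i<n$ and decides every clopen set of dimension $\subseteq\kappa\backslash n$, so it is a complete closed set of dimension $\kappa\backslash n$.

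Next I would write down the two maps. Every $c\in v$ agrees with $a$ on all formulas whose free variables lie in $\kappa\backslash n$; because $\sigma$ names every element of $B$, each such formula is a translate $\psi(v_{j_{1}},\dots,v_{j_{m}})\mapsto\psi(\bar{b})$ of an $\mathcal{L}_{B}$-sentence, so $c$ fixes exactly the elementary diagram $Th(\mathfrak{A}_{B})$ of $\mathfrak{A}$ over $B$ there, while remaining free on $v_{0},\dots,v_{n-1}$. I would send $c$ to
$$\Theta(c):=\{\phi(v_{0},\dots,v_{n-1},\bar{b}):\phi(v_{0},\dots,v_{n-1},v_{j_{1}},\dots,v_{j_{m}})\in c,\ \sigma(v_{j_{l}})=b_{l}\},$$
a complete $n$-type over $B$. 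For the inverse, given $p\in S^{\mathfrak{A}}_{n}(B)$ I translate its parameter-formulas back into $\Lambda_{\mathcal{L}_{\kappa}}$ and take a point realizing the set $p\cup Th(\mathfrak{A}_{B})$.

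The heart of the argument is bijectivity, and this is where I expect the main obstacle. For well-definedness and injectivity I use that $c$ is a genuine MSS, so $\Theta(c)$ is consistent and complete, and that two points of $v$ agreeing on all parameter-formulas agree on every clopen whose dimension meets $\{0,\dots,n-1\}$ and hence coincide by the $T_{2}$ property. For surjectivity I invoke the compactness of the model space (Theorem \ref{cop}): a complete $n$-type $p$ over $B$ is by definition finitely satisfiable together with $Th(\mathfrak{A}_{B})$, so after translation the family $\{l^{\kappa}_{T}(\chi):\chi\in p\cup Th(\mathfrak{A}_{B})\}$ has the finite intersection property and thus a point $c$ in its intersection; the containment $c\supseteq Th(\mathfrak{A}_{B})$ forces $c\in v$, because $Th(\mathfrak{A}_{B})$ already decides every clopen of dimension $\subseteq\kappa\backslash n$ exactly as $a$ does. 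The delicate point is precisely the matching of the two consistency notions — ``realized in an elementary extension of $\mathfrak{A}$'' versus ``extends to a point of $\mathcal{C}^{T}_{\kappa}$ lying in $v$'' — which I reduce to the identification of the diagram fixed on the coordinates $\ge n$ with $Th(\mathfrak{A}_{B})$.

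Finally I would check that $\Theta$ is a homeomorphism by matching bases: for each $\phi(v_{0},\dots,v_{n-1},\bar{b})$ one has $\Theta[\,v\cap l^{\kappa}_{T}(\phi(v_{0},\dots,v_{n-1},v_{j_{1}},\dots,v_{j_{m}}))\,]=[\phi(\bar{v},\bar{b})]$, and as $\phi$ ranges over $\Lambda_{\mathcal{L}_{\kappa}}$ the left-hand sets form a basis of the subspace $v$ while the right-hand sets form the standard clopen basis of $S^{\mathfrak{A}}_{n}(B)$; a bijection carrying a basis bijectively onto a basis is a homeomorphism. For the converse (``vice versa''), given any complete closed set $v$ of dimension $\kappa\backslash n$ I would pick $a\in v$, pass to a model point above it via Lemma \ref{mex} and the representation in Theorem \ref{su} to obtain a $T$-model $\mathfrak{A}$ in which the fixed coordinates $\{a(i):i\ge n\}$ name a subset $B\subseteq A$, and then the forward argument yields $v=a|_{\kappa\backslash n}\cong S^{\mathfrak{A}}_{n}(B)$.
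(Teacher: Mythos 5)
Your proposal is correct and follows essentially the same route as the paper: both fix a surjective naming of $B$ by the variables indexed by $\kappa\backslash n$, translate parameter-formulas into $\Lambda_{\mathcal{L}_{\kappa}}$-formulas with free variables in those coordinates, verify the image is a $\kappa\backslash n$-dimensional complete closed set, and match the clopen bases to get the homeomorphism, with the converse obtained by extracting a model point and a domain point from the given complete closed set. The only difference is cosmetic — you orient the map from the closed set $a|_{\kappa\backslash n}$ to $S^{\mathfrak{A}}_{n}(B)$, while the paper defines the inverse map $g$ and shows its range is the complete closed set.
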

\begin{proof}$ $

\begin{itemize}
    \item For the type space $S^{\mathfrak{A}}_{n}(B)$, let $f:\{v_{i}:i\in\kappa\backslash n\}\rightarrow B$ be a surjection. We define  $g:S^{\mathfrak{A}}_{n}(B)\rightarrow\mathcal{C}_{T}^{\kappa}$ as $p\mapsto\{\phi(\overline{v},\overline{w}):\phi(\overline{v},f(\overline{w}))\in p\}$.
    \begin{itemize}
        \item \textbf{$ran(g)$ is a $\kappa\backslash n$-dimensional complete closed set:}
        
        For any $i\in n$, any $x\in ran(g)$ and any $y\sim_{i}x$, we know for any formula $\phi(\overline{w})\in y$ where $\overline{w}\in\kappa\backslash n$, 
        $S_{\phi}$ is $\sim_{i}$-saturated and then $\phi\in x$, then for $p\in g^{-1}[x]$, $\phi(f(\overline{w}))\in p\subseteq\bigcup S^{\mathfrak{A}}_{n}(B)$ is a sentence. This means $y':=\{\phi(\overline{v},f(\overline{w})):\phi(\overline{v},\overline{w})\in y\}$ is in $S^{\mathfrak{A}}_{n}(B)$. Clearly $g(y')=y$. Hence, $ran(g)$ is $\sim_{i}$-saturated.
        
        For any formula $\phi(\overline{w})$ s.t. $\Delta(S_{\phi})\cap n=\emptyset$, and $S_{\phi}\cap ran(g)=\neq\emptyset$, let $x\in S_{\phi}\cap ran(g)$. We know $\phi(f(\overline{w}))\in p$ for any $p\in g^{-1}[x]$, this means $\phi(f(\overline{w}))\in p$ for any $p\in S^{\mathfrak{A}}_{n}(B)$. Then for any $y\in ran(g)$, $\phi\in y$ i.e. $y\in S_{\phi}$.
        \item \textbf{$g$ is an injection:}
        
        For any $p\neq q\in S^{\mathfrak{A}}_{n}(B)$, we know there is $q\not\ni\phi(\overline{v},\overline{a})\in p$, then $g(q)\not\in S_{\phi(\overline{v},\overline{w})}\ni g(p)$ for any $\overline{w}\in f^{-1}[\overline{a}]$. Then $g(p)\neq g(q)$. Hence $g$ is injective.
        \item \textbf{$g$ is continuous:}
        
        For any $\phi\in\Lambda_{\mathcal{L}_{\kappa}}$, we denote $\phi$ as $\phi(\overline{v},\overline{w})$ where $\overline{v}\in\{v_{i}:i\in n\}, \overline{w}\in\{v_{i}:i\not\in n\}$. $g^{-1}[\phi]=\{p\in S^{\mathfrak{A}}_{n}(B):\phi\in g(p)\}=\{p\in S^{\mathfrak{A}}_{n}(B):\phi(\overline{v},f(\overline{w}))\in p\}=[\phi(\overline{v},f(\overline{w}))]$ is a clopen set on $S^{\mathfrak{A}}_{n}(B)$.
        \item \textbf{$g^{-1}:ran(g)\rightarrow S^{\mathfrak{A}}_{n}(B)$ is continuous:}
        
        For any $\phi(\overline{v},\overline{a})\in\bigcup S^{\mathfrak{A}}_{n}(B)$, $g[\phi(\overline{v},\overline{a})]=\{x\in ran(g):\phi(\overline{v},\overline{w})\in x$ for $\overline{w}\in f^{-1}[\overline{a}]\}=ran(g)\cap\bigcap_{\overline{w}\in f^{-1}[\overline{a}]}S_{\phi(\overline{v},\overline{w})}$ is a closed set on $ran(g)$. Then for any closed set $s$ on $S^{\mathfrak{A}}_{n}(B)$, $(g^{-1})^{-1}[s]=g[s]$ is closed. Hence $g^{-1}$ is continuous.
    \end{itemize}
    \item For any $\kappa\backslash n$-dimensional complete closed set $s$ on $\mathcal{C}_{T}^{\kappa}$, we know there is a model point $x\in\rho s$ for some $\rho$. Let $\mathfrak{A}$ be a $T$-model represented by $x$ and $x'$ be a domain point in the inverse image of $x$ in $\mathcal{C}^{\mathfrak{A}}_{\kappa}$. We define $B=\{x'(i):i\in\rho[\Delta(s)]\}$, define $g:S^{\mathfrak{A}}_{n}(B)\rightarrow\mathcal{C}_{T}^{\kappa}$ as $p\mapsto\{\phi(\overline{v},v_{i_{1}},...,v_{i_{m}}):\phi(\overline{v},x'(\rho(i_{1})),...,x'(\rho(i_{m})))\in p\}$. Then for any $y\in\mathcal{C}_{T}^{\kappa}$,
    \begin{tabular}{rl}
         &$y\in ran(g)$\\
         $\Leftrightarrow$& for formula $\phi(v_{i_{1}},...,v_{i_{m}})$ in $y$ where $i_{1},...,i_{m}\in\{v_{i}:i\not\in n\}$, $\mathfrak{A},x'\vDash\phi(v_{\rho(i_{1})},...,v_{\rho(i_{m})})$\\
         $\Leftrightarrow$& for formula $\phi(v_{i_{1}},...,v_{i_{m}})$ in $y$ where $i_{1},...,i_{m}\in\{v_{i}:i\not\in n\}$, $\phi(v_{\rho(i_{1})},...,v_{\rho(i_{m})})\in x$\\
         $\Leftrightarrow$& for formula $\phi(v_{i_{1}},...,v_{i_{m}})$ in $y$ where $i_{1},...,i_{m}\in\{v_{i}:i\not\in n\}$, $\phi(v_{i_{1}},...,v_{i_{m}})\in y'$ for any $y'\in s$\\
         $\Leftrightarrow$& $y\in s$.
    \end{tabular}
    Hence, $ran(g)=s$.
    
    Then by a similar way as the first direction, we can prove $g:S^{\mathfrak{A}}_{n}(B)\cong s$.
\end{itemize}
\end{proof}
\begin{remark}
The above theorem corresponds all cofinite-dimensional complete closed sets on $\kappa$-dimensional spaces to all type spaces over a nonempty set not more than $\kappa$. In turn, an FOL space can be seen as the merging of a family of type spaces. Then, the existing type space-based approach in model theory can be immediately transferred to the FOL space. In addition, Since the FOL space acts as a merge of type spaces, some connections between different type spaces can also be formalized as connections on the FOL space. For example, for models $\mathfrak{A}$, $\mathfrak{A'}$, $B\subseteq|\mathfrak{A}|$ and $B'\subseteq|\mathfrak{A'}|$ s.t. there is partial elementary embedding $f:B\rightarrow B'$, the relation between $S^{\mathfrak{A}}_{n}(B)$ and $S^{\mathfrak{A'}}_{m}(B')$ can be formalized by a permutation relation on the FOL space. That is, for the closed sets $s$ representing $S^{\mathfrak{A}}_{n}(B)$ and $t$ representing $S^{\mathfrak{A'}}_{m}(B')$, $t\subseteq\rho s$ for some mapping $\rho$. On the other hand, for a model $\mathfrak{A}$ and a $B\subseteq|\mathfrak{A}|$, it is easy to see that for different $m,n\in\mathbb{N}$, the set $s$ representing $S^{\mathfrak{A}}_{m}(B)$ can become a set representing $S^{\mathfrak{A}}_{n}(B)$ by taking a permutation. Then, when we need to consider $S^{\mathfrak{A}}_{n}(B)$ for all $n\in\mathbb{N}$ at the same time, we can combine all these $S^{\mathfrak{A}}_{n}(B)$ into one subspace of the FOL space, which is a coinfinite-dimensional complete closed set $s$ s.t. for a closed set $t$ representing $S^{\mathfrak{A}}_{n}(B)$ for an $n$, there is a mapping $\rho$ s.t. $\rho s=t$. The difference is that here an $n$-type is not a point, but an $\Delta(s)\cup n$-dimensional completely closed set. In particular, although $n$-type spaces over $\emptyset$ do not have a natural representation on the FOL space, such a merge space of the $n$-type spaces over $\emptyset$ do has. That is, an $\emptyset$-dimensional completely closed set.
\end{remark}
\subsection{The example}\label{5.2}
The example is the following theorem\footnote{The formulation of the theorem is referenced from Theorem 5.2.9 in \cite{marker2006model}}:
\begin{itemize}
    \item Let $\mathcal{L}$ be countable and $T$ be an $\mathcal{L}$-theory with infinite models. For all $\kappa\ge\aleph_{0}$, there is a $T$-model $\mathfrak{A}$ with $\|\mathfrak{A}\|=\kappa$ such that if $B\subseteq\mathfrak{A}$ and $n\in\mathbb{N}$, then $\mathfrak{A}$ realizes at most $\|B\|+\aleph_{0}$ types in $S^{\mathfrak{A}}_{n}(B)$.
\end{itemize}

First we consider the translation of two concepts: theory in a countable language and complete theory. Saying a language $\mathcal{L}$ is countable is equivalent to saying $\Lambda_{\mathcal{L}}$ is countable. Then let $T$ be an $\mathcal{L}$-theory, which means that on the $\alpha$-model space $\mathcal{C}_{T}^{\alpha}$ of an $\mathcal{L}$-theory $T$, clopen sets of dimension $I$ are countable for any finite $I\subseteq\alpha$. We call an FOL space with this property as a \textbf{locally-countable} FOL space. Saying a theory is complete is equivalent to saying for any sentence $\phi$, $\phi\in T$ or $\neg\phi\in T$, which means $l_{T}^{\alpha}(\phi)=\emptyset$ or $|\mathcal{C}_{T}^{\alpha}|$ for any $\alpha$. Then $\mathcal{C}_{T}^{\alpha}$ has no non-trivial $\emptyset$-dimensional clopen sets.  We call an FOL space with this property as a \textbf{complete} FOL space.

The rest of the discussion on the translation can be organized into the following proposition:

\begin{proposition}
Let $T$ be a theory and $\mathfrak{A}$ be a $T$-model of cardinality $\kappa$. For any nonempty set $B\subseteq|\mathfrak{A}|$ and $n\in\mathbb{N}$, the following statements are equivalent:
\begin{enumerate}
    \item $\mathfrak{A}$ realizes $\lambda$ many types in $S^{\mathfrak{A}}_{n}(B)$.
    \item For $x$ the model point representing $\mathfrak{A}$ in $\mathcal{C}_{T}^{\kappa}$, let $x'$ be defined as in the proof of Theorem \ref{ttm}, we define $I_{B}=\{i\in\kappa:x'(i)\in B\}$. Then for any $\kappa\backslash n$-dimensional complete closed set $s$ satisfying there is a partial mapping $\rho$ defined on $\Delta(s)$ s.t. $\rho s=a|_{I_{B}}$, there are only $\lambda$ many $b\in s$ s.t. there is mapping $\rho'\supset\rho$, $\rho:b\prec a$.
\end{enumerate}
\end{proposition}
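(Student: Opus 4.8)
The plan is to prove both implications of the equivalence simultaneously by exhibiting a single bijection between the $n$-types over $B$ realized in $\mathfrak{A}$ and the set of points $\{b\in s:\exists\,\rho'\supseteq\rho,\ \rho':b\prec x\}$; once this bijection is established, statements (1) and (2) literally assert the same cardinal $\lambda$, so neither direction needs separate work. (Throughout I read the symbol $a$ occurring in (2) as the model point $x$ and the final $\rho$ as $\rho'$, which the context forces.) The raw material is Theorem \ref{su}: let $f:(\mathcal{C}^{\mathfrak{A}}_{\kappa},l_{\mathfrak{A}})\rightarrow(\mathcal{C}^{T}_{\kappa},l^{\kappa}_{T})$ be the basis-preserving C-mapping and let $x'$ be a domain point with $f(x')=x$, so $x'$ enumerates $A$ and $x'[I_{B}]=B$ by the very definition of $I_{B}$. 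The set $s$ is exactly the kind of cofinite-dimensional complete closed set that Theorem \ref{ttm} identifies with $S^{\mathfrak{A}}_{n}(B)$, the hypothesis $\rho s=x|_{I_{B}}$ being what pins the parameters down to $B$. I will also use the dictionary already applied in Example \ref{examp} and in the proof of Theorem \ref{su}.(2): for $b'\in A^{\kappa}$ defined by $b'(i)=x'(\rho'(i))$ one has $f(b')\prec_{\rho'}x$.

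First I would build the map from realized types to points of $s$. Given $\bar{c}=(c_{0},\dots,c_{n-1})\in A^{n}$, write $c_{j}=x'(k_{j})$ and set $\rho'(j):=k_{j}$ for $j<n$ and $\rho':=\rho$ on $\Delta(s)=\kappa\backslash n$; put $b:=f(b')$ where $b'(i)=x'(\rho'(i))$, so that $\rho':b\prec x$. The first thing to verify is $b\in s$. Since $s$ is a complete closed set of dimension $\kappa\backslash n$, membership is decided clopen set by clopen set on sets of dimension $\subseteq\kappa\backslash n$; and for any such $l^{\kappa}_{T}(\psi)$, because $\rho'|_{\kappa\backslash n}=\rho$, one computes $b\in l^{\kappa}_{T}(\psi)\Leftrightarrow x\in\rho\,l^{\kappa}_{T}(\psi)$, which by $\rho s=x|_{I_{B}}$ is equivalent to $s\subseteq l^{\kappa}_{T}(\psi)$ — precisely the condition for $b$ to lie in $s$.

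Next I would identify the type carried by $b$. Because $\rho$ maps $\Delta(s)=\kappa\backslash n$ onto $I_{B}$ (this is what gives $\rho s=x|_{I_{B}}$ its dimension $I_{B}$) and $x'[I_{B}]=B$, the parameter coordinates of $b$ run over exactly $B$; hence for every formula $\phi(\overline{v},\overline{w})$ the point $b$ records whether the tuple $(x'(\rho'(i_{1})),\dots,x'(\rho'(i_{m})))$ satisfies $\phi$ in $\mathfrak{A}$, i.e. $b$ encodes precisely $tp^{\mathfrak{A}}(\bar{c}/B)$. This delivers well-definedness and injectivity in one stroke (two tuples yield the same $b$ iff they have the same type over $B$), while surjectivity is immediate: an arbitrary factor $b\in s$ with witness $\rho'\supseteq\rho$ is the image of the tuple $(x'(\rho'(0)),\dots,x'(\rho'(n-1)))\in A^{n}$, whose type over $B$ is the one $b$ encodes. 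Thus the realized types are in bijection with the counted points, and the two cardinals agree.

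The technical heart, and the step I expect to cost the most care, is the pair of verifications in the previous two paragraphs: that factoring through any $\rho'\supseteq\rho$ keeps $b$ inside the fixed $s$, and that the parameter block of $b$ is neither smaller nor larger than the full type over $B$. Both rest on the interaction of the permutation calculus of Proposition \ref{pop} with the ultrafilter-like determinacy of complete closed sets, together with pinning down $ran(\rho)=I_{B}$ from $\rho s=x|_{I_{B}}$; the remaining bijection bookkeeping is routine. The $T_{2}$ property of $\mathcal{C}^{T}_{\kappa}$ (Theorem \ref{cop}) ensures each $b$ determines a single type, so no collapsing occurs and the count is exact.
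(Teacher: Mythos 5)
Your proposal is correct and follows essentially the same route as the paper: both arguments rest on the homeomorphism $g:S^{\mathfrak{A}}_{n}(B)\cong s$ from Theorem \ref{ttm} together with the dictionary between realization of a type by a tuple $x'(i_{0}),\dots,x'(i_{n-1})$ and the factor relation $\rho':g(p)\prec x$ for a $\rho'\supseteq\rho$ sending $n$ into the indices of the tuple. The only difference is packaging — the paper reads the bijection off from $g$ via a single chain of equivalences, while you reconstruct the corresponding point $b=f(b')$ from a realizing tuple and verify $b\in s$ by hand — but the content is identical.
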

\begin{proof} Let $f$, $g$ be defined as in the direction $2\Rightarrow 1$ of the proof of Theorem \ref{ttm}. For each $I=\{i_{0},...,i_{n-1}\}\subseteq\alpha$ ($i_{0},...,i_{n-1}$ in ascending order of subscript), defining mapping $\rho_{I}$ as $k\mapsto i_{k}$ for $k\in n$, $k\mapsto \rho(k)$ for $k\not\in n$, then for $p\in S^{\mathfrak{A}}_{n}(B)$ we have:

\begin{tabular}{rl}
     &$x'(i_{0}),...,x'(i_{n-1})$ realizes p\\
     $\Leftrightarrow$& for $\phi(\overline{v},x'(j_{1}),...,x'(j_{m}))\in p$, $\mathfrak{A}\vDash\phi(x'(i_{0}),...,x'(i_{n-1}),x'(j_{1}),...,x'(j_{m}))$\\
     $\Leftrightarrow$& for $\phi(\overline{v},,x'(j_{1}),...,x'(j_{m}))\in p$, $x\in S_{\phi(v_{i_{0}},...,v_{i_{n-1}},v_{j_{1}},...,v_{j_{m}})}$\\
     $\Leftrightarrow$&for $\phi(\overline{v},,x'(j_{1}),...,x'(j_{m}))\in p$, $\phi(\overline{v},v_{j'_{1}},...,v_{j'_{m}})\in\rho_{I}^{-1}x$ for $j'_{1},...,j'_{m}\in \rho^{-1}[j_{1}],...,\rho^{-1}[j_{m}]$\\
     $\Leftrightarrow$&$g(p)=\rho^{-1}_{I}x$ i.e. $\rho_{I} g(p)=a|_{I\cup I_{B}}$
\end{tabular}

Since $g$ is a bijection between $S^{\mathfrak{A}}_{n}(B)$ and $s$, the equivalence of statements holds.
\end{proof}

Obviously, if for any nonempty set $B\subseteq|\mathfrak{A}|$, $\mathfrak{A}$ realizes at most $|B|+\aleph_{0}$ types in $S^{\mathfrak{A}}_{n}(B)$, then $\mathfrak{A}$ realizes countable types in the type space over $\emptyset$. Based on the above discussion, we can translate the theorem into the following proposition.

\begin{proposition}
Let $\mathcal{C}_{\alpha}$ be an locally-countable complete FOL space satisfying $\bigcap_{i,j\in\alpha} -D^{\mathcal{C}}_{ij}\neq\emptyset$, there is a big model point $a\in\mathcal{C}_{\alpha}$ s.t. for any $I\subseteq\alpha$, any cofinite-dimensional complete closed set $s$ and any partial mapping $\rho$ defined on $\Delta(s)$ satisfying $\rho s=a|_{I}$, there are only $\|I\|+\aleph_{0}$ many $b\in s$ s.t. there is a mapping $\rho'\supset\rho$, $\rho':b\prec a$.
\end{proposition}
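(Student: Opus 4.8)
The plan is to mirror the classical construction of Ehrenfeucht--Mostowski models, translating each ingredient into the cylindric-space language and then pushing the type count down along a reduct mapping. First I would reduce to a concrete model space: by the theorem of Section \ref{4.3} the abstract FOL space $\mathcal{C}_{\alpha}$ is S-homeomorphic to an $\alpha$-model space $\mathcal{C}^{T}_{\alpha}$ for a suitable $\mathcal{L}$-formation, where local countability forces the clopen sets of each finite dimension (hence $\Lambda_{\mathcal{L}}$) to be countable, completeness forces $T$ to be complete, and $\bigcap_{i,j\in\alpha}-D^{\mathcal{C}}_{ij}\neq\emptyset$ supplies a point with pairwise distinct coordinates, i.e. guarantees that $T$ has infinite models. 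So without loss of generality $\mathcal{C}_{\alpha}=\mathcal{C}^{T}_{\alpha}$ with $T$ complete in a countable relational language with infinite models. I would then Skolemize: pass to a countable expansion $\mathcal{L}^{Sk}\ge\mathcal{L}$ (coding Skolem functions by graph predicates) and a complete $T^{Sk}\supseteq T$, working in the again locally-countable space $\mathcal{C}^{T^{Sk}}_{\alpha}$. The restriction map of Example 3 of Section \ref{sglz} gives a basis-preserving C-mapping $\pi:\mathcal{C}^{T^{Sk}}_{\alpha}\to\mathcal{C}^{T}_{\alpha}$; being a C-mapping, $\pi$ carries big model points to big model points and surjects the factors of a point onto the factors of its image, so any type bound proved upstairs transfers downstairs.

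The heart of the argument is to build, inside $\mathcal{C}^{T^{Sk}}_{\alpha}$, a big model point $a^{Sk}$ whose coordinate sequence is order-indiscernible and exhausts the Skolem hull of an indiscernible subsequence of length $\alpha$. Topologically I would phrase ``the coordinates indexed by $J$ are order-indiscernible'' as the condition that for all increasing tuples $i_{1}<\cdots<i_{n}$ and $j_{1}<\cdots<j_{n}$ from $J$ and all clopen $u$ with $\Delta(u)\subseteq n$ the point lies in $u$ under the $i$-substitution iff under the $j$-substitution. To produce such a point I would colour increasing finite tuples of a long sequence by the clopen sets they satisfy (only countably many colours, by local countability), extract a monochromatic subsequence by a Ramsey/Erd\H{o}s--Rado argument, and stretch it to length $\alpha$ using compactness of the FOL space (Theorem \ref{cop}): the indiscernibility conditions are finitely consistent, hence realized on a $\emptyset$-dimensional complete closed set, on which Lemma \ref{mex} yields a big model point $a^{Sk}$. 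The diagonal-avoidance hypothesis, preserved by $\pi$, keeps the indiscernibles and the whole hull pairwise distinct, so $a^{Sk}$ is big and its model has size exactly $\alpha$; setting $a:=\pi(a^{Sk})$ gives the required big model point of $\mathcal{C}^{T}_{\alpha}$.

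Finally I would count. Fixing $I\subseteq\alpha$, a cofinite-dimensional complete closed set $s$, and a partial $\rho$ with $\rho s=a|_{I}$, the quantity to bound is the number of $b\in s$ admitting $\rho'\supseteq\rho$ with $\rho':b\prec a$; by the preceding translation proposition together with Theorem \ref{ttm} this is exactly the number of $n$-types over $B=\{a^{*}(i):i\in I\}$ realized in the model represented by $a$, where $a^{*}$ is a domain point over $a$. Working upstairs in $\mathfrak{A}^{Sk}$, each element of $B$ is a Skolem term of finitely many indiscernibles, so at most $\|I\|+\aleph_{0}$ indiscernibles (a set $J_{0}$) are involved; by order-indiscernibility the type over $B$ of any tuple of hull elements is determined by the countably many possible tuples of Skolem terms together with the order configuration of the finitely many indiscernibles in the tuple relative to $J_{0}$, giving $\aleph_{0}\cdot(\|I\|+\aleph_{0})^{<\omega}=\|I\|+\aleph_{0}$ realized types. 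Transferring this along $\pi$ (a factor of $a$ over $I$ is the $\pi$-image of a factor of $a^{Sk}$ over $I$, by Lemma \ref{mtm} applied to the reduct) yields the same bound for $a$.

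I expect the main obstacle to be this last counting step rendered faithfully in the factor/permutation calculus: one must show that the factors $b\prec a$ over $I$ are genuinely parametrized by ``Skolem-term data $\times$ order configuration relative to the $\le\|I\|$ relevant coordinates,'' i.e. re-express the informal term-and-position bookkeeping purely through the relations $\prec$ and $\asymp$ and the operators $u(\tfrac{i}{j})$ on the space, and check that the reduct $\pi$ creates no new factors. The topological Ehrenfeucht--Mostowski step is the other substantial piece, but it reduces to a Ramsey colouring with countably many colours plus the compactness and model-point existence already in hand (Theorem \ref{cop}, Lemmas \ref{mtm} and \ref{mex}).
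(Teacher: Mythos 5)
Your plan is workable, but it is a genuinely different route from the one the paper takes. You translate the statement back into classical model theory and run the standard Ehrenfeucht--Mostowski argument: reduce to a concrete $\alpha$-model space via the theorem of Section \ref{4.3}, Skolemize, build an indiscernible sequence by Ramsey plus compactness, take the Skolem hull, count types classically, and transport the count back through the reduct mapping of Example 3 and the translation propositions (Theorem \ref{ttm} and the proposition preceding the statement). The paper never leaves the space and never Skolemizes: it introduces $\alpha$-order-preserving partial mappings $\rho$ and the closure operator $l(s)=\bigcap\{\rho s:\rho\mbox{ order-preserving}\}$, proves a single Claim that there is a big model point $a$ with $l(\{a\})=\{a\}$ by one transfinite recursion that interleaves Henkin-witness placement at arithmetically coded coordinates (the sets $U_{m}$) with refinements obtained from Ramsey's theorem for hypergraphs applied directly to a two-colouring by clopen sets, and then counts factors of $a$ by using order-preserving mappings $\eta$ to collapse any stray coordinate of a factor into the $\|I\|+\aleph_{0}$-sized index set. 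Your route buys familiarity and lets you reuse classical lemmas verbatim; the paper's route buys exactly what this section is meant to demonstrate, namely that Skolem functions, order-indiscernibles and E--M models all collapse into the single notion of an $l$-invariant point, and it avoids the two delicate transfers your route requires (that the reduct $\pi$ creates no new factors over $I$, and that the represented model is exactly the Skolem hull rather than something larger).

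Two points in your sketch need more care than you give them. First, with countably many colours you cannot literally ``extract a monochromatic subsequence'' by Ramsey or Erd\H{o}s--Rado; the correct move, which you do gesture at afterwards, is to get indiscernibility for each finite set of clopen sets separately and then use compactness of the FOL space (Theorem \ref{cop}) to realize the full scheme, so make that the official argument. Second, Lemma \ref{mex} only produces \emph{some} model point above your indiscernible point, whose represented model may strictly contain the Skolem hull; to get a point whose coordinates exhaust exactly the hull you must pass to the factor corresponding to the Skolem-hull elementary substructure via Theorems \ref{su} and \ref{map} and check it is again a (big) model point. Both repairs are routine given the paper's machinery, so I would not call either a fatal gap, but as written they are the places where your proof is not yet a proof.
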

\begin{proof}
To simplify the discussion, we consider the dimension of $\mathcal{C}$ as a cartesian product $\alpha\times\omega$. 
\begin{itemize}
    \item We say a (partial) mapping $\rho:\alpha\times\omega\Rightarrow\alpha\times\omega$ is $\alpha$-order-preserving if for $(i_{1},j_{1}),(i_{2},j_{2})\in dom(\rho),(i'_{1},j'_{1})=\rho(i_{1},j_{1}),(i'_{2},j'_{2})=\rho(i_{2},j_{2})$, $i_{1}>i_{2}\Leftrightarrow i'_{1}>i'_{2}$, $j_{1}=j'_{1},j_{2}=j'_{2}$.
    \item For subset $s\subseteq|\mathcal{C}|$, we write $L(s):=\{\rho s:\rho$ is $\alpha$-order-preserving$\}$, $l(s):=\bigcap L(s)$.
\end{itemize}

\textbf{Claim: }there is a big model point $a$ s.t. $l(\{a\})=\{a\}$.

Let $a$ be the model point in this Claim, $I,\rho,s,n$ be arbitrary satisfying the relationship described in the proposition. The only difference is $\Delta(s)=(\alpha\times\omega)\backslash(1\times n)$ WLOG, we assume that for any $(i,j)\in I$, $i=\beta+m$ for some limit ordinal $\beta$ and natural number $m>n$. $I':=\{i'\in\alpha:$ there is $(i,j)\in I,i'=i-n$ for $n\in\mathbb{N}\}$, we know $\|I'\|+\aleph_{0}=\|I\|+\aleph_{0}$. We define $X:=\{a|_{t\cup I}:t\subseteq I'\times\omega$ and $\|t\|=n\}$, $Y:=\{b\in s:$ there is $\rho'\supseteq\rho,\rho'\{b\}\in X\}$. Obviously $\|X\|=\|Y\|=\|I\|+\aleph_{0}$. 

For an arbitrary $b\in s$ and a $\rho'\supseteq\rho$ satisfying $\rho' b\prec a$. If $\Delta(\rho' \{b\})\subseteq I'\times\omega$, then $b\in Y$. If $\Delta(\rho' \{b\})\backslash(I'\times\omega)\neq\emptyset$, we assume it contains only one element $(i',j')$ and for any $(i,j)\in\rho'[1\times n]\backslash\{(i',j')\}$, $i\neq i^{*}:=min\{i''\in I':i''>i'\}$. Based on the assumed property of $I$, this is no loss of generality. Then let partial mapping $\eta$ be defined as $(i',j)\mapsto (i^{*},j)$, for $j\in\omega$ and $(i,j)\mapsto(i,j)$ for $i\in I'\backslash\{i^{*}\},j\in\omega$, we have $\eta$ is $\alpha$-order-preserving and then $\eta a|_{s\cup I}=a|_{t\cup I}$ for $t=\{(i^{*},j')\}\cup\rho'[1\times n]\backslash\{(i',j')\}$. We know $a|_{t\cup I}\in X$ and $\eta\circ\rho' b=a|_{t\cup I}$. Hence, $b\in Y$. 

Hence, the proposition holds.

\textbf{Proof of Claim: }

By the following induction procedure, we construct a list of sets converging to the model point we want.
    \begin{itemize}
        \item\textbf{BS: } Let $\{w_{i}:i\in\omega\}$ list all nonempty clopen sets with dimension $1\times 1$. $U_{0}:=\{w_{i}(\frac{(0,2^{i})}{(0,0)}):i\in\omega\}$, $x_{0}:=\bigcap\{-D_{pq}:p,q\in\alpha\times 1\}\cap l(\bigcap U_{0})$. Clearly $x_{0}=l(x_{0})$ is non-empty.
        
        \item\textbf{IH: } $x_{m-1}=l(x_{m-1})$ is a nonempty set s.t. $x_{m-1}|_{\alpha\times(m-1)}$ is an $\alpha\times(m-1)$-dimensional complete closed set.
    \item\textbf{IS: } List $\{u:u$ is a clopen set s.t. $\omega\times(m-1)\subseteq\Delta(u)\subseteq \omega\times m\}$ as $\{u_{i}:i\in\omega\}$. Use recursion on $\omega$, we define a list of non-empty closed sets $x_{m-1}=y_{0}\supseteq y_{1}\supseteq...$ where $y_{i}=l(y_{i})$ for each $i$ . The $n$-th inductive step is:
    
    \begin{itemize}
        \item We consider the clopen set $u_{n}$. For each $\alpha$-order-preserving mapping $\rho$, $K^{\rho}:=\{i\in\alpha:(i,j)\in\Delta(\rho u_{n})$ for some j$\}$ and $k^{\rho}:=\|K^{\rho}\|$. Clearly $k^{\rho}=k^{\rho'}$ for any $\alpha$-order-preserving $\rho,\rho'$, so we write it as $k$. Let $H=(\alpha,[\alpha]^{k})$ be the complete $k$-hypergraph on $\alpha$. Choosing an arbitrary point $a\in y_{n-1}$, let hyperedge coloring $f:[\alpha]^{k}\rightarrow 2$ be defined as $f(I)=1\Leftrightarrow a\in \rho_{I} u_{n}$ where $\rho_{I}$ is $\alpha$-order-preserving and $K^{\rho_{I}}=I$. WLOG assume $a\not\in l(u_{n})$, then there is finite $X\subseteq L(u_{n})$ s.t. $a\not\in \bigcap X$. Since $y_{n-1}= l(y_{n-1})$, there is $h\in\mathbb{N}^{+}$ s.t. for any $I\subseteq\kappa$ with $\|I\|=h$, $a\not\in \bigcap\{\rho u_{n}: \rho$ is $\alpha$-order-preserving and $K^{\rho}\subseteq I\}$. This means there is no monochromatic clique of order $h$ colored $1$. Then by Ramsey's theorem for hypergraph, we know that for any $h'\in\alpha$, there is a monochromatic clique of order $h'$ colored $0$. Since $y_{n-1}= l(y_{n-1})$, this means for any $h'\in\alpha$, $a\in \bigcap\{\rho -u_{n}: \rho$ is $\alpha$-order-preserving and $K^{\rho}\subseteq h'\}$, and then, $a\in l(-u_{n})$. Let $y_{i}:=y_{i-1}\cap l(-u_{n})$.
    \end{itemize}
    
    $x'_{m}:=\bigcap_{i\in\omega}y_{i}$. By compactness, $x'_{m}=l(x'_{m})\neq\emptyset$. Clearly $x'_{m}|_{\alpha\times m}$ is a complete closed set of dimension $\alpha\times m$.
    
    List $\{u:u$ is a clopen set s.t. $(0,m)\in\Delta(v)\subseteq\{(0,m)\}\cup(\omega\times m)$ and $x'_{m}\subseteq[u]_{(0,m)}\}$ as $\{w_{i}:i\in\mathbb{N}^{+}\}$. Let $U_{m}:=\{w_{i}(\frac{(r_{i},p^{i}_{m+1})}{(0,m)}):p_{m+1}$ is the $(m+1)$-th prime$,i\in\mathbb{N}^{+}\}$ where $r_{i}=min\{i':$ there is a $j'$ s.t. $(i',j')\in\Delta([w_{i}]_{(0,m)})\}$. Let $x_{m}:=l(\bigcap U_{m})\cap x'_{m-1}$, by compactness, we know $x_{m}=l(x_{m})\neq\emptyset$.
    \end{itemize}
    
    Let $x:=\bigcap_{i<\omega}x_{i}$, we know $x$ is a singleton by compactness and $T_{2}$ property, know the point in it is a model point by the construction of $U_{m}$, know $x=l(x)$ by IH.
\end{proof}
\begin{remark}
The classical proof of this theorem involves many concepts, like the Skolem function, order-indiscernible, Ehrenfeucht–Mostowski model, etc., and while we do a rough count of the proof length, it can be found that the length of the complete proof (counting the proofs of all the antecedent propositions) is not less than five pages in many model theory textbooks. In contrast, our proof is slightly more than one page long in a similar typographic format and avoids defining many concepts. Even if all the translation processes are involved, the length is no more than four pages in this format. Moreover, since translation is only needed when the problem is introduced, the proportion of translation length is much lower when we use this method for more significant problems. This crude comparison of proof lengths at least illustrates our proof's lower complexity. And this is not because we have adopted a completely different proof idea. In fact, it is easy to see by comparison that the $\alpha$-order-preserving mapping plays a similar role to order-indiscernible in the proof, while the model point in the claim is a topological analog of the E–M model. And, as in the classical proof, we also use the S-theorem. The above facts highlight the distinct advantage of the new method as a homogeneous treatment.
\end{remark}

\section{Conclusion}
\textbf{The Topological Representation.} In the first part, Sections \ref{yzkj}, \ref{sbw}, and \ref{tph}, which are also the main body of this paper, a topological representation of semantics of first-order is presented. For a first-order theory $T$, all $T$-models, subsets of $T$-models, elementary embeddings, and partial elementary embeddings between $T$-models, are represented as points and purely topologically defined point-to-point relations in the model space or $\alpha$-model space of $T$, respectively. Moreover, isomorphic topological structures of these model spaces are depicted as FOL spaces. This way, the construction of semantics of first-order logic is transformed into the construction of abstract topological structures. We have discussed this representation fully in the previous sections. Here we add one more perspective on the topologization space.

By Theorems \ref{su} and \ref{map}, for a theory $T$, $T$-models can also be represented as basis-preserving C-mappings from topologization spaces to the model space of $T$. Elementary embeddings between $T$-models can also be represented as basis-preserving C-mappings between these topologization spaces. From the perspective of category theory, this representation can also be regarded as a generalization of the Stone duality. For the dual space of a Boolean algebra, a point $x$ in it can also be viewed as a continuous mapping from a single point topological space to $x$ (this is the dual morphism in the topological category of a homomorphism from Boolean space to 0-1 algebra). As a generalization of this observation, in category theory, a general element of an object is a morphism directed at it, and an object can be determined by its general elements of a certain type.\cite{awodey2010category} In this paper, a basis-preserving mapping from the topologization space of a first-order structure to an FOL space $\mathcal{C}$ can be regarded as a general element of $\mathcal{C}$.

\textbf{The application in model theory.}
As a discipline, model theory must deal not only with syntactic objects like formulas and theories but also with concrete mathematical structures like first-order structures. The objects it deals with are not homogeneous. In this paper, through the topological representation, we homogenize the heterogeneous objects treated by the model theory into components of the same kind of objects. This treatment brings with it the great advantage that we can express complex relations between heterogeneous objects in model theory by simple topological nations. This feature is amply demonstrated in the in our example. We know that many in-depth discussions in abstract model theory, including the classical proof of Morley's theorem, requires a series of complex definitions. This is evident in our example. the complexity of these definitions makes it difficult for a newcomer to have a precise and natural intuition of the connections between them. Here, all these concepts are reduced to primitive objects such as closed sets, points, and dimensions with certain properties, and their relationships are thus intuitively represented. In general, many of the not-so-fundamental, 'auxiliary line' constructions of first-order logic are transformed into some basic and natural intrinsic structures in FOL space.

In the application example, it is easy to see that our proof method is very syntactic. It is actually a Henkin construction process if translated into the language of model theory.  This phenomenon is common throughout the topological proof of Morley's theorem. What is the reason for this? It is easy to see that there is a very natural connection between cylindric space and cylindric algebra, so the link between cylindric space and first-order syntax is more direct and closed than for specific first-order structures. Topological 
treatment can therefore be seen to some extent as the 'syntacticization' of first-order semantic objects. Based on this, the inherent structure of FOL spaces brings an advantage: It significantly extends the range of applications of the syntactic approach in model theory. Furthermore, in the topological treatment, we are freed from the constraints of formal languages, which simplifies the complexity of expression when performing syntactic constructions in proofs.

While simplifying the syntactic concepts, the topological treatment does make some purely semantic concepts, such as models, subsets, elements of models, etc., less intuitive. However, these problems are solved by tools introduced in the first part of this paper.

The proof we give in the application is considerably simpler than its conventional counterpart. This simplification is a direct result of these features and tools mentioned earlier. And I think these features and tools work in essence because model theory, to a large extent, is a discipline that only studies the first-order syntactic properties of a mathematical structure and therefore does not rely as much on purely semantic concepts.

\textbf{Related works.}
We have already presented other works on the topological representation of semantics of first-order in Introduction. Here, we only discuss work related to topological methods or homogenization methods in model theory.

As mentioned above, our topological method can be seen as a homogenization approach in model theory. Of course, there are many ways of homogenization, but topological structures have unique advantages. For example, Section \uppercase\expandafter{\romannumeral2}.6.2 of \cite{picado2011frames} introduces the pointless topology method to transform a topological space into a complete lattice. Can we use this class of complete lattices as a means of homogenization? An obvious problem is that this structure lacks simple, effective means to distinguish open sets, closed sets, and points, which will cause many difficulties in the basic discussion. For example, the equivalence relations corresponding to quantifiers will be difficult to define.

Cylindric algebra is also a homogeneous tool that comes easily to mind. However, if one tries to translate the representation of semantics of first-order by the model space into the language of cylindric algebra, one finds that these concepts become very complex. Moreover, the language of algebra lacks the flexibility of the language of topology. For example, the concept of point permutation will be difficult to define.

However, some people have indeed developed an algebraic approach to model theory. The paper \cite{nemeti1988cylindric} proposed a set of methods to deal with model theoretic problems using cylindric set algebra. But its discussion is primarily based on the non-algebraic details of cylindric set algebra. For example, isomorphisms between first-order structures only correspond to base-isomorphisms involving specific $\alpha$-sequences. Then, this approach does not do anything about homogenization but simply transposes the heterogeneity in the classical approach into the algebraic approach. 

As mentioned in Introduction, the main idea of this paper is inspired by Pinter's work in \cite{pinter1980topological}. Pinter further developed a topological version of model theory based on ultrafilter space of $\omega$-dimensional local-finite dimensional cylindric algebra in \cite{pinter2016stone}. Different from us, the main work in \cite{pinter1980topological} and \cite{pinter2016stone} is algebraic, and the conclusion on the topological side is mainly the 
consequence of the algebraic side. Therefore, linking cylindric algebras of different dimensions is difficult, and relations like permutation are difficult to define. So the representation of uncountable models can only be specially constructed by a complex and unnatural method. However, some basic intuitions are similar. For example, the concept of ``model point" in \cite{pinter2016stone} can be regarded as a particular form of the concept of model point in this article.

\textbf{Future works.} 
As has been mentioned many times, in a forthcoming paper, we will prove a duality between the category of cylindric spaces with C-mappings between them and the category of cylindric algebras as a natural generalization of Stone duality.

By modifying the definition of cylindric space, we may construct duality or even topological model theory for variants of first-order logic. It will be exciting and may bring unexpected results.

Our work on the topological method in model theory is preliminary, but this method has shown considerable application potential in abstract model theory. Furthermore, it also has application potential when dealing with specific structures, for example, in algebraic model theory. This requires us to build on existing work by introducing distinguished clopen sets to represent basic predicates and functions.
\section*{Acknowledgments}
I am very grateful to Dr. Shengyang Zhong for his valuable and specific advice, assistance, and guidance in writing this paper.

\bibliographystyle{unsrt}  

\end{document}